\newif\ifPDF
\newtheorem{thm}{Theorem}[section]
\newtheorem{cor}[thm]{Corollary}
\newtheorem{lem}[thm]{Lemma}
\newtheorem{prop}[thm]{Proposition}
\theoremstyle{definition}
\newtheorem{defn}[thm]{Definition}
\theoremstyle{remark}
\newtheorem{rem}[thm]{Remark}
\numberwithin{equation}{section}
\newcommand{\norm}[1]{\left\Vert#1\right\Vert}
\newcommand{\Real}{\mathbb R}
\newcommand{\Int}{\mathbb Z}
\newcommand{\Comp}{\mathbb C}
\newcommand{\eps}{\varepsilon}
\newcommand{\Kzero}{\mathrm{K}_0}
\newcommand{\Kone}{\mathrm{K}_1}
\newcommand{\tr}{\mathrm{tr}}
\newcommand{\aff}{\mathrm{Aff}}
\title[Classification of KK-contractible C*-algebras]{The classification of simple separable KK-contractible C*-algebras with finite nuclear dimension}
\author{George A. Elliott}
\address{Department of Mathematics, University of Toronto, Toronto, Ontario, Canada~\ M5S 2E4}
\email{elliott@math.toronto.edu}
\author{Zhuang Niu}
\address{Department of Mathematics, University of Wyoming, Laramie, WY 82071, USA}
\email{zniu@uwyo.edu}
\begin{document}

\begin{abstract}
The class of simple separable KK-contractible (KK-equivalent to $\{0\}$) C*-algebras which have finite nuclear dimension is shown to classified by the Elliott invariant. In particular, the class of C*-algebras $A\otimes \mathcal W$ is classifiable, where $A$ is a simple separable C*-algebra with finite nuclear dimension and $\mathcal W$ is the simple inductive limit of Razak algebras with unique trace, which is bounded (see \cite{Razak-W} and \cite{Jacelon-W}).
\end{abstract}

\maketitle

\section{Introduction}

The classification of separable simple unital UCT C*-algebras with finite nuclear dimension has been completed (\cite{GLN-TAS}, \cite{EGLN-ASH}, \cite{EN-K0-Z}, \cite{EGLN-DR}, \cite{TWW-QD}). In this paper, we study the classification for non-unital C*-algebras. In particular, the following classification theorem is obtained:

\theoremstyle{plain}
\newtheorem*{Theorem}{Theorem}
\begin{Theorem}[Theorem \ref{clas-thm}]
The class of simple separable KK-contractible C*-algebras with finite nuclear dimension and with non-zero traces is classified by the invariant $(\mathrm T^+A, \Sigma A)$. Any C*-algebra $A$ in this class is a simple inductive limits of Razak algebras.
\end{Theorem}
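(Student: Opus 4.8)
\emph{Proof idea.}\ The plan is to show that every $A$ in the class is isomorphic to a model C*-algebra assembled from Razak building blocks, for which the pair $(\trace^+,\Sigma)$ is already known to be a complete invariant, and in addition to realise every admissible pair by such a model. I would begin with the structural reductions. Finite nuclear dimension forces $A$ to be nuclear and, by the Toms--Winter equivalence, $\mathcal Z$-stable; simplicity together with KK-contractibility and the presence of a non-zero trace forces $A$ to be stably projectionless; and KK-contractibility gives $\KK(A,B)=0$ for every separable $B$, so that a $*$-homomorphism out of $A$ carries no $K$-theoretic information and, as an invariant-level object, is determined only by its effect on $\trace^+$ and on the scale $\Sigma$. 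In particular one expects, and should first establish, that such an $A$ absorbs the Jacelon--Razak algebra, $A\cong A\otimes\mathcal W$ --- the natural reduction to the $\mathcal W$-stable case --- either by a central-sequence argument of Matui--Sato type (using nuclearity, $\mathcal Z$-stability and the trace structure to embed $\mathcal W$ approximately centrally in $A$) or, a posteriori, as a consequence of the classification itself.

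Next comes the range of the invariant. Given an admissible pair $(T,\Sigma)$ --- a suitably axiomatised cone of lower semicontinuous traces together with a compatible scale --- I would construct a simple inductive limit $B=\varinjlim(B_n,\varphi_n)$ of Razak algebras realising it: one takes the $B_n$ to be one-dimensional NCCW complexes with vanishing $\Kone$ of Razak's type, and chooses the connecting maps $\varphi_n$ so as to force simplicity while prescribing the limiting trace cone, as in the constructions of Razak, Tsang and Robert, arranging in addition that $B$ is $\mathcal W$-stable. By Robert's classification of inductive limits of one-dimensional NCCW complexes with vanishing $\Kone$, such algebras $B$ are classified by the Cuntz semigroup functor $\mathrm{Cu}^{\sim}$, and for a simple stably projectionless C*-algebra $\mathrm{Cu}^{\sim}$ is recovered functorially from $(\trace^+,\Sigma)$. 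Hence it remains only to identify $A$ with the model $B$ carrying the same $(\trace^+,\Sigma)$.

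The identification $A\cong B$ I would obtain from an \emph{Existence} theorem and a \emph{Uniqueness} theorem, fed into the Elliott approximate-intertwining machinery. \textit{Existence}: every morphism $\gamma\colon(\trace^+ A,\Sigma A)\to(\trace^+ B,\Sigma B)$, and likewise its inverse, lifts to a $*$-homomorphism between suitable stabilised hereditary subalgebras inducing $\gamma$ on traces; since $\KK(A,B)=0$ there is no $K$-theoretic obstruction, and the lift is built first on the semiprojective Razak blocks of $B$ from the prescribed trace data and then corrected using $\mathcal W$-stability of the target. \textit{Uniqueness}: any two $*$-homomorphisms from a Razak block, or from $A$ itself, into $A$ or $B$ that agree on $(\trace^+,\Sigma)$ are approximately unitarily equivalent. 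Granted these, constructing $*$-homomorphisms in both directions that realise the isomorphism $\mathrm{Cu}^{\sim}A\cong\mathrm{Cu}^{\sim}B$ and adjusting them up to approximate unitary equivalence so that the successive triangles commute produces an isomorphism $A\cong B$; this simultaneously shows that $A$ is a simple inductive limit of Razak algebras and that $(\trace^+,\Sigma)$ is a complete invariant on the class.

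I expect the \emph{Uniqueness} theorem to be the main obstacle. In contrast with the unital theory there are no projections to cut down with, so the comparison of two $*$-homomorphisms must be carried out entirely through traces and the Cuntz semigroup; the gap between the operator norm and the trace seminorms must be bridged by a Stone--Weierstrass-type argument over the trace cone, the resulting perturbation absorbed using $\mathcal Z$-stability, and the required approximately order-zero, approximately central data produced from the completely positive approximations supplied by finite nuclear dimension --- in effect a stable uniqueness theorem specialised to the case of vanishing $\KK$. Making all of this work for non-unital hereditary subalgebras, uniformly over the trace cone, and verifying that the scale $\Sigma$ is respected at every stage of the intertwining, is where the technical weight of the argument lies.
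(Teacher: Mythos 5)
Your overall architecture (Razak models realizing the invariant, an existence theorem plus a uniqueness theorem fed into Elliott approximate intertwining, a stable uniqueness theorem driven by KK-contractibility and finite nuclear dimension) matches the paper's, but there is a genuine gap at the step on which everything else depends: you give no mechanism for producing maps \emph{out of} the abstract algebra $A$ into Razak algebras that approximately realize the traces. In the paper this is the trace factorization theorem (Theorem \ref{TFA}) together with the tracial approximation/divisibility structure (Theorems \ref{fdim} and \ref{div-decp}): one first uses the quasidiagonality-type existence result of \cite{TWW-QD} to get approximately multiplicative maps $A\to Q$ realizing prescribed traces, glues these point evaluations into an approximately multiplicative map into a one-dimensional (Razak) building block by means of the stable uniqueness theorem over $Q$ (Corollaries \ref{uniq-Q} and \ref{uniq-Q-path}) and the trace-lifting for the model inductive system (Lemma \ref{trace-lifting}), and then converts this \emph{external} factorization into an \emph{internal} tracial approximation $A\in\mathrm{TA}\mathcal R_z$ with divisibility via a Winter-type argument using the finite-nuclear-dimension c.p.\ approximations and order-zero comparison. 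Both halves of your intertwining need this: the maps $A\to B$ are compositions $A\to S\subset A\to B$ with $S$ a Razak subalgebra (the second arrow coming from Robert's Cuntz-semigroup existence theorem, which only handles maps \emph{from} Razak algebras into a stable rank one target), and the uniqueness theorem for self-maps of $A\otimes Q$ (Theorem \ref{uniq-thm}) is proved precisely by cutting with the projections $q,p_1,\dots,p_n$ of Theorem \ref{div-decp} and combining Robert's uniqueness for Razak blocks with the non-unital stable uniqueness theorem. Saying that the comparison "must be carried out through traces and the Cuntz semigroup" does not supply this step, and without it neither existence nor uniqueness gets off the ground.

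Two further points of divergence. First, your proposed reduction $A\cong A\otimes\mathcal W$ by a Matui--Sato central sequence argument is not an available off-the-shelf tool and is essentially as hard as the classification itself; the paper avoids it entirely by proving instead that $A\otimes\mathcal Z\cong A\otimes Q$ (Theorem \ref{Z-stable}, via the "zero-head" consequence of the stable uniqueness theorem and an intertwining), and $\mathcal W$-absorption appears only a posteriori as a corollary. Second, the scale $\Sigma A$ in the general (non-reduction-class) case is handled in the paper by passing to hereditary subalgebras of the stabilization, Brown's theorem, and the Ciuperca--Elliott--Ivanescu theorem on the Cuntz semigroup for stable rank one algebras; your proposal leaves the bookkeeping of the scale to a general remark, which is fine as a plan but is one of the places where the actual argument has to be carried out carefully.
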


In the theorem, $\mathrm{T}^+A$ is the cone of densely defined lower semicontinuous traces of $A$ with the topology of pointwise convergence, and $\Sigma A$ is the compact subset consisting of the traces with norm at most one.

Consider the C*-algebra $\mathcal W$, the simple inductive limit of Razak algebras with unique trace which is bounded (see \cite{Razak-W} and \cite{Jacelon-W}). It is KK-contractible, and hence $A\otimes \mathcal W$ is KK-contractible for any nuclear C*-algebra $A$. Thus if $A$ has finite nuclear dimension, the C*-algebra $A\otimes\mathcal W$ has finite nuclear dimension as well (if the Toms-Winter conjecture holds, then $A\otimes\mathcal W$ has finite nuclear dimension for all simple nuclear C*-algebra $A$), and hence is classifiable:

\theoremstyle{plain}
\newtheorem*{Corollary}{Corollary}
\begin{Corollary}[Corollary \ref{cor-WW}]
Let $A$ be a simple separable C*-algebra with finite nuclear dimension. Then the C*-algebra $A\otimes \mathcal W$ is classifiable. In particular, $\mathcal W\otimes \mathcal W \cong \mathcal W$.
\end{Corollary}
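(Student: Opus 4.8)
The plan is to deduce the Corollary directly from the main classification theorem (Theorem~\ref{clas-thm}). First I would verify that $A \otimes \mathcal{W}$ lies in the class covered by that theorem: it is separable (a tensor product of separable C*-algebras), it is simple (since $A$ is simple and $\mathcal{W}$ is simple with a unique trace, the minimal tensor product is simple — and nuclearity of at least one factor makes the tensor product unambiguous; here $\mathcal{W}$ is nuclear as an inductive limit of Razak algebras), and it is KK-contractible because $\mathcal{W}$ is KK-contractible and KK-contractibility passes to tensor products with any C*-algebra. The one genuine hypothesis to check is finite nuclear dimension: I would invoke the permanence result that $\dim_{\mathrm{nuc}}(A \otimes B) \leq (\dim_{\mathrm{nuc}} A + 1)(\dim_{\mathrm{nuc}} B + 1) - 1$, so that $\dim_{\mathrm{nuc}}(A \otimes \mathcal{W}) < \infty$ whenever $\dim_{\mathrm{nuc}} A < \infty$, which is exactly the standing assumption on $A$.

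Having placed $A \otimes \mathcal{W}$ in the scope of Theorem~\ref{clas-thm}, classifiability is immediate: the theorem asserts that the class of simple separable KK-contractible C*-algebras with finite nuclear dimension and non-zero traces is classified by $(\mathrm{T}^+, \Sigma)$, and moreover every member is a simple inductive limit of Razak algebras. I should address the trace hypothesis: since $\mathcal{W}$ has a (bounded) trace and $A$ is a nonzero C*-algebra, $A \otimes \mathcal{W}$ has nonzero densely defined lower semicontinuous traces (a trace on $A$ — or a lower semicontinuous quasitrace, which on a nuclear C*-algebra is a trace — tensored with the trace on $\mathcal{W}$), so the "non-zero traces" clause is satisfied; alternatively, simple C*-algebras in this setting are necessarily stably projectionless and the absence of traces would force purely infinite behaviour incompatible with KK-contractibility, but the direct construction of a trace is cleaner. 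Thus $A \otimes \mathcal{W}$ is classified by its Elliott invariant $(\mathrm{T}^+(A\otimes\mathcal{W}), \Sigma(A\otimes\mathcal{W}))$ and is an inductive limit of Razak algebras.

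For the final assertion $\mathcal{W} \otimes \mathcal{W} \cong \mathcal{W}$, I would apply the classification to both sides. By the above, $\mathcal{W} \otimes \mathcal{W}$ is in the classifiable class, and so is $\mathcal{W}$ itself (it is a simple separable inductive limit of Razak algebras with finite nuclear dimension — indeed nuclear dimension one — and is KK-contractible with a nonzero trace). So it suffices to match their invariants. The cone of traces: $\mathcal{W}$ has a unique bounded trace $\tau$ normalized appropriately, and $\mathrm{T}^+\mathcal{W}$ is the ray $\mathbb{R}_{\geq 0}\cdot\tau$ with $\Sigma\mathcal{W}$ the segment up to norm one; for the tensor product, uniqueness of the trace on each factor gives that $\mathcal{W}\otimes\mathcal{W}$ again has a unique bounded trace $\tau\otimes\tau$, so $\mathrm{T}^+(\mathcal{W}\otimes\mathcal{W})$ is again a single ray, and one checks the normalizations agree (both algebras being projectionless, the pairing data reduces to the Cuntz semigroup / trace-simplex data, which here is just a half-line with its distinguished point). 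Hence the invariants are isomorphic and the classification theorem yields $\mathcal{W}\otimes\mathcal{W}\cong\mathcal{W}$.

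The main obstacle is the finite-nuclear-dimension permanence step combined with correctly identifying the invariant in the non-unital, projectionless setting: one must be careful that $(\mathrm{T}^+, \Sigma)$ is the right functorial invariant (the cone of densely defined lower semicontinuous traces together with the norm-$\leq 1$ compact subset) and that an isomorphism at the level of this invariant — rather than merely an abstract bijection of trace cones — is what Theorem~\ref{clas-thm} consumes; checking that $\mathcal{W}\otimes\mathcal{W}$ and $\mathcal{W}$ have isomorphic invariants is then a short computation using uniqueness of traces, but it does require knowing the precise normalization of the trace on $\mathcal{W}$ coming from its description as a bounded inductive limit of Razak algebras (see \cite{Razak-W}, \cite{Jacelon-W}).
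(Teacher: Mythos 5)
Your overall route is the same as the paper's: KK-contractibility of $A\otimes\mathcal W$ (the paper's Lemma \ref{KK-Kun}), finite nuclear dimension of the tensor product (the paper additionally rewrites $A\otimes\mathcal W\cong(A\otimes\mathcal Z)\otimes\mathcal W$, which lets it get away with the weaker hypothesis that $A\otimes\mathcal Z$ has finite nuclear dimension), and then an appeal to Theorem \ref{clas-thm}, with $\mathcal W\otimes\mathcal W\cong\mathcal W$ obtained by matching invariants via uniqueness of the trace. That last part of your argument is fine.

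There is, however, a genuine error in your verification of the trace hypothesis. It is not true that $A\otimes\mathcal W$ always carries a nonzero densely defined lower semicontinuous trace: the hypotheses on $A$ are only simplicity, separability and finite nuclear dimension, so $A$ may be traceless (for instance $A=\mathcal O_2$, or any Kirchberg algebra), and then $A\otimes\mathcal W$ is traceless as well, since a nonzero trace $\tau$ on $A\otimes\mathcal W$ would yield a nonzero tracial functional $x\mapsto\tau(x\otimes h)$ on $A$ for some positive $h\in\mathcal W$. Your fallback claim --- that absence of traces would force purely infinite behaviour ``incompatible with KK-contractibility'' --- is also false: $\mathcal O_2$ is simultaneously purely infinite and KK-contractible, and the paper's own proof of Theorem \ref{clas-thm} handles exactly this situation, showing that a simple separable KK-contractible C*-algebra with $\mathrm T^+=\{0\}$ is purely infinite and is classified by Kirchberg--Phillips (it is stably $\mathcal O_2$). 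As written, your argument therefore establishes classifiability of $A\otimes\mathcal W$ only when it has a nonzero trace. The repair is exactly what the paper does: cite Theorem \ref{clas-thm} in its full form, which needs no trace hypothesis (the invariant is $(\mathrm T^+,\Sigma)$ when $\mathrm T^+\neq\{0\}$, and the Murray--von Neumann data in the traceless, purely infinite case), so no trace-existence check is required for the first assertion; the trace computation is only needed for $\mathcal W\otimes\mathcal W\cong\mathcal W$, where both factors do have traces and your argument is correct.
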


The strategy to the classification theorem is similar to the unital case. Assume $A$ is a simple C*-algebra with trivial $\Kzero$-group, has finite nuclear dimension, and the norm function on $\mathrm{T}^+A$ is continuous (assume $\mathrm{T}^+A\neq \{0\}$), then $A\otimes Q$ has the following tracial approximation structure (Theorem \ref{div-decp}):

For any finite set $\mathcal F\subset A$, any $\eps>0$ and any $n\in \mathbb N$, there are projections $q, p_1, p_2, ..., p_n \in \tilde{A}\otimes Q$, $S\subset p_1(A\otimes Q)p_1$, $S$ is a Razak algebra, such that
\begin{enumerate}
\item $q+p_1+p_2+\cdots+p_n =1_{\tilde{A}\otimes Q}$, $q$ is Murray-von Neumann equivalent to $p_i$, $i=1, 2, ..., n$,
\item $\|[p_i, a\otimes 1_Q]\| < \eps$, $a\in\mathcal F$,
\item $p_iap_i\in_{\eps}v_iSv^*_i$, $a\in\mathcal F$, where $v_i\in \tilde{A}\otimes Q$, $i=1, 2, ..., n$, are the partial isometries  satisfying $v^*_iv_i=p_1$ and $v_iv_i^*=p_i$.
\end{enumerate}
With tracial approximation structure and a stable uniqueness theorem (Corollary \ref{stable-uniq-trace}), the classification theorem is then obtained.

\section{The reduction class $\mathcal R$}

Let $A$ be a C*-algebra. Denote by $\mathrm{T}^+A$ to be the cone of densely defined lower semicontinuous traces of $A$ with the topology of pointwise convergence, and denote by $\Sigma A$ the compact convex subset consisting of the traces with norm at most one. 

\begin{defn}
A C*-algebra $A$ is said to be in the reduction class, denoted by $\mathcal R$, if $A$ is simple, separable, and non-unital, if any element of $\mathrm{T}^+A$ is bounded, and if, moreover, the affine function on $\mathrm{T}^+A$ arising from the Cuntz class of a strict positive element is continuous (equivalently, the norm function on $\mathrm{T}^+A$ is continuous). (The case $\mathrm{T}^+A=\{0\}$ vacuously satisfies the last two conditions, but let us exclude this case.)

\end{defn}

\begin{lem}
Let $A$ be a nonzero simple separable Jiang-Su stable C*-algebra. Then $A$ is stably isomorphic to a C*-algebra in the reduction class. 
\end{lem}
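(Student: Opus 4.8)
The plan is to exhibit a positive element $b$ in the stabilisation $A\otimes\mathcal K$ whose \emph{dimension function} $\tau\mapsto d_\tau(b):=\lim_{n}\tau(b^{1/n})$ is finite and continuous on $\mathrm{T}^+A$, and which generates a non-unital hereditary subalgebra; that hereditary subalgebra will then lie in $\mathcal R$ and be stably isomorphic to $A$. Some reductions first. A simple Jiang-Su stable C*-algebra is either purely infinite---in which case it has no non-zero trace and is excluded here---or stably finite; so we may assume $\mathrm{T}^+A\neq\{0\}$. Passing from $A$ to $A\otimes\mathcal K$ costs nothing: $A\otimes\mathcal K$ is simple, so each of its non-zero hereditary subalgebras is full and hence, by Brown's stable isomorphism theorem (all algebras here being separable), stably isomorphic to $A\otimes\mathcal K$, and so to $A$. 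Thus it suffices to produce $b\in(A\otimes\mathcal K)_+\setminus\{0\}$ for which $C:=\overline{b(A\otimes\mathcal K)b}$ lies in $\mathcal R$. Now $C$ is automatically simple, separable, and has $\mathrm{T}^+C\neq\{0\}$ (being a non-zero hereditary subalgebra of the simple algebra $A\otimes\mathcal K$, which carries a non-zero trace). Restriction of traces is a homeomorphism of cones $\mathrm{T}^+(A\otimes\mathcal K)\cong\mathrm{T}^+C$, under which, since $b$ is strictly positive in $C$, the norm of a trace of $C$ corresponds to $\tau\mapsto\lim_{n}\tau(b^{1/n})=d_\tau(b)$. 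Hence $C\in\mathcal R$ as soon as: (a) $0$ is not an isolated point of $\mathrm{sp}(b)$ (equivalently, $C$ is non-unital); (b) $d_\tau(b)<\infty$ for every $\tau\in\mathrm{T}^+A$; and (c) $\tau\mapsto d_\tau(b)$ is continuous on $\mathrm{T}^+A$.

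To steer the construction of $b$, I first single out a finite, continuous, additive and positively homogeneous function on $\mathrm{T}^+A$ that is strictly positive on the non-zero traces. Fix $a\in(A\otimes\mathcal K)_+$ with $\|a\|=1$ and $1\in\mathrm{sp}(a)$, and set $a_1:=(a-\tfrac{1}{4})_+$. Then $a_1\neq0$, and $a_1$ lies in the Pedersen ideal of $A\otimes\mathcal K$: it has the local unit $f(a)$, where $f$ is continuous, $0\le f\le1$, $f\equiv1$ on $[\tfrac{1}{4},1]$ and $f\equiv0$ near $0$, so that $a_1 f(a)=a_1$. Consequently $g(\tau):=\tau(a_1)$ is finite for every $\tau\in\mathrm{T}^+A$ (a densely defined lower semicontinuous trace being finite on the Pedersen ideal); $g$ is additive and positively homogeneous; it is continuous, being evaluation at the fixed element $a_1$ on $\mathrm{T}^+A$ with the topology of pointwise convergence; and it is strictly positive on $\mathrm{T}^+A\setminus\{0\}$, since traces of a simple C*-algebra are faithful and $a_1\neq0$. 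In particular $g$ is a lower semicontinuous, additive, positively homogeneous function on $\mathrm{T}^+A$, strictly positive on the non-zero traces.

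The crux---and the one place where Jiang-Su stability enters essentially---is to realise such a function as a dimension function. By the structure of the Cuntz semigroup of a simple separable Jiang-Su stable C*-algebra---concretely, by the identification of its non-compact part with the lower semicontinuous, additive, positively homogeneous functions on $\mathrm{T}^+A$ that are strictly positive on the non-zero traces; this rests on strict comparison and almost divisibility, cf. the Cuntz semigroup computations of Antoine--Bosa--Perera and of Thiel---there is $b_0\in(A\otimes\mathcal K)_+$ with $d_\tau(b_0)=g(\tau)$ for all $\tau\in\mathrm{T}^+A$. To secure the non-unitality (a) while keeping (b) and (c), fix a non-zero, non-invertible $h\in\mathcal Z_+$; since $\mathcal Z$ has no projections other than $0$ and $1$, the spectrum of $h$ accumulates at $0$. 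Put $b:=b_0\otimes h\in(A\otimes\mathcal K)\otimes\mathcal Z\cong A\otimes\mathcal K$. Then $\mathrm{sp}(b)=\{st:s\in\mathrm{sp}(b_0),\ t\in\mathrm{sp}(h)\}$ accumulates at $0$---for $\mathrm{sp}(b_0)$ contains a positive number (as $d_\tau(b_0)=g(\tau)>0$ for $\tau\neq0$, so $b_0\neq0$) and $\mathrm{sp}(h)$ accumulates at $0$---which is (a); and since every element of $\mathrm{T}^+((A\otimes\mathcal K)\otimes\mathcal Z)$ is of the form $\rho\otimes\tau_\mathcal Z$ with $\rho\in\mathrm{T}^+(A\otimes\mathcal K)$ and $\tau_\mathcal Z$ the unique tracial state of $\mathcal Z$, one has $d_{\rho\otimes\tau_\mathcal Z}(b)=d_\rho(b_0)\,d_{\tau_\mathcal Z}(h)=c\,g(\rho)$ with $c:=d_{\tau_\mathcal Z}(h)\in(0,1]$; viewed on $\mathrm{T}^+(A\otimes\mathcal K)$ through these identifications, $\tau\mapsto d_\tau(b)$ is $c\,g$, which is finite and continuous, giving (b) and (c). Hence $C=\overline{b(A\otimes\mathcal K)b}$ lies in $\mathcal R$ and is stably isomorphic to $A$.

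I expect the realisation step of the third paragraph to be the main obstacle. The remaining ingredients---the Pedersen-ideal observation, the behaviour of traces and dimension functions under passage to a full hereditary subalgebra and under tensoring with $\mathcal Z$, and Brown's theorem---are routine.
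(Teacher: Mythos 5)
Your argument is correct and is essentially the paper's: fix a positive element of the Pedersen ideal, realize the resulting finite continuous function $\tau\mapsto\tau(a)$ on $\mathrm{T}^+A$ as the dimension function of a positive element (the paper does this in one stroke via Lemma 6.5 of the Elliott--Robert--Santiago trace-cone paper, which also directly gives that the class is not that of a projection, where you instead invoke the $\mathcal Z$-stable Cuntz semigroup computation and force non-unitality by tensoring with a non-invertible positive element of $\mathcal Z$), and then pass to the hereditary subalgebra it generates, using Brown's theorem and the restriction isomorphism of trace cones. The realization step you flag as the main obstacle is exactly what the paper's citation supplies, so there is no gap beyond choosing the appropriate reference (stated for quasitraces if one does not assume exactness).
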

\begin{proof}
Choose a non-zero positive element $a$ in the Pedersen ideal of $A$. By Lemma 6.5 of \cite{ESR-Cuntz}, there exists a positive element $c$ of $A$ such that the Cuntz class $[c]$ gives rise to the same affine function on $\mathrm{T}^+A$ as $a$, and in additional is not the continuous of a projection. In particular, this affine function is the norm function on $\mathrm{T}^+A$ for the non-unital hereditary sub-C*-algebra $B$ of $A$, which on the one hand is in the reduction class, and on the other hand by Brown's theorem is stably isomorphic to $A$. (We are using Proposition 4.7 of \cite{CPtrace} which says that the restriction map is an isomorphism between the topological cones $\mathrm{T}^+A$ and $\mathrm{T}^+B$.)
\end{proof}

%

The following two lemmas shows that, in particular,  a C*-algebra in the reduction class always has an approximate unit $(e_n)$ such that  $e_n$ is self-adjoint with spectrum $[0, 1]$.

\begin{lem}\label{pre-full-sp}
Let $A$ be a separable non-unital C*-algebra. Then, for any finite set $\mathcal{F}\subset A$ and any $\eps>0$, there is a subalgebra $B\subset A$ such that $\mathcal F\subset_\eps B$, but $B^\perp \neq \{0\}$. 
\end{lem}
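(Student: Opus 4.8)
The plan is to take $B$ to be a hereditary sub-C*-algebra of the form $\overline{f(h)\,A\,f(h)}$, where $h$ is a strictly positive element of $A$ and $f$ is a continuous function vanishing near $0$. The whole point is that, $A$ being non-unital, the spectrum of $h$ must accumulate at $0$, and this leaves room for a nonzero element orthogonal to $B$.

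First I would note that a separable C*-algebra is $\sigma$-unital, so there is a strictly positive element $h\in A$ with $0\le h\le 1$. Since $A$ is non-unital, $h$ is not invertible in $\tilde A$, so $0\in\mathrm{sp}(h)$; moreover $0$ is a \emph{non-isolated} point of $\mathrm{sp}(h)$. Indeed, if $0$ were isolated in $\mathrm{sp}(h)$ then $\chi_{(0,1]}$ would be continuous on $\mathrm{sp}(h)$, so $p:=\chi_{(0,1]}(h)$ would lie in $\mathrm{C}^*(h)\subseteq A$; from $ph=h$ and the density of $\{hbh: b\in A\}$ in $A$ one gets $pa=a=ap$ for all $a\in A$, contradicting non-unitality. (And $\mathrm{sp}(h)=\{0\}$ would force $h=0$, i.e. $A=\{0\}$, which is excluded.)

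Next, for $\delta>0$ choose a continuous function $f_\delta\colon[0,1]\to[0,1]$ with $f_\delta\equiv 0$ on $[0,\delta/2]$ and $f_\delta\equiv 1$ on $[\delta,1]$, and set $B_\delta:=\overline{f_\delta(h)\,A\,f_\delta(h)}$. Since $f_{1/n}(t)\,t\to t$ uniformly for $t\in\mathrm{sp}(h)$ and $h$ is strictly positive, $(f_{1/n}(h))_n$ is an approximate unit for $A$; hence $f_\delta(h)\,a\,f_\delta(h)\to a$ as $\delta\to 0$ for each $a$, and as $\mathcal F$ is finite we may fix $\delta$ small enough that $\|a-f_\delta(h)\,a\,f_\delta(h)\|<\eps$ for all $a\in\mathcal F$. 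Because $f_\delta(h)\,a\,f_\delta(h)\in B_\delta$, this gives $\mathcal F\subset_\eps B_\delta$.

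Finally, since $0$ is not isolated in $\mathrm{sp}(h)$, there is a point $s\in\mathrm{sp}(h)\cap(0,\delta/2)$. Pick $g\in\mathrm{C}_0((0,1])$ with $0\le g\le 1$, $g(s)>0$, and $\mathrm{supp}\,g\subseteq(0,\delta/2)$; then $g(h)\in A$ is nonzero and positive, and $g(h)\,f_\delta(h)=0=f_\delta(h)\,g(h)$ because the supports of $g$ and $f_\delta$ are disjoint. Consequently $g(h)$ annihilates $B_\delta$ on both sides, so $g(h)\in B_\delta^\perp\setminus\{0\}$. Taking $B=B_\delta$ finishes the proof. The only genuine subtlety is precisely this last step — guaranteeing $B^\perp\neq\{0\}$ — which is why one must resist using a strictly positive element directly and instead exploit the accumulation of $\mathrm{sp}(h)$ at $0$; the remainder is routine continuous functional calculus.
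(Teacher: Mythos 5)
Your proof is correct and follows essentially the same route as the paper: cut down by $f_\delta(h)$ for a strictly positive element $h$, use that $0$ is a non-isolated point of $\mathrm{sp}(h)$ (since $A$ is non-unital) to produce a nonzero element of $B^\perp$ via a bump function supported near $0$. You simply spell out the details (non-isolation of $0$, the approximate-unit estimate, the explicit orthogonal element) that the paper leaves as ``clear.''
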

\begin{proof}
Pick a strictly positive element, and denote it by by $h$. Then $0$ is a limit point of $\mathrm{sp}(h)$, as otherwise $A$ must be unital. Then, for the given $\mathcal F$ and $\eps$, there is $\delta>0$ such that 
$$\mathcal F\subset_\eps \overline{g_\delta(h)Ag_\delta(h)}=:B,$$ where
$$
g_\delta(t)=\left\{
\begin{array}{ll}
0, & t\in[0, \delta], \\
(t-\delta)/(1-\delta), & t\in [\delta, 1].
\end{array}
\right.
$$
Since $0$ is a limit point of $\mathrm{sp}(h)$, it is clear that $B^\perp\neq \{0\}$, as desired.
\end{proof}

\begin{lem}\label{full-sp}
Let $A$ be a separable simple non-unital non-elementary C*-algebra. Then there is an approximate unit $(e_n)$ such that each $e_n$, $n=1, 2, ...$, is positive of norm one, and has full spectrum.
\end{lem}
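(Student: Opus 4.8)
The plan is to glue together an ordinary approximate unit with a single positive element of spectrum $[0,1]$ sitting in a corner orthogonal to it, so that the sum still has spectrum $[0,1]$ while remaining an approximate unit. First I would fix a strictly positive element $h\in A$ with $\|h\|=1$; since $A$ is non-unital, $0$ is a limit point of $\mathrm{sp}(h)$. For $n\ge 1$ pick continuous $f_n\colon[0,1]\to[0,1]$ with $f_n\equiv 0$ on $[0,1/n]$ and $f_n\equiv 1$ on $[2/n,1]$, and set $u_n:=f_n(h)$. Then $(u_n)$ is an approximate unit with $0\le u_n\le 1$ and $\|u_n\|=1$. For each $n$ choose a nonzero $g_n\in C_0([0,1/n))$ not vanishing on $\mathrm{sp}(h)$ (possible as $0$ is a limit point of $\mathrm{sp}(h)$); then $D_n:=\overline{g_n(h)Ag_n(h)}$ is a nonzero hereditary subalgebra of $A$, and $D_nu_n=u_nD_n=\{0\}$ because $g_nf_n=0$. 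Being hereditary in a simple separable non-elementary C*-algebra, each $D_n$ is again simple, separable, and non-elementary (a minimal projection of $D_n$ would be one of $A$, forcing $A\cong\mathcal K(H)$).

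The crux is the following \textbf{sublemma}: \emph{every simple separable non-elementary C*-algebra $B$ contains a positive element $y$ with $\mathrm{sp}(y)=[0,1]$.} I would deduce this from the fact that such a $B$ is not of type~I (a simple type-I C*-algebra is elementary): by Glimm's theorem $B$ has a C*-subalgebra $C$ admitting the CAR algebra $M_{2^\infty}$ as a quotient, say $q\colon C\twoheadrightarrow M_{2^\infty}$; choosing a positive $x\in M_{2^\infty}$ with $\mathrm{sp}(x)=[0,1]$ (for instance $\sum_k 2^{-k}p_k$ for an independent sequence of projections $p_k$) and a positive contraction $y\in C$ with $q(y)=x$, one gets $[0,1]=\mathrm{sp}(q(y))\subseteq\mathrm{sp}(y)\subseteq[0,1]$, so $y\in C\subseteq B$ does the job. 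One could instead cite directly that an antiliminal separable C*-algebra contains an element of spectrum $[0,1]$.

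Applying the sublemma to $D_n$ yields $y_n\in D_n$ with $0\le y_n\le 1$, $\mathrm{sp}(y_n)=[0,1]$, and $y_nu_n=u_ny_n=0$. Put $e_n:=u_n+y_n$. Since $u_n\perp y_n$ are positive contractions, $e_n$ is a positive contraction, and using $\varphi(e_n)=\varphi(u_n)+\varphi(y_n)$ for continuous $\varphi$ vanishing at $0$ one gets $\mathrm{sp}(e_n)=\mathrm{sp}(u_n)\cup\mathrm{sp}(y_n)=[0,1]$; in particular $\|e_n\|=1$ and $e_n$ has full spectrum. Finally $(e_n)$ is an approximate unit: for $a\in A$, since $y_nu_n=0$,
\[
\|a-e_na\|\le\|a-u_na\|+\|y_na\|=\|(1-u_n)a\|+\|y_n(1-u_n)a\|\le 2\|(1-u_n)a\|\to 0,
\]
and symmetrically on the right (using $\|y_n\|\le 1$).

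Every step except the sublemma is routine bookkeeping; the sublemma is where the real content sits, and it is genuinely the main obstacle: spectra of orthogonal sums of small-spectrum elements, and of small perturbations, are always of ``totally disconnected type'', so one really does need a commuting independent family — i.e.\ the non-type-I structure theory à la Glimm — to manufacture a connected interval of spectrum.
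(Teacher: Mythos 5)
Your proof is correct and follows essentially the same route as the paper: cut a strictly positive element to produce an element acting almost as a unit together with a nonzero orthogonal hereditary subalgebra (automatically simple and non-elementary), pick there a positive element of spectrum $[0,1]$, and add the two. The only difference is that you also prove, via Glimm's theorem, the sublemma that a simple separable non-elementary C*-algebra contains a positive element with spectrum $[0,1]$, a fact the paper invokes without proof.
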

\begin{proof}

Let $\mathcal F\subset A$ be a finite set, and let $\eps>0$ be arbitrary. By Lemma \ref{pre-full-sp}, there is a subalgebra $B\subset A$ such that $\mathcal F\subset_\eps B$, but $B^\perp \neq \{0\}$. Since $B$ is simple and non-elementary, the hereditary subalgebra $B^\perp \neq \{0\}$ is also non-elementary, and hence there is $e'\in B^\perp$ such that $\mathrm{sp}(e')=[0, 1]$. Pick a norm-one positive element $e''\in B$ such that $\|e'' a - a\|<2\eps$ for all $a\in\mathcal F$. Then the element $e:=e'\oplus e''$ satisfies
$$\|e a - a\|<4\eps\quad\mathrm{and}\quad \mathrm{sp}(e)=[0, 1],$$
as desired.
\end{proof}

\section{Trace model algebras for stably projectionless C*-algebras}

\begin{defn}[\cite{point-line}]
Let $E$ and $F$ be finite dimensional C*-algebras, and let $\phi_0, \phi_1: E \to F$ be two homomorphisms (not necessarily unital). Then the C*-algebra
$$A(E, F, \phi_0, \phi_1)=\{(e, f)\in E \oplus \mathrm{C}([0, 1], F): f(0)=\phi_0(e),\ f(1)=\phi_1(e)\}$$
is called an Elliott-Thomsen algebra or a point--line algebra.
\end{defn}

\begin{defn}[\cite{Razak-W}]
Let $k, n\in\mathbb{N}$. Consider homomorphisms $\phi_0, \phi_1: \mathrm{M}_k(\Comp) \to \mathrm{M}_{k(n+1)}(\Comp)$ defined by 
$$\phi_0(a)=a\otimes 1_n\quad\mathrm{and}\quad \phi_1(a) = a\otimes 1_{n+1}.$$ Then the C*-algebra
$$S(k, n)=A(\mathrm{M}_k(\Comp), \mathrm{M}_{k(n+1)}(\Comp), \phi_0, \phi_1)$$
is called a Razak algebra. Let us also call a direct sum of such C*-algebras a Razak algebra, and denote this class of C*-algebras by $\mathcal R_{z}$.
\end{defn}

As shown in \cite{point-line}, any simple partially ordered group together with a pairing with a topological cone with a base a Choquet simples can be realized as the value of the invariant of a simple inductive limits of Elliott-Thomsen algebras. For the propose of this paper, we shall consider the simple inductive systems $(S_i, \iota_i)$ such that each $S_i$ a $Q$-stable point--line algebras and
\begin{enumerate}
\item\label{model-cond-1} All traces of $S$ are bounded and all traces of $S_i$ are bounded (this is automatic).
\item\label{model-cond-2} For any $\eps>0$, there is a sufficiently large $i$ such that if one writes each $S_i=S\{E, F, \phi_0, \phi_1\}$ where $E=\bigoplus_p Q$, $F=\bigoplus_l Q$, and $\phi_0, \phi_1: E\to F$, and writes 
$$
\left( 
\begin{array}{cccc}
[\phi_0]_{1, 1} & [\phi_0]_{1, 2} & \cdots & [\phi_0]_{1, p} \\
{[\phi_0]}_{2, 1} & [\phi_0]_{2, 2} & \cdots & [\phi_0]_{2, p} \\
\vdots & \vdots & \ddots & \vdots \\
{[\phi_0]}_{l, 1} & [\phi_0]_{l, 2} & \cdots & [\phi_0]_{l, p} 
\end{array}
\right)
\quad
\mathrm{and}
\quad
\left( 
\begin{array}{cccc}
[\phi_1]_{1, 1} & [\phi_1]_{1, 2} & \cdots & [\phi_1]_{1, p} \\
{[\phi_1]}_{2, 1} & [\phi_1]_{2, 2} & \cdots & [\phi_1]_{2, p} \\
\vdots & \vdots & \ddots & \vdots \\
{[\phi_1]}_{l, 1} & [\phi_1]_{l, 2} & \cdots & [\phi_1]_{l, p} 
\end{array}
\right)
$$
as the multiplicity matrices of $\phi_0$ and $\phi_1$, respectively, then
$$ [\phi_k]_{i, 1} + [\phi_k]_{i, 2} + \cdots + [\phi_k]_{i, p}> 1-\eps,\quad k=0, 1,\ i=1, 2, ..., l.$$
\item\label{model-cond-3} For each $\eps>0$, there is $e(\eps)\in S_i^+$ for sufficiently large $i$ such that $\|e(\eps)\|=1$ and 
\begin{equation}\label{trace-control-1}
\tau(e(\eps))>1-\eps,\quad \tau\in\mathrm{T}^+S_j,\ \|\tau\|=1, \ j\geq i
\end{equation} 
and
\begin{equation}\label{trace-control-2}
\tau(e(\eps))>1-\eps,\quad \tau\in\mathrm{T}^+S,\ \|\tau\|=1.
\end{equation}
\end{enumerate}

\begin{rem}
It follows from \eqref{trace-control-2} that the norm function $\tau\mapsto \|\tau\|$ is continuous on $\mathrm{T}S$. Hence the tracial states of $S$ form a compact base of $\mathrm{T}S$.
\end{rem}

Then, any compact metrizable Choquet simplex can be realized as the simplex of tracial states of a simple inductive limit of Razak algebras and the inductive system satisfies the conditions above:


\begin{thm}\label{model-trace}
Let $\Delta$ be a compact metrizable Chouqet simplex. Then there is a simple inductive limit C*-algebra $S=\varinjlim (S_i, \phi_{i})$, where each $S_i\in\mathcal{R}_z$, such that the tracial states of $S$ form a compact simplex which is homeomorphic to $\Delta$, and moreover,  $S\otimes Q$ satisfies Conditions (\ref{model-cond-1}), (\ref{model-cond-2}), and (\ref{model-cond-3}).
\end{thm}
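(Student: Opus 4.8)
The plan is to realize $\Delta$ as an inverse limit of finite-dimensional simplices and to build the inductive system $(S_i,\phi_i)$ directly from this data, following the Elliott--Thomsen construction (\cite{point-line}) but arranging the combinatorial parameters so that Conditions (\ref{model-cond-1})--(\ref{model-cond-3}) hold. First I would fix a sequence of surjective affine maps $\Delta = \varprojlim(\Delta_i, \lambda_i)$ with each $\Delta_i$ a finite-dimensional simplex (say with $m_i$ extreme points), which is possible for any metrizable Choquet simplex. Each stage $S_i$ is a direct sum of Razak algebras $S(k,n)$, chosen so that its (automatically bounded) trace simplex models $\Delta_i$; the connecting maps $\phi_i: S_i\to S_{i+1}$ are the Elliott--Thomsen gluing maps dual to $\lambda_i$. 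Passing to $S_i\otimes Q$ replaces each $\mathrm{M}_k(\mathbb C)$ summand by $\mathrm{M}_k(\mathbb C)\otimes Q$, so we may treat the building blocks as point--line algebras with $E=\bigoplus_p Q$, $F=\bigoplus_l Q$ as in Condition (\ref{model-cond-2}), and the multiplicity matrices become matrices of positive reals (supernatural-number ``dimensions'' in $Q$).

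The key point is that all three conditions are really statements about \emph{growth rates} of the multiplicities, and these can be forced by telescoping and by dividing each building block $S(k,n)$ further by $Q$ (increasing $k$) before gluing. For Condition (\ref{model-cond-2}): the row sums $\sum_j [\phi_k]_{i,j}$ of the multiplicity matrices of $\phi_0$ and $\phi_1$ of $S_i$ are the normalized ``total dimension on the edge relative to the cube''; by first tensoring with $Q$ and rescaling, then passing to a subsequence, one makes each such row sum exceed $1-\eps$. Here one uses that, after $Q$-stabilization, the matrix amplifications $a\mapsto a\otimes 1_n$ versus $a\otimes 1_{n+1}$ of a Razak block contribute total trace weight tending to $1$ as the block is chosen large (the ratio $n/(n+1)\to 1$), and that the gluing maps $\phi_i$ can be arranged to spread mass across all coordinates. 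For Condition (\ref{model-cond-3}): given $\eps$, choose $i$ large enough that the ``edge'' part of $S_j$ for $j\ge i$ carries trace at most $\eps$ uniformly over normalized traces — this is where Condition (\ref{model-cond-2}) is used, since large row sums precisely say the bulk of the trace lives on $E$ — and then take $e(\eps)$ to be a norm-one positive element supported on the $E$-summand (a projection of full trace weight up to $\eps$), which simultaneously controls traces of $S_j$ for $j\ge i$ and, passing to the limit, traces of $S$; that is \eqref{trace-control-1} and \eqref{trace-control-2}. Simplicity of the limit is arranged in the standard way by making the connecting maps eventually ``full'' on each summand (every partial map nonzero), which is compatible with all the above constraints.

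The main obstacle I anticipate is the simultaneous bookkeeping: the construction must keep the trace simplex \emph{exactly} homeomorphic to $\Delta$ (so the affine-map data $\lambda_i$ is rigid) while also having enough freedom in the multiplicity matrices to push the row sums past $1-\eps$ and to produce the elements $e(\eps)$. Reconciling these is the crux — one does not get to choose the connecting maps freely once the trace data is fixed, so the flexibility has to come entirely from (a) tensoring building blocks with $Q$ and (b) inserting intermediate stages (telescoping) that refine the $\Delta_i$ without changing the limit. I would handle this by first carrying out the Elliott--Thomsen construction abstractly to fix the $\Delta_i$ and $\lambda_i$, then at each stage replacing $S_i$ by a large matrix amplification over $Q$ and subdividing the interval-of-matrices part finely enough that the resulting multiplicity matrices have the desired row-sum lower bounds; the verification that this subdivision does not disturb the trace simplex is routine but is the step requiring the most care. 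Everything else — boundedness of traces, continuity of the norm function (the Remark), and the membership $S_i\otimes Q\in\mathcal R_z$ — then follows formally.
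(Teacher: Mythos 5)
Your overall shape --- finite-dimensional approximations of $\Delta$, Razak blocks whose ``point'' parts mirror that data, connecting maps dominated by point evaluations, simplicity via dense evaluation points --- is close to the paper's construction (which fixes a simple unital AF algebra $B$ realizing $\Delta$ and copies its multiplicity data). But the mechanisms you propose for forcing Conditions (\ref{model-cond-2}) and (\ref{model-cond-3}) do not work as stated, and those conditions are the actual content of the theorem. The row sums in Condition (\ref{model-cond-2}) are normalized ratios: for $S(k,n)\otimes Q$ the $\phi_0$-row sum is exactly $n/(n+1)$, and this is unchanged by tensoring with $Q$, by increasing $k$, by ``rescaling'', or by telescoping --- composing connecting maps does not alter the building blocks, and re-presenting a block by subdividing its interval still leaves a row with sum $n/(n+1)$. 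The only way to push the row sums above $1-\eps$ is to make the Razak parameters $n_{i,j}$ of the blocks themselves grow along the sequence, and this has to be arranged \emph{inside} the construction of the $\phi_i$, simultaneously with the trace data; in the paper this is the choice of $l$ with $l/(l+1+\sum_j(m_{j,j'}-1))>1-\eps_i$ and $n_{i+1,j'}=l+\sum_j(m_{j,j'}-1)$, which is possible because simplicity of $B$ allows the AF multiplicities $m_{j,j'}$ to be taken arbitrarily large relative to the current $n_{i,j}$.

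For Condition (\ref{model-cond-3}), the element $e(\eps)$ cannot be ``a projection supported on the $E$-summand'': the Razak blocks and their $Q$-stabilizations are projectionless, and no nonzero element has vanishing fiber part since $f(0)=\phi_0(e)$. More importantly, the bound $\tau(e(\eps))>1-\eps$ must hold for normalized traces of \emph{every} later stage $S_j$, $j\ge i$, and of the limit $S$; since the connecting maps are non-unital, each passage to the next stage can lose trace mass, so one needs a quantitative, summable control of these losses over infinitely many stages (the paper's requirement $m_{j,j'}-n_{i,j}-1>n_{i,j}/\eps_i$ together with $\sum_i\eps_i<\infty$), and the same summable control is exactly what keeps the limit trace simplex equal to $\Delta$ rather than a deformation of the inverse-limit data. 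You correctly identify this simultaneous bookkeeping as the crux, but the proposal defers it as ``routine'', while the devices it actually offers (tensoring with $Q$, telescoping, subdividing the interval) cannot supply the missing estimates; so the key step of the proof is not carried out.
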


\begin{proof}
Consider the compact metrizable Chouqet simplex $\Xi$. Consider the strictly positive $f \in \aff C$ with $f(\Delta)=\{1\}$, and pick a (unital) simple AF algebra $B= \overline{\bigcup_{i=1}^\infty B_i}$ such that $\mathrm{T}^+B \cong C$ and the isomorphism sends the tracial state with norm one to $\Xi$.

Choose a decreasing sequence of positive numbers $(\eps_i)$ such that $\sum_{i=1}^\infty \eps_i<\infty$. Also choose a dense sequence $(x_i)$ in $(0, 1)$.

Write $B_i=\bigoplus_{j=1}^{w_1} \mathrm{M}_{k_{1, j}}(\Comp)$, and put 
$$S_1=\bigoplus_{j=1}^{w_1} S(k_{1, j}, 1).$$ Write $B_i=\bigoplus_{j=1}^{w_i} \mathrm{M}_{k_{i, j}}(\Comp)$. Assume that the $i$th algebra 
$$S_i=\bigoplus_{j=1}^{w_i} S(k_{i, j}, n_{i, j})$$
is already defined for some $n_{i, j}$. Let us construct $S_{i+1}$ and a homomorphism $\phi_{i}: S_i \to S_{i+1}$.

Write $B_{i+1}=\bigoplus_{j'=1}^{w_{j+1}} \mathrm{M}_{k_{i+1, j'}}(\Comp)$. Write $m_{j, j'}$, $1\leq j \leq w_i$, $1\leq j' \leq w_{i+1}$, the multiplicities of the embedding of $B_i$ into $B_{i+1}$. Since $B$ is simple, one may assume that the multiplicities are sufficiently large such that
\begin{equation}
m_{j, j'}-n_{i, j}-1 > \frac{n_{i, j}}{\eps_i},\quad j'=1, 2, ..., w_{i+1},\ j=1, 2, ..., w_i.
\end{equation}
Then choose an integer $l$ sufficiently large such that
$$\frac{l}{l+1+\sum_{j=1}^{w_i}(m_{j, j'}-1)}>1-\eps_i,\quad j'=1, 2, ..., w_{i+1}.$$

For each $j'=1, 2, ..., w_{i+1}$, define a homomorphism $\theta_{j'}: S_i \to \mathrm{M}_{k_{i+1, j'}}(\Comp)$ by
$$(f_1, f_2, ..., f_{w_i}) \mapsto 
\left(
\begin{array}{ccc}
\left(
\begin{array}{cc}
\underbrace{
\begin{array}{ccc}
f_1(\ast_1) &  &   \\
 & \ddots &  \\
 & & f_1(\ast_1)
\end{array}}_{m_{1, j'}-n_{i, 1}-1} & \\
 & f_1(x_i)
 \end{array}
\right) & & \\
& \ddots & \\
& &
\left(
\begin{array}{cc}
\underbrace{
\begin{array}{ccc}
f_{w_i}(\ast_{w_i}) &  &   \\
 & \ddots &  \\
 & & f_{w_i}(\ast_{w_i})
\end{array}}_{m_{w_i, j'}-n_{i, w_i} - 1} & \\
 & f_{w_i}(x_i)
 \end{array}
\right)
\end{array}
\right).
$$

For each $j=1, 2, ..., w_i$, $j'=1, 2, ..., w_{i+1}$, define $$\omega_{j, j'}: S(k_{i, j}, n_{i, j}) \to \mathrm{C}([0, 1], \mathrm{M}_{k_{i+1, j'}(m_{j, j'}-1)}(\Comp))$$ by sending $f_j$ to the function 
$$t \mapsto 
\underbrace{
\left(
\begin{array}{ccc}
\left(
\begin{array}{ccc}
\underbrace{
\begin{array}{ccc}
f_j(\ast_j) &  &   \\
 & \ddots &  \\
 & & f_j(\ast_j)
\end{array}}_{m_{j, j'}-2n_{i, j}-2} & & \\
& f_1(t) & \\
 & & f_1(x_i)
 \end{array}
\right) & & \\
& \ddots & \\
& &
\left(
\begin{array}{ccc}
\underbrace{
\begin{array}{ccc}
f_{j}(\ast_{j}) &  &   \\
 & \ddots &  \\
 & & f_{j}(\ast_{j})
\end{array}}_{m_{j, j'}-2n_{i, j} - 2} & & \\
& f_{j}(t) & \\
& & f_{j}(x_i)
 \end{array}
\right)
\end{array}
\right)}_{m_{j, j'}-1},
$$
and put
$$\omega_{j'}:=\bigoplus_{j=1}^{w_i} \omega_{j, j'}: S_i \to \bigoplus_{j=1}^{w_i} \mathrm{C}([0, 1], \mathrm{M}_{k_{i+1, j'}(m_{j, j'}-1)}(\Comp))$$

For each $j'=1, 2, ..., w_{i+1}$, define a homomorphism $\upsilon_{j'}: S_i \to \mathrm{C}([0, 1], \mathrm{M}_{k_{i+1, j'}}(\Comp))$ by sending $(f_1, f_2, ..., f_{w_i})$ to the function
$$
\left(
\begin{array}{ccc}
\left(
\begin{array}{cc}
\underbrace{
\begin{array}{ccc}
f_1(\ast_1) &  &   \\
 & \ddots &  \\
 & & f_1(\ast_1)
\end{array}}_{m_{1, j'}-n_{i, 1}-1} & \\
 & f_1(x_i+t1-tx_i)
 \end{array}
\right) & & \\
& \ddots & \\
& &
\left(
\begin{array}{cc}
\underbrace{
\begin{array}{ccc}
f_{w_i}(\ast_{w_i}) &  &   \\
 & \ddots &  \\
 & & f_{w_i}(\ast_{w_i})
\end{array}}_{m_{w_i, j'}-n_{i, w_i} - 1} & \\
 & f_{w_i}(x_i+t-tx_i)
 \end{array}
\right)
\end{array}
\right).
$$

Then, consider $$S_{i+1}=\bigoplus_{j'=1}^{w_{i+1}} S(k_{i+1, j'}, n_{i+1, j'}),$$
where $$n_{i+1, j'}=l+\sum_{j=1}^{w_i}(m_{j, j'}-1),\quad j'=1, 2, ..., w_{i+1},$$ and consider the homomorphism $\phi_{i}: S_i \to S_{i+1}$ defined by 
$$\phi_{i}= \bigoplus_{j'=1}^{w_{i+1}} U_{j'}^*(\underbrace{\theta_{j'}\oplus\cdots\oplus\theta_{j'}}_l \oplus \omega_{j'} \oplus \upsilon_{j'})U_{j'},$$
where $U_{j'}\in \mathrm{C}([0, 1], \mathrm{M}_{k_{i+1, j'}(l+1+\sum_{j=1}^{w_i}(m_{j, j'}-1))}(\Comp))$ is a unitary satisfying $U_{j'}(0)=1$ and
$$
\begin{array}{cl}
&U^*_{j'}(1)((\underbrace{\theta_{j'}\oplus\cdots\oplus\theta_{j'}}_l \oplus \omega_{j'} \oplus \upsilon_{j'})(f_1, f_2, ..., f_{w_i}))(1)U_{j'}(1)\\
=&
\left(
\begin{array}{cc}
\underbrace{
\begin{array}{ccc}
\left(
\begin{array}{cc}
\underbrace{
\begin{array}{ccc}
f_1(\ast_1) &  &   \\
 & \ddots &  \\
 & & f_1(\ast_1)
\end{array}}_{m_{1, j'}-n_{i, 1}-1} & \\
 & f_1(x_i)
 \end{array}
\right) & & \\
& \ddots & \\
& &
\left(
\begin{array}{cc}
\underbrace{
\begin{array}{ccc}
f_{w_i}(\ast_{w_i}) &  &   \\
 & \ddots &  \\
 & & f_{w_i}(\ast_{w_i})
\end{array}}_{m_{w_i, j'}-n_{i, w_i} - 1} & \\
 & f_{w_i}(x_i)
 \end{array}
\right)
\end{array}
}_{l+\sum_{j=1}^{w_i}(m_{j, j'}-1)} & \\
& 0_{k_{i+1, j'}}
\end{array}
\right).
\end{array}
$$
Then a direct calculation shows that inductive system $(S_i, \phi_i)$ satisfies the conclusion of the theorem.
\end{proof}

\section{The Affine Space}

Denote by $\aff\mathrm{T}^+A$ the continuous affine maps from $\mathrm{T}^+A$ to $\Real$. Set
$$\|a\|_\infty = \sup\{|a(\tau)|: \tau\in\Sigma A \},\quad a\in \aff\mathrm{T}^+A.$$
Then, if all traces of $A$ are bounded, $(\aff\mathrm{T}^+A, \|\cdot\|_\infty)$ is a (ordered) Banach space.

Define the scale of $\aff\mathrm{T}A$ to be the closed convex set $$\aff_1\mathrm{T}^+A:=\overline{\{\hat{a}\in\aff\mathrm{T}^+A: a\in A^+,\ \|a\|\leq 1\}}.$$ Note that $\|a\|_\infty\leq 1$ if $a\in \aff_1\mathrm{T}^+A$.  

\begin{lem}
Let $A, B$ be C*-algebras, and let $\phi: A\to B$ be a homomorphism. Then $\phi_{\mathrm T}: \aff\mathrm T^+A \to \aff\mathrm T^+B$ is a positive linear contraction, and $\phi_{\mathrm T}(\aff_1\mathrm{T}^+A) \subseteq \aff_1\mathrm{T}^+B$.
\end{lem}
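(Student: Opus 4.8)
The plan is to realize $\phi_{\mathrm T}$ as precomposition with the dual map on traces, and then to read off all four assertions from that description. First I would recall (or, if it has not yet been set up, introduce) the map $\phi^\sharp\colon\mathrm T^+B\to\mathrm T^+A$ given by $\phi^\sharp(\tau)=\tau\circ\phi$, and observe that $\phi_{\mathrm T}(f)=f\circ\phi^\sharp$ for $f\in\aff\mathrm T^+A$. The one point here that is not purely formal is that $\tau\circ\phi$ again belongs to $\mathrm T^+A$: lower semicontinuity and the trace identity $\tau(\phi(x)^*\phi(x))=\tau(\phi(x)\phi(x)^*)$ follow immediately from continuity and multiplicativity of $\phi$, while density of the domain of $\tau\circ\phi$ follows from the fact that $\phi$ carries the Pedersen ideal $\mathrm{Ped}(A)$ into $\mathrm{Ped}(B)$ --- indeed $\mathrm{Ped}(A)$ is the algebraic ideal generated by those $x\in A^+$ admitting $y\in A^+$ with $xy=x$, and $\phi$ sends such an $x$ to $\phi(x)$ with $\phi(x)\phi(y)=\phi(x)$, so $\phi(x)\in\mathrm{Ped}(B)$ --- together with the standard fact that a densely defined lower semicontinuous trace is precisely one that is finite on the Pedersen ideal. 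Continuity of $\phi^\sharp$ for the topology of pointwise convergence is clear, since $\tau\mapsto\tau(\phi(a))$ is continuous for each $a$, and $\phi^\sharp$ is evidently linear; hence $\phi_{\mathrm T}(f)=f\circ\phi^\sharp$ is a well-defined element of $\aff\mathrm T^+B$.

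Granting this, linearity of $\phi_{\mathrm T}$ is immediate from linearity of $\phi^\sharp$, and positivity holds because $\phi^\sharp$ takes values in $\mathrm T^+A$: if $f\geq 0$ on $\mathrm T^+A$ then $(\phi_{\mathrm T}f)(\tau)=f(\tau\circ\phi)\geq 0$ for all $\tau\in\mathrm T^+B$. For contractivity the key step is the inclusion $\phi^\sharp(\Sigma B)\subseteq\Sigma A$: for $\tau\in\mathrm T^+B$,
$$\|\tau\circ\phi\|=\sup\{\tau(\phi(a)):a\in A^+,\ \|a\|\leq 1\}\leq\sup\{\tau(b):b\in B^+,\ \|b\|\leq 1\}=\|\tau\|,$$
using $\phi(a)\in B^+$ and $\|\phi(a)\|\leq\|a\|$; so $\|\tau\|\leq 1$ forces $\|\tau\circ\phi\|\leq 1$. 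Then $\|\phi_{\mathrm T}f\|_\infty=\sup_{\tau\in\Sigma B}|f(\tau\circ\phi)|\leq\sup_{\sigma\in\Sigma A}|f(\sigma)|=\|f\|_\infty$, the inequality being a supremum over a smaller set.

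For the remaining inclusion, unwinding the definition gives $(\phi_{\mathrm T}\hat a)(\tau)=\hat a(\tau\circ\phi)=\tau(\phi(a))$, that is, $\phi_{\mathrm T}(\hat a)=\widehat{\phi(a)}$. So if $a\in A^+$ with $\|a\|\leq 1$ and $\hat a\in\aff\mathrm T^+A$, then $\phi(a)\in B^+$ with $\|\phi(a)\|\leq 1$ and $\widehat{\phi(a)}=\phi_{\mathrm T}(\hat a)\in\aff\mathrm T^+B$; hence $\phi_{\mathrm T}$ maps the generating set $\{\hat a:a\in A^+,\ \|a\|\leq 1\}$ of $\aff_1\mathrm T^+A$ into the corresponding set for $B$. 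Since $\phi_{\mathrm T}$ is a contraction, hence $\|\cdot\|_\infty$-continuous, it maps the closure of the former into the closure of the latter, which is exactly the claimed inclusion $\phi_{\mathrm T}(\aff_1\mathrm T^+A)\subseteq\aff_1\mathrm T^+B$.

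The only genuinely substantive point is the well-definedness of $\phi^\sharp$ --- that pulling a densely defined trace back along $\phi$ keeps its domain dense --- and I expect that (minor) verification to be the main obstacle; everything downstream is bookkeeping. If the functoriality $\phi\mapsto\phi^\sharp$ of $\mathrm T^+$ is already recorded in the framework, this step is simply cited and the argument collapses to the displayed estimate together with the identity $\phi_{\mathrm T}(\hat a)=\widehat{\phi(a)}$ and a short closure argument.
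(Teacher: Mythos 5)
The paper states this lemma without proof, treating it as routine, and your argument is the standard (and correct) one: realize $\phi_{\mathrm T}$ as precomposition with $\tau\mapsto\tau\circ\phi$ and check everything on the generating set $\{\hat a: a\in A^+,\ \|a\|\leq 1\}$. The one genuinely non-formal point --- that $\tau\circ\phi$ is still densely defined, via $\phi(\mathrm{Ped}(A))\subseteq\mathrm{Ped}(B)$ and the characterization of densely defined lower semicontinuous traces as those finite on the Pedersen ideal --- is exactly the right thing to verify, and you verify it correctly.
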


\begin{lem}
Let $\tau\in\mathrm{T}^+A$. If $a(\tau)\leq 1$ for all $a\in\aff_1\mathrm{T}^+A$, then $\|\tau\|\leq 1$ (in other words, $\tau\in\Sigma A$).
\end{lem}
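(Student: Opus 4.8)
The plan is to prove the contrapositive: assuming $\|\tau\| > 1$, I will exhibit an element of $\aff_1\mathrm{T}^+A$ on which $\tau$ takes a value greater than $1$. The input is the standard description of the norm of a densely defined lower semicontinuous trace, namely $\|\tau\| = \sup\{\tau(a): a\in A^+,\ \|a\|\le 1\}$ as an element of $[0,\infty]$ (this supremum is $+\infty$ exactly when $\tau$ is unbounded, so there is nothing to distinguish the bounded and unbounded cases here). From $\|\tau\| > 1$ I obtain a positive contraction $a\in A$ with $\tau(a) > 1$.

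The next step is to replace $a$ by an element of the Pedersen ideal without losing the inequality $\tau(a)>1$. Since $\|a-(a-\eps)_+\|\le\eps\to 0$ and $\tau$ is lower semicontinuous and monotone on positive elements, one has $\tau(a)=\sup_{\eps>0}\tau\big((a-\eps)_+\big)$; hence, for $\eps>0$ small enough, $b:=(a-\eps)_+$ is a positive contraction with $\tau(b)>1$, and $b$ lies in $\mathrm{Ped}(A)$. Because a densely defined lower semicontinuous trace is finite on the minimal dense ideal $\mathrm{Ped}(A)$, the affine map $\hat b:\mathrm{T}^+A\to\Real$, $\sigma\mapsto\sigma(b)$, is real-valued; it is continuous because the topology on $\mathrm{T}^+A$ is that of pointwise convergence, for which evaluation against an element of $\mathrm{Ped}(A)$ is continuous by definition. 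Thus $\hat b\in\aff\mathrm{T}^+A$, and since $\|b\|\le 1$ it is one of the functions whose closure defines $\aff_1\mathrm{T}^+A$, so $\hat b\in\aff_1\mathrm{T}^+A$. But $\hat b(\tau)=\tau(b)>1$, contradicting the hypothesis that $a(\tau)\le 1$ for all $a\in\aff_1\mathrm{T}^+A$. Therefore $\|\tau\|\le 1$, that is, $\tau\in\Sigma A$.

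The spectral estimate $\tau(a)=\sup_\eps\tau((a-\eps)_+)$ and the identity $\|\tau\|=\sup\{\tau(a):a\in A^+,\ \|a\|\le 1\}$ are routine. The one point that must be handled with care is the verification that $\hat b$ is a genuine element of $\aff\mathrm{T}^+A$: its finiteness relies on $b$ lying in $\mathrm{Ped}(A)$ together with the fact that densely defined traces are finite on the Pedersen ideal, and its continuity is precisely what the pointwise-convergence topology provides once the evaluation is against a Pedersen-ideal element. I expect this Pedersen-ideal bookkeeping to be the only real obstacle; the rest follows immediately from the definitions of this section.
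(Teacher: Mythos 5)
Your proof is correct and is essentially the paper's argument read in contrapositive form: both reduce to the identity $\|\tau\|=\sup\{\tau(a): a\in A^+,\ \|a\|\leq 1\}$ applied to the hypothesis on $\aff_1\mathrm{T}^+A$. The extra step of regularizing $a$ to $b=(a-\eps)_+\in\mathrm{Ped}(A)$ is a sensible refinement rather than a different route — it guarantees that the test function $\hat{b}$ is genuinely a finite, continuous element of $\aff\mathrm{T}^+A$, hence of $\aff_1\mathrm{T}^+A$, a point the paper's two-line proof passes over silently.
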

\begin{proof}
It follows from the assumption that $\tau(a)\leq 1$, $a\in A^+$ with $\|a\|\leq 1$. Then $\norm{\tau}=\sup\{\tau(a): a\in A^+, \|a\|\leq 1\}\leq 1$.
\end{proof}

Since $\|\hat{a}\|_\infty\leq 1$ for all $a\in A^+$ with $\|a\|\leq 1$, one has
\begin{cor}
For any C*-algebra $A$, one has 
$$\textrm{$\tau\in\Sigma A$ if and only if $|a(\tau)|\leq 1$ for all $a\in \aff\mathrm T^+A$ with $\|a\|_\infty\leq 1$}.$$
\end{cor}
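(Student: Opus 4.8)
The plan is to derive both implications directly from the two preceding lemmas together with the definitions of $\|\cdot\|_\infty$ and $\aff_1\mathrm T^+A$; no new input is needed.

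For the forward implication, suppose $\tau\in\Sigma A$, that is, $\|\tau\|\le 1$. Then $\tau$ is one of the traces over which the supremum defining $\|\cdot\|_\infty$ is taken, so for every $a\in\aff\mathrm T^+A$ with $\|a\|_\infty\le 1$ we get immediately $|a(\tau)|\le\|a\|_\infty\le 1$. This direction requires nothing beyond unwinding the definition of $\|\cdot\|_\infty$.

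For the reverse implication, assume $|a(\tau)|\le 1$ for every $a\in\aff\mathrm T^+A$ with $\|a\|_\infty\le 1$. The strategy is to verify the hypothesis of the Lemma stating that $b(\tau)\le 1$ for all $b\in\aff_1\mathrm T^+A$ forces $\|\tau\|\le 1$, and then to invoke that lemma. First I would check the inequality on the generating set: for $a\in A^+$ with $\|a\|\le 1$, the remark preceding the corollary gives $\|\hat a\|_\infty\le 1$, whence by hypothesis $\hat a(\tau)\le |\hat a(\tau)|\le 1$. To pass to the closure $\aff_1\mathrm T^+A=\overline{\{\hat a: a\in A^+,\ \|a\|\le 1\}}$, I would use that evaluation at $\tau$ is continuous on $\aff\mathrm T^+A$: its elements are continuous affine functions and $\mathrm T^+A$ carries the topology of pointwise convergence, so $b\mapsto b(\tau)$ is a continuous functional, and the inequality $b(\tau)\le 1$ therefore persists for every $b$ in the closure. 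Applying the Lemma then yields $\|\tau\|\le 1$, i.e.\ $\tau\in\Sigma A$.

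I do not expect a genuine obstacle; the only step needing a moment's care is this limiting argument, which promotes the bound from the dense set of normalized positive elements of $A$ to all of $\aff_1\mathrm T^+A$, and which rests solely on the continuity of evaluation at a fixed trace.
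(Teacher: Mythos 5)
Your proof is correct and takes essentially the same route as the paper: the forward direction is just the definition of $\|\cdot\|_\infty$ as a supremum over $\Sigma A$, and the reverse direction reduces to the preceding lemma via the observation that $\|\hat a\|_\infty\leq 1$ for $a\in A^+$ with $\|a\|\leq 1$. Your limiting argument for the closure is not actually needed and its stated justification (pointwise convergence on $\mathrm T^+A$) is not the relevant reason --- the paper already records that $\|b\|_\infty\leq 1$ for every $b\in\aff_1\mathrm T^+A$, so the hypothesis $|a(\tau)|\leq 1$ for $\|a\|_\infty\leq 1$ applies to such $b$ directly (equivalently, that hypothesis itself says evaluation at $\tau$ is bounded of norm at most one on $(\aff\mathrm T^+A,\|\cdot\|_\infty)$, which is the continuity you invoke) --- but this does not affect correctness.
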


Let $(A_i, \iota_i)$ be an inductive system with $\iota_i$ injective. Assume that all traces of $A$ are bounded, and assume that all traces of each $A_i$ are bounded. Then, one has the inductive system
$$
\xymatrix{
(\aff \mathrm T^+A_1, \|\cdot\|_{1, \infty}) \ar[r]^{(\iota_1)_*} & (\aff \mathrm T^+A_2, \|\cdot\|_{2, \infty}) \ar[r]^-{(\iota_2)_*}  & \cdots \ar[r] & (\aff \mathrm T^+A, \|\cdot\|_{\infty})
}.
$$
The dual system is
$$
\xymatrix{
(\mathrm T^+A_1, \Sigma A_1) & (\mathrm T^+{A_2}, \Sigma A_2) \ar[l]_{(\iota_1)^*}  & \cdots \ar[l]_-{(\iota_2)^*}  & (\mathrm T^+A, \Sigma A) \ar[l]
}.
$$


Recall that
\begin{lem}[Lemma 2.8 of \cite{EN-K0-Z}]\label{FDF}
Let $\Delta$ be a compact metrizable Choquet simplex. Then, for any finite subset $\mathcal F\subseteq\aff(\Delta)$ and any $\eps>0$, there exist $m\in\mathbb N$ and unital (pointwise) positive linear maps $\varrho$ and $\theta$,
\begin{displaymath}
\xymatrix{
\aff(\Delta) \ar[r]^-{\varrho} & \Real^m \ar[r]^-{\theta} & \aff(\Delta),
}
\end{displaymath} 
where the unit of $\Real^m$ is $(1, ..., 1)$, 
such that
$$\norm{\theta(\varrho(f))-f}_\infty<\eps,\quad f\in\mathcal F.$$
\end{lem}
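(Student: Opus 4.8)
The plan is to reduce everything to the finite-dimensional approximation of continuous functions on the extreme boundary, exploiting that $\Delta$ is a metrizable Choquet simplex so that $\aff(\Delta)$ embeds isometrically (as an order-unit space) in $C(\partial_e\Delta)$ via restriction, where $\partial_e\Delta$ denotes the (automatically $G_\delta$, but possibly non-closed) extreme boundary. First I would fix the finite set $\mathcal F=\{f_1,\dots,f_r\}\subseteq\aff(\Delta)$ and $\eps>0$. By uniform continuity of each $f_j$ on the compact metric space $\Delta$, I can choose a finite open cover $\{U_1,\dots,U_m\}$ of $\Delta$ fine enough that each $f_j$ varies by less than $\eps$ on each $U_i$; pick points $\mu_i\in U_i$.

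Next I would build the map $\varrho\colon\aff(\Delta)\to\Real^m$ as evaluation, $\varrho(f)=(f(\mu_1),\dots,f(\mu_m))$, which is clearly unital and positive. For $\theta\colon\Real^m\to\aff(\Delta)$ the idea is to take a partition of unity: choose a continuous partition of unity $(h_i)_{i=1}^m$ on $\Delta$ subordinate to $\{U_i\}$, so $h_i\geq 0$, $\sum_i h_i=1$, and $h_i$ supported in $U_i$; then each $h_i$, being a continuous function on $\Delta$ (not necessarily affine), has a barycentric ``affinization'' — here I invoke the Choquet-simplex structure: the map sending $\nu\in\Delta$ to the value at $\nu$ of the continuous affine function whose restriction to $\partial_e\Delta$ agrees with $h_i|_{\partial_e\Delta}$. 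More directly, set $\theta(\xi_1,\dots,\xi_m)=\sum_{i=1}^m\xi_i\,\tilde h_i$, where $\tilde h_i\in\aff(\Delta)$ is chosen with $0\leq\tilde h_i$, $\sum_i\tilde h_i=1$, and $\tilde h_i$ small outside $U_i$; the existence of such an affine partition of unity on a metrizable Choquet simplex is the key structural input (it is where simplicity of $\Delta$, as opposed to a general compact convex set, is used — on a simplex one can separate by affine functions finely enough). Then $\theta$ is unital and positive by construction.

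Finally I would estimate: for $f\in\mathcal F$ and $\nu\in\Delta$,
\begin{displaymath}
\theta(\varrho(f))(\nu)-f(\nu)=\sum_{i=1}^m \tilde h_i(\nu)\bigl(f(\mu_i)-f(\nu)\bigr),
\end{displaymath}
and splitting the sum into indices $i$ with $\nu\in U_i$ (where $|f(\mu_i)-f(\nu)|<\eps$) and the rest (where $\tilde h_i(\nu)$ is forced small, contributing at most $\eps\|f\|_\infty$ in total after refining the cover), one gets $\|\theta(\varrho(f))-f\|_\infty<2\eps\,(1+\|f\|_\infty)$ for all $f\in\mathcal F$, which after rescaling $\eps$ gives the claim.

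The main obstacle is the construction of the affine partition of unity $(\tilde h_i)$: on a general compact convex set one cannot approximate an arbitrary continuous partition of unity by affine functions, and the simplex hypothesis must be used precisely here. I expect to cite the Lazar--Lindenstrauss / Edwards separation theory for Choquet simplices, or alternatively to realize $\Delta$ as the trace simplex of a unital simple AF algebra $B$ (as is in fact done later in the paper) and pull back a finite-dimensional matrix-algebra approximation $B_k\hookrightarrow B$, whose trace simplex is a finite-dimensional simplex $\Real^m$ and whose affine function space provides exactly the $\tilde h_i$ as the (normalized) ranks of the minimal projections; the maps $\varrho,\theta$ are then the dual of the inclusion and a conditional-expectation-type splitting. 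This AF route sidesteps the abstract partition-of-unity construction and reduces the estimate to the standard approximate-factorization property of AF inductive limits.
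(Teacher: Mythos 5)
The paper does not actually prove this lemma; it is quoted verbatim from Lemma 2.8 of \cite{EN-K0-Z}, so your proposal must be judged on its own. Your primary argument has a genuine gap at exactly the point you flag yourself: an affine partition of unity subordinate to a fine open cover of $\Delta$ does not exist, even on the one-dimensional simplex $\Delta=[0,1]$. Indeed, if $\tilde h\in\aff([0,1])$ is nonnegative and small outside $U=(0.4,0.6)$, then it is small at the extreme points $0$ and $1$, and since $\tilde h(t)=(1-t)\tilde h(0)+t\,\tilde h(1)$ it is small everywhere; hence no family $(\tilde h_i)$ with $\sum_i\tilde h_i=1$ and each $\tilde h_i$ concentrated near a small $U_i$ can exist. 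Consequently your error estimate collapses: for indices with $\nu\notin U_i$ the factor $\tilde h_i(\nu)$ is not forced to be small, and those terms contribute up to $2\|f\|_\infty$ times a sum of $\tilde h_i(\nu)$ that is not controlled. Refining the cover does not help, because the obstruction is the rigidity of affine functions, not the fineness of the cover; the mechanism behind the lemma cannot be a localization argument.

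Your fallback route (Lazar--Lindenstrauss, or realizing $\Delta$ as the trace simplex of a unital simple AF algebra) is the correct one, but as sketched it is missing both key ingredients. By Lazar--Lindenstrauss, $\Delta$ is an inverse limit of finite-dimensional simplices $\Delta_{m_k-1}$ with \emph{surjective} affine connecting maps $\pi_k:\Delta\to\Delta_{m_k-1}$; dually, $\aff(\Delta)=\varinjlim(\Real^{m_k},\theta_k)$ with unital positive (isometric) maps $\theta_k$ whose images have dense union. Given $\mathcal F$ and $\eps$, choose $k$ with $\mathcal F\subset_{\eps/2}\theta_k(\Real^{m_k})$. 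The left inverse $\varrho$ is not a ``conditional-expectation-type splitting'' (a conditional expectation $B\to B_k$ does not carry traces to traces in the relevant direction); it is evaluation at points $\tau_i\in\pi_k^{-1}(v_i)$, where $v_1,\dots,v_{m_k}$ are the vertices of $\Delta_{m_k-1}$ --- these preimages exist precisely because $\pi_k$ is surjective. Then $\varrho\circ\theta_k=\mathrm{id}_{\Real^{m_k}}$ exactly, and since unital positive maps between order-unit spaces are contractive, $\|\theta_k(\varrho(f))-f\|_\infty\leq\|\theta_k\varrho(f-\theta_k(\xi_f))\|_\infty+\|\theta_k(\xi_f)-f\|_\infty<\eps$ whenever $\|f-\theta_k(\xi_f)\|_\infty<\eps/2$. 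The proof thus rests on approximate membership of $\mathcal F$ in the image of $\theta_k$ plus an exact splitting, neither of which your write-up establishes.
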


\begin{lem}\label{tr-lifting}
Let $(A_n, \iota_n)$ be an inductive system of C*-algebras with $\phi_n$ injective, and assume that all traces of $\varinjlim A_n$ are bounded. Fix a compact base $\Delta\subset \mathrm{T}^+A$. Then, for any finite set $\mathcal F\subseteq \aff\mathrm{T}A$ and any $\eps>0$, there exists $$\gamma_n: \aff\mathrm{T}^+A \to \aff\mathrm{T}^+A_n$$ for sufficiently large $n$ such that 
$$ | (\iota_{n, \infty}\circ\gamma_n(a))(\tau) - a(\tau) | < \eps,\quad \tau\in\Delta.$$
\end{lem}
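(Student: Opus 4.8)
The plan is to factor the identity map on $\aff\mathrm T^+A$ approximately through a finite-dimensional space by means of Lemma \ref{FDF}, and then to push the resulting maps down the inductive system. Identify $\aff\mathrm T^+A$ with $\aff(\Delta)$ by restriction to the compact base $\Delta$, so that $\mathcal F\subseteq\aff(\Delta)$; since $\Delta$ is a metrizable Choquet simplex, Lemma \ref{FDF} applied to $\Delta$, $\mathcal F$ and $\eps/2$ yields an $m\in\mathbb N$ and unital positive linear maps $\varrho\colon\aff\mathrm T^+A\to\Real^m$ and $\theta\colon\Real^m\to\aff\mathrm T^+A$ with $\norm{\theta(\varrho(a))-a}_\infty<\eps/2$ for $a\in\mathcal F$. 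Put $g_j:=\theta(\mathbf e_j)\in\aff\mathrm T^+A$, where $\mathbf e_1,\dots,\mathbf e_m$ are the standard basis vectors; then $g_j\geq 0$ and $\sum_{j}g_j$ is the unit of $\aff\mathrm T^+A$. The problem is thereby reduced to the following: given $\delta>0$, produce $n$ and $g^{(n)}_1,\dots,g^{(n)}_m\in\aff\mathrm T^+A_n$ with $\norm{\iota_{n,\infty}(g^{(n)}_j)-g_j}_\infty<\delta$ for each $j$.

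This last assertion is exactly the statement that $\bigcup_n\iota_{n,\infty}(\aff\mathrm T^+A_n)$ is dense in $\aff\mathrm T^+A$ --- equivalently, that $\aff\mathrm T^+A$ is the Banach-space inductive limit $\varinjlim\aff\mathrm T^+A_n$ of the system displayed above --- and establishing it is the heart of the matter. The key point is that, for any C*-algebra $B$ all of whose traces are bounded, $\widehat{B_{\mathrm{sa}}}:=\set{\widehat b:b\in B_{\mathrm{sa}}}$ is $\norm{\cdot}_\infty$-dense in $\aff\mathrm T^+B$: a bounded functional on $\aff\mathrm T^+B$ annihilating $\widehat{B_{\mathrm{sa}}}$ extends, by Hahn--Banach, to $\mathrm C(\Sigma B)$ and is therefore given by a signed Radon measure $\mu=\mu^+-\mu^-$ on the compact set $\Sigma B$; the condition that it annihilate $\widehat{B_{\mathrm{sa}}}$ says precisely that $\mu^+$ and $\mu^-$ have equal barycenters in $\mathrm T^+B$, and then, by homogeneity of the elements of $\aff\mathrm T^+B$, $\mu$ annihilates all of $\aff\mathrm T^+B$; so density follows from Hahn--Banach. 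Granting this, choose $a_j\in A_{\mathrm{sa}}$ with $\norm{\widehat{a_j}-g_j}_\infty<\delta/2$; since $A=\overline{\bigcup_n\iota_{n,\infty}(A_n)}$, choose $n$ and $b_j\in(A_n)_{\mathrm{sa}}$ with $\norm{\iota_{n,\infty}(b_j)-a_j}<\delta/2$ for all $j$; then, as $\norm{\iota_{n,\infty}(\widehat{b_j})-\widehat{a_j}}_\infty=\sup_{\tau\in\Sigma A}\abs{\tau(\iota_{n,\infty}(b_j)-a_j)}\leq\norm{\iota_{n,\infty}(b_j)-a_j}$, the elements $g^{(n)}_j:=\widehat{b_j}$ satisfy $\norm{\iota_{n,\infty}(g^{(n)}_j)-g_j}_\infty<\delta$.

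With the $g^{(n)}_j$ at hand, set $\gamma_n:=\theta_n\circ\varrho\colon\aff\mathrm T^+A\to\aff\mathrm T^+A_n$ with $\theta_n(v):=\sum_j v_jg^{(n)}_j$; this is a positive linear map, and for $a\in\mathcal F$ and $\tau\in\Delta$ the triangle inequality together with $\abs{\varrho(a)_j}\leq\norm{\varrho(a)}_\infty\leq\norm{a}_\infty$ gives $\abs{(\iota_{n,\infty}\circ\gamma_n(a))(\tau)-a(\tau)}<m\bigl(\max_{a'\in\mathcal F}\norm{a'}_\infty\bigr)\delta+\eps/2$, which is $<\eps$ provided $\delta$ was chosen small enough (depending only on $\eps$, $m$ and $\mathcal F$). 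Minor points to be attended to: if a positive map $\gamma_n$ is wanted one should, after choosing the $b_j$, replace them by small positive perturbations, using $g_j\geq 0$ and $\sum_jg_j=\text{(unit)}$; and one should note the harmless equivalence between $\norm{\cdot}_\infty$ and uniform convergence on the compact set $\Delta$. The single real obstacle is the density $\aff\mathrm T^+A=\overline{\varinjlim\aff\mathrm T^+A_n}$, which rests on the density of $\widehat{B_{\mathrm{sa}}}$ in $\aff\mathrm T^+B$ for $B$ with bounded traces; the remainder is an application of Lemma \ref{FDF} and elementary estimates.
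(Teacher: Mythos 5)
Your argument is correct and follows essentially the same route as the paper: reduce via Lemma \ref{FDF} to lifting a positive map out of $\Real^m$, and then lift the finitely many images of the basis vectors down the inductive system. The paper's proof dismisses this last step as "easily constructed"; you have simply supplied the density argument (images of self-adjoint elements dense in $\aff\mathrm{T}^+$, plus density of $\bigcup_n\iota_{n,\infty}(A_n)$ in $A$) that makes it work, together with the routine norm adjustments you already flag.
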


\begin{proof}
By Lemma \ref{FDF}, one only has to approximately lift a positive continuous affine map $\gamma: \Real^m \to \aff\mathrm{T}^+A$ to a map $\gamma_n: \Real^m \to \aff\mathrm{T}^+A_n$ for given finite subset $\mathcal F\subset \Real^n$ and $\eps>0$. But since $\Real^m$ is finite dimensional, the lifting can be easily constructed.
\end{proof}

\begin{lem}\label{trace-lifting}
Let $S=\varinjlim(S_n, \iota_n)$ be a model algebra as in Theorem \ref{model-trace}. Then, for any finite set $\mathcal F\subseteq \aff\mathrm{T}^+S$ and any $\eps>0$, there exists $$\gamma_m: \aff\mathrm{T}^+S \to \aff\mathrm{T}^+S_m$$ for sufficiently large $m$ such that 
$$ | a(\gamma_m^*\circ\iota^*_{m, \infty}(\tau)) - a(\tau) | < \eps,\quad a\in\mathcal F,\ \|\tau\|=1,$$
and 
$$1-\eps< \|\gamma^*_m(\tau)\| < 1+\eps,\quad \tau \in \mathrm{T}^+S_m,\ \|\tau\|=1. $$
%
\end{lem}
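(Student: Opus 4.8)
The plan is to upgrade the soft approximate-lifting statement of Lemma~\ref{tr-lifting} to the quantitative two-sided norm control demanded here, using the special structure (Conditions \eqref{model-cond-1}--\eqref{model-cond-3}) of a model algebra. First I would fix a compact base $\Delta$ for $\mathrm{T}^+S$; since $S$ is a model algebra, the tracial states form a compact Choquet simplex, so take $\Delta$ to be that simplex. Apply Lemma~\ref{tr-lifting} to the inductive system $(S_n,\iota_n)$ with the given $\mathcal F$ and a smaller tolerance $\eps'$ (to be specified), obtaining $\gamma_m\colon\aff\mathrm{T}^+S\to\aff\mathrm{T}^+S_m$ for large $m$ with $|(\iota_{m,\infty}\circ\gamma_m(a))(\tau)-a(\tau)|<\eps'$ for all $\tau\in\Delta$ and $a\in\mathcal F$. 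Dualizing, this already gives the first inequality $|a(\gamma_m^*\circ\iota^*_{m,\infty}(\tau))-a(\tau)|<\eps'<\eps$ for $\tau\in\mathrm{T}^+S$ with $\|\tau\|=1$, since such $\tau$ lie in $\Delta$ and $a(\gamma_m^*\sigma)=(\gamma_m a)(\sigma)$ while $\iota^*_{m,\infty}$ is the restriction map.

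The substantive point is the second inequality, i.e.\ that $\gamma_m$, though only approximately unital/contractive a priori, sends normalized traces of $S_m$ to traces of norm within $\eps$ of $1$. For the upper bound $\|\gamma_m^*(\tau)\|<1+\eps$: enlarge the finite set $\mathcal F$ to include the canonical affine function $\hat h\in\aff\mathrm{T}^+S$ coming from the Cuntz class of a strictly positive element (the ``norm function''), which by the Remark after Condition~\eqref{model-cond-3} is continuous and equals $\|\tau\|$ on $\mathrm{T}^+S$. Then for $\tau\in\mathrm{T}^+S_m$ with $\|\tau\|=1$, writing $\sigma=\iota_{m,\infty}^*\!$-free via $\gamma_m^*(\tau)$, we have $\|\gamma_m^*(\tau)\|=\hat h(\gamma_m^*\tau)+(\text{error})=(\gamma_m\hat h)(\tau)$, and one controls $(\gamma_m\hat h)(\tau)$ against $\hat h$ evaluated at a nearby trace of $S$ using the approximation property of $\gamma_m$ together with the fact (Condition~\eqref{model-cond-2}) that the connecting maps, hence the identification of $\aff_1$-scales, are nearly unital at stage $m$. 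Concretely, $\gamma_m$ being obtained from a near-unital finite-dimensional factorization $\aff\mathrm{T}^+S\to\Real^k\to\aff\mathrm{T}^+S_m$ (as in the proof of Lemma~\ref{tr-lifting} via Lemma~\ref{FDF}), its dual carries $\aff_1$-scale to within $\eps$ of the $\aff_1$-scale, whence norms of normalized traces are preserved up to $\eps$.

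For the lower bound $\|\gamma_m^*(\tau)\|>1-\eps$, this is where Condition~\eqref{model-cond-3} does the essential work, and I expect it to be the main obstacle. Use the element $e(\eps')\in S_m^+$ with $\|e(\eps')\|=1$ and $\tau(e(\eps'))>1-\eps'$ for every normalized $\tau\in\mathrm{T}^+S_j$, $j\ge m$, and also $\sigma(e(\eps'))>1-\eps'$ for normalized $\sigma\in\mathrm{T}^+S$. Then for normalized $\tau\in\mathrm{T}^+S_m$,
$$\|\gamma_m^*(\tau)\|\;\ge\;\gamma_m^*(\tau)(\widehat{e(\eps')})\;=\;\bigl(\gamma_m(\widehat{e(\eps')})\bigr)(\tau),$$
and since $\widehat{e(\eps')}$ differs from the constant function $1$ by at most $\eps'$ on $\Delta$ (by \eqref{trace-control-2}) while $\gamma_m$ is positive and nearly unital, $(\gamma_m(\widehat{e(\eps')}))(\tau)>1-O(\eps')$; choosing $\eps'$ small enough relative to $\eps$ gives $>1-\eps$. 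The delicate part is making the quantitative bookkeeping honest: one must track how the errors in Lemma~\ref{FDF}, the near-unitality from Condition~\eqref{model-cond-2}, and the trace-control from Condition~\eqref{model-cond-3} compound, and ensure the same index $m$ works for all three. I would package this by first choosing $\eps'$ (and the auxiliary enlarged finite set), then invoking Lemma~\ref{tr-lifting} and Conditions \eqref{model-cond-2}--\eqref{model-cond-3} simultaneously for all sufficiently large $m$, and finally verifying both displayed inequalities by the estimates above.
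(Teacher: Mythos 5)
The first inequality in your plan is fine and agrees with the paper. The gap is in the second display, and it is a gap of substance rather than bookkeeping. Lemma \ref{tr-lifting} (and the factorization of Lemma \ref{FDF} behind it) controls the lifted map $\gamma_m$ only at traces of $S_m$ of the form $\iota_{m,\infty}^*(\tau')$ with $\tau'\in\mathrm{T}^+S$; it gives no information about $(\gamma_m(a))(\tau)$ when $\tau\in\mathrm{T}^+S_m$ is a normalized trace that is not (close to) the restriction of a trace of $S$ --- and those are exactly the traces the second display must handle. Both your upper and lower bounds reduce to estimating $(\gamma_m(\widehat{e(\eps')}))(\tau)$, respectively $(\gamma_m\hat h)(\tau)$, for an arbitrary normalized $\tau\in\mathrm{T}^+S_m$, and at that point you appeal to $\gamma_m$ being ``nearly unital,'' i.e.\ that its dual carries normalized traces to traces of norm close to $1$. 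That assertion is precisely the second display of the lemma being proved; neither Lemma \ref{FDF} (whose unitality lives at the level of $\aff(\Delta)\to\Real^k\to\aff(\Delta)$, not of the lift into $\aff\mathrm{T}^+S_m$) nor Condition \eqref{model-cond-2} (a statement about the multiplicity matrices of the building block, not about your lift) delivers it. So the argument is circular at the decisive step.

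The paper's proof gets around this with a two-index construction that a one-shot choice of $\gamma_m$ cannot reproduce: it first fixes a stage $n$ carrying the element $e(\eps)$ of Condition \eqref{model-cond-3} and produces $\gamma_n$ by Lemma \ref{tr-lifting}; it then chooses $m\gg n$ so large that every normalized $\tau\in\mathrm{T}^+S_m$ agrees, within $\eps$, with some $\tau'\in\mathrm{T}^+S$ on the two \emph{fixed} test elements $\widehat{e(\eps)}$ and $\gamma_n(\widehat{e(\eps)})$ (a compactness property of the inductive limit, available only because these finitely many elements were fixed at stage $n$ before letting $m\to\infty$); finally it sets $\gamma_m:=(\iota_{n,m})_*\circ\gamma_n$, so that $\gamma_m^*\circ\iota_{m,\infty}^*=\gamma_n^*\circ\iota_{n,\infty}^*$ and the first display is unaffected. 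The norm of $\gamma_m^*(\tau)$ is then squeezed by evaluating at $\widehat{e(\eps)}$ and invoking \eqref{trace-control-1} and \eqref{trace-control-2} for the matched trace $\tau'$ of $S$, which is legitimate precisely because $\tau'$ comes from $S$. To salvage your single-stage plan you would need a substitute for this trace-matching step --- for instance, a proof that the lift in Lemma \ref{tr-lifting} can be chosen so that $\gamma_m(\widehat{e(\eps)})$ is close to $\widehat{e(\eps)}$ uniformly over all of $\mathrm{T}^+S_m$, not just over traces pulled back from $S$ --- and no such argument is supplied in your proposal.
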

\begin{proof}
Fix an arbitrary $\eps>0$ for the time being. It follows from \eqref{trace-control-1} and \eqref{trace-control-2} that there is $e(\eps)\in S_n$ such that
\begin{equation}\label{tr-c-01}
\tau(e(\eps))>1-\eps,\quad \tau\in\mathrm{T}(S_m),\ \|\tau\|=1, \ m\geq n,
\end{equation}
and
\begin{equation}\label{tr-c-02}
\tau(e(\eps))>1-\eps,\quad \tau\in\mathrm{T}(S),\ \|\tau\|=1.
\end{equation}

Applying Lemma \ref{tr-lifting}, one has $\gamma_n: \aff\mathrm{T}^+S\mapsto\aff\mathrm{T}^+S_n$ such that 
\begin{equation}\label{tr-approx-01}
 | a(\gamma_n^*\circ\iota^*_{n, \infty}(\tau)) - a(\tau) | < \eps,\quad a\in\mathcal F\cup\{e(\eps)\},\ \tau\in\mathrm{T}^+A,\  \|\tau\|=1.
 \end{equation}

Consider the induced diagram
\begin{displaymath}
\xymatrix{
 & & \mathrm{T}^+(S) \ar@{=}[d] \\
\mathrm{T}^+S_n \ar[urr]^{\gamma_n^*} & \ar[l]^{\iota^*_{n, m}} \mathrm{T}^+S_m &\ar[l]^{\iota^*_{m, \infty}} \mathrm{T}^+S.
}
\end{displaymath}
With $m$ sufficiently large, one has that for any $\tau\in\mathrm{T}^+ A_m$ with $\|\tau\|=1$, there is $\tau'\in\mathrm{T}^+A$ such that
\begin{equation}\label{pull-trace-1}
|\tau(e(\eps)) - \tau'(e(\eps))| < \eps.
\end{equation}
and
\begin{equation}\label{pull-trace-2}
|\tau(\gamma_n(e(\eps))) - \tau'(\gamma_n(e(\eps)))| < \eps.
\end{equation}
By \eqref{tr-c-01}, $$1\geq \tau(e(\eps))>1-\eps,$$ and then  by \eqref{pull-trace-1}, one has 
\begin{equation}\label{trace-bound}
1+\eps > \tau'(e(\eps))>1-2\eps
\end{equation}
and therefore by \eqref{tr-c-02},
$$1-2\eps<\|\tau'\|< \frac{1+ \eps}{1-\eps}.$$
Then, by \eqref{tr-approx-01},
$$|((\iota_{n, \infty}\circ\gamma_n)(e(\eps)))(\tau') - \tau'(e(\eps))|<\eps\frac{1+\eps}{1-\eps},$$
and then by \eqref{trace-bound}
$$1-2\eps-\eps\frac{1+\eps}{1-\eps} < ((\iota_{n, \infty}\circ\gamma_n)(e(\eps)))(\tau') < 1+\eps + \eps\frac{1+\eps}{1-\eps}.$$
By \eqref{pull-trace-2}, 
$$1-3\eps-\eps\frac{1+\eps}{1-\eps} < ((\iota_{n, m}\circ\gamma_n)(e(\eps)))(\tau) < 1+2\eps + \eps\frac{1+\eps}{1-\eps},$$
and therefore, it follows from \eqref{tr-c-02} that
$$1-3\eps-\eps\frac{1+\eps}{1-\eps} <\|(\gamma_n^*\circ \iota^*_{n, m})(\tau)\|< (1+2\eps + \eps\frac{1+\eps}{1-\eps})\frac{1}{1-\eps}.$$
Then, with sufficiently small $\eps$, the map $\gamma_m:=\iota_{n, m}\circ \gamma_n$ satisfies the conclusion of the lemma.
\end{proof}

%
%
%
%
%

\section{A stable uniqueness theorem}
The main results (Corollary \ref{uniq-Q} and Corollary \ref{uniq-Q-path}) in this section are probably well known to experts; they will be used to show the trace factorization theorem (Theorem \ref{TFA}). A more general stable uniqueness theorem will be proved in Section \ref{section-stable-uniq} including the case that the codomain algebra is not necessarily unital. (If the codomain algebra $B$ is not $\sigma$-unital, as if $B=\prod B_n/\bigoplus B_n$ for nonunital C*-algebras $B_n$, one will have troubles for the picture of $\mathrm{KK}(A, B)$. This difficuty is handled in Section \ref{section-stable-uniq} by considering a certain sub-C*-algebra of $B$.)

\begin{defn}
Let $A$ be a C*-algebra, and let $L: A^1_+\setminus\{0\} \to \mathbb N$ be a map. Then a homomorphism $\phi: A \to B$, where $B$ is a unital C*-algebra, is $L$-full if for any $a\in A^+\setminus\{0\}$, there are contractions $x_1, x_2, ..., x_{L(a)}$ in $B$ such that
$$1_B=x^*_1\phi(a)x_1 + x^*_2\phi(a)x_2 + \cdots + x^*_{L(a)}\phi(a)x_{L(a)}.$$
\end{defn}

\begin{defn}
Let $A$ be a C*-algebra. Denote by 
$$\mathrm{Proj}_\infty(A)=\{p\in\mathrm{M}_n(A): \textrm{$p$ is a projection, $n\in\mathbb{N}$}\},$$
and define
$$\underline{\mathrm{Proj}}_\infty(A)=\bigcup_{k=0}^\infty \mathrm{Proj}_\infty(A\otimes C_k),$$
where $C_k$ is a commutative C*-algebra satisfying 
$$\Kzero(C_k) = \Int/k\Int\quad \mathrm{and}\quad \Kone(C_k) = \{0\}.$$
Also define
$$\mathrm{Proj}_\infty(\widetilde{A})\cap\Kzero(A)=\{p\in \mathrm{Proj}_\infty(\widetilde{A}): \textrm{$p$ is not equivalent to a projection in $\mathrm{M}_\infty(\Comp 1_{\widetilde{A}})$}\}.$$

\end{defn}

The following theorem is well known.
\begin{thm}\label{uniq-hom-1}
Let $A$ be a unital separable exact C*-algebra that satisfies the UCT. Let $L: A^1_+\setminus\{0\} \to \mathbb N$ be a map.

Let $\mathcal F\subset A$ be a finite subset, and let $\eps>0$. Then there exists $n\in\mathbb N$ such that, for any admissible codomain algebra $D$ of finite type, any unital $L$-full homomorphism $\iota: A\to D$, and any nuclear homomorphisms $\phi, \psi: A\to D$, if 
\begin{enumerate}
\item $[\phi]=[\psi]$ in $\mathrm{KL}(A, D)$,
\item $\phi(1_C)$ is unitarily equivalent to $\psi(1_C)$,
\end{enumerate}
then there exists a unitary $u\in \mathrm{M}_{n+1}(D)$ such that
$$\|u\mathrm{diag}(\phi(c), \underbrace{\iota(c), ..., \iota(c)}_n)u^* - \mathrm{diag}(\psi(c), \underbrace{\iota(c), ..., \iota(c)}_n) \| < \eps,\quad c\in\mathcal F.$$
\end{thm}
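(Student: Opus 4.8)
The plan is to prove this by contradiction, reducing to a statement about the sequence algebra, along the now-standard lines of the Dadarlat--Eilers stable uniqueness theorem (with the uniform bound $n$ coming from a compactness argument rather than being constructed explicitly).

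First I would negate the conclusion: if it fails, then for each $n\in\mathbb N$ there is an admissible codomain $D_n$ of finite type, a unital $L$-full homomorphism $\iota_n\colon A\to D_n$, and nuclear homomorphisms $\phi_n,\psi_n\colon A\to D_n$ with $[\phi_n]=[\psi_n]$ in $\mathrm{KL}(A,D_n)$ and $\phi_n(1)$ unitarily equivalent to $\psi_n(1)$, for which no unitary $u\in\mathrm M_{n+1}(D_n)$ brings $\mathrm{diag}(\phi_n(c),\iota_n(c),\dots,\iota_n(c))$ (with $n$ copies of $\iota_n$) within $\eps$ of $\mathrm{diag}(\psi_n(c),\iota_n(c),\dots,\iota_n(c))$ on $\mathcal F$. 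Note that the hypothesis $\phi_n(1)\sim_u\psi_n(1)$ is exactly what guarantees that these two diagonal sums are homomorphisms into unitarily equivalent hereditary subalgebras, so that asking for them to be unitarily equivalent is not a priori obstructed.

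Next I would assemble this data in the sequence algebra. Passing to the stabilizations $D_n\otimes\mathcal K$ and a suitable separable sub-C*-algebra $B$ of $\prod_n (D_n\otimes\mathcal K)/\bigoplus_n(D_n\otimes\mathcal K)$ (the standard device to avoid the non-$\sigma$-unitality issue flagged in the introduction to this section, and treated carefully in Section \ref{section-stable-uniq}), the sequences $(\phi_n)$, $(\psi_n)$, $(\iota_n)$ descend to homomorphisms $\phi,\psi,\iota\colon A\to B$; and since the number of copies of $\iota_n$ tends to infinity, the two offending diagonal sums are, in the limit, $\phi\oplus\iota^{(\infty)}$ and $\psi\oplus\iota^{(\infty)}$, where $\iota^{(\infty)}$ denotes the infinite repeat of $\iota$. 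Because $L$ is independent of $n$, the $L$-fullness of each $\iota_n$ passes to $\iota$, and hence $\iota^{(\infty)}$ is an absorbing (ample) homomorphism. The point of the contradiction is then: if $\phi\oplus\iota^{(\infty)}$ and $\psi\oplus\iota^{(\infty)}$ are unitarily equivalent in $\widetilde B$, one lifts the implementing unitary to a sequence of unitaries, truncates to finitely many copies of $\iota_n$ (permissible since only the finite set $\mathcal F$ and a single $\eps$ are at stake), and contradicts the choice of $D_n,\iota_n,\phi_n,\psi_n$ for all large $n$.

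The heart of the argument is then the absorption/uniqueness step: since $A$ is separable, exact and satisfies the UCT and $\phi,\psi$ are nuclear, the Dadarlat--Eilers absorption theorem (in its $\mathrm{KL}$ form) produces such a unitary, provided $[\phi]=[\psi]$ in $\mathrm{KL}(A,B)$. This is where I expect the main obstacle to lie, and where the ``finite type'' hypothesis on the $D_n$ is essential: one must manufacture a genuine class in $\mathrm{KL}(A,B)$, and verify its triviality, out of the pointwise vanishing of $[\phi_n]-[\psi_n]$ — that is, control the $\varprojlim^1$/UCT obstruction in the quotient and check that $\iota^{(\infty)}$ has the correct $\mathrm{KL}$-class. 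Finite generation of the total $K$-theory of the building blocks makes $\mathrm{KL}(A,\cdot)$ well-behaved enough that a product of $\mathrm{KL}$-trivial maps is $\mathrm{KL}$-trivial after passing to the quotient $\prod/\bigoplus$, and this is precisely the K-theoretic bookkeeping carried out in full in Section \ref{section-stable-uniq}; with that in hand, the remainder of the argument is routine.
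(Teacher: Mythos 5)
The paper does not actually prove Theorem \ref{uniq-hom-1}: it is introduced with the sentence ``The following theorem is well known'' and is used as a black box (it is the Dadarlat--Eilers/Lin uniform stable uniqueness theorem, in which the multiplicity $n$ depends only on $\mathcal F$, $\eps$ and the fullness modulus $L$, not on the codomain). So there is no in-paper proof to compare yours against. That said, your sketch is the standard derivation of the uniform statement from the non-uniform absorption theorem, and it is correct in outline; it also closely parallels the arguments the paper does write out for the neighbouring results (the proof of Theorem \ref{uniq-a-hom}, and Propositions \ref{no-1-uniq-hom-0} and \ref{stable-uniq-full} in Section \ref{section-stable-uniq}): negate over $n$, assemble the counterexamples into $\prod D_n/\bigoplus D_n$, verify that $L$-fullness passes to the limit map exactly as the paper does for $\Sigma$, apply absorption to get a unitary over $k+1$ copies for some fixed $k$, lift it coordinatewise, and contradict the choice of $(D_n,\iota_n,\phi_n,\psi_n)$. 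Two small points you should make explicit. First, the contradiction step is not literally a ``truncation'': for $n\geq k$ you must pad the lifted unitary $u_n\in\mathrm M_{k+1}(D_n)$ by $1_{n-k}$ to obtain a unitary in $\mathrm M_{n+1}(D_n)$ intertwining the sums with $n$ copies of $\iota_n$ (the extra diagonal blocks $\iota_n(c)$ match exactly), which is what the negated statement forbids. Second, since admissible codomain algebras of finite type are unital, $\prod D_n/\bigoplus D_n$ is unital and the stabilization/separable-subalgebra device you invoke is not needed here (it is needed in Section \ref{section-stable-uniq} precisely because the codomains there are non-unital); what you do need, and correctly flag, is the finite-type hypothesis to pass from $[\phi_n]=[\psi_n]$ in each $\mathrm{KL}(A,D_n)$ to $[\Phi]=[\Psi]$ in $\mathrm{KL}(A,\prod D_n/\bigoplus D_n)$, together with the unital version of the absorption theorem for the unital full map $\iota^{(\infty)}$.
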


\begin{thm}\label{uniq-a-hom}
Let $A$ be a separable, unital, exact C*-algebra that satisfies the UCT. Let $L: A^1_+\setminus\{0\} \to \mathbb N$ be a map. 

Let $\mathcal F \subset A$ be a finite subset, and let $\eps>0$. Then there exist $n\in\mathbb N$, finite sets $\mathcal G, \mathcal H \subset A$, a constant $\delta >0$, and a finite set $\mathcal P\subset \mathrm{Proj}_\infty(A)$ such that, for any admissible codomain algebra $D$ of finite type, any unital $\mathcal G$-$\delta$-multiplicative maps $\phi, \psi, \iota: A\to D$, if 
\begin{enumerate}
\item $[\phi(p)]=[\psi(p)]$ for all $p\in\mathcal P$,
\item $\iota(a)$ is $L(a)$-full for any $a \in \mathcal H$,
\end{enumerate}
then there exists a unitary $u\in \mathrm{M}_{n+1}(D)$ such that
$$\|u\mathrm{diag}(\phi(c), \underbrace{\iota(c), ..., \iota(c)}_n)u^* - \mathrm{diag}(\psi(c), \underbrace{\iota(c), ..., \iota(c)}_n) \| < \eps,\quad c\in\mathcal F.$$
\end{thm}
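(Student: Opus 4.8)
The plan is to deduce this from Theorem \ref{uniq-hom-1} by the standard reduction to a sequence algebra. Suppose the conclusion were to fail for some finite $\mathcal F\subset A$ and some $\eps>0$. I would fix an increasing sequence of finite sets $\mathcal G_1\subseteq\mathcal G_2\subseteq\cdots$ with dense union in $A$, a dense sequence $(a_j)$ in $A^1_+\setminus\{0\}$, an increasing sequence of finite sets $\mathcal P_1\subseteq\mathcal P_2\subseteq\cdots$ in $\mathrm{Proj}_\infty(A)$ (together with, after tensoring by the coefficient algebras $C_m$, enough classes to detect the total $K$-theory of $A$), and a null sequence $\delta_k\downarrow 0$. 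Applying the assumed failure with $n=k$, $\mathcal G=\mathcal G_k$, $\mathcal H=\{a_1,\dots,a_k\}$, $\delta=\delta_k$ and $\mathcal P=\mathcal P_k$ would yield, for each $k$, an admissible codomain $D_k$ of finite type and unital $\mathcal G_k$-$\delta_k$-multiplicative (completely positive) maps $\phi_k,\psi_k,\iota_k\colon A\to D_k$ with $[\phi_k(p)]=[\psi_k(p)]$ for $p\in\mathcal P_k$ and $\iota_k(a)$ being $L(a)$-full for $a\in\{a_1,\dots,a_k\}$, yet such that no unitary $u\in\mathrm{M}_{k+1}(D_k)$ brings $\mathrm{diag}(\phi_k(c),\iota_k(c),\dots,\iota_k(c))$ (with $k$ copies of $\iota_k$) within $\eps$ of $\mathrm{diag}(\psi_k(c),\iota_k(c),\dots,\iota_k(c))$ for all $c\in\mathcal F$.

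Next I would pass to the sequence algebra $D:=\prod_k D_k/\bigoplus_k D_k$, again an admissible codomain of finite type (since the maps are unital, each $D_k$, and hence $D$, is unital). Because $\delta_k\to 0$ and $\bigcup_k\mathcal G_k$ is dense in $A$, the sequences $(\phi_k)$, $(\psi_k)$, $(\iota_k)$ induce honest unital $\ast$-homomorphisms $\Phi,\Psi,\mathrm I\colon A\to D$ (nuclear, as the $\mathcal G_k$-$\delta_k$-multiplicative maps may be taken nuclear). Three points then have to be checked. First, $\mathrm I$ is $L$-full: every $a\in A^+\setminus\{0\}$ is a functional-calculus perturbation of some $a_j$, and $\mathrm I(a_j)$ inherits $L(a_j)$-fullness in the limit from the $\iota_k(a_j)$ with $k\ge j$, so a routine perturbation argument gives $L$-fullness of $\mathrm I(a)$ for every $a$ (at worst after enlarging $L$ by a fixed bounded amount, which can be absorbed at the outset). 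Second, $\Phi(1_A)=1_D=\Psi(1_A)$, so hypothesis (2) of Theorem \ref{uniq-hom-1} holds automatically. Third, one needs $[\Phi]=[\Psi]$ in $\mathrm{KL}(A,D)$: for large $k$ the approximately multiplicative maps $\phi_k,\psi_k$ induce well-defined homomorphisms on the finitely many $K$-theory classes coming from $\mathcal P_k$ and its images over the $A\otimes C_m$, the equalities $[\phi_k(p)]=[\psi_k(p)]$ then force $\Phi$ and $\Psi$ to agree on $\underline{K}(A)$, and since $A$ is exact and satisfies the UCT while $D$ is of finite type, this agreement determines the class in $\mathrm{KL}(A,D)$, giving $[\Phi]=[\Psi]$.

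I would then apply Theorem \ref{uniq-hom-1} to $A$, $L$, $\mathcal F$ and $\eps/2$, with the homomorphisms $\Phi,\Psi$ and the $L$-full homomorphism $\mathrm I$, to obtain some $N\in\mathbb N$ and a unitary $U\in\mathrm{M}_{N+1}(D)$ with $\|U\,\mathrm{diag}(\Phi(c),\mathrm I(c),\dots,\mathrm I(c))\,U^*-\mathrm{diag}(\Psi(c),\mathrm I(c),\dots,\mathrm I(c))\|<\eps/2$ (with $N$ copies of $\mathrm I$) for all $c\in\mathcal F$. Lifting $U$ to $(\tilde U_k)_k\in\prod_k\mathrm{M}_{N+1}(D_k)$ and correcting by polar decomposition, I may take each $\tilde U_k$ to be an honest unitary for $k$ large. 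Then for all sufficiently large $k$ --- in particular $k\ge N$, with $\delta_k$ small enough that the multiplicative defects are negligible at the scale $\eps$ --- the unitary $\tilde U_k\oplus 1_{\mathrm{M}_{k-N}(D_k)}\in\mathrm{M}_{k+1}(D_k)$ brings $\mathrm{diag}(\phi_k(c),\iota_k(c),\dots,\iota_k(c))$ (with $k$ copies) within $\eps$ of $\mathrm{diag}(\psi_k(c),\iota_k(c),\dots,\iota_k(c))$ for all $c\in\mathcal F$, contradicting the choice of $D_k$.

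I expect the main obstacle to be the third verification above --- extracting $[\Phi]=[\Psi]$ in $\mathrm{KL}(A,D)$ from the finitely many projection conditions --- which is precisely where the total-$K$-theory formalism (the sets $\underline{\mathrm{Proj}}_\infty$ and the coefficient algebras $C_m$) and the finite-type hypothesis on the codomain do the real work, ensuring that $\mathrm{KL}(A,D)$ is detected on the chosen classes. The upgrading of fullness to the limit and the final unitary lifting and padding are routine, but they should be set up with care.
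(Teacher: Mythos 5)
Your proposal is correct and follows essentially the same route as the paper: negate the statement, assemble the counterexample data into homomorphisms $\Phi,\Psi,\mathrm{I}$ into the sequence algebra $\prod_k D_k/\bigoplus_k D_k$, check fullness of the limit map and $[\Phi]=[\Psi]$ in $\mathrm{KL}$, apply Theorem \ref{uniq-hom-1} there, and lift the resulting unitary to contradict the choice of the $D_k$. The only difference is bookkeeping: the paper fixes $n$ in advance by applying Theorem \ref{uniq-hom-1} to $(\mathcal F,\eps)$ and runs the negation with that fixed $n$, whereas you let $n=k$ grow and pad the lifted unitary with an identity block at the end --- an equally valid variant, which even spares you from needing the limit map to be full with respect to exactly the original $L$.
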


\begin{proof}
Assume the statement were not true. There is a pair $(\mathcal F, \eps)$ such that for any $n$, any finite sets $\mathcal G, \mathcal H\subset A$, any constant $\delta$, any finite set $\mathcal P\subset\mathrm{Proj}_\infty(A)$, there is an admissible codomain algebra $D$ and unital $\mathcal G$-$\delta$-multiplicative maps satisfying 
\begin{enumerate}
\item $[\phi(p)]=[\psi(p)]$ for all $p\in\mathcal P$,
\item $\iota(a)$ is $L$-full for any $a \in \mathcal H$,
\end{enumerate}
but
$$\max_{c\in\mathcal F}\|u(\phi(c)\oplus(\bigoplus^n \iota(c)))u^* - \psi(c)\oplus(\bigoplus^n \iota(c)) \| \geq \eps$$
for any unitary $u\in \mathrm{M}_{n+1}(D)$.

Applying Theorem \ref{uniq-hom-1} to $(\mathcal F, \eps)$, one obtains $n$. Fix $(\mathcal F, \eps)$ and $n$. Then choose sequences
\begin{enumerate}
\item $\mathcal G_1\subset \mathcal G_{2}\subset\cdots $ with $\overline{\bigcup_{k=1}^\infty \mathcal G_k} =A$,
\item $\mathcal H_1\subset \mathcal H_{2}\subset\cdots $ with $\overline{\bigcup_{k=1}^\infty \mathcal H_k} =A^+$,
\item $\delta_1>\delta_2>\cdots$ with $\lim_{k\to\infty}\delta_k=0$,
\item $\mathcal P_1 \subset \mathcal P_2 \subset \cdots$ with $\bigcup\mathcal P_k = \underline{\mathrm{K}}(A)$.
\end{enumerate}
There are admissible codomain algebras $D_k$ and unital $\mathcal G_k$-$\delta_k$-multiplicative maps $\phi_k, \psi_k, \iota_k: A\to D_k$ satisfying 
\begin{enumerate}
\item $[\phi_k(p)]=[\psi_k(p)]$ for all $p\in\mathcal P_k$,
\item $\iota_k(a)$ is $\Delta$-full for any $a \in \mathcal H_k$,
\end{enumerate}
but
\begin{equation}\label{assumption}
\max_{c\in\mathcal F}\|u\mathrm{diag}\{\phi_k(c), \underbrace{\iota_k(c), ..., \iota_k(c)}_n\}u^* - \mathrm{diag}\{\psi_k(c), \underbrace{\iota_k(c), ..., \iota_k(c)}_n\} \| \geq \eps
\end{equation}
for any unitary $u\in \mathrm{M}_{n+1}(D_k)$.

Consider the homomorphisms $\Phi, \Psi, \Sigma: A \to \prod D_k/\bigoplus D_k$ define by $\prod_{k=1}^\infty \phi_k$,  $\prod_{k=1}^\infty \psi_k$,   and $\prod_{k=1}^\infty \iota_k$, respectively. Then one has
$$[\Phi]=[\Psi]\quad\textrm{in $KL(A, \prod D_k/\bigoplus D_k)$}.$$
Moreover, the map $\Sigma$ is $L$-full. Indeed, let $a$ be a non-zero element in $\bigcup \mathcal H_k$ and assume that $a\in\mathcal H_{k_0}$. Since $\mathcal H_k\subset\mathcal H_{k+1}$, for each $\mathcal H_{m}$, $m=k_0, k_0+1, ...,$ there exist contractions $x_{m, 1}, x_{m, 2}, ..., x_{m, \Delta(a)}$ such that 
$$1_{D_m}=x^*_{m, 1}\iota_m(a)x_{m, 1}+x^*_{m, 2}\iota_m(a)x_{m, 2}+\cdots+x^*_{m, L(a)}\iota_m(a)x_{m, L(a)}.$$
Consider the contractions
$$x_i:=(\underbrace{0, ..., 0}_{k_0-1}, x_{k_0, j}, x_{k_0+1, j}, ... ),\quad j=1, 2, ..., L(a).$$
One has
$$1_{\prod D_k/\bigoplus D_k}=x^*_1\Sigma(a)x_1+x^*_2\Sigma(a)x_2+\cdots+x^*_{L(a)}\Sigma(a)x_{L(a)}.$$
Since $A^+=\overline{\bigcup \mathcal H_k}$, one has that $\Sigma$ is $L$-full. By Theorem \ref{uniq-hom-1}, there is a unitary $$U\in \mathrm{M}_{n+1}(\prod D_k/\bigoplus D_k)$$ such that 
$$\|U^*\mathrm{diag}\{\Phi(a),\underbrace{\Sigma(a), ..., \Sigma(a)}_n\}U - \mathrm{diag}\{\Psi(a), \underbrace{\Sigma(a), ..., \Sigma(a)}_n\}\|<\eps,\quad a\in\mathcal F.$$
Then, by lifting $U$ to a unitary in $\prod D_k$, one obtains a contradiction to \eqref{assumption}.
\end{proof}

\begin{cor}\label{uniq-Q-1}
Let $A$ be a unital separable exact C*-algebra satisfying the UCT. Let $\rho: A^+\to (0, \infty)$ be a map.

Let $\mathcal F\subset A$ be a finite set, and let $\eps>0$. Then there is $n\in\mathbb N$, finite sets $\mathcal P\subset\mathrm{Proj}_\infty(A)$, $\mathcal G, \mathcal H\subset A$ and $\delta>0$ such that if $\phi, \psi, \sigma: A \to Q$ are unital $\mathcal G$-$\delta$-multiplicative map satisfying 
\begin{enumerate}
\item $[\phi(p)]_0=[\psi(p)]_0,$ $p\in\mathcal P$,  and
\item $\mathrm{tr}(\sigma(a))> \rho(a)$, $a\in \mathcal H$,
\end{enumerate}
then there is a unitary $u\in\mathrm{M}_{n+1}(Q)$ such that
$$\|u^*\mathrm{diag}(\phi(a), \underbrace{\sigma(a), \sigma(a), ..., \sigma(a)}_n)u - \mathrm{diag}(\psi(a), \underbrace{\sigma(a), \sigma(a), ..., \sigma(a)}_n)\|<\eps, \quad a\in\mathcal F.$$
\end{cor}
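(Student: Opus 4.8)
The plan is to deduce this from Theorem \ref{uniq-a-hom} applied with codomain $D=Q$ (which is a unital separable exact UCT algebra and an admissible codomain algebra of finite type). The only real content is to convert the trace lower bound $\mathrm{tr}(\sigma(a))>\rho(a)$ into a fullness condition ``$\sigma(a)$ is $L(a)$-full'' for a map $L\colon A^1_+\setminus\{0\}\to\mathbb N$ manufactured solely out of $\rho$. Once such an $L$ is available, I would apply Theorem \ref{uniq-a-hom} to $(\mathcal F,\eps,L)$ to obtain $n$ together with $\mathcal G_0,\mathcal H_0,\delta_0,\mathcal P$, and then take as the outputs of the corollary the same $n$ and $\mathcal P$, a finite set $\mathcal G\supseteq\mathcal G_0$ enlarged to contain, for each relevant $a$, the elements $a^{1/2},a,a^2$, a finite set $\mathcal H\supseteq\mathcal H_0$, and a small enough $\delta\le\delta_0$; with the fullness translation in hand the hypotheses of Theorem \ref{uniq-a-hom} are then immediate, and its conclusion is exactly the asserted conclusion.

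The key ingredient is a quantitative fullness statement in $Q$: for each $\rho>0$ there is $N(\rho)\in\mathbb N$ such that every positive contraction $c\in Q$ with $\mathrm{tr}(c)>\rho$ admits contractions $x_1,\dots,x_{N(\rho)}\in Q$ with $x_1^*cx_1+\cdots+x_{N(\rho)}^*cx_{N(\rho)}=1_Q$. To prove this I would first approximate $c$ within a small $\kappa=\kappa(\rho)$ by a positive contraction $c_0$ in a matrix subalgebra $\mathrm{M}_k(\mathbb C)\subseteq Q$ with $\mathrm{tr}(c_0)>\rho-\kappa$. Let $p\in\mathrm{M}_k(\mathbb C)$ be the spectral projection of $c_0$ for $[\rho/2,1]$; then $c_0\ge\tfrac{\rho}{2}p$ and a trace count gives $\mathrm{tr}(p)\gtrsim\rho$. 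Since $p$ is a projection in a matrix algebra I can ``spread it out'': there are $m=O(1/\rho)$ permutation unitaries $u_1,\dots,u_m\in\mathrm{M}_k(\mathbb C)$ (a fixed spreading permutation composed with cyclic shifts) with $u_1pu_1^*+\cdots+u_mpu_m^*\ge 1_Q$, whence $\bar c_0:=\tfrac1m\sum_j u_jc_0u_j^*$ is a positive contraction bounded below by $\lambda\,1_Q$ for some $\lambda=\lambda(\rho)>0$. A positive element $\ge\lambda\,1_Q$ is trivially full with a slight over-shoot: for a fixed $\eta>0$ and $N':=\lceil(1+\eta)/\lambda\rceil$, the elements $x_i:=\tfrac{\sqrt{1+\eta}}{\sqrt{N'}}\bar c_0^{-1/2}$ are contractions with $\sum_{i=1}^{N'}x_i^*\bar c_0x_i=(1+\eta)1_Q$. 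Writing $\bar c_0=\tfrac1m\sum_ju_jc_0u_j^*$ and absorbing the $u_j^*$ gives $(1+\eta)1_Q=\sum_{i,j}y_{ij}^*c_0y_{ij}$ with $y_{ij}:=\tfrac1{\sqrt m}u_j^*x_i$ all contractions; replacing $c_0$ by $c$ perturbs this sum by at most $N'\kappa$, so for $\kappa$ small the resulting element $b$ satisfies $b\ge 1_Q$, and then $y_{ij}b^{-1/2}$ ($\|b^{-1/2}\|\le 1$) are contractions with $\sum(y_{ij}b^{-1/2})^*c(y_{ij}b^{-1/2})=1_Q$. Since $m,\lambda,N',\kappa$ depend only on $\rho$, I may set $N(\rho):=N'm$. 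Finally, put $L(a):=N(\rho(a))$ (rescaling by $\|a\|$ if $\mathcal H$ is not taken to consist of contractions); a routine perturbation argument, using $a^{1/2},a,a^2\in\mathcal G$ and $\delta$ small, shows that for $a\in\mathcal H$ the element $\sigma(a)$ is within a controlled distance of a positive contraction of trace $>\rho(a)$, and $L$-fullness survives this perturbation by the same $b^{-1/2}$ device; so $\sigma(a)$ is $L(a)$-full in $Q$.

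The step I expect to be the main obstacle is precisely this uniformity. Because $\sigma$ ranges over all unital $\mathcal G$-$\delta$-multiplicative maps (and the sequence/ultraproduct argument buried in Theorem \ref{uniq-a-hom} needs one $L$ that works simultaneously for an entire sequence of such maps into $Q$), the number of terms $L(a)$ must depend on $a$ only through the number $\rho(a)$, never through the particular element $\sigma(a)$. One cannot achieve this by simply averaging $\sigma(a)$ to the scalar $\mathrm{tr}(\sigma(a))1_Q$, since depolarizing a positive contraction in $\mathrm{M}_k(\mathbb C)$ requires a number of conjugating unitaries that grows with $k$; the crux of the argument above is that it suffices to make an average conjugate merely \emph{bounded below}, and this needs only $O(1/\rho)$ permutations because a single spectral projection already has trace comparable to $\rho$. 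The remaining delicate point, keeping every $x_i$ a genuine contraction through the matrix-algebra approximation rather than merely of norm slightly above $1$, is what forces the small over-shoot to $(1+\eta)1_Q$ and the final division by $b^{1/2}$.
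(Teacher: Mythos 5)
Your proposal is correct and follows essentially the same route as the paper: reduce to Theorem \ref{uniq-a-hom} by converting the trace lower bound $\mathrm{tr}(\sigma(a))>\rho(a)$ into an $L(a)$-fullness condition with $L$ depending only on $\rho(a)$. The only difference is that the paper obtains this conversion by citing Lemma 12.4 of \cite{GLN-TAS}, whereas you prove the quantitative fullness statement in $Q$ directly (matrix approximation, spectral projection, spreading by $O(1/\rho)$ permutation unitaries, and the $b^{-1/2}$ correction), and that self-contained argument is sound.
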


\begin{proof}
By Lemma 12.4 of \cite{GLN-TAS},  there is a function $L: A^+\setminus\{0\} \to \mathbb N$ such that if $\mathrm{tr}(b)> \rho(a)$ for some $b\in Q^+$, then $b$ is $L(a)$-full. Then the statement follows directly from Theorem \ref{uniq-a-hom}.
\end{proof}

\begin{cor}\label{uniq-Q}
Let $A$ be a separable exact C*-algebra (with or without a unit) satisfying the UCT. Let $\rho: A^+\setminus\{0\}\to (0, \infty)$ be a map.

Let $\mathcal F\subset A$ be a finite set, and let $\eps>0$. Then there is $n\in\mathbb N$, finite sets $\mathcal P\subset\mathrm{Proj}_\infty(\widetilde{A})\cap\Kzero(A)$, $\mathcal G, \mathcal H\subset A$ and $\delta>0$ such that if $\phi, \psi, \sigma: A \to Q$ are $\mathcal G$-$\delta$-multiplicative map satisfying 
\begin{enumerate}
\item $[\phi(p)]_0=[\psi(p)]_0,$ $p\in\mathcal P$,  and
\item $\mathrm{tr}(\sigma(a))> \rho(a)$, $a\in \mathcal H$,
\end{enumerate}
then there is a unitary $u\in\mathrm{M}_{n+1}(Q)$ such that
$$\|u^*\mathrm{diag}(\phi(a), \underbrace{\sigma(a), \sigma(a), ..., \sigma(a)}_n)u - \mathrm{diag}(\psi(a), \underbrace{\sigma(a), \sigma(a), ..., \sigma(a)}_n)\|<\eps, \quad a\in\mathcal F.$$

\end{cor}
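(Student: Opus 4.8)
The plan is to reduce the non-unital statement (Corollary \ref{uniq-Q}) to the unital one (Corollary \ref{uniq-Q-1}) by passing to unitizations, exactly as Corollary \ref{uniq-Q-1} was reduced to Theorem \ref{uniq-a-hom}. First I would unitize everything: replace $A$ by $\widetilde A$, which is again separable, exact, unital, and satisfies the UCT. The maps $\phi,\psi,\sigma\colon A\to Q$ are only $\mathcal G$-$\delta$-multiplicative (and not assumed unital), so I cannot unitize them naively; instead I would first apply a standard functional-calculus correction to replace each of $\phi,\psi,\sigma$ by genuine $*$-homomorphisms (or at least by maps whose ``unit images'' $\phi(e),\psi(e),\sigma(e)$ for an approximate unit $e$ are close to a projection), at the cost of enlarging $\mathcal G$ and shrinking $\delta$; this is the usual perturbation argument and I would treat it as routine. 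Having arranged that the images of an approximate identity are projections, I would extend $\phi,\psi,\sigma$ to unital maps $\widetilde\phi,\widetilde\psi,\widetilde\sigma\colon\widetilde A\to Q$ by sending $1_{\widetilde A}$ to $1_Q$ on the orthogonal complement; more precisely, on a matrix amplification to absorb the defect projections, using that $Q$ has cancellation and is strongly self-absorbing so that any two projections of the same (rational) trace value are Murray--von Neumann equivalent.

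Next I would feed $\widetilde A$, the finite set $\mathcal F\cup\{1\}$, and $\eps$ into Corollary \ref{uniq-Q-1}, obtaining $n$, a finite set $\widetilde{\mathcal P}\subset\mathrm{Proj}_\infty(\widetilde A)$, finite sets $\widetilde{\mathcal G},\widetilde{\mathcal H}\subset\widetilde A$, and $\delta>0$. I would then pull these data back to $A$: take $\mathcal G,\mathcal H\subset A$ large enough (together with $\delta$ small enough) that $\mathcal G$-$\delta$-multiplicativity of $\phi,\psi,\sigma$ on $A$ yields $\widetilde{\mathcal G}$-$\delta'$-multiplicativity of $\widetilde\phi,\widetilde\psi,\widetilde\sigma$ on $\widetilde A$; and translate the $\mathrm{K}_0$ hypothesis $[\phi(p)]_0=[\psi(p)]_0$ for $p\in\mathcal P\subset\mathrm{Proj}_\infty(\widetilde A)\cap\mathrm K_0(A)$ into $[\widetilde\phi(p)]_0=[\widetilde\psi(p)]_0$ for $p\in\widetilde{\mathcal P}$. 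The point here is that every class in $\mathrm K_0(\widetilde A)$ splits as a scalar part (coming from $\mathrm K_0(\Comp 1)$) plus a part detected by $\mathrm{Proj}_\infty(\widetilde A)\cap\mathrm K_0(A)$; on the scalar part $\widetilde\phi$ and $\widetilde\psi$ agree automatically because they were built to be unital and to have the same behaviour on the adjoined unit, so the hypothesis on $\mathcal P$ suffices to force $[\widetilde\phi(p)]_0=[\widetilde\psi(p)]_0$ for all $p\in\widetilde{\mathcal P}$. The trace condition transfers directly: if $\mathrm{tr}(\sigma(a))>\rho(a)$ for $a\in\mathcal H$ then the same holds for $\widetilde\sigma$ on $\mathcal H\subset A\subset\widetilde A$, so I would define $\widetilde\rho$ on $\widetilde A^+\setminus\{0\}$ by $\widetilde\rho|_{A}=\rho$ and $\widetilde\rho(1)=1/2$, say, and check that $\widetilde\sigma(1_{\widetilde A})=1_Q$ has trace $1>\widetilde\rho(1)$.

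Then Corollary \ref{uniq-Q-1} produces a unitary $u\in\mathrm M_{n+1}(Q)$ implementing the desired approximate equality for $c\in\mathcal F\subset\widetilde A$, hence in particular for $c\in\mathcal F\subset A$; reversing the initial functional-calculus perturbations (which only move things by a controlled amount, absorbed by choosing $\eps$ slightly smaller at the outset) gives the conclusion for the original $\phi,\psi,\sigma$. The main obstacle I anticipate is bookkeeping the unitization of non-unital, only-approximately-multiplicative maps into $Q$ and tracking the defect projections through the Murray--von Neumann equivalences in $Q$ so that the $\mathrm K_0$-data really does match on the nose on the scalar summand; this is where the strong self-absorption and cancellation properties of $Q$, together with the fact that $\mathrm{tr}$ separates $\mathrm K_0(Q)\cong\mathbb Q$, do the work. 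Everything else is a routine transfer of quantifiers of the kind already carried out in the proof of Corollary \ref{uniq-Q-1}.
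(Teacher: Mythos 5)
Your overall strategy (unitize and feed $\widetilde A$ into Corollary \ref{uniq-Q-1}) is the same as the paper's, but there is a concrete gap in how you transfer the fullness data. Corollary \ref{uniq-Q-1} must be applied to $\widetilde A$ with a function $\widetilde\rho$ defined on \emph{all} of $\widetilde A^+\setminus\{0\}$, and it hands back a finite set $\widetilde{\mathcal H}\subset\widetilde A^+$ on which you must verify $\mathrm{tr}(\widetilde\sigma(h))>\widetilde\rho(h)$. Your $\widetilde\rho$ is only defined on $A^+$ and on the unit, whereas $\widetilde A^+\setminus\{0\}$ contains elements $a+\lambda 1$ with $a\in A$ not positive and $\lambda>0$; worse, the elements of $\widetilde{\mathcal H}$ need not lie in $A$ at all, while your hypothesis only controls $\mathrm{tr}(\sigma(a))$ for $a$ in a finite set $\mathcal H\subset A$ that the non-unital corollary gets to choose. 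So the claim that "the trace condition transfers directly" does not close the loop. The paper's fix is exactly at this point: since $A$ is nonunital it is an essential ideal of $\widetilde A$, so every nonzero $a\in\widetilde A^+$ dominates a nonzero element of $A^+$; this gives a map $J:\widetilde A^+\setminus\{0\}\to A^+\setminus\{0\}$ with $J(a)\leq a$, one sets $\widetilde\rho:=\rho\circ J$ and $\mathcal H:=J(\widetilde{\mathcal H})$, and then $\mathrm{tr}(\widetilde\sigma(h))\geq\mathrm{tr}(\sigma(J(h)))>\rho(J(h))=\widetilde\rho(h)$ for $h\in\widetilde{\mathcal H}$. Without this minorization device your argument cannot be completed as stated.

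A secondary point: the "functional-calculus correction" you propose before unitizing is both unnecessary and, in the strong form stated, unjustified --- one cannot in general replace $\mathcal G$-$\delta$-multiplicative maps by genuine homomorphisms (that is precisely what such uniqueness theorems are for), and straightening $\phi(e)$ into a projection plus absorbing "defect projections" by matrix amplification would change the maps and force you to re-verify the $\Kzero$ and trace hypotheses for the perturbed maps. None of this is needed: the naive unitization $\widetilde\theta(a+\lambda 1)=\theta(a)+\lambda 1_Q$ of a c.p.c.\ map is u.c.p., and its multiplicativity defect on $\widetilde A$ is controlled by that of $\theta$ on $A$, which is exactly how the paper chooses $\mathcal G$ and $\delta$. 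Your handling of the $\Kzero$-hypothesis (scalar part matched automatically by unitality, so $\mathcal P=\widetilde{\mathcal P}$ suffices) agrees with the paper and is fine.
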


\begin{proof}
If $A$ is unital, then it follows from Corollary \ref{uniq-Q-1} directly. Then let us assume that $A$ is nonunital.

Consider the unitization $\widetilde{A}$. Since $A$ is nonunital, $A$ is an essential ideal of $\widetilde{A}$ (as if $(a, -\lambda)\perp A$, then a straightforward calculation shows that $a$ must be the unit of $A$ and $\lambda=1$), and therefore $a^{\frac{1}{2}}Aa^{\frac{1}{2}}\neq\{0\}$ for any nonzero positive element $a\in\widetilde{A}$. Then there is a nonzero positive element $a'\in A$ such that $a' \leq a$. So there is a map $J: \widetilde{A}^+\setminus\{0\}\to A^+\setminus\{0\}$ such that 
$$a\geq J(a),\quad a\in \widetilde{A}^+\setminus\{0\}.$$
Then put $$\widetilde{\rho}(a) = \rho(J(a)),\quad a\in \widetilde{A}^+\setminus\{0\}.$$

Applying Corollary \ref{uniq-Q-1} to $(\mathcal F, \eps)$ with respect to $\widetilde{A}$ and $\widetilde{\rho}$, one obtains $n$, $\widetilde{\mathcal P}$, $\widetilde{\mathcal G}, \widetilde{\mathcal H} \subset \widetilde{A}$, and $\delta>0$. Choose finite subset $\mathcal G \subset A$ such that if $\theta: A\to Q$ is $\mathcal G$-$\delta$-multiplicative, then the unitization $\widetilde{\theta}: \widetilde{A} \to Q$ is $\widetilde{\mathcal G}$-$\delta$-multiplicative; set $$\mathcal H=J(\widetilde{\mathcal H});$$ since $\Kzero(\widetilde{A}) = \Kzero(A)\oplus\Kzero(\Comp 1_{\widetilde{A}})$, one may assume that any element of $\widetilde{\mathcal P}$ has the form of $p - q$ with $\pi(p) = \pi(q)$, and thus set $\mathcal P = \widetilde{\mathcal P} \subset \mathrm{Proj}_\infty(\widetilde{A})\cap\Kzero(A)$.

It now follows that the tuple $(n, \mathcal P, \mathcal G, \mathcal H)$ satisfies the conclusion of the corollary.

Indeed, let $\phi, \psi, \sigma: A\to Q$ be maps satisfying the assumption of the corollary. Consider the unitizations $\widetilde{\phi}, \widetilde{\psi}, \widetilde{\sigma}: \widetilde{A}\to Q$. Then, for each $a\in\widetilde{\mathcal H}$, 
$$\mathrm{tr}(\widetilde{\sigma}(a)) \geq \mathrm{tr}(\widetilde{\sigma}(J(a))) =  \mathrm{tr}(\sigma(J(a))) \geq \rho(J(a)) =  \widetilde{\rho}(a).$$
Since $\mathcal P = \widetilde{\mathcal P}$, it is clear that $$[\widetilde{\phi}(p)]_0 = [\widetilde{\psi}(p)]_0,\quad p\in \widetilde{\mathcal P}.$$
Then it follows from Corollary \ref{uniq-Q-1} that there is a unitary $u\in\mathrm{M}_{n+1}(Q)$ such that
$$\|u^*\mathrm{diag}(\widetilde{\phi}(a), \underbrace{\widetilde{\sigma}(a), \widetilde{\sigma}(a), ..., \widetilde{\sigma}(a)}_n)u - \mathrm{diag}(\widetilde{\psi}(a), \underbrace{\widetilde{\sigma}(a), \widetilde{\sigma}(a), ..., \widetilde{\sigma}(a)}_n)\|<\eps, \quad a\in\mathcal F.$$
Since $\widetilde{\phi}, \widetilde{\psi}, \widetilde{\sigma}$ are extensions of $\phi, \psi, \sigma$ respectively, one has
$$\|u^*\mathrm{diag}({\phi}(a), \underbrace{{\sigma}(a), {\sigma}(a), ..., {\sigma}(a)}_n)u - \mathrm{diag}({\psi}(a), \underbrace{{\sigma}(a), {\sigma}(a), ..., {\sigma}(a)}_n)\|<\eps, \quad a\in\mathcal F,$$
as desired.
\end{proof}

\begin{cor}\label{uniq-Q-path}
Let $A$ be a separable exact C*-algebra (with or without unit) satisfying the UCT. Let $\rho: A^+\setminus\{0\}\to (0, \infty)$ be a map.

Let $\mathcal F\subset A$ be a finite set, and let $\eps>0$. Then there are finite sets $\mathcal P\subset\mathrm{Proj}_\infty(\widetilde{A})\cap\Kzero(A)$, $\mathcal G, \mathcal H\subset A$ and $\delta>0$ such that if $\phi_0, \phi_1: A \to Q$ are $\mathcal G$-$\delta$-multiplicative maps satisfying 
\begin{enumerate}
\item $[\phi_0(p)]_0=[\phi_1(p)]_0,$ $p\in\mathcal P$,  and
\item $\mathrm{tr}(\phi_0(a))> \rho(a)$ and $\mathrm{tr}(\phi_1(a))> \rho(a)$, $a\in \mathcal H$
\end{enumerate}
then there is a path $\{\phi_t$, $t\in[0, 1]\}$ connecting $\phi_0$ and $\phi_1$ such that
$\phi_t$ is $\mathcal F$-$\eps$-multiplicative for each $t\in [0, 1]$, and 
$$| \mathrm{tr}(\phi_0(a)-\phi_t(a)) | < | \mathrm{tr}(\phi_0(a)-\phi_1(a)) | + 2\eps,\quad a \in\mathcal F,\ t\in[0, 1].$$
\end{cor}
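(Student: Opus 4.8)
The plan is to deduce the path statement from the unitary statement of Corollary \ref{uniq-Q}, using the standard Warsaw-style trick of encoding a pair of maps as a single map into $\mathrm{M}_{n+1}(Q)$ and rotating along a unitary path. First I would apply Corollary \ref{uniq-Q} to the pair $(\mathcal F, \eps')$ for a suitably small $\eps'$ (to be fixed at the end), obtaining $n$, $\mathcal P$, $\mathcal G$, $\mathcal H$, $\delta$; these will be (after possibly enlarging $\mathcal G$ and $\mathcal H$ and shrinking $\delta$) the data claimed in the statement. Given $\phi_0,\phi_1$ satisfying hypotheses (1) and (2), note that each $\phi_i$ is itself $L(a)$-full on $\mathcal H$ in the sense needed, because $\mathrm{tr}(\phi_i(a))>\rho(a)$ forces, via Lemma 12.4 of \cite{GLN-TAS} as used in Corollary \ref{uniq-Q-1}, that $\phi_i(a)$ is full with a uniform bound. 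So I may take $\sigma=\phi_0$ (or $\phi_1$) in the role of the full map. Corollary \ref{uniq-Q} then yields a unitary $u\in\mathrm{M}_{n+1}(Q)$ with
$$\|u^*\,\mathrm{diag}(\phi_0(a),\underbrace{\phi_0(a),\dots,\phi_0(a)}_n)\,u - \mathrm{diag}(\phi_1(a),\underbrace{\phi_0(a),\dots,\phi_0(a)}_n)\|<\eps',\quad a\in\mathcal F.$$

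Next I would build the path. Since $Q$ is the universal UHF algebra, it has real rank zero and in particular $\mathrm{M}_{n+1}(Q)$ has connected unitary group with a Lipschitz-type bound: any unitary is a product of finitely many exponentials, so there is a continuous path $u_t$ from $1$ to $u$ in $\mathcal U(\mathrm{M}_{n+1}(Q))$. Define
$$\phi_t := (\text{compression to the first corner of})\ \ u_t^*\,\mathrm{diag}(\phi_0(\cdot),\phi_0(\cdot),\dots,\phi_0(\cdot))\,u_t$$
after the appropriate reparametrisation: run $t\in[0,1/2]$ to conjugate by $u_t$, arriving at $u^*\mathrm{diag}(\phi_0,\dots,\phi_0)u$, which by the displayed estimate is within $\eps'$ of $\mathrm{diag}(\phi_1,\phi_0,\dots,\phi_0)$; then run $t\in[1/2,1]$ along a linear/rotational homotopy inside $\mathrm{M}_{n+1}(Q)$ that moves the $(1,1)$-corner map from (the corner of) $u^*\mathrm{diag}(\phi_0,\dots)u$ to $\phi_1$ while keeping the remaining corners equal to $\phi_0$, using that the first corner map only needs to be $\mathcal F$-$\eps$-multiplicative, not a homomorphism. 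Throughout, each $\phi_t$ takes values in a corner isomorphic to $Q$ (the corners of $\mathrm{M}_{n+1}(Q)$ are all $\cong Q$ since $Q$ absorbs $\mathrm{M}_{n+1}$), so $\phi_t\colon A\to Q$ as required, and $\mathcal G$-$\delta$-multiplicativity of $\phi_0$ plus the smallness of $\eps'$ gives $\mathcal F$-$\eps$-multiplicativity of every $\phi_t$.

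Finally I would check the trace inequality. Write $\mathrm{tr}$ for the unique trace on $Q$ and also (abusing notation) the normalised trace on $\mathrm{M}_{n+1}(Q)$. Conjugation by the unitaries $u_t$ does not change traces of $\mathrm{diag}(\phi_0(a),\dots,\phi_0(a))$, so along $t\in[0,1/2]$ the quantity $\mathrm{tr}(\phi_0(a)) - \mathrm{tr}(\phi_t(a))$ stays within $\eps'$ of a constant (the value at $t=1/2$ differs from $\mathrm{tr}(\phi_0(a))-\mathrm{tr}(\phi_1(a))$ by at most $\eps'$ by the displayed norm estimate); along $t\in[1/2,1]$ the corner map moves affinely (or in a controlled way) from a point within $\eps'$ of $\phi_1(a)$-value to $\phi_1(a)$-value, contributing another $\eps'$. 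Summing the errors and taking $\eps' = \eps/2$ (say, and small enough that the multiplicativity bookkeeping also goes through) gives
$$|\mathrm{tr}(\phi_0(a)-\phi_t(a))| < |\mathrm{tr}(\phi_0(a)-\phi_1(a))| + 2\eps,\quad a\in\mathcal F,\ t\in[0,1],$$
as claimed. The main obstacle I anticipate is the second half of the path: producing, inside $\mathrm{M}_{n+1}(Q)$, an explicit homotopy from the corner of $u^*\mathrm{diag}(\phi_0,\dots)u$ to $\phi_1$ that simultaneously (i) stays approximately multiplicative with the stated $\eps$, and (ii) keeps the trace moving "monotonically enough" so the $+2\eps$ bound holds rather than a larger constant; a convex/linear interpolation of the relevant completely positive maps (legitimate because we have abandoned homomorphism on the corner) together with the norm estimate from Corollary \ref{uniq-Q} should do it, but the trace bookkeeping is where the constants must be tracked carefully.
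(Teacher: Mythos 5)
Your reduction to Corollary \ref{uniq-Q} (taking $\sigma=\phi_0$, which is legitimate since hypothesis (2) supplies exactly the trace lower bound that Corollary \ref{uniq-Q} requires of $\sigma$) is the right starting point, but the construction of the path has a genuine gap at the step where you extract a map $A\to Q$ from the rotated $(n+1)\times(n+1)$ diagonal. If $\phi_t$ is literally the compression $a\mapsto e_{11}u_t^*(\phi_0(a)\otimes 1_{n+1})u_te_{11}$, then for intermediate $t$ there is no control on $[u_te_{11}u_t^*,\ \phi_0(a)\otimes 1_{n+1}]$, so $\phi_t$ need not be $\mathcal F$-$\eps$-multiplicative; moreover the trace of this compression is \emph{not} constant along the conjugation --- it must travel from $\mathrm{tr}(\phi_0(a))$ at $t=0$ to (approximately) $\mathrm{tr}(\phi_1(a))$ at $t=1/2$, and nothing prevents it from oscillating far outside that interval on the way, so the remark that ``conjugation does not change traces'' applies to the full diagonal but not to its corner. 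If instead you take $\phi_t$ to be the full map $a\mapsto u_t^*(\phi_0(a)\otimes 1_{n+1})u_t$ into $\mathrm{M}_{n+1}(Q)\cong Q$, the trace \emph{is} constant, but then the endpoint at $t=1/2$ has trace $\frac{1}{n+1}\mathrm{tr}(\phi_1(a))+\frac{n}{n+1}\mathrm{tr}(\phi_0(a))$, which is close to $\mathrm{tr}(\phi_0(a))$ rather than to $\mathrm{tr}(\phi_1(a))$, and the remaining ``linear homotopy'' down to $\phi_1$ would interpolate between two maps that are far apart in norm; a convex combination of two approximately multiplicative maps is not approximately multiplicative unless the maps are close.

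The paper's route avoids both problems by first using the approximate unitary equivalence of $\mathrm{id}_Q$ with $a\mapsto a\otimes 1_{2n+1}$ to assume $\phi_i=\phi_i'\otimes 1_{2n+1}$ for $\mathcal G$-$\delta$-multiplicative maps $\phi_i'$ still satisfying the hypotheses. Then both $\phi_0$ and $\phi_1$ are $(2n+1)$-fold diagonals of full maps, and the ``standard argument'' replaces the entries one at a time: at each of the $2n+1$ stages, Corollary \ref{uniq-Q} applies (because at least $n$ of the remaining $2n$ entries are copies of a single full map) and yields a unitary conjugating the current diagonal to within a small tolerance of the next one; a path of unitaries from $1$ to that unitary preserves both the trace and the approximate multiplicativity, and the closing linear segment is short in norm, so it too preserves approximate multiplicativity and moves the trace by at most the tolerance. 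Each stage therefore shifts the trace by $\frac{1}{2n+1}\mathrm{tr}(\phi_1(a)-\phi_0(a))$ up to a small error, so the trace follows a monotone staircase from $\mathrm{tr}(\phi_0(a))$ to $\mathrm{tr}(\phi_1(a))$, which is exactly what gives $|\mathrm{tr}(\phi_0(a)-\phi_t(a))|<|\mathrm{tr}(\phi_0(a)-\phi_1(a))|+2\eps$. The essential idea your proposal is missing is this subdivision into $2n+1$ small swaps; a single swap of one entry against $n$ background copies of $\phi_0$ cannot be promoted to a path from $\phi_0$ to $\phi_1$ with the stated trace control.
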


\begin{proof}
Applying Corollary \ref{uniq-Q} to $(\mathcal F, \eps)$ with respect to $L$,  one obtains $\mathcal P$, $\mathcal G$, $\mathcal H$, $\delta$, and $n$. Then $\mathcal P$, $\mathcal G$, $\mathcal H$ and $\delta$ satisfy the conclusion of the statement.

Indeed, let $\phi_0, \phi_1: A \to Q$ be $\mathcal G$-$\delta$-multiplicative maps satisfying 
\begin{enumerate}
\item $[\phi_0(p)]_0=[\phi_1(p)]_0,$ $p\in\mathcal P$,  and
\item $\mathrm{tr}(\phi_0(a))> \rho(a)$ and $\mathrm{tr}(\phi_1(a))> \rho(a)$, $a\in \mathcal H$.
\end{enumerate}

Since the map $a\mapsto a\otimes 1_{2n+1}: Q \to Q\otimes \mathrm{M}_{2n+1}\cong Q$ is approximately unitarily equivalent to the identity map,  without loss of generality, one may assume that $\phi_0=\phi'_0\otimes 1_{2n+1}$ and $\phi_1=\phi'_1\otimes 1_{2n+1}$ for $\mathcal G$-$\delta$-multiplicative maps $\phi'_0, \phi'_1: A\to Q$ satisfying
\begin{enumerate}
\item $[\phi_0(p)]_0=[\phi_1(p)]_0,$ $p\in\mathcal P$,  and
\item $\mathrm{tr}(\phi_0(a))> \rho(a)$ and $\mathrm{tr}(\phi_1(a))> \rho(a)$, $a\in \mathcal H$.
\end{enumerate}
Then, with Corollary \ref{uniq-Q}, a standard argument shows that the desired path exists.
\end{proof}

\section{Trace factorization for C*-algebras with trivial $\Kzero$-groups}

\begin{thm}\label{TFA}
Let $A\in\mathcal R$ be a simple separable stably finite nuclear C*-algebra satisfying the UCT. Assume that $A$ has stable rank one and that $\Kzero(A)=\{0\}$. Then, for any finite set $\mathcal F\subseteq A$ and any $\eps>0$, there exist a Razak algebra $S$, an $\mathcal F$-$\eps$-multiplicative map $\theta: A\to S$, and an embedding $\iota: S \to A$ such that
$$ | \tau(a) - \tau(\iota\circ\theta(a)) | < \eps,\quad a\in\mathcal F,\ \tau\in \mathrm{T}^+A, \ \|\tau\|=1.$$
\end{thm}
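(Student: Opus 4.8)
The plan is to factor the identity on $A$ through a Razak algebra at the level of traces, using the trace model algebras from Theorem \ref{model-trace} together with the stable uniqueness result Corollary \ref{uniq-Q-path}. First I would fix a compact base $\Delta = \Sigma A$ (the tracial states of $A$, compact since $A\in\mathcal R$ has continuous norm function) and realize the Choquet simplex $\Delta$ by a model inductive limit $S = \varinjlim(S_m, \iota_m)$ with $S_m\in\mathcal R_z$ as in Theorem \ref{model-trace}, so that the tracial state space of $S$ is affinely homeomorphic to $\Delta$; call this homeomorphism $\kappa$. Dualizing gives an isometric isomorphism of ordered Banach spaces $\kappa_*: \aff\mathrm{T}^+A \to \aff\mathrm{T}^+S$ on the scaled parts. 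The strategy is then: (i) produce a trace-level map $A\to S$ approximating $\kappa_*$; (ii) use the model structure (Lemma \ref{trace-lifting}) to push this down to a finite stage $S_m$, a genuine Razak algebra, getting an approximate trace-level map $A\to S_m$; (iii) lift this to an honest completely positive $\mathcal F$-$\eps$-multiplicative map $\theta: A\to S_m$; and (iv) because $\Kzero(A)=\{0\}$ and $A$ has stable rank one, realize the trace-level map in the reverse direction by an honest embedding $\iota: S_m\hookrightarrow A$ whose induced trace map is (approximately) $\kappa_*^{-1}$ at stage $m$, so that $\iota\circ\theta$ approximates the identity on traces on $\mathcal F$.

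For step (iii), I would use the fact that $A$ is nuclear together with $S_m$ being a (semiprojective, one-dimensional NCCW) Razak algebra: the trace-preserving map $A\to \aff\mathrm{T}^+S_m$ can be lifted to a c.p.\ contraction, and after a standard perturbation using the finite presentation of $S_m$ one obtains a genuine $\mathcal F$-$\eps$-multiplicative $*$-homomorphism-up-to-$\eps$. Concretely, since $S_m$ is a point--line algebra with matrix building blocks, a trace on $S_m$ is determined by its values on the $E$-fibres, and a c.p.\ approximation of the desired affine map is assembled from matrix amplifications; Lemma \ref{FDF} reduces this to finite-dimensional data, which is why the model conditions \eqref{model-cond-2}, \eqref{model-cond-3} are needed to control norms. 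For step (iv), the existence of the embedding $\iota: S_m\to A$ inducing the prescribed map on $\aff\mathrm{T}^+$ is where the hypotheses $\Kzero(A)=\{0\}$, stable rank one, and $A\in\mathcal R$ combine: since $\Kzero$ vanishes there is no $\mathrm K$-theoretic obstruction, and one builds $\iota$ by the usual Razak-algebra construction inside $A$ (choosing orthogonal positive elements of $A$ with Cuntz classes realizing the required affine functions, which is possible because the norm function on $\mathrm{T}^+A$ is continuous and $A$ has stable rank one so the relevant comparison holds). Corollary \ref{uniq-Q-path} (applied with $Q$ replaced in spirit by the relevant codomain, via tensoring with $Q$ and using $Q$-stability of the models) guarantees that the two directions are compatible up to $\eps$ and that the composite $\iota\circ\theta$ is uniformly close to the identity on traces over the finite set $\mathcal F$, after passing to a sufficiently large stage $m$.

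The main obstacle I anticipate is step (iv): constructing the honest embedding $\iota: S_m\to A$ realizing a prescribed map on $\aff\mathrm{T}^+$, rather than merely a c.p.\ trace-compatible map. This requires extracting from $A$ a concrete copy of the Razak building blocks $S(k,n)$ — that is, finitely many orthogonal positive elements and partial isometries in $A$ with prescribed Cuntz classes and boundary conditions matching the $\phi_0,\phi_1$ of $S_m$ — and the existence of such configurations uses crucially that $A$ has stable rank one (for the needed Cuntz semigroup surjectivity onto the relevant affine functions, via the Cuntz class of strictly positive elements being continuous, as guaranteed by $A\in\mathcal R$) and that $\Kzero(A)=\{0\}$ (to kill the $\mathrm K_0$-valued obstruction to such a $*$-homomorphism existing). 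Everything else — the trace lifting across the model system, the c.p.\ approximation, the final $\eps$-bookkeeping — is routine given the earlier results, but this geometric realization step inside $A$ is the technical heart and must be done with care to ensure the induced trace map is genuinely close to $\kappa_*^{-1}$ uniformly on $\mathcal F$.
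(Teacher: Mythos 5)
Your overall architecture (factor the identity at the level of traces through a finite stage $S_m$ of a trace model algebra, going down with an approximately multiplicative map $\theta$ and coming back up with an embedding $\iota$) matches the paper's, but two of your steps conceal genuine gaps, and you have located the difficulty in the wrong place.

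The serious gap is your step (iii). You claim that the trace-compatible map $A\to S_m$ "can be lifted to a c.p.\ contraction, and after a standard perturbation using the finite presentation of $S_m$ one obtains a genuine $\mathcal F$-$\eps$-multiplicative" map. Semiprojectivity of $S_m$ lets one perturb an \emph{already approximately multiplicative} map to a homomorphism; it cannot upgrade a mere c.p.\ contraction with the right tracial behaviour to an approximately multiplicative one. Producing an $\mathcal F$-$\eps$-multiplicative map $A\to S_m$ with prescribed traces is an \emph{existence theorem for approximate embeddings}, and it is exactly here that the hypotheses UCT and nuclearity are consumed. The paper does this in two stages: first, the Tikuisis--White--Winter quasidiagonality theorem gives $\mathcal G$-$\delta$-multiplicative maps $\psi_{\infty,i}, \psi_{j,t_k}: A\to Q$ whose traces approximate the prescribed values at the fibres of the point--line algebra $S_n=S(E,F,\phi_0,\phi_1)$ over a partition $0=t_0<\cdots<t_{s+1}=1$; second, Corollary \ref{uniq-Q-path} is used to join $\psi_{j,t_k}$ to $\psi_{j,t_{k+1}}$ by a path of $\mathcal F$-$\eps$-multiplicative maps with controlled traces, so that the fibrewise data glue into a single map $\theta:A\to S_n$ respecting the boundary conditions $f(0)=\phi_0(e)$, $f(1)=\phi_1(e)$. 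Your proposal never invokes quasidiagonality and uses Corollary \ref{uniq-Q-path} only at the end "for compatibility," whereas its real role is in the construction of $\theta$ itself. Note also that the vanishing of the $\mathrm K_0$-obstruction in Corollary \ref{uniq-Q-path} is where $\Kzero(A)=\{0\}$ together with stable finiteness is used (so that $\mathcal P=\{0\}$); in your sketch $\Kzero(A)=\{0\}$ is only attached to step (iv).

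For your step (iv), which you call the technical heart, the paper does not build the Razak building blocks inside $A$ by hand. Since $A$ and $S$ are stably projectionless with $\mathrm T^+A\cong\mathrm T^+S$, their Cuntz semigroups are isomorphic (Theorem 4.4 and Corollary 6.8 of \cite{ESR-Cuntz}), and Theorem 1 of \cite{Robert-Cu} (classification and existence of homomorphisms from one-dimensional NCCW complexes with trivial $\Kone$ into stable rank one algebras by Cuntz semigroup morphisms) produces the homomorphism $\iota: S_n\to A$ realizing the composite $\mathrm{Cu}(S_n)\to\mathrm{Cu}(S)\cong\mathrm{Cu}(A)$; injectivity follows because the kernel ideal of this Cuntz map is zero. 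Your by-hand alternative would amount to reproving the existence half of Robert's theorem and is left entirely unexecuted. Finally, the reduction of the general case to the $Q$-stable case (the paper first assumes $A\cong A\otimes Q$ and then pulls $\iota$ back along $\mathrm{Cu}(A\otimes Q)\cong\mathrm{Cu}(A)$ using Robert's theorem again) is absent from your sketch.
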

\begin{proof}
Let us first assume that $A\cong A\otimes Q$.

Since $A$ is simple, any tracial state of $A$ is faithful. Since the cone $\mathrm T^+A$ has a compact base, one has $$\inf\{\tau(a): \tau\in\mathrm{T}^+(A), \|\tau\|=1\}>0,\quad a\in A^+\setminus\{0\}.$$
Therefore, one can define the map
\begin{equation}\label{L-defn}
L: A^+\setminus\{0\} \ni a \mapsto \inf\{\tau(a): \tau\in\mathrm{T}^+A, \|\tau\| =1\}\in(0, \infty).
\end{equation}


Applying Corollary \ref{uniq-Q-path} to $A$ and $L/4$ with respect to $(\mathcal F, \eps)$, one obtains $\mathcal G\subset A$, $\mathcal H\subset A^+$, and $\delta>0$ satisfying the conditions of Corollary \ref{uniq-Q}. (Since $\Kzero(A)=\{0\}$ and $A$ is stably finite, one has $\mathrm{Proj}_\infty(A)\cap\Kzero(A)=\{0\}$, and hence $\mathcal P=\{0\}$.)

Choose a model algebra $S=\varinjlim(S_n, \iota_n)$ such that each $S_n$ is a $Q$-stablized Razak algebra, and  
$$\Kzero(S)=\{0\}\quad\mathrm{and}\quad (\mathrm{T}^+S, \|\cdot\|)\cong (\mathrm{T}^+A, \|\cdot\|).$$
Denote by $\gamma: \aff\mathrm{T}A \to \aff\mathrm{T}S$ the induced isomorphism.

By Lemma \ref{trace-lifting}, there is positive continuous affine map $\gamma_n: \aff\mathrm{T}^+A \to \aff\mathrm{T}^+S_n$ for a sufficiently large $n$ such that 
\begin{equation}\label{tr-appr-1}
| \gamma_n^*\circ\iota^*_{n, \infty}(\tau)(a) -  \gamma^*(\tau)(a) | <\eps,\quad a\in\mathcal F\cup\mathcal H,\ \tau\in \mathrm{T}^+A,\ \|\tau\|=1
\end{equation}
and 
\begin{equation}\label{tr-appr-2}
1-\eps'<\|\gamma_n^*(\tau)\|<1+\eps,\quad \tau\in\mathrm{T}^+S_n, \ \|\tau\|=1.
\end{equation}

Write $$S_n=\{(e, f) \in E\oplus\mathrm{C}([0, 1], F): f(0)=\phi_0(e),\ f(1)=\phi_1(e)\},$$
where $E=\bigoplus_{i=1}^p Q$ and $F=\bigoplus_{i=1}^l Q$ for some $p, l\in\mathbb N$, and $\phi_0, \phi_1: E\to F$ are homomorphisms. The maps $\phi_0$ and $\phi_1$ might not be unital, but by Condition \ref{model-cond-2}, one may assume that 
\begin{equation}\label{am-state}
\tau(\phi_0(1_E))>\frac{1}{2}\quad\mathrm{and}\quad \tau(\phi_1(1_E))>\frac{1}{2},\quad \tau\in\mathrm{T}_1(F).
\end{equation}

For $i$th direct summand $Q$ of $E$, consider its canonical tracial state $\mathrm{tr}_i$ and consider the trace $$\gamma_n^*(\tr\circ\pi_{\infty, i}) \in \mathrm{T}(A).$$
It follows from \eqref{tr-appr-2} that 
$$1-\eps< \|\gamma_n^*(\tr\circ\pi_{\infty, i})\| < 1+\eps.$$

Take a partition $$0=t_0< t_1 < t_2 < \cdots < t_s <t_{s+1}=1$$ such that
$$| \gamma_n(a)(\mathrm{tr}_{j, t_k}) - \gamma_n(a)(\mathrm{tr}_{j, t}) | < \eps,\quad a\in\mathcal F,\ t\in[t_k, t_{k+1}],\  1\leq j\leq l,\ 0 \leq k\leq s.$$ 

Since $A$ has the UCT, by \cite{TWW-QD}, there are $\mathcal G$-$\delta$-multiplicative maps $\psi_{\infty, i}: A\to Q$, $i=1, 2, ..., p$, such that
\begin{equation}\label{qtr-0}
|\mathrm{tr}\circ\psi_{\infty, i}(a) - (\gamma_n)_*(\tr_i\circ\pi_{\infty, i})(a)|<\frac{1}{2}\min\{\eps, L(h): h\in\mathcal H\},\quad a\in\mathcal F\cup \mathcal H,
\end{equation}
and there are $\mathcal G$-$\delta$-multiplicative maps $\psi_{j, t_k}: A\to Q$, $j=1, 2, ..., l$, $k=1, 2, ..., s$, such that 
\begin{equation}\label{qtr-1}
|\mathrm{tr}\circ\psi_{j,t_ k}(a) - (\gamma_n)_*(\tr\circ\pi_{j, t_k})(a)|<\frac{1}{2}\min\{\eps, L(h): h\in\mathcal H\},\quad a\in\mathcal F\cup \mathcal H.
\end{equation}
Set $$\psi_\infty:=\bigoplus_{i=1}^p\psi_{\infty, i}: A\to E.$$

Note that, by \eqref{qtr-0}, \eqref{qtr-1}, \eqref{tr-appr-2}, and the construction of $L$ (\eqref{L-defn}), one has
\begin{equation}\label{qtr-2}
\mathrm{tr}(\psi_{\infty, i})(a) >\frac{1}{2}L(a)\quad\mathrm{and}\quad \mathrm{tr}(\psi_{j, t_k})(a) >\frac{1}{2}L(a),\quad a\in\mathcal H,
\end{equation}
where $i=1, 2, ..., p$, $j=1, 2, ..., l$, and $k=1, 2, ..., s$.

For each $j=1, 2, ..., l$, consider the maps 
$$\psi_{j, t_0}=\pi_j\circ\phi_0\circ \psi_{\infty} \quad\mathrm{and}\quad \psi_{j, t_{s+1}}=\pi_j\circ\phi_1\circ \psi_{\infty}.$$
Note that, by \eqref{am-state} and \eqref{qtr-2}, one has
$$\mathrm{tr}(\psi_{j, t_0})(a)>\frac{1}{4}L(a) \quad\mathrm{and}\quad \mathrm{tr}(\psi_{j, t_{s+1}})(a)>\frac{1}{4}L(a),\quad a\in\mathcal H.$$

Compare $\psi_{j, t_{k}}$ with $\psi_{j, t_{k+1}}$ for each $k=0, 1, ..., s$. By Corollary \ref{uniq-Q-path}, exists a path $(\psi_{j, t_{k}})_t$, $t\in [t_k, t_{k+1}]$ such that $(\psi_{j, t_{k}})_t: A \to Q$ is $\mathcal F$-$\eps$-multiplicative for each $t\in [t_k, t_{k+1}]$, and
$$| \mathrm{tr}((\psi_{j, t_{k}})_t(a) - \psi_{j, t_{k}}(a)) | <3 \eps,\quad a\in\mathcal F,\ t\in [t_k, t_{k+1}].$$

Denote by $\theta: A \to S_n$ the induced map. It is $\mathcal F$-$\eps$-multiplicative and satisfies
\begin{equation}\label{tr-est-AC}
 | \tau(\Psi(a))- \gamma_n(a)(\tau) | < 4\eps,\quad a\in\mathcal F,\quad \tau\in \mathrm{T}^+S_n,\ \|\tau\| = 1. 
 \end{equation}

Note that $A$ and $S$ are stably projectionless and $\mathrm{T}^+A \cong \mathrm{T}^+S$. By Theorem 4.4 and Corollary 6.8 of \cite{ESR-Cuntz}, it follows that the Cuntz semigroup of $A$ and the Cuntz semigroup of $S$ are isomorphic. Applied to the canonical map $\mathrm{Cu}(S_n)\to\mathrm{Cu}(S)\cong\mathrm{Cu}(A)$, Theorem 1 of \cite{Robert-Cu} implies that there is a homomorphism $\iota: S_n\to A$ giving rise to this map, and in particular such that 
\begin{equation}\label{tr-est-CA}
\iota_*=\gamma^{-1}\circ(\iota_{n, \infty})_*\quad\textrm{on $\aff\mathrm{T}^+S_n$}.
\end{equation}
Since the ideal of $\mathrm{Cu}(S_n)$ killed by the map $\mathrm{Cu}(S_n)\to\mathrm{Cu}(S)\cong\mathrm{Cu}(A)$ is zero, as the map $S_n\to S$ is an embedding, it follows that the map $S_n\to A$ is also an embedding. 
By \eqref{tr-est-AC}, \eqref{tr-est-CA}, and \eqref{tr-appr-1}, one then has 
$$\|\iota_*\circ\theta_*(\hat{f})-\hat{f}\|_\infty<6\eps,\quad f\in\mathcal F,$$
and this proves the theorem in the case that $A\cong A\otimes Q$.

In the general case, consider the embedding $A\to A\otimes Q$, $a\mapsto a\otimes 1_A$, and note that it induced an isometric isomorphism from $\mathrm{T}^+A$ to $\mathrm{T}^+(A\otimes Q)$. Then, for any $\mathcal F\subset A$, and any $\eps$, the prove above provides a sub-C*-algebras $S$ and a factorization
\begin{displaymath}
\xymatrix{
A \ar[r] & A\otimes Q \ar[r]^-\theta & S \ar[r]^-\iota & A\otimes Q
}
\end{displaymath}
such that
$$ | \tau(a) - \tau(\iota\circ\theta(a\otimes 1_Q)) | < \eps,\quad a\in\mathcal F,\ \tau\in \mathrm{T}^+(A\otimes Q), \ \|\tau\|=1.$$

Since $A$ and $A\otimes Q$ are stably projectionless, one has that $\mathrm{Cu}(A\otimes Q) \cong \mathrm{Cu}(A)$, and denote this isomorphism by $\xi$. Then, by Theorem 1 of \cite{Robert-Cu}, there is an embedding $\iota': S\to A$ such that $[\iota'] = \xi\circ[\iota]$ on $\mathrm{Cu}(S)$, and hence
$$ | \tau(a) - \tau(\iota'\circ\theta(a\otimes 1_Q)) | < \eps,\quad a\in\mathcal F,\ \tau\in \mathrm{T}^+(A), \ \|\tau\|=1,$$
as desired.
\end{proof}

\section{Tracial Approximations}

\begin{lem}[\cite{CE-str1} or Theorem 6 of \cite{Robert-Cu}]\label{pre-comp}
Consider $I=\mathrm{C}_0((0, 1])$. For any finite set $\mathcal F\subset I^+$ and any $\eps>0$, there are finite set $\mathcal G\subset \mathrm{M}_\infty(I)$ such that for any C*-algebra $A$ with stable rank one and any homomorphisms $\phi, \psi: I\to A$, if
$$[\phi(g')] \leq [\psi(g)]\quad\mathrm{and}\quad [\psi(g')] \leq \textrm{$[\phi(g)],\quad g', g\in\mathcal G$ with $g'\ll g$},$$
then there is a unitary $u\in \tilde{A}$ such that
$$\|u^*\phi(a) u -\psi(a)\|<\eps,\quad a\in \mathcal F.$$
\end{lem}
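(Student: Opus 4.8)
The plan is to deduce the statement from the uniqueness theorem for homomorphisms out of $I=\mathrm C_0((0,1])$ into C*-algebras of stable rank one, in the quantitative form of \cite{CE-str1} (equivalently Theorem 6 of \cite{Robert-Cu}): if $A$ has stable rank one, the assignment $\phi\mapsto\mathrm{Cu}(\phi)$ induces an \emph{isometry} from the set of approximate-unitary-equivalence classes of homomorphisms $I\to A$, with a natural metric $d_U$, onto the set of $\mathrm{Cu}$-morphisms $\mathrm{Cu}(I)\to\mathrm{Cu}(A)$ with the Ciuperca--Elliott metric $d_W$; in particular, if $d_W(\mathrm{Cu}(\phi),\mathrm{Cu}(\psi))$ is small then $\phi$ and $\psi$ are approximately unitarily equivalent with a correspondingly small error. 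The remaining task is to replace the a priori infinitely many Cuntz comparisons that witness $d_W$-smallness by a single finite set $\mathcal G$.

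First I would recall that a homomorphism $\phi\colon I\to A$ is the same datum as the positive contraction $a_\phi:=\phi(\iota)$, where $\iota\in I$ is the function $\iota(t)=t$, via $\phi(f)=f(a_\phi)$, and that its amplifications are determined by $a_\phi$ as well; thus the hypotheses $[\phi(g')]\le[\psi(g)]$ and $[\psi(g')]\le[\phi(g)]$ for $g'\ll g$ in $\mathcal G$ are exactly a finite list of inequalities among values of $\mathrm{Cu}(\phi)$ and $\mathrm{Cu}(\psi)$ on elements of $\mathrm{Cu}(I)=\mathrm{Cu}(\mathrm M_\infty(I))$. Next, by uniform continuity of continuous functional calculus — each $f\in\mathcal F$ is uniformly continuous on $[0,1]$ — choose $\eps_1>0$ so that $\|a-b\|<\eps_1$ implies $\|f(a)-f(b)\|<\eps$ for every $f\in\mathcal F$ and every pair of positive contractions $a,b$. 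It therefore suffices to arrange that the hypotheses on $\mathcal G$ force $d_U(\phi,\psi)<\eps_1$, and by the isometry above this reduces to forcing $d_W(\mathrm{Cu}(\phi),\mathrm{Cu}(\psi))<\eps_1$.

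Finally, $d_W(\mathrm{Cu}(\phi),\mathrm{Cu}(\psi))<\eps_1$ says, in essence, that $\mathrm{Cu}(\phi)$ dominates a uniform $\eps_1$-shift of $\mathrm{Cu}(\psi)$, and conversely. Since $\mathrm{Cu}(I)$ is a countably based Cuntz semigroup whose elements are lower semicontinuous $\overline{\mathbb N}$-valued functions on $(0,1]$, each the supremum of finite sums of ``bumps'' $n\,\chi_J$ over subintervals $J$, this domination need only be tested on finitely many such bumps once the grid of intervals is fine enough — and the compactness of $[0,1]$ is precisely what turns ``fine enough'' into ``finite''. Concretely, I would take $\mathcal G$ to consist of elements $h\otimes e\in\mathrm M_\infty(I)$ with $h\in I^+$ supported on the intervals of a partition of $[0,1]$ of mesh comparable to $\eps_1$ and $e$ a matrix unit of suitable rank, together with way-below companions $h'\otimes e$ where $h'$ is supported on slightly smaller intervals; applying the hypotheses to these pairs then yields $d_W(\mathrm{Cu}(\phi),\mathrm{Cu}(\psi))<\eps_1$. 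The step I expect to be the main obstacle is exactly this last one: unwinding the definition of $d_W$ from \cite{CE-str1} together with the structure of $\mathrm{Cu}(\mathrm C_0((0,1]))$ to see that genuine strict predecessors $g'\ll g$ (not $g'=g$) can be chosen whose ``gap'' — the difference of the defining thresholds, and the rank — absorbs the tolerance $\eps_1$. The identification of homomorphisms with positive elements, the functional-calculus estimate, and the uniqueness/isometry input are either routine or direct citations. (A contradiction argument passing to a sequence of would-be counterexamples $(A_k,\phi_k,\psi_k)$ and a limit inside $\prod A_k/\bigoplus A_k$ is awkward here, since stable rank one need not pass to such quotients, so the explicit construction of $\mathcal G$ above is the natural route.)
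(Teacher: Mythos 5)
The paper offers no internal proof of this lemma at all: it is stated as a quoted result, with the proof delegated to \cite{CE-str1} (Ciuperca--Elliott) and Theorem 6 of \cite{Robert-Cu}. Your proposal is therefore best read as supplying the derivation those references contain, and in outline it is correct: identify $\phi$ with the positive contraction $\phi(\iota)$, reduce via uniform continuity of the finitely many $f\in\mathcal F$ to making $d_U(\phi,\psi)$ small, invoke the stable-rank-one comparison $d_U\leq c\,d_W$ of Ciuperca--Elliott (sharpened by Robert--Santiago), and then force $d_W(\mathrm{Cu}(\phi),\mathrm{Cu}(\psi))$ small by finitely many Cuntz comparisons. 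The one place where your sketch is needlessly complicated -- and where you anticipate the ``main obstacle'' -- is the choice of $\mathcal G$: since $d_W$ is defined by comparing the classes $[(\iota-t)_+]$, the natural finite set is simply the chain $\mathcal G=\{(\iota-k\delta)_+ : 0\leq k\leq \lceil 1/\delta\rceil\}$ with $\delta$ comparable to your $\eps_1$; one has $[(\iota-s)_+]\ll[(\iota-t)_+]$ whenever $s>t$ (because $(\iota-s)_+=((\iota-t)_+-(s-t))_+$), so genuine way-below pairs exist automatically, and the hypothesis applied to consecutive cut-downs gives, by monotonicity in $t$, the uniform shifted domination defining $d_W\leq 2\delta$ -- no bump functions, matrix amplifications, or decomposition of general elements of $\mathrm{Cu}(I)$ are needed. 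One small factual correction: your parenthetical reason for avoiding a counterexample-sequence argument is wrong, since stable rank one does pass to products and quotients (hence to $\prod A_k/\bigoplus A_k$); the direct construction of $\mathcal G$ is still the cleaner route, it just does not need that justification.
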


\begin{lem}\label{comp}
Let $A$ be a simple C*-algebra (with or without unit) with stable rank one such that $A\in\mathcal R$, and assume that $A$ has strict comparison. 

Let $F$ be a finite dimensional C*-algebra, and let
$$\phi: F\to A,$$
$$\phi_i: F\to A,\quad i\in \mathbb N,$$
be order zero maps such that for each $c\in F_+$ and $f\in\mathrm{C}_{0}^{+}((0, 1])$,
$$\lim_{i\to\infty}\sup_{\tau\in\Sigma A} |\tau(f(\phi(c))-f(\phi_i(c)))| = 0$$
and
$$\limsup_{i\to\infty} \|f(\phi_i(c))\| \leq \|f(\phi(c))\|.$$
Then, there are contractions
$$s_i\in \mathrm{M}_4 \otimes  A,\quad i\in\mathbb N,$$
such that $$\lim_{i\to\infty}\|s_i(1_4\otimes \phi(c)) - (e_{1, 1} \otimes \phi_i(c))s_i\| = 0,\quad c\in F^+$$
and $$\lim_{i\to\infty}\|(e_{1, 1} \otimes \phi_i(c))s_is_i^*- e_{1, 1} \otimes \phi_i(c)\| = 0. $$
\end{lem}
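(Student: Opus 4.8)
The plan is to realize the $s_i$ as approximate intertwiners obtained from a comparison argument applied to the cones over $F$, using Lemma \ref{pre-comp} in the matrix algebra $\mathrm{M}_4\otimes A$. Since $\phi$ and $\phi_i$ are order zero maps from the finite dimensional algebra $F$, each restricts on a matrix summand $\mathrm{M}_{k}(\Comp)$ of $F$ to a $*$-homomorphism from the cone $\mathrm{C}_0((0,1])\otimes \mathrm{M}_{k}(\Comp)$ (by the structure theorem for order zero maps), and it suffices to treat one summand at a time and then take a direct sum; so I may as well assume $F=\mathrm{M}_k(\Comp)$, and by amplifying it further suffices (replacing $A$ by $\mathrm{M}_k\otimes A$, which is still in $\mathcal R$ with stable rank one and strict comparison) to treat $F=\Comp$, i.e.\ two order zero maps $\mathrm{C}_0((0,1])\to A$, equivalently two positive contractions $h=\phi(1)$ and $h_i=\phi_i(1)$. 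The two hypotheses then say exactly that $\sup_{\tau\in\Sigma A}|\tau(f(h))-\tau(f(h_i))|\to 0$ and $\limsup_i\|f(h_i)\|\le\|f(h)\|$ for every $f\in \mathrm{C}_0^+((0,1])$.

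Next I would use strict comparison to produce, for each $i$, the Cuntz comparisons needed to feed Lemma \ref{pre-comp}. The point is that for fixed $g'\ll g$ in $\mathrm{C}_0((0,1])$ (or a matrix amplification thereof), the functions $\tau\mapsto \tau(g'(h_i))$ and $\tau\mapsto \tau(g(h))$ are, respectively, eventually dominated: since $g'\ll g$ there is $\delta>0$ with $g'\le (1-\delta)$-scaled below $g$ in the appropriate sense, and the uniform trace convergence plus the norm-control hypothesis force $d_\tau(g'(h_i))<d_\tau(g(h))$ for all $\tau\in\Sigma A$ and all large $i$ — here one uses that $\Sigma A$ is compact and that the relevant dimension functions on a cone element are computed by evaluating the trace of a slightly smaller function. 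Strict comparison (applied in $\mathrm{M}_4\otimes A$, or first in $A$ and then amplified) then gives $[g'(h_i)]\le [g(h)]$ in the Cuntz semigroup, and symmetrically $[g'(h)]\le [g(h_i)]$ using $\limsup\|f(h_i)\|\le\|f(h)\|$ to handle the boundary behaviour near the top of the spectrum. Running over the finite set $\mathcal G$ supplied by Lemma \ref{pre-comp} for the data $(\mathcal F,\eps)$, I obtain for each large $i$ a unitary $u_i\in\widetilde{\mathrm{M}_4\otimes A}$ with $\|u_i^*\phi(c)u_i-\phi_i(c)\|<\eps$ on $\mathcal F$; letting $\eps\to 0$ along a sequence and setting $s_i:=e_{1,1}u_i(1_4\otimes\,\cdot\,)$ appropriately — more precisely, $s_i$ should be $u_i$ compressed so that it intertwines the copy of $\phi(c)$ sitting in all four corners with the copy of $\phi_i(c)$ in the $(1,1)$ corner — yields the two displayed limits; the second one, $\|(e_{1,1}\otimes\phi_i(c))s_is_i^*-e_{1,1}\otimes\phi_i(c)\|\to 0$, follows because $s_is_i^*$ is approximately a projection dominating the support of $\phi_i(c)$ by construction.

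The main obstacle I anticipate is the bookkeeping that makes the comparison hypotheses of Lemma \ref{pre-comp} follow from the \emph{two} given asymptotic conditions rather than from an honest comparison at each fixed $i$: I need $d_\tau(g'(h_i))<d_\tau(g(h))$ uniformly in $\tau$, but the hypotheses control $\tau(f(h_i))$ only through continuous functions $f$, so I must interpose an intermediate continuous function between the indicator-type function defining $d_\tau$ and the given ones, exploiting $g'\ll g$ and compactness of $\Sigma A$ to get a uniform gap. The role of $\mathrm{M}_4$ (rather than just $A$) is the standard one: it provides the room to encode ``$s_i$ is a partial isometry from the support of $\phi(c)$ spread over several corners to the support of $\phi_i(c)$'', which is how such intertwiners are used downstream; and because $A$ has stable rank one, $\mathrm{M}_4\otimes A$ does too, so Lemma \ref{pre-comp} applies verbatim. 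Finally one reduces the general finite dimensional $F$ back from the one-summand case by taking block-diagonal $s_i$ and noting all the error estimates are uniform over the finitely many summands.
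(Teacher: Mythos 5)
Your overall strategy is not the paper's: the paper simply observes that the argument of Proposition 2.1 of \cite{Winter-TA} goes through verbatim in the non-unital setting, using strict comparison; in that argument the contractions $s_i$ are built directly from elements witnessing Cuntz subequivalences of the form $g'(\phi_i(c))\precsim g(\phi(c))\oplus g(\phi(c))$ (together with stable rank one), and the extra matrix copies in $\mathrm{M}_4$ are exactly what absorbs the error terms. Your proposal instead tries to produce honest unitaries $u_i$ conjugating $\phi$ onto $\phi_i$ via Lemma \ref{pre-comp}, and this is where there is a genuine gap. To invoke Lemma \ref{pre-comp} you need, for each fixed large $i$, the exact two-sided comparisons $[g'(h_i)]\leq[g(h)]$ and $[g'(h)]\leq[g(h_i)]$. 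But the hypotheses only give, after interposing a continuous function $\hat g$ with $\hat g=1$ on $\{g'>0\}$ and $\operatorname{supp}\hat g\subset\{g>0\}$, the estimate $\mathrm{d}_\tau(g'(h_i))\leq\tau(\hat g(h_i))\leq\tau(\hat g(h))+\eps_i\leq \mathrm{d}_\tau(g(h))+\eps_i$ uniformly in $\tau\in\Sigma A$. Strict comparison requires $\mathrm{d}_\tau(g'(h_i))<\mathrm{d}_\tau(g(h))$ for \emph{all} $\tau$, and the ``uniform gap'' you hope to extract from $g'\ll g$ and compactness of $\Sigma A$ does not exist: $\tau\mapsto\tau(\hat g(h))$ is continuous while $\tau\mapsto\mathrm{d}_\tau(g(h))$ is only lower semicontinuous, and their difference can vanish at some $\tau$ (the spectral measure of $h$ for that trace may put no mass on the collar between $\operatorname{supp}\hat g$ and $\{g>0\}$), so the $\eps_i$ cannot be absorbed. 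Indeed the conclusion of the lemma is deliberately weaker than approximate unitary equivalence of $\phi$ and $\phi_i$ (it is a one-sided intertwining into four copies of $\phi$), and with hypotheses that are only asymptotic, and one-sided in norm, one should not expect the unitaries $u_i$ to exist at all.

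The correct mechanism, and the reason $\mathrm{M}_4$ appears, is the opposite of what you propose: since $A$ is simple and $\Sigma A$ is a compact base on which every $\tau$ is faithful, $\alpha:=\inf_{\tau\in\Sigma A}\mathrm{d}_\tau(g(h))>0$ (a strictly positive lower semicontinuous function attains its infimum on a compact set), so once $\eps_i<\alpha$ one has $\mathrm{d}_\tau(g'(h_i))\leq\mathrm{d}_\tau(g(h))+\eps_i<2\,\mathrm{d}_\tau(g(h))$ for all $\tau$, and strict comparison yields $g'(h_i)\precsim g(h)\oplus g(h)$; the intertwiner $s_i$ is then assembled from the implementing elements (this also produces the second displayed condition, since the comparison is arranged so that $s_is_i^*$ is an approximate unit for $\phi_i(F)\,$), rather than from a uniqueness theorem for homomorphisms of the cone. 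A secondary point: your reduction of a general finite-dimensional $F$ to $F=\Comp$ ``by replacing $A$ with $\mathrm{M}_k\otimes A$'' is not correct as stated, since an order zero map $\mathrm{M}_k\to A$ corresponds to a homomorphism $\mathrm{C}_0((0,1])\otimes\mathrm{M}_k\to A$, not to a positive element of $\mathrm{M}_k\otimes A$; this could be repaired (Robert's classification covers $\mathrm{C}_0((0,1])\otimes F$), but the comparison step above is the essential obstruction to your route.
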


\begin{proof}
The proof is the same as Proposition 2.1 of \cite{Winter-TA} (the argument does not require the C*-algebra to be unital; the strict comparison is sufficient for the argument to proceed).
\end{proof}

\begin{lem}\label{app-unit}
Let $A$ be a separable C*-algebra with nuclear dimension at most $m$. Let $(e_n)$ be an approximate unit for $A$.
Then there is a system of $(m+1)$-decomposable c.p.~approximations
$$
\xymatrix{
\tilde{A} \ar[r]^-{\tilde{\psi}_j} & F_j^{(0)}\oplus F_j^{(1)}\oplus\cdots\oplus F_j^{(m)} \oplus \Comp \ar[r]^-{\tilde{\phi}_j} & \tilde{A},\quad j=1, 2, ...
}
$$ 
such that 
\begin{equation}\label{m-decom-cond-1}
\tilde{\phi}_j(F_j^{(l)})\subset A,\quad l=0, 1, ..., m,
\end{equation}
\begin{equation}\label{m-decom-cond-2}
\tilde{\phi}_j|_\Comp(1_{\Comp}) = 1_{\tilde{A}}-e_{n_j},\quad\textrm{for some $e_{n_j}$ in the approximate unit $(e_n)$}, 
\end{equation} 
and
\begin{equation}\label{refine-0} 
\lim_{j\to\infty}\|a\tilde{\phi}^{(l)}_j\tilde{\psi}^{(l)}_j(1_{\tilde{A}}) - \tilde{\phi}_j^{(l)}\tilde{\psi}^{(l)}_j(a)\|=0,\quad l=1, 2, ..., m+1, \ a\in \tilde{A},
\end{equation} 
where $\tilde{\phi}^{(l)}_j$ and $\tilde{\psi}^{(l)}_j$ are the projection of $\tilde{\phi}_j$ to $F_j^{(l)}$ and the restriction of $\tilde{\psi}_j$ to $F_j^{(l)}$, respectively.
\end{lem}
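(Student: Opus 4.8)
The plan is to start from a fixed system of $(m+1)$-decomposable completely positive approximations witnessing $\dim_{\mathrm{nuc}}A\le m$, say
\begin{displaymath}
\xymatrix{
A \ar[r]^-{\psi_j} & F_j^{(0)}\oplus F_j^{(1)}\oplus\cdots\oplus F_j^{(m)} \ar[r]^-{\phi_j} & A,
}
\end{displaymath}
with each $\psi_j$ contractive completely positive, each $\phi_j^{(l)}$ completely positive and contractive order zero, and $\|\phi_j\psi_j(a)-a\|\to 0$ for all $a\in A$. The first step is to pass to the unitization: extend $\psi_j$ to a unital completely positive map $\widetilde{\psi}_j\colon\widetilde A\to (F_j^{(0)}\oplus\cdots\oplus F_j^{(m)})\oplus\Comp$ by sending $1_{\widetilde A}$ to $(\psi_j(1)^{\perp}$-defect placed partly in the $F_j^{(l)}$'s and partly in the new summand $\Comp)$ — concretely, using an approximate unit $(e_n)$ of $A$, one replaces $\psi_j$ by $a\mapsto(\psi_j(e_{n_j}^{1/2}ae_{n_j}^{1/2}),\,\sigma_j(a))$ where $\sigma_j\colon\widetilde A\to\Comp$ is the character $a+\lambda1\mapsto\lambda$, and defines $\widetilde\phi_j$ on $\Comp$ by $1\mapsto 1_{\widetilde A}-e_{n_j}$. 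This immediately gives \eqref{m-decom-cond-1} and \eqref{m-decom-cond-2}, and one checks that $\|\widetilde\phi_j\widetilde\psi_j(a)-a\|\to 0$ for $a\in\widetilde A$ after choosing $n_j\to\infty$ fast enough (using that $(e_n)$ is an approximate unit and that $\psi_j,\phi_j$ are contractive).

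The second and main step is to arrange the asymptotic commutation condition \eqref{refine-0}, namely that $a$ asymptotically commutes through each order-zero block: $\|a\,\widetilde\phi_j^{(l)}\widetilde\psi_j^{(l)}(1_{\widetilde A})-\widetilde\phi_j^{(l)}\widetilde\psi_j^{(l)}(a)\|\to 0$. The idea is to perturb the order-zero maps $\phi_j^{(l)}$ by cutting down: each $\phi_j^{(l)}$ corresponds via the structure theorem for order zero maps to a $\ast$-homomorphism $\pi_j^{(l)}\colon\mathrm{C}_0((0,1])\otimes F_j^{(l)}\to M(A)$ together with the positive element $h_j^{(l)}=\phi_j^{(l)}(1_{F_j^{(l)}})$. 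Since the approximations converge, for a fixed finite set and tolerance one may replace $h_j^{(l)}$ by $g(h_j^{(l)})$ for a suitable function $g$ supported away from $0$, and then use a quasicentral-type argument — or, more precisely, a standard reindexing/diagonal argument together with the fact that the images $\phi_j\psi_j(a)$ converge to $a$ — to show that, after relabelling, the positive contractions $\widetilde\phi_j^{(l)}\widetilde\psi_j^{(l)}(1)$ become asymptotically central relative to the finitely many elements being tracked. I expect this to be the crux: one has to produce the commutation \emph{within each individual block} $F_j^{(l)}$, not merely for the sum, and this forces one to invoke the order-zero structure together with an approximate-unit (quasicentral) argument adapted to the non-unital setting, exactly as the lemma's preamble anticipates (it is stated relative to a prescribed approximate unit $(e_n)$).

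Finally, once the block-wise asymptotic centrality is in hand, one assembles the maps: set $\widetilde\psi_j$ to be the direct sum of the (perturbed) $\psi_j^{(l)}$'s together with the character $\sigma_j$, and $\widetilde\phi_j$ the direct sum of the perturbed $\phi_j^{(l)}$'s together with $1\mapsto 1_{\widetilde A}-e_{n_j}$; a telescoping estimate shows $\widetilde\phi_j\widetilde\psi_j\to\mathrm{id}_{\widetilde A}$ pointwise, the colour count is still $m+1$ (the extra $\Comp$ summand is a one-dimensional order zero piece that can be absorbed or listed separately as in the statement), and \eqref{m-decom-cond-1}, \eqref{m-decom-cond-2}, \eqref{refine-0} all hold. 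The only genuinely delicate point is the second step; everything else is bookkeeping with contractive completely positive maps and approximate units, so in the write-up I would cite the order zero structure theorem and a quasicentral approximate unit lemma and keep the perturbation estimates terse.
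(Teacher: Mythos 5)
Your first step is essentially the construction the paper uses: unitize a given $(m+1)$-decomposable approximation by $\tilde{\psi}_j(a)=\big(\psi_j(e_{n_j}^{1/2}ae_{n_j}^{1/2}),\pi(a)\big)$ and $\tilde{\phi}_j|_{\Comp}:1\mapsto 1_{\tilde A}-e_{n_j}$, which gives \eqref{m-decom-cond-1}, \eqref{m-decom-cond-2} and $\tilde{\phi}_j\tilde{\psi}_j\to\mathrm{id}_{\tilde A}$ (the paper arranges exact commutation of the tracked elements with $e_{n_j}$ via functional calculus of a strictly positive element, but your approximate version $e_{n_j}^{1/2}ae_{n_j}^{1/2}\approx a$ works, provided you choose $e_{n_j}$ before, or uniformly over, the $j$-th approximation). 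So the bookkeeping part is fine.

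The gap is in your treatment of \eqref{refine-0}, which you yourself identify as the crux but do not actually prove. What is needed is that each block composition $\theta_j^{(l)}:=\tilde{\phi}_j^{(l)}\tilde{\psi}_j^{(l)}$ asymptotically acts as multiplication by its value at the unit, $\|a\,\theta_j^{(l)}(1_{\tilde A})-\theta_j^{(l)}(a)\|\to 0$. You replace this by the claim that the positive contractions $\theta_j^{(l)}(1_{\tilde A})$ can be made \emph{asymptotically central} by a quasicentral/reindexing argument; asymptotic centrality is a strictly weaker statement (it follows from \eqref{refine-0} applied to $a$ and $a^*$, but it does not imply it), and the proposed derivation does not work anyway: pointwise convergence of the \emph{sum} $\sum_l\theta_j^{(l)}(a)\to a$ gives no control over the individual summands, and neither relabelling the sequence nor cutting $h_j^{(l)}=\phi_j^{(l)}(1_{F_j^{(l)}})$ down by functional calculus supported away from $0$ produces the block-wise relation. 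The mechanism the paper uses is the normalization trick of Proposition 4.2 of \cite{Winter-Z-stable-02}: replace $\tilde{\psi}_j$ by $\hat{\psi}_j=\tilde{\psi}_j(1_{\tilde A})^{-1/2}\,\tilde{\psi}_j(\cdot)\,\tilde{\psi}_j(1_{\tilde A})^{-1/2}$ (inverse taken in the hereditary subalgebra generated by $\tilde{\psi}_j(1_{\tilde A})$) and compensate by $\hat{\phi}_j=\tilde{\phi}_j\big(\tilde{\psi}_j(1_{\tilde A})^{1/2}\,\cdot\,\tilde{\psi}_j(1_{\tilde A})^{1/2}\big)$; since $\hat{\phi}_j\hat{\psi}_j\to\mathrm{id}_{\tilde A}$ (hence is asymptotically multiplicative) and each block $\hat{\phi}_j^{(l)}\hat{\psi}_j^{(l)}$ is completely dominated by the full composition, an approximate version of the fact that a completely positive map dominated by a homomorphism is given by multiplication with a positive element commuting with the image yields exactly the block-wise condition \eqref{refine-0}. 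Neither quasicentral approximate units nor the order-zero structure theorem enters at this point; without this (or an equivalent) argument your proof of the key condition is missing.
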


\begin{proof}
Let $\mathcal F\subseteq \tilde{A}$ be a finite set of positive elements with norm one,  and let $\eps>0$ be arbitrary. 
Write $$\mathcal F=\{(a, \lambda_a): a\in \mathcal F_A\subset A^+\}\subset A\oplus \Comp 1_{\tilde{A}}.$$
By considering a strictly positive element of $A$, there is $e$ in the approximate unit $(e_n)$ of $A$ such that for any $a\in \mathcal F$, there is $a'\in A$ satisfying
$$\|a-a'\|<\eps,\quad a'e=ea',\quad\mathrm{and}\quad (a'-\pi(a'))(1-e)=0,$$
where $\pi: \tilde{A} \to \Comp$ is the canonical quotient map.
Denote by $\mathcal F'$ the set of such $a'$.

Consider the elements $$e^{\frac{1}{2}} a' e^{\frac{1}{2}} \in A,\quad a'\in\mathcal F'.$$ Then choose a factorization
$$
\xymatrix{
A \ar[r]^-{\psi} & F^{(0)}\oplus F^{(1)}\oplus\cdots\oplus F^{(m)} \ar[r]^-{\phi} & A
}
$$ 
such that 
\begin{enumerate}
\item $\| \phi(\psi(e^{\frac{1}{2}}a'e^{\frac{1}{2}})) - e^{\frac{1}{2}}a'e^{\frac{1}{2}} \| <\eps$, $a'\in\mathcal F',$ and
\item the restriction of $\phi$ to each direct summand $F^{(l)}$, $l=0, 1, ..., m$, is order zero.
\end{enumerate}

Then, define maps
$$\tilde{\psi}: \tilde{A}\ni a \mapsto \psi(e^{\frac{1}{2}}ae^{\frac{1}{2}})\oplus \pi(a)  \in (F^{(0)}\oplus F^{(1)}\oplus\cdots\oplus F^{(m)})\oplus\Comp, $$
and 
$$\tilde{\phi}: (F^{(0)}\oplus F^{(1)}\oplus\cdots\oplus F^{(m)})\oplus\Comp\ni (a, \lambda) \mapsto \phi(a) + \lambda(1-e). $$

Then, for any $a\in\mathcal F$, one has
\begin{eqnarray*}
\|\tilde{\phi}(\tilde{\psi}(a)) - a\| & < & \|\tilde{\phi}(\tilde{\psi}(a')) - a'\|+ 3\eps \\
& = &\|\phi(\psi(e^{\frac{1}{2}}a'e^{\frac{1}{2}})) +\pi(a')(1-e) -a' \| + 3\eps \\
& < & \| e^{\frac{1}{2}}a'e^{\frac{1}{2}} + \pi(a')(1-e) - a'\| + 4\eps=4\eps.
\end{eqnarray*}
It is clear that the restriction of $\tilde{\phi}$ to each simple summand of $F^{(0)}\oplus F^{(1)}\oplus\cdots\oplus F^{(m)}\oplus\Comp$ has order zero. 

Since $\mathcal F$ and $\eps$ are arbitrary, one obtains the $(m+1)$-decomposable c.p.~approximations $(\tilde{\psi}_j, \tilde{\phi}_j)$, $j=1, 2, ...$, which satisfy \eqref{m-decom-cond-1} and \eqref{m-decom-cond-2} of the Lemma.


In the same way as in the proof of Proposition 4.2 of \cite{Winter-Z-stable-02}, $\tilde{\psi}_j$ and $\tilde{\phi}_j$ can be modified to satisfy \eqref{refine-0}. Indeed, consider
$$\hat{\psi}_j: \tilde{A} \ni a \mapsto \tilde{\psi}_j(1_{\tilde{A}})^{-\frac{1}{2}}\tilde{\psi}_j(a) \tilde{\psi}_j(1_{\tilde{A}})^{-\frac{1}{2}} \in (F_j^{(0)}\oplus F_j^{(1)}\oplus\cdots\oplus F_j^{(m)})\oplus\Comp,$$
where the inverse is taking in the hereditary sub-C*-algebra generated by $\tilde{\psi}(1_{\tilde{A}})$,
and
$$\hat{\phi}_j: (F_j^{(0)}\oplus F_j^{(1)}\oplus\cdots\oplus F_j^{(m)})\oplus\Comp \ni a\mapsto \phi(\tilde{\psi}_j(1_{\tilde{A}})^{\frac{1}{2}} a \tilde{\psi}_j(1_{\tilde{A}})^{\frac{1}{2}}) \in \tilde{A}.$$
Then the proof of Proposition 4.2 of \cite{Winter-Z-stable-02} shows that
$$\lim_{j\to\infty}\|\hat{\phi}_j^{(l)}\hat{\psi}^{(l)}_j(a) - \hat{\phi}^{(l)}_j\hat{\psi}^{(l)}_j(a) \hat{\phi}^{(l)}_j\hat{\psi}^{(l)}_j(1_{\tilde{A}}) \|=0,\quad a\in A.$$

Note that $\pi(1_{\tilde{A}}) = 1_{\Comp}$. One has that $\pi(1_{\tilde{A}})\Comp\pi(1_{\tilde{A}})=\Comp$, and the restriction of $\hat{\phi}$ to $\Comp$ is the map $\lambda \mapsto \lambda(1-e)$. It follows that the decompositions $(\hat{\psi}_j, \hat{\phi}_j)$ satisfies the lemma.
\end{proof}
%
 
\begin{thm}\label{fdim}
Let $\mathcal S$ be a class of nuclear C*-algebra.

Let $A$ be a C*-algebra with nuclear dimension at most $m$ such that $A\in\mathcal R$, $A$ has stable rank one, and $A$ has strict comparison of positive elements. 

Assume that for any finite set $\mathcal F\subset A$ and any $\eps>0$, there is a C*-algebra $S\in\mathcal S$ and maps
\begin{displaymath}
\xymatrix{
A \ar[r]^\sigma & S \ar[r]^\rho & A
}
\end{displaymath}
such that $\sigma$ is $\mathcal F$-$\eps$-multiplicative, $\rho$ is an embedding, such that
$$| \tau(\rho(\sigma(a))) - \tau(a)|<\eps, \quad a\in\mathcal F,\  \tau\in \mathrm{T}^+A,\ \|\tau\|=1.$$

Then, the C*-algebra $A$ has the following property: For any finite set $\mathcal F\subset A$ and any $\eps>0$, there is a projection $p\in \mathrm{M}_{4(m+2)}(\tilde{A})$, a sub-C*-algebra $S\subset (1-p)\mathrm{M}_{4(m+2)}(A)(1-p)$ with $S\in\mathcal S$, such that
\begin{enumerate}
\item $\|[p, 1_{4(m+2)} \otimes a]\| < \eps$, $a\in\mathcal F$,
\item $p(1_{4(m+2)} \otimes a) p\in_\eps S$, $a\in\mathcal F$,
\item $p\sim e_{11}$ in $\mathrm{M}_{4(m+2)}(\tilde{A})$, and
\item $(1-p)\mathrm{M}_{4(m+2)}(A)(1-p) \in \mathcal R$.
\end{enumerate}
\end{thm}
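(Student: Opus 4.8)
\textbf{Proof plan for Theorem \ref{fdim}.}

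The plan is to combine the trace-factorization hypothesis with the finite-nuclear-dimension decompositions of Lemma \ref{app-unit}, glued together by the comparison-of-order-zero-maps result of Lemma \ref{comp}. First I would fix $\mathcal F$ and $\eps$; enlarging $\mathcal F$ and shrinking $\eps$ as needed, I would apply Lemma \ref{app-unit} (using the approximate unit with full spectrum furnished by Lemma \ref{full-sp}) to obtain, for a suitable $j$, a c.p.\ approximation $\tilde\psi\colon \tilde A \to F^{(0)}\oplus\cdots\oplus F^{(m)}\oplus\Comp$ and $\tilde\phi$ back into $\tilde A$ with $\tilde\phi(F^{(l)})\subset A$, $\tilde\phi|_\Comp(1) = 1_{\tilde A} - e$, the approximate-multiplicativity on $\mathcal F$, and the "asymptotic commutation" property \eqref{refine-0} for the order-zero pieces $\tilde\phi^{(l)}\tilde\psi^{(l)}$. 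The idea is that each order-zero piece $\tilde\phi^{(l)}\tilde\psi^{(l)}$ can be pushed, up to unitary equivalence in a matrix amplification, into a copy of $S\in\mathcal S$ via the trace-factorization hypothesis: applying it to a finite set that generates (under the order-zero functional calculus) the relevant positive elements gives $\sigma\colon A\to S$, $\rho\colon S\hookrightarrow A$ with $\tau\circ\rho\circ\sigma \approx \tau$ on traces, so the order-zero map $\rho\circ\sigma\circ(\tilde\phi^{(l)}\tilde\psi^{(l)})$ agrees with $\tilde\phi^{(l)}\tilde\psi^{(l)}$ up to $\eps$ on all of $\Sigma A$, and has image in (a corner of a matrix algebra over) $S$.

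Next I would invoke strict comparison: since $A\in\mathcal R$ has a compact base for $\mathrm T^+A$ and strict comparison of positive elements, the hypothesis of Lemma \ref{comp} (trace convergence plus the norm bound $\limsup\|f(\phi_i(c))\| \le \|f(\phi(c))\|$, the latter automatic for embeddings) is met for the order-zero maps $\phi := \tilde\phi^{(l)}\tilde\psi^{(l)}$ and $\phi_i := \rho_i\circ\sigma_i\circ(\tilde\phi^{(l)}\tilde\psi^{(l)})$ along a sequence of ever-finer data; Lemma \ref{comp} then produces contractions $s_i \in \mathrm M_4\otimes A$ implementing an approximate "one-sided" Murray--von Neumann subequivalence of $\phi$ into $\phi_i$ inside $\mathrm M_4\otimes A$. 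Running this for each of the $(m+1)$ order-zero summands and for the scalar summand $\Comp$ (which contributes the projection $1_{\tilde A}-e$, itself approximately $\mathrm M_4$-absorbed using that $e$ has full spectrum so that $1-e$ is Cuntz-dominated appropriately), and packaging the $m+2$ summands into $\mathrm M_{4(m+2)}(\tilde A)$, I would assemble a single contraction $s$ whose "source projection" $p := s^*s$ (after a small perturbation to an honest projection, using stable rank one) satisfies: $p$ is close to a projection dominated by $e_{11}\otimes(\text{image of }\tilde\phi\tilde\psi)$ so $p\sim e_{11}$ in $\mathrm M_{4(m+2)}(\tilde A)$ (conclusion (3)); $spS s^* =: S'\subset (1-p)\mathrm M_{4(m+2)}(A)(1-p)$ is a copy of a direct sum of the $S$'s, hence in $\mathcal S$ (conclusion (2), together with approximate multiplicativity giving $p(1_{4(m+2)}\otimes a)p\in_\eps S$); and $\|[p,1_{4(m+2)}\otimes a]\|<\eps$ from \eqref{refine-0} plus approximate multiplicativity of $\sigma$ (conclusion (1)). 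Conclusion (4), that $(1-p)\mathrm M_{4(m+2)}(A)(1-p)\in\mathcal R$, follows because it is a full hereditary subalgebra of $\mathrm M_{4(m+2)}(A)$, hence stably isomorphic to $A$, and the class $\mathcal R$ is invariant under passing to non-unital full hereditary subalgebras (the norm/affine-function continuity condition transfers via Proposition 4.7 of \cite{CPtrace}, exactly as in the lemma following the definition of $\mathcal R$)---one only has to check $(1-p)\mathrm M_{4(m+2)}(A)(1-p)$ is non-unital, which holds because $p$ is properly infinite-free, i.e.\ $1-p$ still Cuntz-dominates a strictly positive element of $A$ with $0$ a limit point of its spectrum.

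The main obstacle I expect is the bookkeeping of conclusion (1), the approximate commutation $\|[p, 1_{4(m+2)}\otimes a]\|<\eps$: the projection $p$ is built from the partial isometries/contractions $s_i$ coming out of Lemma \ref{comp}, and these are produced by an abstract compactness (sequence) argument with no explicit control relating $s_i$ to the original element $a$. Getting $p$ to approximately commute with $1\otimes a$ requires that the order-zero maps $\tilde\phi^{(l)}\tilde\psi^{(l)}$ themselves approximately commute with $a$ (which is the content of \eqref{refine-0} in Lemma \ref{app-unit}) \emph{and} that this near-commutation is preserved when the image is transported into $S$ via $\rho\circ\sigma$ and then aligned by $s$. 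The way I would handle this is to observe that $p$ is approximately in the hereditary subalgebra generated by $\bigoplus_l e_{11}\otimes \phi_i^{(l)}(1)$, which by \eqref{refine-0} (after transport, using that $\sigma$ is $\mathcal F$-$\eps$-multiplicative and $\rho$ is a genuine homomorphism) approximately commutes with $1_{4(m+2)}\otimes a$ for $a\in\mathcal F$; then any projection approximately supported there inherits the near-commutation up to a controlled error. This is the step where the three approximations (finite nuclear dimension, trace factorization, strict comparison) must be threaded in the right order, and is where most of the real work lies.
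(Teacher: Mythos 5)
Your outline follows the same route as the paper's own proof (Winter's tracial-approximation argument, Theorem 2.2 of \cite{Winter-TA}): Lemma \ref{app-unit} for the unitized nuclear-dimension decomposition, the trace-factorization hypothesis to move the order-zero pieces into $S$, Lemma \ref{comp} (plus Lemma \ref{pre-comp} for the scalar summand, using the full-spectrum approximate unit) to produce intertwining contractions in $\mathrm{M}_4(A)$, and an assembly in $\mathrm{M}_{m+2}\otimes\mathrm{M}_4\otimes\tilde{A}$. However, the step you yourself single out as the crux --- conclusion (1), $\|[p,1_{4(m+2)}\otimes a]\|<\eps$ --- is resolved by an argument that does not work. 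It is simply not true that a projection lying (approximately) in the hereditary subalgebra generated by a positive element which approximately commutes with $a$ must itself approximately commute with $a$: take $h$ a projection commuting exactly with $a$ and $p\leq h$ a subprojection with $[p,a]$ large. In the paper the commutation is an \emph{algebraic} consequence of an intertwining relation, not of ``support'': the column $v=\sum_{l}e_{1,l}\otimes\bigl(s^{(l)}(1_4\otimes(\phi^{(l)}\psi^{(l)}(1_{\tilde A}))^{1/2})\bigr)$ is built with the weights $(\phi^{(l)}\psi^{(l)}(1_{\tilde A}))^{1/2}$ inserted --- this is exactly where \eqref{refine-0} enters, via \eqref{twrist}. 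These weights make $vv^*$ equal to $e_{1,1}\otimes e_{1,1}\otimes 1_{\tilde A}$ (because $\sum_l\phi^{(l)}\psi^{(l)}(1_{\tilde A})$ approximates the unit) and give $v(1_{m+2}\otimes 1_4\otimes a)\approx(1_{m+2}\otimes e_{1,1}\otimes\rho\sigma(a))v$; then $p:=v^*v$ satisfies $p(1\otimes a)\approx v^*(e_{1,1}\otimes\rho\sigma(a))v\approx(1\otimes a)p$, which is conclusion (1), while simultaneously $p\sim e_{1,1}$ (conclusion (3)) and $p(1\otimes a)p\in_\eps v^*(e_{1,1}\otimes e_{1,1}\otimes\rho(S))v$ (conclusion (2)). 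Without inserting these weights and deriving (1) from the intertwining, your plan has no valid argument for the key estimate.

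Two further points. First, $\sigma\circ\tilde{\phi}^{(l)}$ is only approximately order zero ($\sigma$ is merely $\mathcal F$-$\eps$-multiplicative), so before Lemma \ref{comp} can be applied --- and in order for the final corner algebra to be an honest copy of an algebra in $\mathcal S$ --- you must first correct it to a genuine order-zero map $F^{(l)}\to S$ using the weak stability of the order-zero relations (Proposition 1.4 of \cite{WZ-ndim}); note also that Lemma \ref{comp} requires a finite-dimensional domain, so it is these corrected maps on $F^{(l)}$, not the compositions $\tilde{\phi}^{(l)}\tilde{\psi}^{(l)}$ on $\tilde{A}$, that get compared. Second, your justification of conclusion (4) rests on the claim that $\mathcal R$ is invariant under passing to full hereditary subalgebras; this is false in general, since the norm function $\tau\mapsto \mathrm{d}_\tau(h)$ on $\mathrm{T}^+$ is only lower semicontinuous for an arbitrary hereditary subalgebra --- this is precisely why the lemma in Section 2 needs the careful choice of positive element from \cite{ESR-Cuntz}. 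The correct (and simpler) argument is the paper's: since $p\sim e_{1,1}\otimes e_{1,1}\otimes 1_{\tilde A}$ and $A$ has stable rank one, cancellation gives $(1-p)\mathrm{M}_{4(m+2)}(A)(1-p)\cong\mathrm{M}_{4(m+2)-1}(A)$, which is visibly in $\mathcal R$ (and in particular non-unital).
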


\begin{proof}
The proof is similar to that of Theorem 2.2 of \cite{Winter-TA}. Let $(e_n)$ be an approximate unit of $A$. Since $A\in\mathcal R$, by Lemma \ref{full-sp} one may assume that $\mathrm{sp}(e_n)=[0, 1]$. Since $\mathrm{dim}_\mathrm{nuc}(A)\leq m$, by Lemma \ref{app-unit}, there is a system of $(m+1)$-decomposable c.p.~approximations
$$
\xymatrix{
\tilde{A} \ar[r]^-{\psi_j} & F_j^{(0)}\oplus F_j^{(1)}\oplus\cdots\oplus F_j^{(m)} \oplus \Comp \ar[r]^-{\phi_j} & \tilde{A},\quad j=1, 2, ...
}
$$ 
such that 
\begin{equation}
\phi_j(F_j^{(l)})\subset A, \quad l=0, 1, ..., m
\end{equation} and 
\begin{equation}\label{defn-phi-j}
\phi_j|_\Comp(1_\Comp) = 1_{\tilde{A}}-e_{j},
\end{equation} 
where $e_j$ is an element of $(e_n)$.

Denote by
$$\phi_{j}^{(l)} = \phi_j|_{F_j^{(l)}}\quad\mathrm{and}\quad \phi_{j}^{(m+1)}(\lambda)= \phi_j|_\Comp,\quad l=0, 1, ..., m.$$
By Lemma \ref{app-unit}, one may assume that
\begin{equation}\label{refine}
\lim_{j\to\infty}\|a\phi^{(l)}_j\psi^{(l)}_j(1_{\tilde{A}}) - \phi_j^{(l)}\psi^{(l)}_j(a)\|=0,\quad l=1, 2, ..., m+1, \ a\in A.
\end{equation}

Then, by the weak stability of order zero maps (Proposition 1.4 of \cite{WZ-ndim}), for each $j\in\mathbb N$, there are order zero maps
$$\tilde{\phi}_{j, i}^{(l)}: F_j^{(l)} \to S_i\subset A,\quad i\in\mathbb N,\ l=0, 1, ..., m,$$
such that
$$\lim_{i\to\infty}\| \tilde{\phi}_{j, i}^{(l)}(c) - \sigma_i(\phi_{j}^{(l)}(c))\|=0,\quad c\in F_{j}^{(l)};$$
and
(order zero) positive linear map 
$$\tilde{\phi}_{j, i}^{(m+1)}: \Comp\ni 1 \mapsto 1_{\tilde{A}}-\sigma_i(e_j)\in \tilde{S_i}=\textrm{C*}\{S_i, 1_{\tilde{A}}\}\subset \tilde{A},\quad i\in\mathbb N.$$
Note that
\begin{equation}\label{defn-phi-j-p}
 \tilde{\phi}_{j, i}^{(m+1)}(\lambda) = \sigma_i(\phi_{j}^{(m+1)}(\lambda)),\quad \lambda \in F_{j}^{(m+1)}=\Comp,
\end{equation} 
where one still uses $\sigma_i$ to denote the induced map $\tilde{A} \to \tilde{S_i}$

Note that for each $l=0, 1, ..., m$,
$$\lim_{i\to\infty}\|f(\tilde{\phi}_{j, i}^{(l)}(c)) - \sigma_i (f(\phi_{j}^{(l)}(c)))\| = 0,\quad c\in (F_j^{(l)})^+, \  f\in\mathrm{C}^+_0((0, 1]),$$
and hence
$$\lim_{i\to\infty}\sup_{\tau\in\Sigma A}| \tau(f(\tilde{\phi}_{j, i}^{(l)}(c)) - f(\phi_{j}^{(l)}(c)))| = 0,\quad c\in (F_j^{(l)})^+, \  f\in\mathrm{C}^+_0((0, 1]).$$
Also note that
$$\limsup_{i\to\infty} \|f(\tilde{\phi}_{j, i}^{(l)})(c)\| \leq \| f(\phi_j^{(l)})(c)\|, \quad c\in (F_j^{(l)})^+, \  f\in\mathrm{C}^+_0((0, 1]).$$

Applying Lemma \ref{comp} to $(\tilde{\phi}_{j, i}^{(l)})_{i\in\mathbb N}$ and $\phi_j^{(l)}$ for each $l=0, 1, ..., m$, there are contractions $$s^{(l)}_{j, i} \in \mathrm{M}_4(A) \subset \mathrm{M}_4(\tilde{A}),\quad i\in\mathbb N,$$
such that
\begin{equation*}
\lim_{i\to\infty}\|s_{j, i}^{(l)}(1_4\otimes \phi_j^{(l)}(c)) - (e_{1, 1}\otimes\tilde{\phi}_{j, i}^{(l)}(c)) s_{j, i}^{(l)}\| = 0,\quad c\in F_j^{(l)}
\end{equation*}
and $$\lim_{i\to\infty}\|(e_{1, 1} \otimes \phi_j^{(l)}(c))s_{j,i}^{(l)}(s_{j, i}^{(l)})^*- e_{1, 1} \otimes \phi_j^{(l)}(c)\| = 0. $$
For $l=m+1$, since $\mathrm{sp}(e_j) = [0, 1]$, by Lemma \ref{pre-comp}, there are unitaries $$s^{(m+1)}_{j, i} \in \tilde{A},\quad i\in\mathbb N,$$ such that $$\lim_{i\to\infty}\|s_{j, i}^{(m+1)}e_j-\sigma_i(e_j)s_{j, i}^{(m+1)}\|=0,$$ and hence
$$\lim_{i\to\infty}\|s_{j, i}^{(m+1)}(1_{\tilde{A}}-e_j)-(1_{\tilde{A}}-\sigma_i(e_j))s_{j, i}^{(m+1)}\|=0.$$
By \eqref{defn-phi-j} and \eqref{defn-phi-j-p}, one has
\begin{equation*}
\lim_{i\to\infty}\|s_{j, i}^{(m+1)}\phi_j^{(m+1)}(c) - \tilde{\phi}_{j, i}^{(m+1)}(c) s_{j, i}^{(m+1)}\| = 0,\quad c\in F_j^{(m+1)}=\Comp.
\end{equation*}

Consider $e_{1, 1}\otimes s_{j, i}^{(m+1)} \in \mathrm{M_4}\otimes \tilde{A}$, and still denote it by $s_{j, i}^{(m+1)}$, one has
\begin{equation*}
\lim_{i\to\infty}\|s_{j, i}^{(m+1)}(1_4\otimes \phi_j^{(m+1)}(c)) - (e_{1, 1}\otimes\tilde{\phi}_{j, i}^{(m+1)}(c)) s_{j, i}^{(m+1)}\| = 0,\quad c\in F_j^{(l)}
\end{equation*}
and $$(e_{1, 1} \otimes \phi_j^{(m+1)}(c))s_{j,i}^{(m+1)}(s_{j, i}^{(m+1)})^* = e_{1, 1} \otimes \phi^{(m+1)}_j(c). $$

Therefore,
\begin{equation}\label{pre-s-1}
\lim_{i\to\infty}\|s_{j, i}^{(l)}(1_4\otimes\phi_j^{(l)}(c)) - (e_{1, 1} \otimes \tilde{\phi}_{j, i}^{(l)}(c)) s_{j, i}^{(l)}\| = 0,\quad c\in F_j^{(l)},\ l=0, 1, ..., m+1.
\end{equation}

\begin{equation}\label{pre-s-1-1}
\lim_{i\to\infty}\|(e_{1, 1}\otimes \phi_i(c))s_{j,i}^{(l)}(s_{j, i}^{(l)})^*- \phi_i(c)\| = 0, \quad c\in F_j^{(l)},\ l=0, 1, ..., m+1.
\end{equation}


%

Let $\tilde{\sigma}_i: \tilde{A} \to \tilde{S_i}$ and $\tilde{\rho}_i: \tilde{S_i} \to\tilde{A}$ denote the unital maps induced by 
$\sigma_i: {A} \to {S_i}$ and $\rho_i: {S_i} \to {A}$, respectively.

Consider the contractions
$$s_j^{(l)} := (s_{j, i}^{(l)})_{i\in\mathbb N}\in (\mathrm{M}_4\otimes \tilde{A})_\infty,\quad l=0, 1, ..., m+1.$$ By \eqref{pre-s-1} and \eqref{pre-s-1-1}, they satisfy
$$s_j^{(l)}(1_4\otimes \bar\iota(\phi_j^{(l)}(c))) = (e_{1, 1}\otimes \bar\rho\bar\sigma({\phi}_j^{(l)}(c))) s_{j}^{(l)}$$
and
$$(e_{1,1}\otimes \bar{\rho}(\phi_j^{(l)}(c)))s_j^{(l)}(s_j^{(l)})^* = (e_{1,1}\otimes \bar{\rho}(\phi_j^{(l)}(c))),$$
where 
$$\bar\rho: \prod \tilde{S_i}/\bigoplus \tilde{S_i} \to \tilde{A}_\infty$$
is the homomorphism induced by $\tilde{\rho}_i$,  $\iota: \tilde{A}\to (\tilde{A})_\infty$ is the canonical embedding, $\tilde{S_i}=\mathrm{C^*}\{S_i, 1_{\tilde{A}}\}$, and
$$\bar\iota: (\tilde{A})_\infty \to ((\tilde{A})_\infty)_\infty$$ is the embedding induced by the canonical embedding $\iota: \tilde{A}\to (\tilde{A})_\infty$.

Let $$\bar\gamma: \tilde{A}_\infty \to ((\tilde{A})_\infty)_\infty$$ be the homomorphism induced by
$$\bar{\rho}\bar{\sigma}: \tilde{A} \to (\tilde{A})_\infty,$$
For each $l=0, 1, ..., m+1$, let 
$$\bar{\phi}^{(l)}: \prod_j F_j^{(l)}/\bigoplus_j F_j^{(l)} \to A_\infty$$
and
$$\bar{\psi}^{(l)}: A \to \prod_j F_j^{(l)}/\bigoplus_j F_j^{(l)}$$
denote the maps induced by $\phi_j^{(l)}$ and $\psi_j^{(l)}$. 

Define a contraction
$$\bar{s}^{(l)} = (s_j^{(l)}) \in (\mathrm{M}_4\otimes \tilde{A}_\infty)_\infty,$$ and then
$$s^{(l)}(1_4\otimes \bar\iota\bar{\phi}^{(l)}\bar\psi^{(l)}(a)) = (e_{1, 1} \otimes \bar\gamma\bar\phi^{(l)}\bar\psi^{(l)}(a)) s^{(l)},\quad a\in \tilde{A},$$ and
$$(e_{1, 1} \otimes \bar\gamma\bar\phi^{(l)}\bar\psi^{(l)}(a)) s_j^{(l)}(s_j^{(l)})^* = (e_{1, 1} \otimes \bar\gamma\bar\phi^{(l)}\bar\psi^{(l)}(a)).$$

By \eqref{refine}, one has
$$\bar\phi^{(l)}\bar\psi^{(l)}(1_A)\iota(a) = \bar\phi^{(l)}\bar\psi^{(l)}(a),\quad a\in A.$$
In particular, 
$$((\bar\phi^{(l)}\bar\psi^{(l)}(1_A))^{\frac{1}{2}}\iota(a) \in \textrm{C*}\{\bar\phi^{(l)}\bar\psi^{(l)}(A)\},$$
and hence
\begin{eqnarray}\label{twrist}
\bar{s}^{(l)}(1_4 \otimes (\bar{\iota}\bar{\phi}^{(l)}\bar{\psi}^{(l)}(1_{\tilde{A}}))^{\frac{1}{2}})(1_4\otimes \bar{\iota}\iota(a)) &= & \bar{s}^{(l)}(1_4\otimes \bar{\iota}(\bar{\phi}^{(l)}\bar{\psi}^{(l)}(1_{\tilde{A}})^{\frac{1}{2}}\iota(a)) \nonumber \\
& = & (e_{1, 1} \otimes \bar{\gamma}(\bar{\phi}^{(l)}\bar{\psi}^{(l)}(1_{\tilde{A}}))^{\frac{1}{2}}\iota(a)) \bar{s}^{(l)} \nonumber \\
& = & (e_{1, 1} \otimes \bar{\gamma}(\iota(a)\bar{\phi}^{(l)}\bar{\psi}^{(l)}(1_{\tilde{A}}))^{\frac{1}{2}}) \bar{s}^{(l)} \nonumber \\
& = & (e_{1, 1} \otimes \bar{\gamma}(\iota(a)))(e_{1, 1}\otimes \bar{\gamma}(\bar{\phi}^{(l)}\bar{\psi}^{(l)}(1_{\tilde{A}}))^{\frac{1}{2}})\bar{s}^{(l)}.
\end{eqnarray}

Set
\begin{eqnarray*}
\bar{v} & = & \sum_{l=0}^{m+1} e_{1, l} \otimes ((e_{1, 1} \otimes \bar{\gamma}\bar{\phi}^{l}\bar{\psi}^{(l)}(1_{\tilde{A}}))^{\frac{1}{2}}\bar{s}^{(l)}) \\
& = & \sum_{l=0}^{m+1} e_{1, l} \otimes (\bar{s}^{(l)}(1_4\otimes \bar{\iota}\bar{\phi}^{l}\bar{\psi}^{(l)}(1_{\tilde{A}}))^{\frac{1}{2}}) \in \mathrm{M}_{m+2}(\Comp) \otimes \mathrm{M}_4(\Comp)\otimes (A_\infty)_\infty.
\end{eqnarray*}

Then
\begin{eqnarray*}
\bar{v}\bar{v}^* & = & \sum_{l=0}^{m+1} e_{1, 1} \otimes (e_{1, 1} \otimes (\bar{\gamma}\bar{\phi}^{l}\bar{\psi}^{(l)}(1_{\tilde{A}}))) =  e_{1, 1} \otimes e_{1, 1} \otimes \bar{\gamma}(1_{\tilde{A}}).
\end{eqnarray*}
Thus, $\bar{v}$ is an partial isometry. 

Moreover, for any $a\in \tilde{A}$,
\begin{eqnarray*}
\bar{v}(1_{(m+2)}\otimes 1_4 \otimes\bar{\iota}\iota(a)) & = & \sum_{l=0}^{m+1} e_{1, l} \otimes (\bar{s}^{(l)}(1_4\otimes \bar{\iota}\bar{\phi}^{l}\bar{\psi}^{(l)}(1_{\tilde{A}})^{\frac{1}{2}})(1_4\otimes \bar{\iota}\iota(a)))\\
& = & \sum_{l=0}^{m+1} e_{1, l} \otimes (e_{1, 1} \otimes \bar{\gamma}(\iota(a)))(e_{1, 1} \otimes \bar{\gamma}(\bar{\phi}^{(l)}\bar{\psi}^{(l)}(1_{\tilde{A}})^{\frac{1}{2}}\bar{s}^{(l)}) \quad\quad\textrm{(by \eqref{twrist})}\\
& = & (1_{m+2}\otimes e_{1, 1} \otimes\bar{\gamma}(\iota(a))) \sum_{l=0}^{m+1} e_{1, l} \otimes e_{1, 1} \otimes \bar{\gamma}(\bar{\phi}^{(l)}\bar{\psi}^{(l)}(1_{\tilde{A}})^{\frac{1}{2}}\bar{s}^{(l)}) \\
& = & (1_{m+2}\otimes e_{1, 1} \otimes \bar{\gamma}(\iota(a)))\bar{v},
\end{eqnarray*}
and hence
\begin{equation*}
\bar{v}^*\bar{v} (1_{m+2}\otimes 1_4 \otimes \bar\iota\iota(a))  =  \bar{v}^*(1_{m+2}\otimes e_{1, 1} \otimes \bar\gamma\iota(a)) \bar{v} =  (1_{m+2}\otimes 1_4\otimes\bar\iota\iota(a)) \bar{v}^*\bar{v},\quad a\in \tilde{A}.
\end{equation*}

Then, for any finite set $\mathcal F\subseteq \tilde{A}$ and any $\eps>0$, there are $i, j\in\mathbb N$ and $v\in \mathrm{M}_{m+2}(\Comp)\otimes \mathrm{M}_{4}(\Comp) \otimes \tilde{A}$ such that
\begin{enumerate}
\item $vv^*= e_{1, 1}\otimes e_{1, 1}\otimes \tilde{\rho}_i(1_{\tilde{S_i}})= e_{1, 1}\otimes e_{1, 1}\otimes 1_{\tilde{A}}$,
\item $\|[v^*v, 1_{m+2} \otimes a]\| < \eps$, $a\in \mathcal F$, 
\item $\|v^*v(1_{m+2}\otimes a) - (e_{1, 1}\otimes\tilde{\rho}_i\tilde{\sigma}_i(a))v\|<\eps$, $a\in\mathcal F$.
\end{enumerate}

Define
$$\kappa: \tilde{S}_i \to \mathrm{M}_{m+2}(\Comp)\otimes \mathrm{M}_{4}(\Comp) \otimes \tilde{A}$$
by
$$\kappa(s)=v^*(e_{1, 1}\otimes e_{1, 1} \otimes\tilde{\rho}_i(s))v.$$
Note that
$$\kappa(S_i) \subset \mathrm{M}_{m+2}(\Comp)\otimes \mathrm{M}_{4}(\Comp) \otimes A.$$

Then $\kappa$ is an embedding; and on setting $p=1_{\kappa(\tilde{S}_i)}=v^*v$, one has
\begin{enumerate}
\item $p \sim e_{1, 1}\otimes e_{1, 1}\otimes 1_{\tilde A}$,
\item $\|[p, 1_{m+2}\otimes 1_4 \otimes a]\| <\eps$, $a\in\mathcal F$,
\item $p(1_{m+2}\otimes 1_4 \otimes a)p \in_\eps \kappa(\tilde{S}_i)$, $a\in\mathcal F$.
\end{enumerate}

Since $p\sim e_{1, 1}\otimes e_{1, 1}\otimes 1_{\tilde A}$, one has that $ (1-p)\mathrm{M}_{4(m+2)}(A)(1-p) \cong \mathrm{M}_{4(m+2)-1}(A)$, and hence is still in the reduction class $\mathcal R$, as desired.
%
%
\end{proof}

\begin{lem}\label{pre-decp}
Let $A$ be a simple C*-algebra satisfying the UCT. Assume $\mathrm{dim}_{\mathrm{nuc}}(A)=m<\infty$, $\Kzero(A)=\{0\}$, and $A\in\mathcal R$. Then, for any finite set $\mathcal F \subset A$, and $\eps>0$, there are $d\in\mathbb N$, a projection $p$ in $\mathrm{M}_d(\tilde{A})$, and a sub-C*-algebra $S\subset p\mathrm{M}_d(A)p$ with $S\in\mathcal R_z$, such that
\begin{enumerate}
\item $\|[p, 1_d\otimes a ]\| < \eps$, $a\in\mathcal F$,
\item $p(1_d\otimes a)p\in_\eps S$, $a\in\mathcal F$, and
\item $1-p$ is Murray von Neumann equivalent to a projection of $\mathrm{M}_d(\Comp1_{\tilde{A}})$ with (normalized) trace at most $\eps$.
\end{enumerate}
\end{lem}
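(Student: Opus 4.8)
The plan is to invoke Theorem \ref{fdim} with $\mathcal{S}=\mathcal{R}_z$ and then to iterate it, peeling off Razak corners until what is left has small trace.

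First the set-up. Since $A$ has finite nuclear dimension it is $\mathcal{Z}$-stable, and, being simple, separable and stably finite (it carries a nonzero bounded trace, lying in $\mathcal{R}$), it therefore has stable rank one and strict comparison of positive elements. These four properties, together with membership in $\mathcal{R}$, nuclear dimension at most $m$, triviality of $\Kzero$, and the UCT, are inherited by every matrix algebra $\mathrm{M}_r(A)$, and (by item (4) of Theorem \ref{fdim} for $\mathcal{R}$, and standard permanence of the others under corners) by every algebra $(1-q)\mathrm{M}_r(A)(1-q)$ with $q$ a projection equivalent to $e_{11}\otimes 1$; since such a corner is isomorphic to a matrix algebra over $A$, the class $\mathcal{C}$ of all these algebras is just $\{\mathrm{M}_r(A):r\ge 1\}$ up to isomorphism. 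By Theorem \ref{TFA}, every $B\in\mathcal{C}$ satisfies the trace-factorization hypothesis of Theorem \ref{fdim} with $\mathcal{S}=\mathcal{R}_z$. Hence, writing $d_0=4(m+2)$: for every $B\in\mathcal{C}$, every finite $\mathcal{G}\subset B$ and every $\delta>0$ there are a projection $q\in\mathrm{M}_{d_0}(\widetilde{B})$ with $q\sim e_{11}\otimes 1_{\widetilde B}$ and a Razak algebra $S'\subset q\,\mathrm{M}_{d_0}(B)\,q$, with $\|[q,1_{d_0}\otimes b]\|<\delta$ and $q(1_{d_0}\otimes b)q\in_\delta S'$ for $b\in\mathcal{G}$, and with $(1-q)\mathrm{M}_{d_0}(B)(1-q)\cong\mathrm{M}_{d_0-1}(B)\in\mathcal{C}$. (Note the Razak corner $q$ here is \emph{small}, of normalized trace $1/d_0$, whereas Lemma \ref{pre-decp} asks for $1-p$ small; that mismatch is exactly what forces the iteration.)

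Now I would iterate this $N$ times, where $N$ is fixed in advance so that $((d_0-1)/d_0)^N<\eps$, along a rapidly decreasing sequence of tolerances $\eps_1>\eps_2>\dots>0$ to be pinned down at the end. Put $A_0=A$, $\mathcal{F}_0=\mathcal{F}$; given $A_i\cong\mathrm{M}_{(d_0-1)^i}(A)\in\mathcal{C}$ and a finite set $\mathcal{F}_i\subset A_i$, apply the displayed consequence of Theorem \ref{fdim} to $(A_i,\mathcal{F}_i,\eps_{i+1})$ to obtain $q_{i+1}$, a Razak algebra $S^{(i+1)}\subset q_{i+1}\mathrm{M}_{d_0}(A_i)q_{i+1}$, and $A_{i+1}:=(1-q_{i+1})\mathrm{M}_{d_0}(A_i)(1-q_{i+1})\cong\mathrm{M}_{(d_0-1)^{i+1}}(A)$, and set $\mathcal{F}_{i+1}:=\{(1-q_{i+1})(1_{d_0}\otimes b)(1-q_{i+1}):b\in\mathcal{F}_i\}\subset A_{i+1}$. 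After $N$ steps everything sits inside $\mathrm{M}_d(A)$ with $d:=d_0^{N}$, each step amplifying the ambient matrix algebra by a factor $d_0$. The structural point is that, after the identifications, $q_{i+1}\le 1_{\widetilde{A_i}}=1_{\mathrm{M}_{d_0}(\widetilde{A_{i-1}})}-q_i$, so $q_{i+1}$ is orthogonal to each of $q_1,\dots,q_i$; consequently
$$p:=\bigl(1_{d_0^{N-1}}\otimes q_1\bigr)+\bigl(1_{d_0^{N-2}}\otimes q_2\bigr)+\cdots+\bigl(1_{d_0}\otimes q_{N-1}\bigr)+q_N$$
(each $q_i$ amplified diagonally into $\mathrm{M}_d(\widetilde A)$) is a projection with $1-p=1_{\widetilde{A_N}}$, and
$$S:=\mathrm{M}_{d_0^{N-1}}\bigl(S^{(1)}\bigr)\oplus\mathrm{M}_{d_0^{N-2}}\bigl(S^{(2)}\bigr)\oplus\cdots\oplus S^{(N)}\ \subset\ p\,\mathrm{M}_d(A)\,p$$
is a direct sum of matrix algebras over Razak algebras, hence (as $\mathrm{M}_r(S(k,n))=S(rk,n)$) itself a Razak algebra. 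A straightforward induction on $i$, using $q_i\sim e_{11}\otimes 1_{\widetilde{A_{i-1}}}$ and $\Kzero(A_i)=0$, shows that $1-p=1_{\widetilde{A_N}}$ is Murray--von Neumann equivalent to a projection of $\mathrm{M}_d(\Comp 1_{\widetilde A})$ of rank $(d_0-1)^N$, hence of normalized trace $((d_0-1)/d_0)^N<\eps$; this is (3).

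It remains to verify (1) and (2) and then to choose the $\eps_i$. For $a\in\mathcal{F}$ one splits $1_d\otimes a$ level by level: at each stage $1_{d_0}\otimes(\,\cdot\,)=q(\cdot)q+(1-q)(\cdot)(1-q)$ up to twice the commutator defect, the $q$-corner going into (an amplification of) the corresponding $S^{(i)}$ and the $(1-q)$-corner becoming the next $\mathcal{F}_\bullet$. Because each $q_{i+1}$ annihilates every earlier $q_j$-corner, $[p,1_d\otimes a]$ picks up only the local defects $\|[q_{i+1},1_{d_0}\otimes b]\|<\eps_{i+1}$ plus the accumulated splitting errors of the earlier steps, so $\|[p,1_d\otimes a]\|$ is bounded by a fixed multiple of $N\sum_i\eps_i$; likewise $p(1_d\otimes a)p$ is, up to a fixed multiple of $\sum_i\eps_i$, the orthogonal sum of the corners $q_i(1_{d_0}\otimes b)q_i\in_{\eps_i}S^{(i)}$, hence within that same error of $S$. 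Choosing the $\eps_i$ with $\sum_i\eps_i$ small compared with $\eps/N$ yields (1) and (2). The main obstacle I anticipate is exactly this last bookkeeping — tracking how the almost-commutation defects and the ``almost in $S$'' errors propagate through the $N$ nested corner compressions — but it is routine once one exploits the mutual orthogonality $q_{i+1}\perp q_1,\dots,q_i$, which annihilates all cross-terms; beyond Theorems \ref{fdim} and \ref{TFA} the only genuinely new ingredients are the permanence facts keeping the iterates $A_i$ in the class to which Theorem \ref{fdim} applies.
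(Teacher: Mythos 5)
Your proposal is correct and follows essentially the same route as the paper: verify the hypotheses of Theorem \ref{fdim} via Theorem \ref{TFA}, iterate the resulting decomposition $N$ times with $\bigl(\tfrac{4(m+2)-1}{4(m+2)}\bigr)^N<\eps$, take $p$ to be the (amplified) sum of the successive Razak corners and $S$ the corresponding direct sum, and control the commutator and approximation errors additively by choosing the step tolerances of order $\eps/N$. The only cosmetic difference is that you enlarge each summand to a matrix amplification $\mathrm{M}_r(S^{(i)})$ (harmless, since $\mathrm{M}_r(S(k,n))\cong S(rk,n)$), whereas the paper simply keeps $S_1\oplus\cdots\oplus S_N$.
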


\begin{proof}
Without loss of generality, let us assume that $\|a\|\leq 1$, $a\in\mathcal F$.

Pick $N\in\mathbb{N}$ such that 
$$(\frac{4(m+2)-1}{4(m+2)})^N<\eps.$$ Set $\eps'=\frac{\eps}{8N}$.

By Theorem \ref{TFA}, the C*-algebra $A$ satisfies the conditions of Theorem \ref{fdim}, and hence there is a projection $p_1\in \mathrm{M}_{4(m+2)}(\tilde A)$ and a sub-C*-algebra $S_1\in p\mathrm{M}_{4(m+2)}(\tilde A)p$ with $S_1\in\mathcal R_0$ such that 
\begin{enumerate}
\item $\|[p_1, 1_{4(m+2)} \otimes a]\| < \eps'$, $a\in\mathcal F$,
\item $p_1(1_{4(m+2)} \otimes a) p_1\in_{\eps'} S_1$, $a\in\mathcal F$,
\item $p_1\sim e_{11}$ in $\mathrm{M}_{4(m+2)}(\tilde{A})$, and
\item $(1-p_1)\mathrm{M}_{4(m+2)}(A)(1-p_1) \in \mathcal R$.
\end{enumerate}

Consider the sub-C*-algebra $A_1:=(1-p_1)\mathrm{M}_{4(m+2)}(A)(1-p_1)$. Its nuclear dimension is bounded by $m$ and it also satisfies the UCT. Thus, by Theorem \ref{TFA} and Theorem \ref{fdim}, there is a projection $p_2\in \mathrm{M}_{4(m+2)}(\tilde A_1)\subset \mathrm{M}_{4(m+2)}((1-p_1)\mathrm{M}_{4(m+2)}(\tilde{A})(1-p_1))$ and a sub-C*-algebra $S_2\in p_2\mathrm{M}_{4(m+2)}(\tilde A)p_2$ with $S_2\in\mathcal R_0$ such that 
\begin{enumerate}
\item $\|[p_2, 1_{4(m+2)} \otimes a]\| < \eps'$, $a\in(1-p_1)(\mathrm M_{4(m+1)}\otimes\mathcal F)(1-p_1)$,
\item $p_2(1_{4(m+2)} \otimes a) p_2\in_{\eps'} S_2$, $a\in(1-p_1)(\mathrm M_{4(m+1)}\otimes\mathcal F)(1-p_1)$,
\item $p_2\sim e_{11}$ in $\mathrm{M}_{4(m+2)}(\tilde{A_1})\subset \mathrm{M}_{4(m+2)}((1-p_1)\mathrm{M}_{4(m+2)}(\tilde{A})(1-p_1))$, and
\item $(1-p_2)\mathrm{M}_{4(m+2)}(A_1)(1-p_2) \in \mathcal R$.
\end{enumerate}

Note that $p_1 \perp p_2$ as projections of $\mathrm M_{4(n+2)}\otimes \mathrm M_{4(n+2)}\otimes \tilde{A}$ (where $p_1$ is regarded as $1_{4(m+2)}\otimes p_1$), and note that
\begin{eqnarray*}
& &1_{4(m+2)}\otimes 1_{4(m+2)}\otimes a \\
 &\approx_{2\eps'}& p_1(1_{4(m+2)}\otimes 1_{4(m+2)}\otimes a)p_1 + (1-p_1)(1_{4(m+2)}\otimes 1_{4(m+2)}\otimes a)(1-p_1) \\
&\approx_{2\eps'}& p_1(1_{4(m+2)}\otimes 1_{4(m+2)}\otimes a)p_1 + p_2((1-p_1)(1_{4(m+2)}\otimes 1_{4(m+2)}\otimes a)(1-p_1))p_2 \\
&&+(1-p_2)((1-p_1)(1_{4(m+2)}\otimes 1_{4(m+2)}\otimes a)(1-p_1))(1-p_2),
\end{eqnarray*}
and for any $a\in\mathcal F$,
$$p_1(1_{4(m+2)}\otimes 1_{4(m+2)}\otimes a)p_1 \in_{\eps'} S_1\quad\mathrm{and}\quad p_2((1-p_1)(1_{4(m+2)}\otimes 1_{4(m+2)}\otimes a)(1-p_1))p_2 \in_{\eps'} S_2.$$
Therefore, 
\begin{enumerate}
\item $\|[p_1+p_2, 1_{4(m+2)}\otimes 1_{4(m+2)} \otimes a]\| < 8\eps'$, $a\in \mathcal F$,
\item $(p_1+p_2)(1_{4(m+2)}\otimes 1_{4(m+2)} \otimes a) (p_1+p_2)\in_{4\eps'} S_1\oplus S_2$, $a\in \mathcal F$.
\end{enumerate}

Also note that $p_1+p_2 $ is Murray-von Neumann equivalent to a projection of $\mathrm M_{4(m+2)}(\mathrm M_{4(m+2)}(\Comp 1_{\tilde{A}}))$ with rank $4(m+2) + 4(m+2)-1$, and 
$$\frac{\mathrm{rank}(1-(p_1+p_2))}{(4(m+2))^2} = (\frac{4(m+2) - 1}{4(m+2)})^2.$$

%

Then, repeating this procedure $N$ times, one obtains pairwise orthogonal projections $$p_1, p_2, ..., p_N \in \underbrace{\mathrm{M}_{4(m+2)}(\Comp)\otimes\cdots\otimes \mathrm{M}_{4(m+2)}}_N(\Comp)\otimes {\tilde{A}},$$ and sub-C*-algebras $S_i\in p_i ((\bigotimes_N \mathrm M_{4(m+2)}(\Comp))\otimes{\tilde{A}})p_i$
such that each $p_i$ is Murray-von Neumann equivalent to a projection of $\bigotimes_N \mathrm M_{4(m+2)}(\Comp 1_{\tilde{A}})$ and $$\frac{\mathrm{rank}(1-(p_1+\cdots+p_N))}{(4(m+2))^N} = (\frac{4(m+2)-1}{4(m+2)})^N <\eps,$$ and
\begin{enumerate}
\item $\|[p_1+\cdots+p_N, 1_{(4(m+2))^N} \otimes a]\| < 8(N-1)\eps'<\eps$, $a\in \mathcal F$,
\item $(p_1+\cdots + p_N)(1_{(4(m+2))^N}\otimes a) (p_1+\cdots + p_N)\in_{4(N-1)\eps'} S_1\oplus\cdots\oplus S_N$, $a\in \mathcal F$.
\end{enumerate}
Thus, one obtains the desired decomposition with $d=(4(m+2))^N$, $p=p_1+\cdots+p_N$, and $S=S_1\oplus\cdots\oplus S_N$.
\end{proof}

The following statement is a straightforward corollary of Lemma \ref{pre-decp}.
\begin{cor}\label{pre-div-decp}
Let $A$ be a simple C*-algebra satisfying the UCT. Assume that $\mathrm{dim}_{\mathrm{nuc}}(A)=m<\infty$, $\Kzero(A)=\{0\}$, and $A\in\mathcal R$. Then, for any finite set $\mathcal F\subset A$, any $\eps>0$ and any $n\in \mathbb N$, there is a projection $p \in \tilde{A}\otimes Q$ and a sub-C*-algebra $S\subset p(A\otimes Q)p$ with $S\in\mathcal R_z$ such that
\begin{enumerate}
\item $\|[p, a\otimes 1_Q]\| < \eps$, $a\in\mathcal F$,
\item $p(a\otimes 1_Q)p \in_{\eps} S$, $a\in\mathcal F$,
\item $1-p$ is Murray von Neumann equivalent to a projection of $1_{\tilde{A}} \otimes Q$ with trace at most $\eps$.
\end{enumerate}
\end{cor}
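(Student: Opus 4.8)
The plan is to obtain the corollary from Lemma~\ref{pre-decp} by transporting the matrix-amplified decomposition it produces into $\tilde A\otimes Q$ along a unital embedding of a matrix algebra into $Q$. The only feature of $Q$ that will be used is that it is the universal UHF algebra: for every $d\in\mathbb N$ there is a unital embedding $\mathrm M_d\hookrightarrow Q$, and the unique tracial state of $Q$ restricts to the normalized trace on any such unital matrix subalgebra.

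\emph{Step 1.} Apply Lemma~\ref{pre-decp} to $A$, the finite set $\mathcal F$, and $\eps$. This yields $d\in\mathbb N$, a projection $p_0\in\mathrm M_d(\tilde A)$, and a sub-C*-algebra $S_0\subset p_0\mathrm M_d(A)p_0$ with $S_0\in\mathcal R_z$, such that $\|[p_0,1_d\otimes a]\|<\eps$ and $p_0(1_d\otimes a)p_0\in_\eps S_0$ for all $a\in\mathcal F$, and $1_{\mathrm M_d(\tilde A)}-p_0$ is Murray--von Neumann equivalent, within $\mathrm M_d(\tilde A)$, to a projection $e\in\mathrm M_d(\Comp 1_{\tilde A})$ whose normalized trace is at most $\eps$. (The integer $n$ in the statement of the corollary does not enter the conclusion; if a later application requires it, one simply replaces $\eps$ by $\min\{\eps,1/n\}$ here.)

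\emph{Step 2.} Fix a unital embedding $\mathrm M_d\hookrightarrow Q$ and let $\Theta\colon\mathrm M_d(\tilde A)=\tilde A\otimes\mathrm M_d\to\tilde A\otimes Q$ be the induced unital, injective $*$-homomorphism. It carries $\mathrm M_d(A)=A\otimes\mathrm M_d$ into the ideal $A\otimes Q$, carries $\mathrm M_d(\Comp 1_{\tilde A})=1_{\tilde A}\otimes\mathrm M_d$ into $1_{\tilde A}\otimes Q$, and sends the diagonal element $1_d\otimes a$ to $a\otimes 1_Q$ for $a\in A$. Put $p:=\Theta(p_0)\in\tilde A\otimes Q$ and $S:=\Theta(S_0)$; then $S\subset p(A\otimes Q)p$ and $S\cong S_0\in\mathcal R_z$. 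Since $\Theta$ is an isometric $*$-homomorphism with $\Theta(1_d\otimes a)=a\otimes 1_Q$, conditions (1) and (2) for $(p,S)$ are immediate from those for $(p_0,S_0)$. For (3), applying $\Theta$ to the Murray--von Neumann equivalence $1_{\mathrm M_d(\tilde A)}-p_0\sim e$ gives $1_{\tilde A\otimes Q}-p=\Theta(1_{\mathrm M_d(\tilde A)}-p_0)\sim\Theta(e)$, and $\Theta(e)$ is a projection of $1_{\tilde A}\otimes Q$ whose trace equals the normalized trace of $e$ in $\mathrm M_d$, hence is at most $\eps$.

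There is no essential obstacle: all the substance sits in Lemma~\ref{pre-decp} (and, behind it, Theorems~\ref{TFA} and~\ref{fdim}), and what remains is the routine verification that commutator norms, approximate containments, Murray--von Neumann equivalences, and normalized traces all pass unchanged through the embedding $\Theta$. The one point deserving a moment's attention is the bookkeeping that a \emph{single} unital embedding $\mathrm M_d\hookrightarrow Q$ simultaneously matches the ideal $A\otimes\mathrm M_d$ with $A\otimes Q$, the scalar copy $1_{\tilde A}\otimes\mathrm M_d$ with $1_{\tilde A}\otimes Q$, and the diagonal $1_d\otimes a$ with $a\otimes 1_Q$, so that conditions (1)--(3) emerge in exactly the form the statement demands.
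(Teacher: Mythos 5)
Your proposal is correct and is exactly the intended argument: the paper itself offers no proof, declaring the corollary "a straightforward corollary of Lemma \ref{pre-decp}," and the content is precisely the transport of the decomposition along a unital embedding $\mathrm{M}_d\hookrightarrow Q$ as you describe. Your observation that the integer $n$ plays no role in the conclusion is also accurate (it is only used in the subsequent Theorem \ref{div-decp}).
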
 

One then has the following (tracial) divisibility property.
\begin{thm}\label{div-decp}
Let $A$ be a simple C*-algebra satisfying the UCT. Assume that $\mathrm{dim}_{\mathrm{nuc}}(A)=m<\infty$, $\Kzero(A)=\{0\}$, and $A\in\mathcal R$. Then, for any finite set $\mathcal F\subset A$, any $\eps>0$ and any $n\in \mathbb N$, there are projections $q, p_1, p_2, ..., p_n \in \tilde{A}\otimes Q$, $S\subset p_1(A\otimes Q)p_1$, $S\in\mathcal R_z$ such that
\begin{enumerate}
\item $q+p_1+p_2+\cdots+p_n =1_{\tilde{A}\otimes Q}$, $q$ is Murray-von Neumann equivalent to $p_i$, $i=1, 2, ..., n$,
\item $\|[p_i, a\otimes 1_Q]\| < \eps$, $a\in\mathcal F$,
\item $p_iap_i\in_{\eps}v_iSv^*_i$, $a\in\mathcal F$, where $v_i\in \tilde{A}\otimes Q$, $i=1, 2, ..., n$, are the partial isometries  satisfying $v^*_iv_i=p_1$ and $v_iv_i^*=p_i$.
\end{enumerate}
\end{thm}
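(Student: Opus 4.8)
The plan is to combine the single‑projection decomposition of Corollary~\ref{pre-div-decp} with the divisibility carried by the $Q$‑stable algebra $A\otimes Q$: one extracts a Razak subalgebra into a corner of trace $1/(n+1)$ and then uses an approximately central copy of $\mathrm M_{n+1}(\Comp)$ to split the unit of $\tilde A\otimes Q$ into $n+1$ mutually equivalent pieces.

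I would first record the structural input. As $A$ is simple with $\dim_{\mathrm{nuc}}(A)<\infty$ it is $\mathcal Z$‑stable, so $A\otimes Q$ is $Q$‑stable, and, being stably finite with $\Kzero(A)=0$, it is stably projectionless, of stable rank one, and has strict comparison. The unital algebra $\tilde A\otimes Q$ is likewise $Q$‑stable and stably finite, hence of stable rank one, so Murray--von Neumann equivalence of its projections is detected by their classes in $\Kzero(\tilde A\otimes Q)\cong\mathbb Q$, equivalently by the trace. In particular, once mutually orthogonal, mutually equivalent projections $p_1,\dots,p_n$ of trace $1/(n+1)$ have been produced, the complement $q:=1_{\tilde A\otimes Q}-\sum_{i=1}^np_i$ has trace $1/(n+1)$ as well, and is therefore Murray--von Neumann equivalent to each $p_i$; this settles~(1), and reduces the theorem to producing the $p_i$'s with properties~(2) and~(3).

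For~(2), I would invoke that $Q$‑stability (equivalently here, $\mathcal Z$‑stability) of $\tilde A\otimes Q$ furnishes, for the given $\mathcal F$ and any prescribed tolerance, a unital homomorphism $\mu\colon\mathrm M_{n+1}(\Comp)\to\tilde A\otimes Q$ that is approximately central, i.e.\ $\|[\mu(e_{kl}),a\otimes 1_Q]\|$ is as small as we wish for all matrix units $e_{kl}$ and all $a\in\mathcal F$ (a unital copy of $Q$, hence of $\mathrm M_{n+1}(\Comp)$, sits in $(\tilde A\otimes Q)_\infty\cap(\tilde A\otimes Q)'$). Setting $g_i=\mu(e_{ii})$ and $g_{i1}=\mu(e_{i1})$, the $g_i$ are orthogonal, pairwise equivalent, of trace $1/(n+1)$, sum to $1_{\tilde A\otimes Q}$, and satisfy $\|[g_i,a\otimes 1_Q]\|<\eps$; the projections $p_i$ will be obtained by perturbing $g_i$ inside its corner $g_i(A\otimes Q)g_i\cong A\otimes Q$, with $v_i$ the correspondingly perturbed partial isometries.

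The hard part is~(3). Fixing the corner $g_1(A\otimes Q)g_1\cong A\otimes Q$ (simple, of finite nuclear dimension, with $\Kzero=0$, in $\mathcal R$), Corollary~\ref{pre-div-decp} applied there produces a \emph{sub}projection $\hat p_1\le g_1$ together with a Razak algebra $S_1\subset\hat p_1(A\otimes Q)\hat p_1$ satisfying $\hat p_1(a\otimes 1_Q)\hat p_1\in_\eps S_1$, but also a small nonzero, and trivial, complement $g_1-\hat p_1$; one cannot simply enlarge $\hat p_1$ back to $g_1$, since the term $(g_1-\hat p_1)(a\otimes 1_Q)(g_1-\hat p_1)$ has norm comparable to $\|a\|$ and is not absorbed by enlarging $S_1$ in finitely many steps. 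The way around this that I would pursue is to arrange, already in the application of Corollary~\ref{pre-div-decp} (and ultimately in Theorem~\ref{TFA}), that the Razak algebra can be taken $(n+1)$‑divisible --- legitimately, since $S(k,m)\otimes\mathrm M_{n+1}(\Comp)\cong S(k(n+1),m)$ is again a Razak algebra, so $S_0$ may be replaced by $S_0\otimes\mathrm M_{n+1}(\Comp)$ --- and then to choose $\mu$ compatibly with the resulting $\mathrm M_{n+1}(\Comp)$ acting on the Razak algebra, so that cutting down by $g_1=\mu(e_{11})$ lands approximately inside a genuine Razak subalgebra $S\subset g_1(A\otimes Q)g_1$; the remaining trivial piece of arbitrarily small trace is redistributed into $q$ with matching corrections on each $p_i$, keeping $q,p_1,\dots,p_n$ mutually equivalent by strict comparison and stable rank one. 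Collecting these ingredients and tracking the finitely many controllable error accumulations yields~(1)--(3); I expect this reconciliation of the \emph{exact} trace $1/(n+1)$ (forced by $q\sim p_i$) with the Razak structure to be the main point of the argument.
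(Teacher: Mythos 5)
You have set the problem up correctly: condition (1) reduces, by stable rank one and $\Kzero(\tilde{A}\otimes Q)\cong\Ratn$, to making the $p_i$ mutually orthogonal with class exactly $\frac{1}{n+1}$, and you have correctly located the crux, namely that the small ``trivial'' leftover from Corollary \ref{pre-div-decp} must end up in $q$ (which carries no approximation requirement) while the $p_i$ stay clean. But the mechanism you propose for this -- an approximately central unital copy of $\mathrm{M}_{n+1}(\Comp)$, a Razak algebra rewritten as $S_0\otimes\mathrm{M}_{n+1}(\Comp)$ with $\mu$ ``chosen compatibly'', and a ``redistribution'' of the leftover ``with matching corrections on each $p_i$'' -- is exactly the missing idea, and the natural readings of it fail. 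First, Corollary \ref{pre-div-decp} only puts $p(a\otimes 1_Q)p$ within $\eps$ of \emph{some} element of $S$, not of $S_0\otimes 1_{n+1}$, so the matrix units of the $\mathrm{M}_{n+1}$ factor of $S$ (or any copy of $\mathrm{M}_{n+1}$ tied to $S$) need not even approximately commute with $p(a\otimes 1_Q)p$; moreover they live under $p$ and cannot yield the exact partition of $1_{\tilde{A}\otimes Q}$. (Also, the identification $g_1(A\otimes Q)g_1\cong A\otimes Q$ does not carry $g_1(a\otimes 1_Q)g_1$ to elements of the form $a'\otimes 1_Q$, so Corollary \ref{pre-div-decp} does not literally apply in the corner; one would have to go back to Lemma \ref{pre-decp}.) Second, any ``matching correction'' that enlarges a $p_i$ by a projection $r_i$ taken from the leftover, in order to force $[p_i]=\frac{1}{n+1}$, destroys (2) and (3): $r_i$ has small trace but $[r_i,a\otimes 1_Q]$ and $r_i(a\otimes 1_Q)r_i$ are of norm comparable to $\|a\|$ and are not captured by any fixed Razak subalgebra; whereas if you correct only $q$, then $[q]\neq[p_i]$ and, by cancellation, $q\not\sim p_i$. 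Iterating the decomposition on the leftover only improves the approximation and never produces the exact equality of $\Kzero$-classes that (1) demands.

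The paper's resolution is simpler, and it is worth comparing. Having obtained $p$ and $S'\subset p(A\otimes Q)p$ with $1-p$ equivalent to a scalar projection of (rational) trace $r$ small, it tensors the whole configuration with a fresh matrix factor $\mathrm{M}_m(\Comp)\subset Q$, using $Q\otimes Q\cong Q$ so that $a\otimes 1_Q$ is unchanged; the new matrix units then commute \emph{exactly} with $p$, with $S'$, and with $\mathcal F\otimes 1_Q$. Choosing integers $l, m$ with $\frac{l}{m}=(\frac{1}{n+1}-r)\cdot\frac{1}{1-r}$ and $m-l$ divisible by $n$, one sets $q=(1-p)\otimes 1_m+p\otimes(e_{11}+\cdots+e_{ll})$ and $p_i=p\otimes e_i$ for pairwise orthogonal scalar projections $e_i$ of rank $\frac{m-l}{n}$ orthogonal to $e_{11}+\cdots+e_{ll}$, and takes $S=S'\otimes e_1$ (still a Razak algebra) with $v_i$ scalar partial isometries. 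Conditions (2) and (3) are then inherited verbatim from the properties of $p$ and $S'$, and $q, p_1, \dots, p_n$ are all equivalent to scalar projections of trace exactly $\frac{1}{n+1}$, which gives (1). In short, the exact trace matching is achieved by scalar arithmetic in an exactly commuting new tensor factor, not by correcting the $p_i$ or by aligning an approximately central $\mathrm{M}_{n+1}$ with $S$; without this device your argument does not close.
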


\begin{proof}
Let $(\mathcal F, \eps)$, and $n$ be given. It follows from Corollary \ref{pre-div-decp} that there exist a projection $p \in \tilde{A}\otimes Q$ and a sub-C*-algebra $S'\subset p(A\otimes Q)p$ with $S\in\mathcal R_0$ such that
\begin{enumerate}
\item $\|[p, a\otimes 1_Q]\| < \eps$, $a\in\mathcal F$,
\item $p(a\otimes 1_Q)p \in_{\eps} S'$, $a\in\mathcal F$,
\item $1-p$ is Murray von Neumann equivalent to a projection of $1_{\tilde{A}} \otimes Q$ with trace $r$ and $r < \frac{1}{n}$.
\end{enumerate}

Choose natural numbers $l$ and $m$ such that $m$ and $l$ are divided by $n$ and
\begin{equation}\label{size-mul}
\frac{l}{m} = (\frac{1}{n+1} - r)\cdot \frac{1}{1-r}.
\end{equation}
Then, consider $\tilde{A}\otimes Q \otimes \mathrm{M}_{m}$, and consider 
$$q:=(1-p)\otimes 1_m + p\otimes e_{1, 1} + p\otimes e_{2, 2} +\cdots + p\otimes e_{l, l}.$$ Then $q$ is Murray-von Neumann equivalent to a projection of $1_{\tilde{A}}\otimes Q \otimes \mathrm{M}_{m}$ with trace
$$r+(1-r)\frac{l}{m}=\frac{1}{n+1},\quad \textrm{by \eqref{size-mul}}.$$

Since $m-l$ is divided by $n$, there are pairwise orthogonal projections $e_1, e_2, ..., e_n\in \mathrm{M}_m(\Comp)$  such that $$e_i\perp (e_{1, 1}+\cdots+e_{ll}),\quad\mathrm{and}\quad e_i\sim e_j,\quad i, j=1, 2, ..., n,$$
Then put
$$p_i=p\otimes e_i\in \tilde{A}\otimes Q \otimes \mathrm{M}_{m},\quad i=1, 2, ..., n,$$
and 
$$S=S'\otimes e_i\subset p_1(A\otimes Q\otimes\mathrm{M}_m)p_1.$$ Note that there are partial isometries $v_i\in 1_{\tilde{A}\otimes Q}\otimes \mathrm{M}_m$, $i=1, 2, ..., n$,  satisfying $v^*_iv_i=p_1$ and $v_iv_i^*=p_i$. Also note that each $p_i$, $i=1, 2, ..., n$, is Murray-von Neumann equivalent to a projection of $1_{\tilde{A}}\otimes Q\otimes \mathrm M_m$ with trace $\frac{1}{n+1}$, and, therefore, $p_i$ is murray-von Neumann equivalent to $q$.

 Regard $\tilde{A}\otimes Q \otimes \mathrm{M}_{m}$ as a unital sub-C*-algebra of $\tilde{A}\otimes Q \otimes Q$, and identify $Q\otimes Q$ with $Q$.
Then $q, p_1, p_2, ..., p_n$, $v_1, v_2, ..., v_n$, and $S$ satisfy the conclusion of the theorem.
\end{proof}

\begin{rem}
It follows from Theorem \ref{div-decp} that any element $a\in A$ can be approximated by
$$qaq + p_1ap_1 +p_2ap_2 + \cdots + p_nap_n$$
as an diagonal element of $\mathrm{M}_{m+1}(qAq)$. This structure theorem will lead to a uniqueness theorem (Theorem \ref{uniq-thm}), and then a classification of C*-algebras with trivial K-theory (Theorem \ref{clas-thm}).
\end{rem}

\begin{cor}\label{decp}
Let $A$ be a C*-algebra of Theorem \ref{div-decp}. Assume that $A\cong A\otimes Q$. Then, for any finite set $\mathcal F\subset A$, any $\eps>0$, and any $n\in\mathbb{N}$,  there are sub-C*-algebras $A_0, A_1\subset A$, $S\subset A_1$, with $A_0\perp A_1$ and $S\in\mathcal R_z$, and c.p.~maps
$$
\xymatrix{
A \ar[r]^-{\theta=\theta_0\oplus\theta_1} & A_0 \oplus A_1 \ar@{^(->}[r]^-\iota & A
}
$$
such that $\theta$ is $\mathcal F$-$\eps$-multiplicative, $\iota$ is the embedding, and
\begin{enumerate}
\item $\|\iota(\theta(a)) - a\| < \eps$, $a\in\mathcal F$,
\item $\theta_1(a)\in_\eps S$, $a\in\mathcal F$,
\item there are $v_1, v_2, ..., v_n\in A$ such that
         \begin{enumerate}
         \item $\|(v_i^*v_i)\theta_0(a) -\theta_0(a)\| < \eps$, $a\in\mathcal F$, 
         \item $v_iv_i^* \in A_1$ and  $v_iv_i^* \perp v_jv_j^*$, $i\neq j$.
         \end{enumerate}
\end{enumerate}
\end{cor}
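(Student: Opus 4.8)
The plan is to apply Theorem~\ref{div-decp} and simply repackage its conclusion. First a routine reduction: since $A\cong A\otimes Q$ and $Q$ is strongly self-absorbing, the first-factor embedding $A\to A\otimes Q$ is approximately unitarily equivalent to an isomorphism, so after shrinking $\eps$ it suffices to produce the data of the corollary for the algebra $A\otimes Q$ and the finite set $\{a\otimes 1_Q:a\in\mathcal F\}$ --- which is precisely the shape in which Theorem~\ref{div-decp} delivers its conclusion. So, assuming $\|a\|\le 1$ on $\mathcal F$ and fixing a small $\eps_0>0$ (of order $\eps/n^2$; explicitly $\eps_0=\eps/(4n^2)$ will do), I apply Theorem~\ref{div-decp} to $A$ with data $(\mathcal F,\eps_0,n)$ to obtain pairwise orthogonal, pairwise Murray--von Neumann equivalent projections $q,p_1,\dots,p_n\in\tilde A\otimes Q$ with $q+p_1+\cdots+p_n=1$, partial isometries $u_i$ with $u_i^*u_i=p_1$ and $u_iu_i^*=p_i$, and a $\mathrm{C}^*$-algebra $S_0\in\mathcal R_z$ with $S_0\subseteq p_1(A\otimes Q)p_1$, such that $\|[p_i,a\otimes 1_Q]\|<\eps_0$ and $p_i(a\otimes 1_Q)p_i\in_{\eps_0}u_iS_0u_i^*$ for all $a\in\mathcal F$. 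I drop the ``$\otimes 1_Q$'' from the notation and record that, $A\otimes Q$ being an ideal of $\tilde A\otimes Q$, every corner and product formed below lies in $A\otimes Q\cong A$.

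Next I would put $A_0:=qAq$ and $A_1:=(1-q)A(1-q)=(p_1+\cdots+p_n)A(p_1+\cdots+p_n)$ (so $A_0\perp A_1$, as $q(1-q)=0$), define the completely positive contractions $\theta_0(a)=qaq$ and $\theta_1(a)=(1-q)a(1-q)$, set $\theta=\theta_0\oplus\theta_1\colon A\to A_0\oplus A_1$, and let $\iota\colon A_0\oplus A_1\to A$ be $(x,y)\mapsto x+y$, which is an embedding because $A_0\perp A_1$. From $1-q=\sum_i p_i$ one gets $\|[q,a]\|\le\sum_i\|[p_i,a]\|<n\eps_0$; combining this with the identities $a-\iota\theta(a)=-qa(1-q)-(1-q)aq$ and $\theta(ab)-\theta(a)\theta(b)=qa(1-q)bq+(1-q)aqb(1-q)$ and the estimates $\|qa(1-q)\|=\|[q,a](1-q)\|\le\|[q,a]\|$, $\|(1-q)aq\|\le\|[q,a]\|$, one obtains $\|\iota\theta(a)-a\|<2n\eps_0$ and $\|\theta(ab)-\theta(a)\theta(b)\|<n\eps_0$ for $a,b\in\mathcal F$; so conclusion~(1) and the $\mathcal F$-$\eps$-multiplicativity of $\theta$ hold as soon as $2n\eps_0<\eps$.

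For~(2) I would take $S:=\mathrm{C}^*\{u_isu_j^*:s\in S_0,\ 1\le i,j\le n\}$. Each $u_isu_j^*=p_i(u_isu_j^*)p_j$ lies in $A_1$, so $S\subseteq A_1$; and since $u_i^*u_i=p_1$ for every $i$ while the $u_iu_i^*=p_i$ are pairwise orthogonal, the assignment $s\otimes e_{i,j}\mapsto u_isu_j^*$ is an isomorphism of $\mathrm{M}_n(S_0)$ onto $S$. Since $\mathrm{M}_n(S(k,r))\cong S(nk,r)$, the class $\mathcal R_z$ is closed under matrix amplification, whence $S\cong\mathrm{M}_n(S_0)\in\mathcal R_z$. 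Writing $\theta_1(a)=\sum_{i,j}p_iap_j$, the off-diagonal terms satisfy $\|p_iap_j\|=\|p_i[a,p_j]\|\le\|[p_j,a]\|<\eps_0$, and choosing $s_i\in S_0$ with $\|p_iap_i-u_is_iu_i^*\|<\eps_0$ (provided by Theorem~\ref{div-decp}) shows that $\theta_1(a)$ lies within $n^2\eps_0$ of $\sum_i u_is_iu_i^*\in S$; so~(2) holds once $n^2\eps_0<\eps$.

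For~(3): $A_0=qAq$ is $\sigma$-unital, so I pick a positive contraction $h\in A_0$ with $\|h\theta_0(a)-\theta_0(a)\|<\eps$ on $\mathcal F$; since $q\sim p_i$ I fix partial isometries $w_i$ with $w_i^*w_i=q$ and $w_iw_i^*=p_i$, and set $v_i:=w_ih^{1/2}$, which lies in $A$ (and not merely in the multiplier algebra) because $h\in A$. Then $v_i^*v_i=h^{1/2}qh^{1/2}=h$ gives~(3a), while $v_iv_i^*=w_ihw_i^*\in p_iAp_i\subseteq A_1$ and $w_i^*w_j=0$ for $i\ne j$ give~(3b); with $\eps_0=\eps/(4n^2)$ every bound above is strict. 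This last step is the only one that is not purely mechanical: since the projections $q,p_i$ furnished by Theorem~\ref{div-decp} live in $\tilde A\otimes Q$ rather than in $A$, one cannot take $v_i$ to be a partial isometry from $q$ to $p_i$ directly, and cutting it down by an approximate unit of $A_0$ is what both pulls it into $A$ and simultaneously yields the source condition~(3a). Apart from this --- and the preliminary strong-self-absorption reduction --- the argument is bookkeeping.
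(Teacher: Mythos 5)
Your proof is correct and follows exactly the route the paper intends: the paper states Corollary \ref{decp} without proof as an immediate repackaging of Theorem \ref{div-decp}, taking $A_0=q(A\otimes Q)q$, $A_1=(1-q)(A\otimes Q)(1-q)$, the compressions $\theta_0,\theta_1$, and a copy of the Razak algebra inside $(1-q)(A\otimes Q)(1-q)$. The details you supply that the paper leaves implicit --- passing from $\mathcal F\subset A$ to $\{a\otimes 1_Q\}$ via approximate unitary equivalence of the first-factor embedding with an isomorphism, absorbing the matrix amplification $\mathrm{M}_n(S_0)$ (or equivalently $\bigoplus_i u_iS_0u_i^*$) back into $\mathcal R_z$, and cutting the partial isometries by $h^{1/2}$ with $h$ from an approximate unit of $A_0$ so that $v_i\in A$ and (3a) holds --- are all accurate.
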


Motivated by this, one has the following notion of tracial approximation for a C*-algebra with or without unit.
\begin{defn}
Let $\mathcal S$ be a class of C*-algebras. A C*-algebra $A$ is said to be tracially approximated by C*-algebras in $\mathcal S$, denoted by $A\in\mathrm{TA}\mathcal S$, if 
any finite set $\mathcal F\subset A$, any $\eps>0$, and any $n\in\mathbb N$, there are sub-C*-algebras $A_0, A_1 \subset A$, $S\subset A_1$, with $A_0 \perp A_1$ and $S\in\mathcal S$, and c.p.~maps
$$
\xymatrix{
A \ar[r]^-{\theta=\theta_0\oplus\theta_1} & A_0 \oplus A_1 \ar@{^(->}[r]^-\iota & A
}
$$
such that $\theta$ is $\mathcal F$-$\eps$-multiplicative, $\iota$ is the natural embedding, and
\begin{enumerate}
\item $\|\iota(\theta(a)) - a\| < \eps$, $a\in\mathcal F$,
\item $\theta_1(a)\in_\eps S$, $a\in\mathcal F$,
\item there are $v_1, v_2, ..., v_n\in A$ such that
         \begin{enumerate}
         \item $\|(v_i^*v_i)\theta_0(a) -\theta_0(a)\| < \eps$, $a\in\mathcal F$, 
         \item $v_iv_i^* \in A_1$ and  $v_iv_i^* \perp v_jv_j^*$, $i\neq j$.
         \end{enumerate}
\end{enumerate}
\end{defn}

\begin{lem}
Let $A$ be a simple C*-algebra with strict comparison of positive elements, and assume that $A$ has stable rank one. Then $A\in\mathrm{TA}\mathcal S$ if, and only if, 
for any finite set $\mathcal F\subset A$, and any $\eps>0$, there are sub-C*-algebras $A_0, A_1 \subset A$, $S\subset A_1$, with $A_0 \perp A_1$ and $S\in\mathcal S$, and c.p.~maps
$$
\xymatrix{
A \ar[r]^-{\theta=\theta_0\oplus\theta_1} & A_0 \oplus A_1 \ar@{^(->}[r]^-\iota & A
}
$$
such that $\theta$ is $\mathcal F$-$\eps$-multiplicative, $\iota$ is the embedding, and
\begin{enumerate}
\item $\|\iota(\theta(a)) - a\| < \eps$, $a\in\mathcal F$,
\item $\theta_1(a)\in_\eps S$, $a\in\mathcal F$,
\item $\tau(b) <\eps $, $b\in A_0$ with $\|b\|\leq 1$, $\tau\in\mathrm{T}(A)$, $\|\tau\| =1$.
\end{enumerate}
\end{lem}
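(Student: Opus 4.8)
The statement is an equivalence between the property $A\in\mathrm{TA}\mathcal S$ (which for given $(\mathcal F,\eps)$ also asks for the $n$ orthogonal "copies" $v_1,\dots,v_n$) and the weaker property obtained by dropping $n$ and replacing (3) by the trace smallness of $A_0$. The plan is to prove the forward implication as a routine consequence of strict comparison, and the converse by using strict comparison together with stable rank one to manufacture the $v_i$.

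For the forward direction, given $(\mathcal F,\eps)$ first enlarge $\mathcal F$ so that it is self-adjoint, consists of contractions, and contains a positive contraction $h$ which is an approximate unit for $\mathcal F$ to within some small $\eps_1$ to be chosen. Apply $\mathrm{TA}\mathcal S$ to $(\mathcal F,\eps_1,n)$ with $n>1/\eps$, obtaining $A_0,A_1,S,\theta=\theta_0\oplus\theta_1$ and $v_1,\dots,v_n$. Since $\iota$ is isometric, $\iota\theta\approx_{\eps_1}\mathrm{id}$, and $\theta$ is $\mathcal F$-$\eps_1$-multiplicative, $\theta_0(h)$ is a positive contraction in $A_0$ with $\|\theta_0(h)\theta_0(a)-\theta_0(a)\|$ small for $a\in\mathcal F$. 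Put $g:=(\theta_0(h)-2\eps_1)_+$. From $\|(v_i^*v_i)\theta_0(h)-\theta_0(h)\|<\eps_1$ and the standard fact that $\|eb-b\|<\delta$ forces $(b-2\delta)_+\precsim e$, we get $g\precsim v_i^*v_i\sim v_iv_i^*$, hence $d_\tau(g)\le d_\tau(v_iv_i^*)$ for all $\tau$. As the $v_iv_i^*$ lie in $A_1$ and are pairwise orthogonal, $n\,d_\tau(g)\le\sum_i d_\tau(v_iv_i^*)=d_\tau\big(\sum_i v_iv_i^*\big)\le\|\tau\|=1$, so $d_\tau(g)\le 1/n$ for every normalized $\tau$. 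Now keep $A_1,S,\theta_1$, replace $A_0$ by $A_0':=\overline{gA_0g}$ and $\theta_0$ by $\theta_0'(\cdot):=g\,\theta_0(\cdot)\,g$; one checks directly that $\theta':=\theta_0'\oplus\theta_1$ is still $\mathcal F$-$O(\eps_1)$-multiplicative, $\|\iota\theta'(a)-a\|=O(\eps_1)$, and $\theta_1(a)\in_{\eps_1}S$. Finally, any positive contraction $b\in A_0'$ lies in $\overline{gAg}$, so $b\precsim g$ and $\tau(b)\le\|b\|\,d_\tau(b)\le d_\tau(g)\le 1/n<\eps$; choosing $\eps_1$ small enough finishes this direction.

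For the converse, given $(\mathcal F,\eps,n)$ apply the weaker property with $\eps'<\min\{\eps,\tfrac{1}{n+1}\}$ and $\mathcal F$ self-adjoint of contractions, obtaining $A_0,A_1,S,\theta=\theta_0\oplus\theta_1$ with $\tau(b)<\eps'$ for all positive contractions $b\in A_0$. Let $g$ be a power of a strictly positive element of $A_0$ large enough that $g$ is an approximate unit for $\theta_0(\mathcal F)$ within $\eps$; since $g^{1/m}\in A_0$ is a contraction for every $m$ we get $d_\tau(g)\le\eps'$ for every normalized $\tau$. The point is to embed $n$ pairwise orthogonal copies of $\mathrm{her}(g)$ into $A_0^\perp$: writing $c$ for a strictly positive element of $A_0^\perp$ and using that $A_0$ has uniformly small trace so that $d_\tau(c)\ge\|\tau\|-\eps'\ge 1-\eps'$ for all $\tau$, strict comparison gives $n\,d_\tau(g)\le n\eps'<1-\eps'\le d_\tau(c)$, hence $n[g]\le[c]$ in $\mathrm{Cu}(A)$ and there are pairwise orthogonal positive $c_1,\dots,c_n\in\mathrm{her}(c)=A_0^\perp$ with $g\precsim c_i$. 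Since $A$, and hence the hereditary subalgebra $A_0^\perp$, has stable rank one, Rørdam's comparison lemma produces for sufficiently small $\delta>0$ partial isometries $v_i\in A$ with $v_i^*v_i=(g-\delta)_+$ and $v_iv_i^*\in\mathrm{her}(c_i)\subseteq A_0^\perp$; the $v_iv_i^*$ are pairwise orthogonal, and for $\delta$ small $(g-\delta)_+$ is still an approximate unit for $\theta_0(\mathcal F)$ within $\eps$. Setting $A_1':=C^*\big(A_1\cup\{v_1v_1^*,\dots,v_nv_n^*\}\big)\subseteq A_0^\perp$, keeping $S\subseteq A_1\subseteq A_1'$, and keeping $A_0$ and $\theta$, all conditions of $\mathrm{TA}\mathcal S$ hold.

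The main obstacle is the "large complement" input used in the converse direction, namely that a hereditary subalgebra of uniformly small trace has a complement carrying almost all of the trace ($d_\tau(c)\ge\|\tau\|-\eps'$). I would deduce this from strict comparison together with the regularity of $A$ available in this setting — stable rank one and, in the algebras to which the lemma is applied, $\mathcal Z$-stability / finite nuclear dimension, which make $\mathrm{Cu}(A)$ almost divisible; without such divisibility the required orthogonal copies need not exist, so this is the only step where more than bare strict comparison is genuinely used. Everything else reduces to bookkeeping with $\eps$'s, the elementary Cuntz-comparison facts cited above, and the passage between a subalgebra $A_0$ and the hereditary subalgebra it generates.
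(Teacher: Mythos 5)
The paper states this lemma without proof, so there is nothing to compare against; judging your argument on its own terms, the forward direction is essentially right, but the converse contains a genuine gap.

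\emph{Forward direction.} Your argument is correct: adjoin an approximate unit element $h$ for $\mathcal F$, use $(b-2\delta)_+\precsim e$ when $\|eb-b\|<\delta$ to get $g:=(\theta_0(h)-2\eps_1)_+\precsim v_i^*v_i\sim v_iv_i^*$, sum the pairwise orthogonal $v_iv_i^*$ inside $A_1$ to get $d_\tau(g)\le 1/n$, and cut $A_0$ down to $\overline{gA_0g}$. The $\eps$-bookkeeping is routine and the only hypotheses used are that traces are bounded; strict comparison and stable rank one are not even needed here.

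\emph{Converse direction: the gap.} The step ``$A_0$ has uniformly small trace, so $d_\tau(c)\ge\|\tau\|-\eps'$ for $c$ strictly positive in $A_0^\perp$'' is a non sequitur, and your proposed repair (almost divisibility of $\mathrm{Cu}(A)$ from $\mathcal Z$-stability) does not address it. Smallness of the trace on a subalgebra says nothing about the size of its annihilator: in a simple C*-algebra the annihilator of a hereditary subalgebra of small trace can itself be small (the two need not ``fill up'' the trace), and divisibility only helps you subdivide something you already know is large. The correct source of the largeness of $A_0^\perp$ is condition (1) of the hypothesis, which you never use for this purpose: since $A_1\subseteq A_0^\perp$ and $a\approx_\eps\theta_0(a)+\theta_1(a)$ with $\theta_1(a)\in A_1$, any positive $a\in\mathcal F$ with $\tau(a)$ uniformly close to $\|\tau\|$ forces $\tau(\theta_1(a))\ge\tau(a)-\tau(\theta_0(a))-\eps\ge 1-\eta-\eps'-\eps$, hence $d_\tau(c)\ge 1-\eta-\eps'-\eps$ for $c$ strictly positive in $A_1$. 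So you must remember to adjoin such an almost-trace-full positive element to $\mathcal F$ \emph{before} invoking the weak hypothesis; producing one uniformly over all normalized traces requires the norm function on $\mathrm{T}^+A$ to be continuous with $\Sigma A$ compact (i.e.\ $A\in\mathcal R$), a hypothesis the lemma does not list but which holds in every application in the paper. Once $d_\tau(c)$ is known to be uniformly close to $1$, your production of the $n$ pairwise orthogonal copies via strict comparison and R\o rdam's lemma is fine (and stable rank one is not really needed for that step: $g^{\oplus n}\precsim c$ already yields a row $(x_1,\dots,x_n)$ with $x_i^*x_i=((g-\delta)_+-\delta')_+$ and $x_ix_i^*$ pairwise orthogonal in $\overline{cAc}$).
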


\section{Classification of KK-contractible C*-algebras}
The structure (Theorem \ref{div-decp}) obtained in the previous section actually leads us to a classification of KK-contractible C*-algebras. One first need a stable uniqueness theorem with codomain algebra not necessary unital.

\subsection{A stable uniqueness theorem}\label{section-stable-uniq}

Let $A, B$ be C*-algebras, and let $\gamma: A \to B$ be a homomorphism, and consider the homomorphism $$\gamma_\infty:=\mathrm{diag}\{\gamma, \gamma, ...\}: A \to\mathcal M(\mathcal K\otimes B),$$
where $\mathcal K$ is the algebra of compact operates on a separable Hilbert space, and $\mathcal M(\mathcal K\otimes B)$ is the multiplier algebra.
\begin{lem}[\cite{EK-BDF}]\label{absorbing}
Assume that $A$ is not unital, and assume that $B$ is separable. The extension $\gamma_\infty$ is absorbing, in the nuclear sense, if it is full, i.e., $\overline{B\gamma(a)B}=B$, $a\in A\setminus\{0\}$.
\end{lem}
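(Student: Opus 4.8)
The plan is to obtain this as an instance of the abstract absorption theorem of Elliott and Kucerovsky \cite{EK-BDF}. That theorem says: if $A$ is separable and $D$ is a $\sigma$-unital stable C*-algebra, then every extension of $A$ by $D$ which is \emph{purely large} absorbs every nuclear trivial extension — which is exactly the meaning of being ``absorbing in the nuclear sense'' — and is absorbing outright if it is itself nuclear. In our situation $D=\mathcal K\otimes B$ is stable, it is $\sigma$-unital because $B$ is separable, and $A$ is separable by hypothesis; so everything reduces to verifying that the trivial extension determined by $\gamma_\infty$ is purely large. Realize the extension algebra inside $\mathcal M(\mathcal K\otimes B)$ as $E=\pi^{-1}(\gamma_\infty(A))$, where $\pi\colon\mathcal M(\mathcal K\otimes B)\to\mathcal M(\mathcal K\otimes B)/(\mathcal K\otimes B)$ is the quotient map, and for $x\in E$ write $\mathrm{Her}(x):=\overline{x(\mathcal K\otimes B)x^*}$; pure largeness means that for every $x\in E$ with $\pi(x)\neq 0$, the hereditary subalgebra $\mathrm{Her}(x)$ of $\mathcal K\otimes B$ contains a subalgebra which is stable and full in $\mathcal K\otimes B$.

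The heart of the matter is the model case $x=\gamma_\infty(a)$ with $a\in A^+\setminus\{0\}$, which I would treat by a direct computation. Writing $\gamma_\infty(a)=\sum_k e_{kk}\otimes\gamma(a)$ (strictly convergent in $\mathcal M(\mathcal K\otimes B)$) and using that $(e_{kk}\otimes\gamma(a))(e_{ij}\otimes b)(e_{ll}\otimes\gamma(a))$ equals $e_{ij}\otimes\gamma(a)b\gamma(a)$ when $k=i$ and $j=l$, and is zero otherwise, one gets
$$\mathrm{Her}(\gamma_\infty(a))=\mathcal K\otimes\overline{\gamma(a)B\gamma(a)}.$$
This algebra is of the form $\mathcal K\otimes(\,\cdot\,)$, hence stable; and it is full in $\mathcal K\otimes B$ exactly when $\overline{\gamma(a)B\gamma(a)}$ is full in $B$, i.e.\ exactly when the closed two-sided ideal of $B$ generated by $\gamma(a)$ is all of $B$. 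Since that ideal is precisely $\overline{B\gamma(a)B}$, the fullness hypothesis gives this immediately. (Fullness also forces $\gamma$ to be injective, so $\gamma(a)\neq 0$ whenever $a\neq 0$.) Thus the pure-largeness condition holds on every element of the form $\gamma_\infty(a)$.

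It remains to pass from these elements to an arbitrary $x\in E$ with $\pi(x)\neq 0$; this is the standard functional-calculus reduction of \cite{EK-BDF}, whose shape I would follow. Replacing $x$ by $x^*x$ one may assume $x\in E^+$ with $\pi(x)=a\in A^+\setminus\{0\}$; for $0<\delta<\|a\|$, with $g_\delta(t)=(t-\delta)_+$ one passes to $g_\delta(x)\in E^+$, whose hereditary subalgebra is contained in $\mathrm{Her}(x)$ and whose image $g_\delta(a)$ is nonzero, while a plateau function $h$ with $h(0)=0$ and $h\equiv 1$ on $[\delta,\infty)$ produces $h(x)\in E^+$ acting as a unit on $\mathrm{Her}(g_\delta(x))$. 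Since $h(x)$ and $\gamma_\infty(h(a))$ are both lifts of $h(a)$, their difference lies in $\mathcal K\otimes B$, and one compares the hereditary subalgebras of these two positive multipliers to transport a stable full subalgebra of $\mathrm{Her}(\gamma_\infty(h(a)))$ — which exists by the model case, since $h(a)\neq 0$ — into $\mathrm{Her}(g_\delta(x))\subseteq\mathrm{Her}(x)$, stability and fullness being preserved by the (approximately inner) transport involved. This comparison of hereditary subalgebras of two positive multipliers having the same nonzero image in the corona is precisely the technical core of \cite{EK-BDF}, and is the step I expect to be the main obstacle; the non-unitality of $A$ is what guarantees that $\gamma_\infty$ genuinely falls within the non-degenerate framework of that theorem, so that the unambiguous, non-unital notion of absorption is the relevant one.
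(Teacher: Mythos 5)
Your proof follows essentially the same route as the paper's, which consists of exactly two citations to Elliott--Kucerovsky: Theorem 17(iii) for ``full infinite repeat $\Rightarrow$ purely large,'' and Corollary 16 for ``purely large $\Rightarrow$ nuclearly absorbing.'' You reproduce this architecture, and your added model computation $\mathrm{Her}(\gamma_\infty(a))=\mathcal K\otimes\overline{\gamma(a)B\gamma(a)}$ (stable, and full precisely because $\overline{B\gamma(a)B}=B$) is correct and is the genuine content behind Theorem 17(iii); the reduction from a general $x\in E$ with $\pi(x)\neq 0$ to these model elements, which you explicitly defer, is the part of \cite{EK-BDF} the paper simply cites. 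One point you should make explicit rather than gesture at: the non-unital form of the Elliott--Kucerovsky absorption theorem is false as originally stated, and the paper cites Theorem 2.1 of \cite{Gabe-BDF}, which shows the original statement does hold when the quotient algebra $A$ is non-unital --- this is exactly why the hypothesis that $A$ is not unital appears in the lemma, and your parenthetical about ``the non-degenerate framework'' should be replaced by that citation.
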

\begin{proof}
By Theorem 17 (iii) of \cite{EK-BDF}, the extension $\gamma_\infty$ is purely large. Note that $A$ does not have a unit. Then it follows from Corollary 16 of \cite{EK-BDF} (see Theorem 2.1 \cite{Gabe-BDF} for a correction) that $\gamma_\infty$ is nuclear absorbing.
\end{proof}

\begin{prop}\label{no-1-uniq-hom-0}
Let $A$ be a separable C*-algebra without a unit, and let $B$ be a separable C*-algebra. Let $\gamma: A\to B$ be a full homomorphism, i.e., $\overline{B\gamma(a)B}=B$, $a\in A\setminus\{0\}$.

Suppose that $\phi, \psi: A\to B$ are two nuclear homomorphisms with $[\phi]=[\psi]$ in $\mathrm{KK}_\mathrm{nuc}(A, B)$. Then for any finite set $\mathcal F\subset A$ and $\eps>0$, there exist an integer $n$ and a unitary $u\in {\mathrm{M}_{n+1}(B)}+\Comp 1_{n+1}$ satisfying 
$$\|u^*(\phi(a)\oplus(\underbrace{\gamma(a)\oplus\cdots\oplus\gamma(a)}_n)u - \psi(a)\oplus(\underbrace{\gamma(a)\oplus\cdots\oplus\gamma(a)}_n)\| < \eps,\quad a\in\mathcal F.$$
\end{prop}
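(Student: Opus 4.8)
The plan is to derive this from the absorption statement of Lemma \ref{absorbing} together with the standard uniqueness theorem for nuclearly absorbing Cuntz pairs, and then to cut the resulting infinite approximate unitary equivalence down to a finite corner. Since $A$ is non-unital and $\gamma$ is full, Lemma \ref{absorbing} shows that $\gamma_\infty\colon A\to\mathcal M(\mathcal K\otimes B)$ is absorbing in the nuclear sense. Fix an identification $\mathcal K\otimes B\cong\bigoplus_{i\geq 0}(e_{ii}\otimes B)$, regard $B$ as the corner $e_{00}\otimes B$, and (using $\mathrm M_2\otimes\mathcal K\cong\mathcal K$) view $\Phi:=\phi\oplus\gamma_\infty$ and $\Psi:=\psi\oplus\gamma_\infty$ as homomorphisms $A\to\mathcal M(\mathcal K\otimes B)$ which are block diagonal with blocks $(\phi,\gamma,\gamma,\dots)$ and $(\psi,\gamma,\gamma,\dots)$, respectively. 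Since $\phi(a),\psi(a)\in e_{00}\otimes B\subseteq\mathcal K\otimes B$, we have $\Phi(a)-\Psi(a)=(\phi(a)-\psi(a))\oplus 0\in\mathcal K\otimes B$ for every $a$, so $(\Phi,\Psi)$ is a Cuntz pair. Its two entries have the same image in the corona algebra $\mathcal M(\mathcal K\otimes B)/(\mathcal K\otimes B)$, namely that of $\gamma_\infty$, so both $\Phi$ and $\Psi$ are nuclearly absorbing by Lemma \ref{absorbing}; and adding the common degenerate summand $\gamma_\infty$ reduces $(\Phi,\Psi)$ to $(\phi,\psi)$, whose class in $\KK_{\mathrm{nuc}}(A,B)$ is $[\phi]-[\psi]=0$.

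By the standard uniqueness theorem for Cuntz pairs of nuclearly absorbing homomorphisms whose $\KK$-class vanishes (in the form due to Dadarlat and Eilers; see also Thomsen and Lin), $\Phi$ and $\Psi$ are approximately unitarily equivalent through unitaries in $1+\mathcal K\otimes B$; that is, there are unitaries $w_k=1+x_k$ with $x_k\in\mathcal K\otimes B$ and $\|w_k\Phi(a)w_k^*-\Psi(a)\|\to 0$ for all $a\in A$. To pass to a finite corner, fix $\mathcal F$ and $\eps>0$, let $\delta>0$ be small (to be fixed at the end), choose $w=w_k$ with $\|w\Phi(a)w^*-\Psi(a)\|<\delta$ for $a\in\mathcal F$, and write $w=1+x$ with $x\in\mathcal K\otimes B$. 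Let $Q_N=\sum_{i=0}^{N}(e_{ii}\otimes 1_B)$, the standard increasing sequence of projections in $\mathcal M(\mathcal K\otimes B)$ converging strictly to $1$; since $x\in\mathcal K\otimes B$, both $\|[Q_N,w]\|$ and $\|Q_NxQ_N-x\|$ tend to $0$ as $N\to\infty$. Fixing $N$ so large that both are below $\delta$ and setting $w'=Q_NwQ_N+(1-Q_N)=1+Q_NxQ_N$, we get $\|w'-w\|<\delta$, so for $\delta$ small $w'$ is invertible and its unitarization $u:=w'(w'^*w')^{-1/2}$ is a genuine unitary with $u-1\in Q_N(\mathcal K\otimes B)Q_N$. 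Identifying $Q_N(\mathcal K\otimes B)Q_N\cong\mathrm M_{N+1}(B)$, this puts $u$ in $\mathrm M_{N+1}(B)+\Comp 1_{N+1}$, with $\|u-w\|\leq C_0\delta$ for an absolute constant $C_0$.

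Since $\Phi$ and $\Psi$ are block diagonal, $Q_N$ commutes with $\Phi(a)$ and with $\Psi(a)$, and $Q_N\Phi(a)Q_N=(\phi\oplus\gamma^{\oplus N})(a)$, $Q_N\Psi(a)Q_N=(\psi\oplus\gamma^{\oplus N})(a)$, where $\gamma^{\oplus N}$ denotes $N$ copies of $\gamma$. A routine three-term estimate for $a\in\mathcal F$ --- replace $u$ by $w$, then move $Q_N$ past $w$ inside $wQ_N\Phi(a)Q_Nw^*$, then invoke $\|w\Phi(a)w^*-\Psi(a)\|<\delta$ --- yields
\[
\|u\,(\phi\oplus\gamma^{\oplus N})(a)\,u^*-(\psi\oplus\gamma^{\oplus N})(a)\|\leq C\delta\,(\|a\|+1),\qquad a\in\mathcal F,
\]
with $C$ absolute. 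Choosing $\delta$ so that the right-hand side is $<\eps$, setting $n:=N$, and replacing $u$ by $u^*$ gives the asserted unitary. The entire content of the argument is carried by its two inputs --- Lemma \ref{absorbing} and the stable uniqueness theorem for absorbing Cuntz pairs; the only point requiring care is this final descent, which must be arranged so that $u$ lies in the unitization of $\mathrm M_{n+1}(B)$ rather than merely in $\mathcal M(\mathcal K\otimes B)$, and that is precisely why one truncates by the finite-rank projections $Q_N$ and re-unitarizes within the corner.
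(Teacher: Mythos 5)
Your proposal is correct and follows essentially the same route as the paper: both arguments rest on Lemma \ref{absorbing} (fullness of $\gamma$ plus non-unitality of $A$ gives nuclear absorption of $\gamma_\infty$) combined with the Dadarlat--Eilers-type stable uniqueness theorem to obtain approximate unitary equivalence of $\phi\oplus\gamma_\infty$ and $\psi\oplus\gamma_\infty$ by unitaries in $\mathcal K\otimes B+\Comp 1$, and both conclude by truncating with the finite diagonal projections and re-unitarizing inside the corner $\mathrm{M}_{n+1}(B)+\Comp 1_{n+1}$. The only cosmetic difference is that you phrase the middle step via the vanishing class of the nuclearly absorbing Cuntz pair $(\Phi,\Psi)$, whereas the paper first produces a strict nuclear $\sigma$ with $\phi\oplus\sigma\cong\psi\oplus\sigma$ and then absorbs $\sigma$ into $\gamma_\infty$; these are interchangeable formulations of the same machinery.
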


\begin{proof}
Since $[\phi] = [\psi]$ in $\mathrm{KK}_\mathrm{nuc}(A, B)$, there is a strict nuclear representation $\sigma: A \to \mathcal M(\mathcal K\otimes B)$ such that $\phi\oplus\sigma \cong \psi\oplus\sigma$, where $``\cong"$ is the properly asymptotically unitary equivalence define in \cite{DE-KK-Asy}. Hence $\phi\oplus\sigma\oplus\gamma_\infty \cong \psi\oplus\sigma\oplus\gamma_\infty$. Since $\gamma_\infty$ is (non-unital) absorbing, one has $\sigma\oplus\gamma_\infty\sim\gamma_\infty$. It then follows from Lemma 4.3 of \cite{DL-classification} that there is a sequence of unitaries $u_k \in\mathcal K\otimes B + \Comp 1$ that 
$$\|u_k^*(\phi(a)\oplus \gamma_\infty(a))u_k - \psi(a)\oplus \gamma_\infty(a)\|\to 0,\quad a\in A,\ k\to\infty.$$

Let $e_n=\mathrm{diag}\{\underbrace{1_{\tilde{B}}, 1_{\tilde{B}}, ..., 1_{\tilde{B}}}_n, 0, ...\}$. Since $u_k \in \mathcal K\otimes B + \Comp 1$, one has that $[u_k, e_n]\to 0$, as $n\to\infty$. Then, for sufficiently large $k$, and then a sufficiently large $n$, the element $e_nu_ke_n \in{\mathrm{M}_{n}(B)}+\Comp 1_{n}$ can be perturbed to a unitary $u$ satisfying the conclusion of the proposition.
\end{proof}

In order to get a stable uniqueness theorem for approximate homomorphisms and with a control on the multiplicity $n$, we shall apply Proposition \ref{no-1-uniq-hom-0} to the case that $B$ is an asymptotical sequence algebra, i.e., $B=\prod B_n/\bigoplus B_n$ for some C*-algebras $B_n$. Such an asymptotical sequence algebra is not $\sigma$-unital if $B_n$ are not unital. This brings a trouble for the picture of $\mathrm{KK}(A, B)$ in general. In the following, we shall show that in this situation ($B$ is not $\sigma$-unital), there is a separable sub-C*-algebra $B'\subset B$ containing the images of $A$ under $\phi, \psi,$ and $\gamma$, and $B'$ is also large enough so that the map $\gamma: A \to B'$ is still full. Then one just works on the maps from $A$ to $B'$ instead.

\begin{prop}[Proposition 2.1 of \cite{Bla-WEP}]\label{lem-of-bla}
Let $A$ be a separable C*-algebra (with or without unit). Then there is a countable subset $S$ of $A$ such that if $J$ is any ideal of $A$, then $S\cap J$ is dense in $J$.
\end{prop}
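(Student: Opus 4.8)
The plan is to build, once and for all, a single countable subset $S\subseteq A$ out of a countable dense subset, and then to check the density condition ideal by ideal; the only delicate point will be arranging that the approximants genuinely lie inside the given ideal rather than merely close to it, and I would handle this by using cut-downs of the form $(\cdot-q)_+$ together with the quotient maps $A\to A/J$.

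Concretely, since $A$ is separable I would fix a countable dense subset $D\subseteq A$ and a countable dense subset $D_{\mathrm{sa}}\subseteq A_{\mathrm{sa}}$, and form the countable set
$$D^+:=\{(d_+-q)_+ : d\in D_{\mathrm{sa}},\ q\in\mathbb{Q},\ q>0\}\subseteq A^+,$$
which is dense in $A^+$ (given $b\in A^+$, pick $d\in D_{\mathrm{sa}}$ with $\|d-b\|$ small, note $\|d_+-b\|\le\|d-b\|$, and let $q\to 0^+$). Then set
$$S:=\{de : d\in D,\ e\in D^+\},$$
a countable subset of $A$. Since $S\cap J\subseteq J$ and $J$ is closed, to prove $\overline{S\cap J}=J$ it suffices to show that every $x\in J$ is a limit of elements of $S\cap J$.

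So fix an ideal $J$, an element $x\in J$, and $\eps>0$. First I would perform a standard cut-down: with $h:=(x^*x)^{1/2}\in J$ and $f_n(t):=\min(1,nt)$, one has $f_n(h)\in J$ (as $f_n(0)=0$), and a continuous functional calculus computation gives $(x-xf_n(h))^*(x-xf_n(h))=h^2(1-f_n(h))^2$, so that $\|x-xf_n(h)\|\le 1/n$. Fixing $n$ with $1/n<\eps/3$ and writing $g:=f_n(h)$, a positive contraction in $J$, the task reduces to approximating $xg$ by an element of $S\cap J$. I would then choose $d\in D$ with $\|d-x\|$ small, choose $b\in D_{\mathrm{sa}}$ with $\|b-g\|<\delta$ for a small $\delta>0$ (hence $\|b_+-g\|<\delta$), pick a rational $q\in(\delta,2\delta)$, and set $e:=(b_+-q)_+\in D^+$, so that $\|e-g\|<3\delta$.

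The crux is the claim that $e\in J$; this is exactly where the special shape of the elements of $D^+$ is essential. Letting $\pi\colon A\to A/J$ be the quotient map, we have $g\in J$, hence $\|\pi(b_+)\|=\|\pi(b_+)-\pi(g)\|\le\|b_+-g\|<q$, and since $\pi(b_+)\ge 0$ and continuous functional calculus commutes with $\pi$,
$$\pi(e)=(\pi(b_+)-q)_+=0,$$
so $e\in\ker\pi=J$ and therefore $de\in S\cap J$. Finally
$$\|de-x\|\le\|d\|\,\|e-g\|+\|d-x\|\,\|g\|+\|xg-x\|\le 3\delta\,\|d\|+\|d-x\|+\eps/3,$$
which is $<\eps$ once $\|d-x\|$ and $\delta$ are taken small enough relative to $\|x\|$ and $\eps$. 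Hence $x\in\overline{S\cap J}$, and since $J$ and $x$ were arbitrary this proves the proposition. I expect essentially all the difficulty to be concentrated in the membership claim $e\in J$ (equivalently, in choosing $D^+$ to consist of elements whose defect relative to $J$ can be detected by the functional calculus after passing to $A/J$); the cut-down step and the final estimate are routine.
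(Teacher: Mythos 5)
The paper itself gives no argument for this proposition---it is simply quoted from Proposition 2.1 of Blackadar's paper---so there is no in-paper proof to compare with; your job was to supply one, and your route is the standard and correct one in structure: reduce to approximating $xg$ with $g=f_n(h)\in J$ a positive contraction, approximate $g$ by a rational cut-down $e=(b_+-q)_+$ of an element $b$ from a countable dense set of self-adjoint elements, and detect the membership $e\in J$ by pushing through the quotient map $\pi\colon A\to A/J$ and functional calculus. (Like the step $f_n(h)\in J$, this uses that $J$ is a closed two-sided ideal, which is the intended reading of the statement.)

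There is, however, one false step: the inequality $\|d_+-b\|\le\|d-b\|$ for $b\ge 0$ and $d$ self-adjoint, which you invoke both for the density of $D^+$ in $A^+$ and to conclude $\|b_+-g\|<\delta$. The positive-part function is $1$-Lipschitz on $\Real$ but is not operator Lipschitz with constant $1$, even against a positive operator: with
\begin{displaymath}
b=\begin{pmatrix}1&0\\[2pt]0&-\tfrac15\end{pmatrix},\qquad g=\tfrac1{10}\begin{pmatrix}5&1\\[2pt]1&1\end{pmatrix}\ \ge\ 0,
\end{displaymath}
one computes $\|b-g\|=\tfrac{1+\sqrt{17}}{10}\approx 0.512$ while $\|b_+-g\|=\tfrac{2+\sqrt{10}}{10}\approx 0.516$. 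Fortunately this only disturbs constants, not the argument. Since $b\ge g-\|b-g\|\ge-\|b-g\|$, one has $\|b_-\|\le\|b-g\|$ and hence $\|b_+-g\|\le 2\|b-g\|$; this still gives density of $D^+$ in $A^+$ and the bound $\|e-g\|\le q+2\delta<4\delta$ in place of $3\delta$, which changes nothing in the final estimate. Moreover, the crucial claim $e\in J$ needs no norm estimate on $b_+-g$ at all: since $\pi(g)=0$, one has $\|(\pi(b))_+\|\le\|\pi(b)\|=\|\pi(b-g)\|\le\|b-g\|<\delta<q$, so $\pi(e)=\bigl((\pi(b))_+-q\bigr)_+=0$ exactly as you argue. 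With these adjustments your proof is complete.
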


\begin{lem}\label{sep-cond}
Let $D$ be a C*-algebra. Let $A\subseteq D$ be separable sub-C*-algebra such that 
$$\overline{DaD}=D, \quad a\in A\setminus\{0\},$$ and let $B\subseteq D$ be another separable sub-C*-algebra. Then, there is a separable sub-C*-algebra $C$ of $D$ such that
$$A, B \subseteq C$$
and
$$\overline{CaC}=C, \quad a\in A\setminus\{0\}.$$
\end{lem}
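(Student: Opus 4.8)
The plan is a standard separable inductive (Blackadar-style) exhaustion; the only point that is not purely formal is the reduction of the fullness requirement ``$\overline{CaC}=C$ for all $a\in A\setminus\{0\}$'' to countably many elements of $A$.

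First I would apply Proposition~\ref{lem-of-bla} to fix a countable subset $S\subseteq A$ such that $S\cap J$ is dense in $J$ for every closed two-sided ideal $J$ of $A$; discarding $0$ if it occurs, enumerate $S=\{a_1,a_2,\dots\}$ with each $a_k\neq 0$. The reduction I would then record is: it is enough to produce a separable sub-C*-algebra $C$ of $D$ with $A,B\subseteq C$ and $\overline{Ca_kC}=C$ for all $k$. Indeed, for such a $C$ and an arbitrary $a\in A\setminus\{0\}$, the closed ideal $J$ of $A$ generated by $a$ is nonzero (it contains $a$), so $S\cap J$, being dense in $J$, contains some $b\neq 0$; then $b\in J=\overline{AaA}\subseteq\overline{CaC}$, and since $\overline{CaC}$ is a closed two-sided ideal of $C$ it contains the ideal $\overline{CbC}$ of $C$ generated by $b$, which is all of $C$ because $b=a_k\in S\setminus\{0\}$ for some $k$; hence $\overline{CaC}=C$.

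Next I would carry out the construction as an increasing union $C_0\subseteq C_1\subseteq\cdots$ of separable sub-C*-algebras of $D$. Set $C_0=\mathrm{C}^*(A\cup B)$, which is separable since $A$ and $B$ are. Given a separable $C_n$, fix a countable dense subset $\{d_{n,k}\}_{k\in\mathbb N}$ of $C_n$ and, for each triple $(k,j,m)$, use the hypothesis $\overline{Da_jD}=D$ to choose finitely many elements $x^{(k,j,m)}_1,\dots,x^{(k,j,m)}_r,y^{(k,j,m)}_1,\dots,y^{(k,j,m)}_r\in D$ with $\|d_{n,k}-\sum_i x^{(k,j,m)}_i a_j y^{(k,j,m)}_i\|<1/m$; let $C_{n+1}$ be the sub-C*-algebra of $D$ generated by $C_n$ together with all of these (countably many) elements $x^{(k,j,m)}_i,y^{(k,j,m)}_i$, so that $C_{n+1}$ is again separable. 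Put $C=\overline{\bigcup_n C_n}$, which is separable and contains $A$ and $B$. A routine estimate then shows that every $d\in C$ lies in the closed linear span of $Ca_jC$: approximate $d$ by some $d'\in C_n$, then $d'$ by some $d_{n,k}$, then $d_{n,k}$ to within $1/m$ by $\sum_i x^{(k,j,m)}_i a_j y^{(k,j,m)}_i$, whose terms lie in $C_{n+1}a_jC_{n+1}\subseteq Ca_jC$. Hence $\overline{Ca_jC}=C$ for every $j$, and by the reduction above $\overline{CaC}=C$ for all $a\in A\setminus\{0\}$, which finishes the argument.

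The only genuine obstacle is the reduction step: one must recognize $\overline{CaC}$ as the closed two-sided ideal of $C$ generated by $a$ and invoke the countable test set from Proposition~\ref{lem-of-bla} to pass from the possibly uncountable lattice of ideals of $A$ to the countable family $\{a_k\}$; everything after that is a telescoping exhaustion with no surprises.
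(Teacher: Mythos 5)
Your proof is correct and takes essentially the same route as the paper's: the same appeal to Proposition~\ref{lem-of-bla} to reduce the fullness condition to the countable family $\{a_j\}$, and the same increasing union $C_0\subseteq C_1\subseteq\cdots$ of separable subalgebras obtained by adjoining the finitely many witnesses $x_i,y_i$ for each element of a countable dense set. The only cosmetic difference is that you carry out the reduction up front using the ideal $J=\overline{AaA}$ of $A$, whereas the paper performs it after the construction with $J=\overline{CaC}\cap A$; both arguments are equivalent.
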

\begin{proof}
Apply Proposition \ref{lem-of-bla}, one obtains a  countable set $$\{a_0, a_1, a_2, ...\}\subset A$$ such that
$\{a_0, a_1, a_2, ...\}\cap J$ is dense in $J$ for any ideal $J$ of $A$. We may assume that $$a_0=0\quad \textrm{and}\quad a_j\neq 0,\quad j=1, 2, ...\ .$$

Set $$C_1=\textrm{C*}\{A, B\}\subseteq D.$$ It is clear that $C_1$ is separable. Pick a dense set $\{c_1, c_2, ...\}$ in $C_1$. Since $\overline{Da_jD}=D$, $j=1, 2, ...$, for any $\eps>0$ and any $c_i$, there are eventually zero sequences $x_{c_i, a_j, \eps, 1}, x_{c_i, a_j, \eps, 2}, ...,$ and $y_{c_i, a_j, \eps, 1}, y_{c_i, a_j, \eps, 2}, ...,$ in $D$ such that 
and
$$ \| c_i - (x_{c_i, a_j, \eps, 1} a_j y_{c_i, a_j, \eps, 1}+  x_{c_i, a_j, \eps, 2}a_jy_{c_i, a_j, \eps, 2}+ \cdots )\|<\eps.$$

Set $$C_2=\textrm{C*}\{C_1, x_{c_i, a_j, \frac{1}{n}, k}, y_{c_i, a_j, \frac{1}{n}, k} : i, j, n, k=1, 2, ... \},$$ and, then one has that 
$$\overline{C_2a_jC_2} \subseteq C_1,\quad j=1, 2, ...\ .$$

Repeat the construction above, one obtains a sequence of separable C*-algebras $$C_1\subseteq C_2\subseteq \cdots \subseteq C_n \subseteq \cdots\subseteq D$$
such that 
$$\overline{C_{n+1}a_jC_{n+1}} \subseteq C_n,\quad j=1, 2, ...,\ n=1, 2, ...\ .$$
Set $C=\overline{\bigcup_{n=1}^\infty C_n}$, and then one has 
$$\overline{Ca_jC} = C,\quad j=1, 2, ...,\ .$$

Then the separable sub-C*-algebra $C$ satisfies the lemma. Indeed, let $a \in A\setminus\{0\}$. Consider the ideal $J:=\overline{CaC} \cap A$. Since $a\in J$, one has that $J\neq\{0\}$. By Proposition \ref{lem-of-bla}, one has that $\{a_0, a_1, a_2, ...\}\cap J$ is dense in $J$, and in particular, the ideal $J$ contains some $a_j\neq 0$. Since 
$C=\overline{Ca_jC} \subseteq \overline{CJC}=\overline{CaC}$, one has that $\overline{CaC}=C$, as desired.
\end{proof}

\begin{rem}
If $A$ is simple, then, in the proof above, one only needs to pick one non-zero element of $A$ and does not need Proposition \ref{lem-of-bla}. 
\end{rem}

Let us point out the following corollary, although it is not used in this paper.
\begin{cor}
Let $A, D$ be C*-algebra, and assume that $A$ is separable. Let $\phi, \psi, \sigma: A \to D$ be homomorphisms such that 
\begin{enumerate}
\item $[\phi] = [\psi]$ in $\mathrm{Hom}(\underline{\mathrm{K}}_*(A), \underline{\mathrm{K}}_*(D))$, and
\item $\overline{D\sigma(a)D} = D$, $a\in A\setminus\{0\}$. 
\end{enumerate}
Then there is a separable sub-C*-algebra $C\subset D$ such that
\begin{enumerate}
\item $\phi(A), \psi(A), \sigma(A)\subset C$,
\item $[\phi] = [\psi]$ in $\mathrm{Hom}(\underline{\mathrm{K}}_*(A), \underline{\mathrm{K}}_*(C))$,
\item $\overline{C\sigma(a)C} = C$.
\end{enumerate}
\end{cor}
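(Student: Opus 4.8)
The plan is to first pass to a separable sub-C*-algebra large enough to witness the $\underline{\mathrm{K}}_*$-agreement, and then to enlarge once more via Lemma~\ref{sep-cond} to recover fullness of $\sigma$. The key observation that makes a single pass through each step suffice — rather than a back-and-forth — is that the relation $[\phi]=[\psi]$, once it holds inside some separable sub-C*-algebra, is automatically preserved under any further enlargement, since it is an \emph{equality} of homomorphisms out of $\underline{\mathrm{K}}_*(A)$ and is thus stable under post-composition with $\underline{\mathrm{K}}_*$ of an inclusion.

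First I would set $C_1=\mathrm{C}^*\{\phi(A),\psi(A),\sigma(A)\}\subseteq D$, which is separable. Since $A$ is separable, $\underline{\mathrm{K}}_*(A)$ is countable, say $\{x_1,x_2,\dots\}$. For each $i$, the elements $\phi_*^{C_1}(x_i)$ and $\psi_*^{C_1}(x_i)$ of $\underline{\mathrm{K}}_*(C_1)$ have the same image in $\underline{\mathrm{K}}_*(D)$, by hypothesis~(1). Now $D$ is the inductive limit of the directed family of its separable sub-C*-algebras containing $C_1$ (the C*-algebra generated by two separable sub-C*-algebras is separable), and $\underline{\mathrm{K}}_*$ is continuous; since two elements of an inductive limit that agree in the limit already agree at a finite stage, there is a separable $D_i$ with $C_1\subseteq D_i\subseteq D$ and $\phi_*^{D_i}(x_i)=\psi_*^{D_i}(x_i)$. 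Put $C_2=\mathrm{C}^*\big(C_1\cup\bigcup_i D_i\big)$, a separable sub-C*-algebra of $D$ containing $\phi(A),\psi(A),\sigma(A)$ and each $D_i$. By functoriality of $\underline{\mathrm{K}}_*$ under $D_i\hookrightarrow C_2$ one gets $\phi_*^{C_2}(x_i)=\psi_*^{C_2}(x_i)$ for every $i$, i.e. $[\phi]=[\psi]$ in $\mathrm{Hom}(\underline{\mathrm{K}}_*(A),\underline{\mathrm{K}}_*(C_2))$.

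Next I would apply Lemma~\ref{sep-cond} with $\sigma(A)$ in the role of ``$A$'' — it is a separable sub-C*-algebra of $D$ and is full in $D$ by hypothesis~(2) — and $C_2$ in the role of ``$B$'', obtaining a separable sub-C*-algebra $C$ of $D$ with $\sigma(A),C_2\subseteq C$ and $\overline{C\sigma(a)C}=C$ for all $a$ with $\sigma(a)\neq 0$. Then $\phi(A),\psi(A),\sigma(A)\subseteq C$, and since $C_2\subseteq C$, functoriality of $\underline{\mathrm{K}}_*$ under $C_2\hookrightarrow C$ upgrades the agreement over $C_2$ to $\phi_*^{C}(x)=\psi_*^{C}(x)$ for all $x\in\underline{\mathrm{K}}_*(A)$, i.e. $[\phi]=[\psi]$ in $\mathrm{Hom}(\underline{\mathrm{K}}_*(A),\underline{\mathrm{K}}_*(C))$. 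This delivers the three conclusions.

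The only genuinely non-formal ingredient is the continuity of $\underline{\mathrm{K}}_*$ (K-theory with all finite cyclic coefficients) along the directed family of separable sub-C*-algebras of $D$, together with the elementary fact that equality of two elements in an inductive limit is detected at a finite stage; everything else is bookkeeping with separability and functoriality. The one point worth stating with care is precisely that passing from $C_2$ to the larger $C$ cannot destroy the relation $[\phi]=[\psi]$ — which, as noted, follows because this relation is an equality of maps into $\underline{\mathrm{K}}_*(C_2)$ and is therefore preserved by the induced map $\underline{\mathrm{K}}_*(C_2)\to\underline{\mathrm{K}}_*(C)$ — and this is exactly what allows the K-theory enlargement and the fullness enlargement to be performed one after the other.
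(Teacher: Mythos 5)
Your proposal is correct and follows essentially the same route as the paper: use countability of $\underline{\mathrm{K}}_*(A)$ (separability of $A$) to find a separable sub-C*-algebra witnessing $[\phi]=[\psi]$, note this equality persists under further enlargement by functoriality, and then apply Lemma~\ref{sep-cond} with the full separable subalgebra $\sigma(A)$ to restore fullness. The paper states this only as a two-line sketch; your write-up simply supplies the standard continuity-of-$\underline{\mathrm{K}}_*$ and finite-stage-detection details that the sketch leaves implicit.
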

\begin{proof}
Since $A$ is separable (and hence all $\mathrm{K}_*(A, \Int/n\Int)$, $*=0, 1$, $n\in\mathbb N$, are countable), there is a separable sub-C*-algebra $B\subset D$ such that $\phi(A), \psi(A)\subset B$ and
$[\phi] = [\psi]$ in $\mathrm{Hom}(\underline{\mathrm{K}}_*(A), \underline{\mathrm{K}}_*(B))$. The the existence of $C$ follows directly from 
Lemma \ref{sep-cond}.
\end{proof}

\begin{defn}
Let $L: A^+ \times (0, 1) \to \mathbb N$ and $N: A^+ \times (0, 1) \to (0, +\infty)$ be maps.

A positive map $\phi: A\to B$ is said to be $(L, N)$-full if for any $\eps>0$, any $a\in A^+\setminus\{0\}$, and any $b\in B^+$ with $\|b\| \leq 1$, there are $b_1, b_2, ..., b_{L(a, \eps)}$ with $\|b_i\| \leq N(a, \eps)$, $i=1, 2, ..., L(a, \eps)$, such that
$$\|b- (b_1\phi(a)b^*_1 + b_2\phi(a)b^*_2 + \cdots + b_{L(a, \eps)}\phi(a)b^*_{L(a, \eps)})\|<\eps.$$
\end{defn}

\begin{defn}
A C*-algebra $A$ is said to be KK-contractible if $A$ is KK-equivalent to $\{0\}$, or equivalently, if $\mathrm{KK}(A, A)=\{0\}$.
\end{defn}

\begin{lem}\label{KK-Kun}
If $B$ is KK-contractible, then $A\otimes B$ is KK-contractible for any nuclear C*-algebra $A$.
\end{lem}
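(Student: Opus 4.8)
The plan is to exploit the characterization, already built into the definition above, that a (separable) $C^*$-algebra $D$ is KK-contractible exactly when $1_D = 0$ in $\mathrm{KK}(D, D)$, and then to transport the vanishing of $1_B$ to $1_{A\otimes B}$ via the external Kasparov product. The first, elementary, observation is that $\mathrm{KK}(D,D) = \{0\}$ is equivalent to $1_D = 0$ in $\mathrm{KK}(D,D)$: one implication is trivial, and for the other, if $1_D = 0$ then for every $C^*$-algebra $D'$ and every $y \in \mathrm{KK}(D', D)$ biadditivity of the Kasparov product gives $y = y\cdot 1_D = y\cdot 0 = 0$, so in particular $\mathrm{KK}(D,D) = \{0\}$. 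Thus the hypothesis on $B$ says precisely that $1_B = 0$ in $\mathrm{KK}(B,B)$.

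Next I would invoke the external Kasparov product
$$\mathrm{KK}(A, A)\times \mathrm{KK}(B, B)\longrightarrow \mathrm{KK}(A\otimes B, A\otimes B),\qquad (x, y)\longmapsto x\otimes y,$$
which is biadditive and sends $(1_A, 1_B)$ to $1_{A\otimes B}$. Here the hypothesis that $A$ is nuclear is used to guarantee that the tensor product $A\otimes B$ is unambiguous and that $A\otimes B$ is again a separable nuclear $C^*$-algebra, so that the external product is available with its usual properties. Since $1_B = 0$, biadditivity yields
$$1_{A\otimes B} = 1_A\otimes 1_B = 1_A\otimes 0 = 0$$
in $\mathrm{KK}(A\otimes B, A\otimes B)$.

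Finally, applying the first observation with $D = A\otimes B$, the relation $1_{A\otimes B} = 0$ forces $\mathrm{KK}(A\otimes B, A\otimes B) = \{0\}$, that is, $A\otimes B$ is KK-contractible. There is no genuine obstacle here; the only things to verify are the biadditivity of the external product and the normalization $1_A\otimes 1_B = 1_{A\otimes B}$ (both classical; see Kasparov's construction of the external product), together with the routine bookkeeping of tensor products, which the nuclearity hypothesis takes care of.
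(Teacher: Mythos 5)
Your argument is correct (in the separable setting in which KK-theory, and this paper, operate), but it takes a different route from the paper. You work entirely at the level of the Kasparov ring structure: reduce KK-contractibility of $D$ to the vanishing of $1_D$ in $\mathrm{KK}(D,D)$, and then push $1_B=0$ forward by the external product (equivalently, one could use only the map $\tau_A\colon \mathrm{KK}(B,B)\to \mathrm{KK}(A\otimes B, A\otimes B)$, which already sends $1_B$ to $1_{A\otimes B}$ and is additive, so the factor $1_A$ and bilinearity are not even needed). The paper instead argues concretely in the Cuntz-pair picture: KK-contractibility of $B$ gives a strictly continuous homotopy of pairs of homomorphisms $(\phi^+_t,\phi^-_t)\colon B\to\mathcal M(B\otimes\mathcal K)$ joining $(\mathrm{id}_B,0)$ to $(0,0)$ with $\phi^+_t-\phi^-_t$ mapping into $B\otimes\mathcal K$, and then tensors this homotopy with $\mathrm{id}_A$ to produce an explicit witness that $\mathrm{id}_{A\otimes B}\sim_{\mathrm{KK}}0$, so no product machinery is invoked. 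Your approach is shorter but leans on the normalization $1_A\otimes 1_B=1_{A\otimes B}$ and bilinearity of the external product (and, implicitly, separability so that the relevant Kasparov products are defined); the paper's approach buys a hands-on construction that stays at the level of homomorphisms into multiplier algebras. One small inaccuracy in your write-up: nuclearity of $A$ makes $A\otimes B$ unambiguous, but it does not make $A\otimes B$ nuclear or separable unless $B$ is; fortunately neither property is needed for your argument, since the external product over minimal tensor products does not require nuclearity.
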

\begin{proof}
Since $B$ is KK-contractible, one has $\mathrm{id}_{B} \sim_{\mathrm{KK}} 0_{B}$, and hence there is a continuous path (in the strict topology) of pairs $(\phi^+_t, \phi^-_t)$, $t\in[0, 1]$, where $$\phi^{\pm}_t: B \to \mathcal M(B\otimes\mathcal K),\quad t\in[0, 1],$$
are homomorphisms such that 
$$\phi_t^+(a)-\phi_t^-(a) \in B\otimes\mathcal K,\quad t\in[0, 1],\ a\in B,$$
$$(\phi_0^+, \phi_0^-) = (\mathrm{id}_{B}, 0)\quad\mathrm{and}\quad (\phi_1^+, \phi_1^-) = (0, 0).$$

Fix a nuclear C*-algebra $A$. Consider 
$$\Phi_t^{\pm}(a\otimes b)=a\otimes\phi_t^{\pm}(b)\in A\otimes \mathcal M( B\otimes\mathcal K)\subset \mathcal M(A\otimes B\otimes\mathcal K),\quad a\in A,\ b\in B,\ t\in[0, 1].$$
Then $\Phi_t^{\pm}(a\otimes b)$, $t\in[0, 1]$, are continuous paths (with the strict topology) in $\mathcal M(A\otimes B \otimes\mathcal K)$, and
$$\Phi_t^+(a\otimes b) - \Phi_t^-(a\otimes b) = a\otimes (\phi_t^+(b) - \phi_t^-(b)) \in A\otimes B \otimes\mathcal K.$$
Moreover, 
$(\Phi^+_0, \Phi^-_0) = (\mathrm{id}_{A \otimes \mathcal W}, 0)$ and $(\Phi^+_1, \Phi^-_1) = (0, 0)$. Therefore, $\mathrm{id}_{A\otimes B} \sim_{\mathrm{KK}} 0$, and $A\otimes B$ is KK-contractible, as desired.
\end{proof}

\begin{prop}\label{no-1-uniq-hom-1}
Let $A$ be a separable nuclear C*-algebra without unit which is KK-contracible. Let $L: A^+ \times (0, 1) \to \mathbb N$ and $N: A^+ \times (0, 1) \to (0, +\infty)$ be maps.

Let $\mathcal F\subset A$ be a finite set, and let $\eps>0$. There exists $n\in\mathbb N$ such that for any homomorphisms $\phi, \psi, \sigma: A \to B$, where $B$ is a C*-algebra, and $\sigma$ is $L$-$N$ full, there is a unitary $u\in{\mathrm{M}_{n+1}(B)}+\Comp 1_{n+1}$ such that
$$\| u(\phi(a)\oplus\underbrace{\sigma(a)\oplus\cdots\oplus\sigma(a)}_n)u^* - \psi(a)\oplus\underbrace{\sigma(a)\oplus\cdots\oplus\sigma(a)}_n\| <\eps,\quad a\in\mathcal F.$$
\end{prop}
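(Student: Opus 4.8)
The plan is a contradiction argument carried out in an asymptotic sequence algebra, using Proposition~\ref{no-1-uniq-hom-0} as the non-quantitative input and Lemma~\ref{sep-cond} to repair the failure of $\sigma$-unitality. Suppose the statement fails for some pair $(\mathcal F, \eps)$. Then for every $n\in\mathbb N$ there are a C*-algebra $B_n$ and homomorphisms $\phi_n,\psi_n,\sigma_n\colon A\to B_n$ with $\sigma_n$ being $(L,N)$-full (with the given, fixed $L$ and $N$), such that no unitary $u\in\mathrm{M}_{n+1}(B_n)+\Comp 1_{n+1}$ satisfies $\|u^*(\phi_n(a)\oplus\sigma_n(a)^{\oplus n})u-\psi_n(a)\oplus\sigma_n(a)^{\oplus n}\|<\eps$ for all $a\in\mathcal F$. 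Set $D=\prod_n B_n/\bigoplus_n B_n$ and let $\Phi,\Psi,\Sigma\colon A\to D$ be the homomorphisms induced by the sequences $(\phi_n)$, $(\psi_n)$, $(\sigma_n)$.

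The first step is to check that $\Sigma$ is full in $D$, i.e.\ $\overline{D\Sigma(a)D}=D$ for every $a\in A\setminus\{0\}$; here the uniformity built into $(L,N)$-fullness is essential, and plain fullness of each $\sigma_n$ would not suffice. It is enough to approximate a positive contraction $b$ of $D$ represented by a sequence $(b_n)_n$. Fixing $a\in A^+\setminus\{0\}$ and $\eps'>0$, write $L=L(a,\eps')$ and $N=N(a,\eps')$; for each $n$ the $(L,N)$-fullness of $\sigma_n$ yields $b_{n,1},\dots,b_{n,L}\in B_n$ with $\|b_{n,i}\|\le N$ and $\|b_n-\sum_{i=1}^{L}b_{n,i}\sigma_n(a)b_{n,i}^*\|<\eps'$. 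Since $N$ is independent of $n$, the sequences $\hat b_i=(b_{n,i})_n$ are elements of $\prod_n B_n$ of norm at most $N$, and in $D$ one gets $\|b-\sum_{i=1}^{L}\hat b_i\,\Sigma(a)\,\hat b_i^*\|\le\eps'$. As $\eps'>0$ is arbitrary, positive contractions span $D$, and the closed ideal generated by $\Sigma(a^*a)$ equals that generated by $\Sigma(a)$, the map $\Sigma$ is full.

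Because $D$ need not be $\sigma$-unital, Proposition~\ref{no-1-uniq-hom-0} cannot be applied to $D$ directly. Instead I would invoke Lemma~\ref{sep-cond} with $\Sigma(A)$ as the full separable subalgebra and $\mathrm{C}^*(\Phi(A)\cup\Psi(A))$ as the auxiliary one, obtaining a separable sub-C*-algebra $B'\subseteq D$ with $\Phi(A),\Psi(A),\Sigma(A)\subseteq B'$ on which $\Sigma$ is still full. Since $A$ is nuclear, $\Phi,\Psi,\Sigma$ are nuclear and $\KK_{\mathrm{nuc}}(A,B')=\KK(A,B')$, which vanishes because $A$ is KK-contractible; in particular $[\Phi]=[\Psi]$ in $\KK_{\mathrm{nuc}}(A,B')$. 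Applying Proposition~\ref{no-1-uniq-hom-0} to $\Phi,\Psi$ and the full homomorphism $\Sigma\colon A\to B'$ for the pair $(\mathcal F,\eps)$ then produces an integer $n_0$ and a unitary $U\in\mathrm{M}_{n_0+1}(B')+\Comp 1_{n_0+1}$ with $\|U^*(\Phi(a)\oplus\Sigma(a)^{\oplus n_0})U-\Psi(a)\oplus\Sigma(a)^{\oplus n_0}\|<\eps$ for $a\in\mathcal F$.

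To conclude, view $U$ inside $\mathrm{M}_{n_0+1}(D)+\Comp 1_{n_0+1}\cong\big(\prod_n(\mathrm{M}_{n_0+1}(B_n)+\Comp 1_{n_0+1})\big)/\bigoplus_n\mathrm{M}_{n_0+1}(B_n)$, lift it to a bounded sequence $(x_n)_n$ with $x_n\in\mathrm{M}_{n_0+1}(B_n)+\Comp 1_{n_0+1}$; then $\|x_n^*x_n-1\|\to 0$ and $\|x_nx_n^*-1\|\to 0$, so taking $u_n$ to be the unitary part of $x_n$ for $n$ large (and $1_{n_0+1}$ otherwise) gives a lift of $U$ by unitaries. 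Since the norm in $D$ is the $\limsup$ of the coordinate norms and $\mathcal F$ is finite, for all sufficiently large $n$ one has $\|u_n^*(\phi_n(a)\oplus\sigma_n(a)^{\oplus n_0})u_n-\psi_n(a)\oplus\sigma_n(a)^{\oplus n_0}\|<\eps$ for every $a\in\mathcal F$; and for such $n$ with moreover $n\ge n_0$, the unitary $u_n\oplus 1_{n-n_0}\in\mathrm{M}_{n+1}(B_n)+\Comp 1_{n+1}$ witnesses the same estimate with $n$ copies of $\sigma_n$, contradicting the choice of $B_n$. Thus the proposition holds. The one genuine obstacle is the non-$\sigma$-unitality of $D$ when the $B_n$ are non-unital, which is exactly what Lemma~\ref{sep-cond} is designed to handle; the place where the quantitative $(L,N)$-fullness—rather than mere fullness—is indispensable is the verification that $\Sigma$ is full in $D$.
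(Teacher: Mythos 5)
Your proposal is correct and follows essentially the same route as the paper: the same contradiction argument in $\prod_n B_n/\bigoplus_n B_n$, with the uniform $(L,N)$-bounds giving fullness of $\Sigma$, Lemma~\ref{sep-cond} supplying a separable full subalgebra on which Proposition~\ref{no-1-uniq-hom-0} applies (KK-contractibility and nuclearity giving $[\Phi]=[\Psi]$ in $\mathrm{KK}_{\mathrm{nuc}}$), and a coordinatewise lift of the resulting unitary; in fact you spell out the fullness check and the lifting/padding step that the paper leaves implicit. Only a cosmetic remark: when padding, write $u_n=b+\lambda 1_{n_0+1}$ with $|\lambda|=1$ and use $u_n\oplus\lambda 1_{n-n_0}$ rather than $u_n\oplus 1_{n-n_0}$, so that the padded unitary lies in $\mathrm{M}_{n+1}(B_n)+\Comp 1_{n+1}$; it still acts trivially on the extra diagonal block.
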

\begin{proof}
Assume the statement were not true. There are $(\mathcal F, \eps)$ such that for each $n\in\mathrm{N}$, there is a C*-algebra $B_n$ and homomorphisms $\phi_n, \psi_n, \sigma_n: A\to B_n$ such that
$$\max_{a\in\mathcal F}\inf_{u\in\mathrm U({\mathrm{M}_{n+1}(B_n)}+\Comp 1_{n+1})}\| u(\phi(a)\oplus\underbrace{\sigma(a)\oplus\cdots\oplus\sigma(a)}_n)u^* - \psi(a)\oplus\underbrace{\sigma(a)\oplus\cdots\oplus\sigma(a)}_n\| \geq \eps.$$

Consider the maps $\Phi$, $\Psi$, and $\Sigma: A \to \prod B_n/\bigoplus B_n$ induced by $(\phi_n)$, $(\psi_n)$, and $(\sigma_n)$ respectively. Since $\sigma_n$, $n=1, 2, ...$, are $L$-$N$ full, the map $\Sigma$ is full. Since $A$ is separable, it follows from Lemma \ref{sep-cond} that there is a separable C*-algebra $B\subset \prod B_n/\bigoplus B_n$ such that 
$$\Phi(A), \Psi(A), \Sigma(A)\subset B$$
and the map $\Sigma: A \to B\subset \prod B_n/\bigoplus B_n$ is full.

Then it follows from Proposition \ref{no-1-uniq-hom-0} that there is $k\in\mathbb{N}$ and a unitary $$U\in {\mathrm{M}_{k+1}(B)}+\Comp 1_{k+1}\subset \mathrm{M}_{k+1}(\prod B_n/\bigoplus B_n) +\Comp 1_{k+1} $$ such that
$$\| U(\Phi(a)\oplus\underbrace{\Sigma(a)\oplus\cdots\oplus\Sigma(a)}_k)U^* - \Psi(a)\oplus\underbrace{\Sigma(a)\oplus\cdots\oplus\Sigma(a)}_k\| <\eps,\quad a\in\mathcal F.$$

By lifting $U$ to a unitary $(u_n) \in\mathrm{M}_{k+1}(\prod{B_n})+\Comp1_{k+1}$, one obtains a contradiction.
\end{proof}

\begin{rem}
If $A$ is simple, the condition that $\sigma$ is $L$-$N$-full can be replaced by $\sigma(a)$ is $L(a, \eps)$-$N(a, \eps)$-full for a previously fixed $a$.
\end{rem}

\begin{prop}\label{stable-uniq-full}
Let $A$ be a separable nuclear C*-algebra without unit which is KK-contractible. Let $L: A^+ \times (0, 1) \to \mathbb N$ and $N: A^+ \times (0, 1) \to (0, +\infty)$ be maps.

Let $\mathcal F\subset A$ be a finite set, and let $\eps>0$. There exist $\mathcal G\subset A$, $\mathcal H\subset A^+$, $\delta>0$, $n\in\mathbb N$ such that for any $\mathcal G$-$\delta$-multiplicative maps $\phi, \psi, \sigma: A \to B$ (where $B$ is a C*-algebra) satisfying
$$\textrm{$\sigma(a)$ is $L(a, \eps)$-$N(a, \eps)$ full,\quad $\eps>0$,\ $a\in\mathcal H$},$$
there is a unitary $u\in{\mathrm{M}_{n+1}(B)}+\Comp 1_{n+1}$ such that
$$\| u(\phi(a)\oplus\underbrace{\sigma(a)\oplus\cdots\oplus\sigma(a)}_n)u^* - \psi(a)\oplus\underbrace{\sigma(a)\oplus\cdots\oplus\sigma(a)}_n\| <\eps,\quad a\in\mathcal F.$$
\end{prop}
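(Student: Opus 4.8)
\emph{Proof proposal.} The plan is to deduce the statement from its homomorphism counterpart, Proposition \ref{no-1-uniq-hom-1}, by the usual reindexing argument through an asymptotic sequence algebra. First I would apply Proposition \ref{no-1-uniq-hom-1} to $(\mathcal{F}, \eps)$ together with the given maps $L$ and $N$ --- in the simple-algebra form of the Remark following it, fixing once and for all a nonzero $a_0 \in A^+$ --- to obtain the integer $n$. This $n$ is the one claimed in the statement, and $\mathcal{H}$ will be chosen to contain $a_0$; note that no hypothesis relating $\phi$ and $\psi$ in $\mathrm{KK}_{\mathrm{nuc}}$ is needed, since $A$ is KK-contractible.

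Suppose, for contradiction, that no triple $(\mathcal{G}, \mathcal{H}\ni a_0, \delta)$ works for this $n$. Fix increasing finite sets $\mathcal{G}_1 \subseteq \mathcal{G}_2 \subseteq \cdots$ with $\overline{\bigcup_k \mathcal{G}_k} = A$, increasing finite sets $\mathcal{H}_1 \subseteq \mathcal{H}_2 \subseteq \cdots$ with $a_0 \in \mathcal{H}_1$ and $\overline{\bigcup_k \mathcal{H}_k} = A^+$, and a sequence $\delta_k \downarrow 0$. By the assumed failure of the statement, for each $k$ there are a C*-algebra $B_k$ and contractive $\mathcal{G}_k$-$\delta_k$-multiplicative maps $\phi_k, \psi_k, \sigma_k \colon A \to B_k$ such that $\sigma_k(a)$ is $L(a, \eps')$-$N(a, \eps')$-full for every $\eps'>0$ and every $a \in \mathcal{H}_k$, but such that no unitary $u \in \mathrm{M}_{n+1}(B_k) + \Comp 1_{n+1}$ brings the $n$-fold diagonal sums of $\phi_k$ and $\psi_k$ (with $n$ copies of $\sigma_k$) within $\eps$ of each other on $\mathcal{F}$. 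Because $\delta_k \to 0$ and $\bigcup_k \mathcal{G}_k$ is dense in $A$, the induced maps $\Phi = (\phi_k)$, $\Psi = (\psi_k)$, $\Sigma = (\sigma_k) \colon A \to B := \prod_k B_k / \bigoplus_k B_k$ are genuine homomorphisms, and they are nuclear because $A$ is.

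Next I would verify that $\Sigma(a_0)$ is $L(a_0, \cdot)$-$N(a_0, \cdot)$-full: given $\eps' > 0$ and a positive contraction $b = (b_k) \in B$, the fullness of each $\sigma_k(a_0)$ (available because $a_0 \in \mathcal{H}_1 \subseteq \mathcal{H}_k$) provides, for every $k$, elements $b_{k, 1}, \dots, b_{k, L(a_0, \eps'/2)}$ of $B_k$ of norm at most $N(a_0, \eps'/2)$ with $\norm{b_k - \sum_i b_{k, i}\sigma_k(a_0)b_{k, i}^*} < \eps'/2$; the sequences $(b_{k, i})_k$ then define elements of $B$ of norm at most $N(a_0, \eps'/2)$ that witness the required approximation in $B$. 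Hence Proposition \ref{no-1-uniq-hom-1} (in its simple-algebra form) applies to $\Phi, \Psi, \Sigma$ with codomain $B$, and produces a unitary $U \in \mathrm{M}_{n+1}(B) + \Comp 1_{n+1}$ satisfying the estimate of the conclusion with $\Phi, \Psi, \Sigma$ in place of $\phi, \psi, \sigma$ and with $n$ copies of $\Sigma$.

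Finally, after multiplying $U$ by a scalar of modulus one I may write $U = 1 + x$ with $x \in \mathrm{M}_{n+1}(B)$; lifting $x$ to a bounded sequence $(x_k) \in \prod_k \mathrm{M}_{n+1}(B_k)$, the elements $u_k := 1 + x_k \in \mathrm{M}_{n+1}(B_k) + \Comp 1_{n+1}$ satisfy $\norm{u_k^* u_k - 1} \to 0$ and $\norm{u_k u_k^* - 1} \to 0$, so for large $k$ they are invertible and their unitary parts $\tilde{u}_k$ lie within $o(1)$ of them. Since $(\tilde{u}_k)$ represents $U$ and $(\phi_k), (\psi_k), (\sigma_k)$ represent $\Phi, \Psi, \Sigma$, the norms of the corresponding differences over $\mathcal{F}$ have limit superior strictly below $\eps$, so for all large $k$ the honest unitary $\tilde{u}_k$ contradicts the choice of $\phi_k, \psi_k, \sigma_k$. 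The only conceptually delicate point is that $B$ is not $\sigma$-unital when the $B_k$ are non-unital, so the $\mathrm{KK}$-picture underlying Proposition \ref{no-1-uniq-hom-0} is not directly available; but this is precisely the difficulty already handled inside Proposition \ref{no-1-uniq-hom-1} by passing to a separable sub-C*-algebra via Lemma \ref{sep-cond}, so the present reduction is entirely routine. (For non-simple $A$ one would in addition have to choose the finite set $\mathcal{H}$ so that fullness of $\Sigma$ on $\mathcal{H}$ forces honest fullness of $\Sigma$ everywhere; since $A$ is simple throughout the applications in this paper, taking $\mathcal{H} = \{a_0\}$ suffices.)
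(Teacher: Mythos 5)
Your skeleton (negate the statement, pass to $B=\prod_k B_k/\bigoplus_k B_k$, invoke the homomorphism version Proposition \ref{no-1-uniq-hom-1}, lift the unitary approximately and correct it) is exactly the paper's, and your treatment of the lifting step is in fact more careful than the paper's one-line remark. The problem is that you only prove the statement under an extra hypothesis that is not in it: Proposition \ref{stable-uniq-full} is stated for an arbitrary separable nuclear non-unital KK-contractible $A$, not a simple one (the Remark immediately after it, reducing $\mathcal H$ to a single point, is explicitly reserved for the simple case). Your argument fixes one element $a_0$, verifies fullness of $\Sigma(a_0)$ only, and then appeals to the simple-algebra form of the Remark following Proposition \ref{no-1-uniq-hom-1}; for non-simple $A$, fullness of the limit homomorphism $\Sigma$ at $a_0$ (or on any finite set fixed in advance) gives no control over $\overline{B\Sigma(a)B}$ for $a$ outside the ideal generated by that set, so Proposition \ref{no-1-uniq-hom-0}/\ref{no-1-uniq-hom-1} cannot be applied. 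Your closing parenthesis, proposing to ``choose the finite set $\mathcal H$ so that fullness of $\Sigma$ on $\mathcal H$ forces honest fullness of $\Sigma$ everywhere,'' is not a viable repair: no single finite $\mathcal H$ can do this in a non-simple algebra, and it also misreads the logic of the proof, in which the finite set that finally works is produced by the contradiction scheme rather than chosen so as to imply global fullness of each individual $\sigma$.

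The missing idea is the paper's quantitative density argument. One takes $\mathcal H_1\subset\mathcal H_2\subset\cdots$ with $\overline{\bigcup_k\mathcal H_k}=A^+$, so each counterexample map $\sigma_k$ is $L$-$N$ full on $\mathcal H_k$, whence $\Sigma$ is $L$-$N$ full at every element of $\bigcup_k\mathcal H_k$ (ignoring finitely many coordinates). Then, for an arbitrary $a\in A^+$, one picks $a'\in\bigcup_k\mathcal H_k$ with $\|a-a'\|$ small compared with $\eps/(L(a,\eps)N^2(a,\eps))$ and defines modified control functions $\bar L(a,\eps)=L(a',\eps)$, $\bar N(a,\eps)=N(a',\eps)$; the norm bound $N$ is exactly what lets the approximate fullness at $a'$ be transferred to $a$ with a doubled tolerance, so that $\Sigma$ is genuinely $\bar L$-$\bar N$ full on all of $A^+$ and Proposition \ref{no-1-uniq-hom-1} applies in its stated (non-simple) form, with $n$ chosen in advance from $(\mathcal F,\eps)$ and $\bar L,\bar N$ (which depend only on the pre-chosen sequence $(\mathcal H_k)$, not on the counterexample maps). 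Your proof as written is fine for the simple case — which is all the paper subsequently uses — but it does not establish the proposition as stated, and the general case is precisely the part where the paper's proof contains content beyond your outline.
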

\begin{proof}
Assume the statement were not true. There is a pair $(\mathcal F, \eps)$ such that for any $n$, any finite sets $\mathcal G, \mathcal H\subset A$, any constant $\delta$, there is a C*-algebra $D$ and $\mathcal G$-$\delta$-multiplicative maps $\phi, \psi, \sigma$ satisfying 
\begin{enumerate}
\item $[\phi(p)]=[\psi(p)]$ for all $p\in\mathcal P$,
\item $\sigma(a)$ is $L(a, \eps)$-$N(a, \eps)$-full for any $a \in \mathcal H$ and $\eps>0$,
\end{enumerate}
but
$$\max_{c\in\mathcal F}\|u(\phi(c)\oplus(\bigoplus^n \sigma(c)))u^* - \psi(c)\oplus(\bigoplus^n \sigma(c)) \| \geq \eps$$
for any unitary $u\in \mathrm{M}_{n+1}(B)$.

Chose sequences
\begin{enumerate}
\item $\mathcal G_1\subset \mathcal G_{2}\subset\cdots $ with $\overline{\bigcup_{k=1}^\infty \mathcal G_k} =A$,
\item $\mathcal H_1\subset \mathcal H_{2}\subset\cdots $ with $\overline{\bigcup_{k=1}^\infty \mathcal H_k} =A^+$,
\item $\delta_1>\delta_2>\cdots$ with $\lim_{k\to\infty}\delta_k=0$,
\end{enumerate}

For any $a\in A^+$ and any $\eps>0$, choose $a'\in \bigcup\mathcal H_k$ 
with $$\|a-a'\| < \frac{\eps}{L(a, \eps) N^2(a, \eps)},$$
and set new functions
$$\bar{L}(a, \eps)=L(a', \eps)\quad\mathrm{and}\quad \bar{N}(a, \eps)=N(a', \eps) $$

Applying Proposition \ref{no-1-uniq-hom-1} to $(\mathcal F, \eps)$ with respect to $\bar{L}(a, 2\eps)$ and $\bar{N}(a, 2\eps)$, one obtains $n$. Now, fix $(\mathcal F, \eps)$ and $n$. There are C*-algebras $B_k$ and $\mathcal G_k$-$\delta_k$-multiplicative maps $\phi_k, \psi_k, \sigma_k: A\to B_k$ satisfying 
$$\textrm{$\sigma(a)$ is $L(a, \eps)$-$N(a, \eps)$-full for any $\eps>0$ and any $a \in \mathcal H_k$,}$$
but
\begin{equation}\label{assumption}
\max_{c\in\mathcal F}\|u\mathrm{diag}\{\phi_k(c), \underbrace{\sigma_k(c), ..., \sigma_k(c)}_n\}u^* - \mathrm{diag}\{\psi_k(c), \underbrace{\sigma_k(c), ..., \sigma_k(c)}_n\} \| \geq \eps
\end{equation}
for any unitary $u\in {\mathrm{M}_{n+1}(B_k)}+\Comp 1_{n+1}$.

Consider the homomorphisms $\Phi, \Psi, \Sigma: A \to \prod B_k/\bigoplus B_k$ defined by $\prod_{k=1}^\infty \phi_k$,  $\prod_{k=1}^\infty \psi_k$,   and $\prod_{k=1}^\infty \sigma_k$, respectively. 

Then $\Sigma$ is $\bar{L}(a, 2\eps)$-$\bar{N}(a, 2\eps)$-full. Indeed, let $a\in \bigcup \mathcal H_k$, and assume that $a\in\mathcal H_1$.  For any $\eps>0$, and any $b=(b_1, b_2, ...)\in \prod B_k$ with $\|b\| \leq 1$, since $\sigma_k(a), k=1, 2, ...$, are $L(a, \eps)$-$N(a, \eps)$-full, there are $x_{k, 1}, x_{k, 2}, ..., x_{k, L(a, \eps)}$ with $\|x_{k, i}\| \leq N(a, \eps)$ such that
$$ \|b_i-(x_{k, 1}\sigma_k(a)x^*_{k, 1} + \cdots + x_{k, L(a, \eps)}\sigma_k(a)x^*_{k, L(a, \eps)}) \| <\eps.$$ Set $x_i=(x_{i, 1}, x_{i, 2}, ... )$,  Then $$\|x_i\| < N(a, \eps),\quad i=1, 2, ..., L(a, \eps)$$ 
and
$$ \|b-(x_1\Sigma(a)x^*_1 + \cdots + x_{L(a, \eps)}\Sigma(a)x^*_{L(a, \eps)}) \| <\eps.$$
Now, let $a\in A=\overline{\bigcup \mathcal H_k}$. Take the element $a'\in \bigcup \mathcal H_k$ with $$\|a-a'\| < \frac{\eps}{L(a, \eps) N^2(a, \eps)}.$$
Since $a'\in \bigcup \mathcal H_k$, there are $x_1, x_2, ..., x_{L(a', \eps)}$ with $\|x_i\| \leq N(a', \eps)=\bar{N}(a, \eps)$ such that 
$$ \|b-(x_1\Sigma(a')x^*_1 + \cdots + x_{L(a', \eps)}\Sigma(a')x^*_{L(a', \eps)}) \| <\eps,$$
and hence (note that $\bar{L}(a,\eps)=L(a', \eps)$)
$$ \|b-(x_1\Sigma(a)x^*_1 + \cdots + x_{L(a, \eps)}\Sigma(a')x^*_{\bar{L}(a, \eps)}) \| <2\eps.$$ 
Then, by Proposition \ref{no-1-uniq-hom-1}, there is a unitary $U\in {\mathrm{M}_{n+1}(B)}+\Comp 1_{n+1}$ such that
$$ \|U(\Phi(a)\oplus \underbrace{\Sigma(a) \oplus \cdots \oplus \Sigma(a)}_n)U^* - \Psi(a)\oplus \underbrace{\Sigma(a) \oplus \cdots \oplus \Sigma(a)}_n\| < \eps,\quad a\in\mathcal F. $$
By lifting $U$ to a unitary $(u_n) \in \mathrm M_{n+1}(\prod B_k) + \Comp 1_{n+1}$, one obtains a contradiction.
\end{proof}

\begin{rem}
If $A$ is simple, then the set $\mathcal H$ can be chosen to be a set consisting of a single point.
\end{rem}

In the case that the algebra has strict comparison of positive elements, the fullness of an element can be controlled by traces (Lemma \ref{full-by-trace}).

\begin{lem}\label{std-lem}
Let $A$ be a C*-algebra, and let $z\in A$. Set $a=zz^*$ and $b=z^*z$. Then, for any $\eps>0$, there is $u\in A$ such that
$\|u^*au-b\| < \eps$ and $\|u\|\leq 1$.
\end{lem}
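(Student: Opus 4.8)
The plan is to write down $u$ explicitly via the continuous functional calculus on $b := z^*z$ and then verify the two requirements by a routine spectral estimate --- this is the standard ``approximate polar decomposition'' trick. Fix $\delta>0$, to be specified at the end in terms of $\eps$, and set
$$u = z\,(b+\delta)^{-1/2},$$
where $(b+\delta)^{-1/2}$ denotes $f(b)$ for the continuous function $f(t)=(t+\delta)^{-1/2}$ on $[0,\infty)\supseteq\mathrm{sp}(b)$. First I would check that $u\in A$: even when $A$ is non-unital, $f(b)$ lies in the unitization $\widetilde A$, and since $A$ is a closed two-sided ideal of $\widetilde A$, the product $u=z\cdot f(b)$ lies in $A$. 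This is the only point at which non-unitality requires any care.

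Next I would bound the norm of $u$. All elements involved are continuous functions of the single positive element $b$, hence commute, so
$$u^*u = f(b)\,(z^*z)\,f(b) = b\,(b+\delta)^{-1},$$
whence $\|u\|^2=\|u^*u\|=\sup\{t/(t+\delta):t\in\mathrm{sp}(b)\}\le 1$, i.e. $\|u\|\le 1$.

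Finally I would estimate $u^*au-b$. Since $z^*(zz^*)z=(z^*z)^2=b^2$, the same commutativity gives $u^*au=f(b)\,z^*(zz^*)z\,f(b)=b^2(b+\delta)^{-1}$, and therefore
$$u^*au - b = b^2(b+\delta)^{-1} - b(b+\delta)(b+\delta)^{-1} = -\,\delta\,b\,(b+\delta)^{-1}.$$
Hence $\|u^*au-b\|=\sup\{\delta t/(t+\delta):t\in\mathrm{sp}(b)\}\le\delta$, using $\delta t/(t+\delta)\le\delta$ for all $t\ge 0$. Taking $\delta=\eps/2$ (or any $\delta<\eps$) then gives $\|u^*au-b\|<\eps$ with $\|u\|\le 1$, as desired. (The case $z=0$ is trivial, with $u=0$, and is also the content of the formula when $b=0$.) There is no genuine obstacle here beyond the verification $u\in A$ in the non-unital case, dealt with as above.
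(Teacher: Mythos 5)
Your proof is correct and follows essentially the same idea as the paper's, namely an approximate polar decomposition of $z$ implemented by functional calculus in $b=z^*z$. The only difference is cosmetic: the paper takes $u=vh_{\eps'}(z^*z)$, where $v$ is the polar-decomposition partial isometry (living in $A^{**}$) and $h_{\eps'}$ a cutoff vanishing near $0$, whereas you use the regularized inverse $z(b+\delta)^{-1/2}$, which has the small advantage of making the membership $u\in A$ and the norm estimates completely immediate.
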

\begin{proof}
Consider the polar decomposition $z=v(z^*z)^{\frac{1}{2}}$. Then $v^*av = b$. Set $u=vh_{\eps'}(z^*z)\in A$, where
$$
h_{\eps'}(t)=
\left\{
\begin{array}{ll}
1, & \textrm{if $\eps'\leq t\leq 1$}, \\
\textrm{linear}, & \textrm{if $\frac{\eps'}{2}\leq t\leq \eps'$}, \\
0, & \textrm{if $0\leq t\leq \frac{\eps'}{2}$}.
\end{array}
\right.
$$
With $\eps'$ sufficiently small, the element $u$ has the desired property.
\end{proof}

\begin{lem}\label{full-by-trace}
For any $\eps>0$ and $\delta>0$, there are $L(\delta)$ and $N(\eps)$ satisfying the following:
Let $A$ be a C*-algebra such that all traces of $A$ are bounded, and assume that $A$ has strict comparison of positive elements. Let $a\in A^+$ with $\|a\|=1$ satisfying
$$\tau(a) > \delta,\quad \tau\in\mathrm{T}^+(A),\ \|\tau\|=1.$$
Then, for any $b\in A^+$ with $\|b\|=1$, there are $x_1, x_2, ..., x_{L(\delta)}$ such that 
$$\|x_i\|\leq N(\eps),\quad i=1, 2, ..., L(\delta)$$
and
$$\|b-(x_1ax_1^* + x_2ax_2^*+\cdots+x_{L(\delta)}a x^*_{L(\delta)})\| < \eps.$$
\end{lem}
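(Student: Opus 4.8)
The plan is to deduce the statement from a single Cuntz‑comparison estimate, reading the $x_i$ off a matrix that implements a subequivalence. I would take $L(\delta):=\lfloor 2/\delta\rfloor+1$, so that $L(\delta)\,\delta>2$, and $N(\eps):=1$; it will turn out that no dependence of $N$ on $\eps$ is actually needed.

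First I would record the elementary trace inequalities. For $\tau\in\mathrm{T}^+A$ with $\|\tau\|=1$ one has $d_\tau(a)\ge\tau(a)>\delta$, since $a\le a^{1/n}$ for every $n$ (as $\|a\|\le 1$), so $\tau(a)\le\tau(a^{1/n})\le d_\tau(a)$; on the other hand $d_\tau(b)\le 1$, since $\|b^{1/n}\|\le 1$ and $\tau$ is bounded of norm one. Scaling by $\|\tau\|$, for every nonzero $\tau\in\mathrm{T}^+A$,
$$d_\tau(b\oplus b)=2\,d_\tau(b)\le 2\|\tau\| < L(\delta)\,\delta\,\|\tau\| < L(\delta)\,d_\tau(a)=d_\tau\big(a^{\oplus L(\delta)}\big).$$
Since all traces of $A$ are bounded and $A$ has strict comparison of positive elements, this gives $b\oplus b\precsim a^{\oplus L(\delta)}$ in the Cuntz semigroup.

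Next I would extract the $x_i$, which is the only step requiring care. By the standard fact that $x\precsim y$ if and only if for every $\eta>0$ there is $w$ with $(x-\eta)_+=wyw^*$, there is $W\in\mathrm M_{2,L(\delta)}(A)$ with $(b\oplus b-\tfrac{\eps}{2})_+=W\,a^{\oplus L(\delta)}\,W^*$. The left‑hand side equals $(b-\tfrac{\eps}{2})_+\oplus(b-\tfrac{\eps}{2})_+$, so on comparing upper‑left corners the first row $W_1\in\mathrm M_{1,L(\delta)}(A)$ of $W$ satisfies $(b-\tfrac{\eps}{2})_+=W_1\,a^{\oplus L(\delta)}\,W_1^*$. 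The crucial point is the norm bound: $\|W\|^2=\|WW^*\|=\|(b-\tfrac{\eps}{2})_+\oplus(b-\tfrac{\eps}{2})_+\|=\|(b-\tfrac{\eps}{2})_+\|\le 1$, whence $\|W_1\|\le 1$. Writing $W_1=(x_1,\dots,x_{L(\delta)})$ gives $(b-\tfrac{\eps}{2})_+=\sum_{i=1}^{L(\delta)}x_i\,a\,x_i^*$ with $\|x_i\|\le\|W_1\|\le 1$, and finally $\|b-\sum_i x_i a x_i^*\|=\|b-(b-\tfrac{\eps}{2})_+\|\le\tfrac{\eps}{2}<\eps$, as required.

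The main obstacle is exactly this last norm control: applying the standard lemma directly to $b\precsim a^{\oplus L}$ would produce an implementing element of uncontrolled norm, with no way to bound it in terms of $\eps$ alone. The remedy is the passage to $b\oplus b$: asking for $b\oplus b\precsim a^{\oplus L(\delta)}$ rather than $b\precsim a^{\oplus L(\delta)}$ costs only a factor $2$ in the multiplicity — which is why $L(\delta)=\lfloor 2/\delta\rfloor+1$ and not $\lfloor 1/\delta\rfloor+1$ — and in return forces any implementing $W$ to be a contraction, since $WW^*=(b\oplus b-\tfrac{\eps}{2})_+$ has norm at most $1$; the corner $W_1$ then automatically inherits the bound. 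The remaining ingredients — the inequality $d_\tau(a)\ge\tau(a)$, the identity $(x\oplus x-\eta)_+=(x-\eta)_+\oplus(x-\eta)_+$, and the corner bookkeeping for rectangular matrices over $A$ — are routine.
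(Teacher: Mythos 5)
Your reduction to a single Cuntz comparison is fine as far as it goes: the estimates $d_\tau(a)\ge\tau(a)>\delta$, $d_\tau(b)\le\|\tau\|$, the conclusion $b\oplus b\precsim a^{\oplus L(\delta)}$ from strict comparison, the identity $(b\oplus b-\tfrac{\eps}{2})_+=(b-\tfrac{\eps}{2})_+\oplus(b-\tfrac{\eps}{2})_+$, and the corner bookkeeping extracting the first row $W_1$ are all correct. The gap is exactly at the step you yourself call crucial. You assert $\|W\|^2=\|WW^*\|=\|(b\oplus b-\tfrac{\eps}{2})_+\|$, but $WW^*$ is \emph{not} $(b\oplus b-\tfrac{\eps}{2})_+$; what the R{\o}rdam-type lemma gives you is $W\,a^{\oplus L}\,W^*=(b\oplus b-\tfrac{\eps}{2})_+$, and from that identity one only gets the \emph{lower} bound $\|(b\oplus b-\tfrac{\eps}{2})_+\|\le\|W\|^2\|a\|$, no upper bound at all. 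In the form $(x-\eta)_+=wyw^*$ the implementing element has no norm control whatsoever: already in $A=\Comp$ with $x=1$, $y=t$ small, any $w$ with $1-\eta=wt\bar w$ has $|w|=\sqrt{(1-\eta)/t}$, which blows up as $t\to 0$; the same phenomenon occurs whenever the comparison is forced to use the part of the spectrum of $a^{\oplus L}$ near $0$, which the hypothesis $\tau(a)>\delta$ does not rule out (e.g.\ $a$ with one eigenvalue $1$ and many eigenvalues of size comparable to $\delta$). Passing from $b$ to $b\oplus b$ does not change this in any way, so the claimed conclusion that ``any implementing $W$ is a contraction'' is unsupported, and with it the assertion that $N$ can be taken equal to $1$, independent of $\eps$.

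The version of the comparison lemma that \emph{does} carry a norm bound is the one the paper uses: there is $z$ with $z^*z=(b-\tfrac{\eps}{5})_+$ and $zz^*\in \mathrm{M}_{L(\delta)}(\overline{aAa})$, so $\|z\|\le 1$. But then $(b-\tfrac{\eps}{5})_+$ is not yet of the form $\sum_i x_i a x_i^*$; converting membership of $zz^*$ in the hereditary subalgebra of $a^{\oplus L}$ into such an expression with controlled norms is precisely the remaining work in the paper's proof: one approximates $zz^*$ by $(a^{\oplus L})xx^*(a^{\oplus L})$, cuts by $h_{\eps/5}(a^{\oplus L})$, uses Lemma \ref{std-lem} twice to return to $b$ with unitaries of norm $\le\sqrt{1+\eps}$, and finally writes $h_{\eps/5}(a)=g_{\eps/5}(a)\,a$ to peel off the factor $a$, at the cost of the factor $\sqrt{\|g_{\eps/5}\|}\sim\sqrt{10/\eps}$. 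That is exactly where the $\eps$-dependence of $N(\eps)=(1+\tfrac{\eps}{5})\sqrt{\|g_{\eps/5}\|}$ comes from, and it is the content your proposal skips. To repair your argument you would need either to prove a genuinely new comparison lemma producing a contractive implementing element (your doubling trick does not do this), or to reinstate the cutting-function step, in which case you are back to the paper's proof and must allow $N$ to depend on $\eps$.
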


\begin{proof}
For each $\delta$, set $$L(\delta) = \lceil\frac{1}{\delta}\rceil.$$
For each $\eps>0$, 
define  $$
h_{\eps}(t)=
\left\{
\begin{array}{ll}
1, & \textrm{if $\eps\leq t\leq 1$}, \\
\textrm{linear}, & \textrm{if $\frac{\eps}{2}\leq t\leq \eps$}, \\
0, & \textrm{if $0\leq t\leq \frac{\eps}{2}$},
\end{array}
\right.
$$
and define
$$g_\eps(t)=\frac{h_\eps(t)}{t},\quad t\in (0, 1], \quad\textrm{and}\quad g_\eps(0)=0.$$
Then set
$$N(\eps)=(1+\frac{\eps}{5})\sqrt{\|g_\frac{\eps}{5}\|}.$$

Then $L(\delta)$ and $N(\eps)$ has the desired property.

Indeed, let $A$ be a C*-algebra with strict comparison of positive elements. Let $a\in A^+$ with $\|a\|=1$ satisfying
$$\tau(a) > \delta,\quad \tau\in\mathrm{T}^+(A),\ \|\tau\|=1.$$ It is then clear that
$$\mathrm{d}_\tau([a]) > \delta,\quad  \tau\in\mathrm{T}^+(A),\ \|\tau\|=1.$$

Let $b\in A^+$ with $\|b\| \leq 1$. Then $\mathrm{d}_\tau([b]) < \lceil\frac{1}{\delta}\rceil \mathrm{d}_\tau([a])$, $\tau\in\mathrm{T}(A)$. Since $A$ has strict comparison, one has
$$b \preceq \underbrace{a\oplus\cdots\oplus a}_{\lceil\frac{1}{\delta}\rceil=L(\delta)}=a^{\oplus L(\delta)}.$$

There is then an element $z\in \mathrm{M}_{L(\delta)}(A)$ such that
$$(b-\frac{\eps}{5})_+=z^*z\quad\mathrm{and}\quad zz^*\in \mathrm{M}_{L(\delta)}(\overline{aAa}).$$
Pick $x\in \mathrm{M}_{L(\delta)}(A)$ such that
\begin{equation}\label{est-913-01}
\|zz^* - (a^{\oplus L(\delta)})xx^*(a^{\oplus L(\delta)})\| < \frac{\eps}{5}.
\end{equation}

Moreover, note that $x$ can be chosen so that $$\|(a^{\oplus L(\delta)})x\|\in [1-\frac{2\eps}{5}, 1],$$ hence
\begin{equation}\label{est-913-02}
\|x^*(a^{\oplus L(\delta)}) h_{\frac{\eps}{5}}(a^{\oplus L(\delta)})(a^{\oplus L(\delta)})x - x^*(a^{\oplus L(\delta)}) (a^{\oplus L(\delta)})x\| < \frac{\eps}{5}.
\end{equation}

By Lemma \ref{std-lem}, there is $u_1\in \mathrm{M}_{L(\delta)}(A)$ such that $\|u_1\| < \sqrt{1+\eps}$ and
\begin{equation}\label{est-913-03}
\|u_1(x^*(a^{\oplus L(\delta)}) (a^{\oplus L(\delta)})x)u_1^* - (a^{\oplus L(\delta)})xx^*(a^{\oplus L(\delta)})\| < \frac{\eps}{5}.
\end{equation}

Again by Lemma \ref{std-lem}, there is $u_2\in \mathrm{M}_{L(\delta)}(A)$ such that $\|u_2\| < \sqrt{1+\eps}$ and
\begin{equation}\label{est-913-04}
\|u_2(zz^*)u^*_2 - (b-\frac{\eps}{5})_+\|< \frac{\eps}{5},
\end{equation}
and hence (by \eqref{est-913-01}, \eqref{est-913-02}, \eqref{est-913-03}, and \eqref{est-913-04})
$$\|u_2u_1(x^*(a^{\oplus L(\delta)}) h_{\frac{\eps}{5}}(a^{\oplus L(\delta)})(a^{\oplus L(\delta)})x) u_1^*u_2^* - b \|< \eps,$$
In other words, there is an element $w'\in \mathrm{M}_{L(\delta)}(A)$ with $\|w'\|\leq 1+\eps$ such that
$$\|w'(h_{\frac{\eps}{5}}(a^{\oplus L(\delta)}))(w')^* - b \| < \eps.$$
Note that $h_{\frac{\eps}{5}}(a) = g_{\frac{\eps}{5}}(a) a$. One has that there is $w\in \mathrm{M}_{L(\delta)}(A)$ with $\|w\|\leq (1+\eps)\sqrt{\|g_{\frac{\eps}{5}}\|}$ such that
$$\|w(\bigoplus^La)w^* - b \| < \eps.$$
Write $w=(w_{i,j})$, $i, j=1, 2, ..., L$. Then
$$\| b - \sum_{i=1}^L w_{1, i} a w_{1, i}^*\| < \eps.$$
Since $\|w\|\leq (1+\eps)\sqrt{\|g_{\frac{\eps}{5}}\|}$, one has that 
$$\|w_{i, j}\|\leq (1+\eps)\sqrt{\|g_\frac{\eps}{5}\|}=N(\eps),\quad i, j=1, 2, ..., L,$$
as desired.
\end{proof}

\begin{cor}\label{stable-uniq-trace}
Let $A$ be a separable nuclear C*-algebra without a unit which is KK-contracible. Let $\rho: A^+\setminus\{0\} \to (0, +\infty)$ be a map.

Let $\mathcal F\subset A$ be a finite set, and let $\eps>0$.  There exist $\mathcal G\subset A$, $\mathcal H\subset A^+$, $\delta>0$, $n\in\mathbb N$ such that for any $\mathcal G$-$\delta$-multiplicative maps $\phi, \psi, \sigma: A \to B$, where $B$ is a C*-algebra with all traces are bounded and with strict comparison of positive elements, such that
$$\tau(\sigma(a)) > \rho(a),\quad a\in\mathcal H,\ \tau\in\mathrm{T}^+B,\ \|\tau\|=1,$$
then there is a unitary $u\in\mathrm{M}_{n+1}(B) + \Comp 1_{n+1}$ such that
$$\| u(\phi(a)\oplus\underbrace{\sigma(a)\oplus\cdots\oplus\sigma(a)}_n)u^* - \psi(a)\oplus\underbrace{\sigma(a)\oplus\cdots\oplus\sigma(a)}_n\| <\eps,\quad a\in\mathcal F.$$
\end{cor}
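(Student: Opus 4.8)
The plan is to deduce Corollary \ref{stable-uniq-trace} from Proposition \ref{stable-uniq-full} by converting the trace lower bound on $\sigma(a)$ into an $(L,N)$-fullness hypothesis via Lemma \ref{full-by-trace}.

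First I would manufacture the pair of functions $L,N$ required by Proposition \ref{stable-uniq-full}. For $a\in A^+\setminus\{0\}$ and $\eps'\in(0,1)$, put $L(a,\eps'):=L(\rho(a))$ and $N(a,\eps'):=N(\eps')$, where $L(\delta)=\lceil 1/\delta\rceil$ (with $\delta=\rho(a)$) and $N(\eps')$ are the quantities produced by Lemma \ref{full-by-trace}; note that $L(a,\eps')$ does not in fact depend on $\eps'$. Applying Proposition \ref{stable-uniq-full} to $(\mathcal F,\eps)$ with respect to these $L$ and $N$ yields $\mathcal G_0\subset A$, $\mathcal H_0\subset A^+$, $\delta_0>0$, and $n\in\mathbb N$. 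Without loss of generality I may take every element of $\mathcal H_0$ to have norm one (the set is finite and $\rho$ is defined on all of $A^+\setminus\{0\}$, so normalizing costs nothing), and I shrink $(\mathcal G_0,\delta_0)$ to $(\mathcal G,\delta)$ so that any $\mathcal G$-$\delta$-multiplicative (completely positive, contractive) map $\sigma$ has $\|\sigma(a)\|\le 1$ for $a\in\mathcal H_0$; set $\mathcal H:=\mathcal H_0$.

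Next I would check that the hypotheses of Proposition \ref{stable-uniq-full} hold. Let $\phi,\psi,\sigma:A\to B$ be $\mathcal G$-$\delta$-multiplicative, with $B$ having bounded traces and strict comparison of positive elements, and suppose $\tau(\sigma(a))>\rho(a)$ for all $a\in\mathcal H$ and all $\tau\in\mathrm{T}^+B$ with $\|\tau\|=1$. Fix $a\in\mathcal H$; then $\sigma(a)\in B^+$ and, since $\rho(a)\le\tau(\sigma(a))\le\|\sigma(a)\|\le 1$, Lemma \ref{full-by-trace} applies to $\sigma(a)$ with $\delta=\rho(a)$ (its proof uses only that $\mathrm{d}_\tau([\sigma(a)])\ge\tau(\sigma(a))>\rho(a)$ and $\mathrm{d}_\tau([b])\le 1$, so the exact normalization $\|\sigma(a)\|=1$ is inessential; alternatively one rescales $\sigma(a)$ to norm one and absorbs the harmless factor $\rho(a)^{-1/2}$ into $N$). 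Thus for every $\eps'>0$ and every $b\in B^+$ with $\|b\|\le 1$ there are $x_1,\dots,x_{L(a,\eps')}$ with $\|x_i\|\le N(a,\eps')$ and $\|b-\sum_i x_i\sigma(a)x_i^*\|<\eps'$; that is, $\sigma(a)$ is $(L,N)$-full for each $a\in\mathcal H$. Proposition \ref{stable-uniq-full} then produces the unitary $u\in\mathrm{M}_{n+1}(B)+\Comp 1_{n+1}$ with the stated approximation property, which completes the proof.

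The only substantive point is this translation step, and the detail demanding care is the normalization: Lemma \ref{full-by-trace} is stated for $\|a\|=1$, whereas $\sigma(a)$ is merely contractive, so I must either observe that the proof of that lemma goes through verbatim with $\|\sigma(a)\|\le 1$, or rescale and keep track of the resulting bound on the coefficients. The remaining bookkeeping — choosing $\mathcal G,\delta$ so that contractivity and positivity of $\sigma$ are respected, and aligning the set $\mathcal H$ with the output of Proposition \ref{stable-uniq-full} — is entirely routine.
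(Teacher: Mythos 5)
Your proposal is correct and follows exactly the route the paper takes: the paper's own proof of Corollary \ref{stable-uniq-trace} is precisely the one-line observation that the statement follows from Lemma \ref{full-by-trace} (translating the trace lower bound into $(L,N)$-fullness) combined with Proposition \ref{stable-uniq-full}. Your additional care over the normalization $\|\sigma(a)\|\le 1$ versus $\|a\|=1$ in Lemma \ref{full-by-trace} is a legitimate detail the paper glosses over, and your resolution of it is fine.
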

\begin{proof}
The statement follows directly from Lemma \ref{full-by-trace} and Proposition \ref{stable-uniq-full}.
\end{proof}

A special case of the stable uniqueness theorem is when $A=B$, $\phi=\sigma=\mathrm{id}$, and $\psi=0$. 
\begin{cor}\label{zero-head}
Let $A$ be a separable simple nuclear C*-algebra without a unit which is KK-equivalent to $\{0\}$. Assume all traces of $A$ are bounded and $A$ has strict comparison of positive elements. 

Let $\mathcal F\subset A$ be a finite set, and let $\eps>0$. There is $n\in\mathbb N$ and a unitary $u\in\mathrm{M}_{n+1}(A) + \Comp 1_{n+1}$ such that
$$\| u(a\oplus\underbrace{ a \oplus\cdots\oplus a }_n)u^* - 0\oplus\underbrace{a\oplus\cdots\oplus a}_n\| <\eps,\quad a\in\mathcal F.$$
\end{cor}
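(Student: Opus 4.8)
The plan is to recognise the statement as the special case of Corollary~\ref{stable-uniq-trace} in which $B=A$, $\phi=\sigma=\mathrm{id}_A$ and $\psi=0$; almost all of the (minimal) work lies in producing the auxiliary map $\rho$ that Corollary~\ref{stable-uniq-trace} takes as input.

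First I would define $\rho\colon A^+\setminus\{0\}\to(0,+\infty)$. If $\mathrm{T}^+A=\{0\}$ one may simply take $\rho\equiv 1$, since then the fullness-by-traces hypothesis of Corollary~\ref{stable-uniq-trace} is vacuous. Otherwise set
$$\rho(a)=\tfrac12\inf\{\tau(a):\tau\in\mathrm{T}^+A,\ \|\tau\|=1\},\qquad a\in A^+\setminus\{0\}.$$
The only point that needs checking is that this infimum is strictly positive for each nonzero $a\in A^+$. Since $A$ is simple, every normalised trace is faithful, so $\tau(a)>0$ for each such $\tau$ individually; and choosing a nonzero $b\in\mathrm{Ped}(A)^+$ with $b\preceq a^{\oplus m}$ for some $m$ (possible by simplicity of $A$) one gets $\tau(a)\ge m^{-1}\mathrm{d}_\tau(b)$, so the positivity of $\inf_{\|\tau\|=1}\tau(a)$ reduces to $\inf\{\mathrm{d}_\tau(b):\|\tau\|=1\}>0$, i.e.\ to compactness of the normalised trace space (equivalently, boundedness of the norm function on a compact base of $\mathrm{T}^+A$), which is in force for the algebras considered here---cf.\ the class $\mathcal R$ and the corresponding step in the proof of Theorem~\ref{TFA}.

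Then I would apply Corollary~\ref{stable-uniq-trace} to the separable nuclear non-unital KK-contractible algebra $A$, to this $\rho$, and to the given pair $(\mathcal F,\eps)$, obtaining finite sets $\mathcal G\subset A$, $\mathcal H\subset A^+$, a constant $\delta>0$ and an integer $n\in\mathbb N$. Taking $B:=A$ and the three maps $\phi=\sigma=\mathrm{id}_A$, $\psi=0$, one checks the hypotheses: being $*$-homomorphisms, $\mathrm{id}_A$ and $0$ are $\mathcal G$-$\delta$-multiplicative for every $\mathcal G,\delta$; $B=A$ has all traces bounded and strict comparison of positive elements by hypothesis; and the fullness condition on $\sigma$ reads $\tau(\sigma(a))=\tau(a)\ge 2\rho(a)>\rho(a)$ for $a\in\mathcal H$ and $\tau\in\mathrm{T}^+A$ with $\|\tau\|=1$, which holds by the choice of $\rho$. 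Corollary~\ref{stable-uniq-trace} then produces a unitary $u\in\mathrm{M}_{n+1}(A)+\Comp 1_{n+1}$ with
$$\|u(a\oplus\underbrace{a\oplus\cdots\oplus a}_n)u^*-(0\oplus\underbrace{a\oplus\cdots\oplus a}_n)\|<\eps,\qquad a\in\mathcal F,$$
which is exactly the assertion. The only genuine obstacle is the one isolated in the second paragraph---producing a globally defined, strictly positive $\rho$, i.e.\ bounding normalised traces away from $0$ on positive elements; everything else is a direct specialisation of the already-established stable uniqueness theorem.
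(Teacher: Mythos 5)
Your proposal is correct and follows essentially the same route as the paper: define $\rho(a)=\tfrac12\inf\{\tau(a):\tau\in\mathrm{T}^+A,\ \|\tau\|=1\}$, note it is strictly positive because $A$ is simple and the normalised traces form a compact base of $\mathrm{T}^+A$, and then specialise Corollary \ref{stable-uniq-trace} to $B=A$, $\phi=\sigma=\mathrm{id}_A$, $\psi=0$. Your extra remarks (the Pedersen-ideal argument for positivity of the infimum and the separate treatment of $\mathrm{T}^+A=\{0\}$) only make explicit points the paper leaves implicit.
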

\begin{proof}
For any $a\in A^+\setminus\{0\}$, define $\rho(a)=\inf\{\frac{1}{2}\tau(a): \tau\in\mathrm{T}^+A,\ \|\tau\|=1\}$. Since $\mathrm{T}^+A$ has a compact base, and $A$ is simple, one has that $\rho(a) \in (0, \infty)$. Then, the statement follows from Corollary \ref{stable-uniq-trace}.
\end{proof}

\begin{cor}\label{UHF-stable}
Let $A$ be a separable simple nuclear C*-algebra without a unit which is KK-contractible. Assume that all traces of $A$ are bounded. Then $A\otimes U \cong A\otimes Q$ for any UHF algebra $U$.
\end{cor}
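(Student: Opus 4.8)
The plan is to obtain the isomorphism by an Elliott two‑sided approximate intertwining, using Lemma \ref{KK-Kun}, the stable uniqueness theorem (Corollary \ref{stable-uniq-trace}), and Corollary \ref{zero-head} to deal with the matrix amplifications that the uniqueness theorem inevitably introduces.

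First I would record the structure of the two algebras in play. Write $B_1=A\otimes U$ and $B_2=A\otimes Q$. By Lemma \ref{KK-Kun} both are separable, simple, nuclear, non‑unital and KK‑contractible; in particular $\mathrm{KK}(B_i,B_j)=0$ and $K_*(B_i)=0$, so any two nuclear (approximate) homomorphisms between them automatically agree in $\mathrm{KK}$ and the $K$‑theoretic hypotheses in the uniqueness theorems are vacuous. Since $U$ and $Q$ are strongly self‑absorbing and Jiang--Su absorbing, $B_1$ and $B_2$ are $\mathcal Z$‑stable; being simple, non‑elementary and stably finite (all their traces are bounded), they therefore have stable rank one, have strict comparison of positive elements, and are stably projectionless. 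Finally, tensoring with the unique trace of $U$ (resp.\ $Q$) gives canonical identifications $(\mathrm T^+B_1,\Sigma B_1)\cong(\mathrm T^+A,\Sigma A)\cong(\mathrm T^+B_2,\Sigma B_2)$; fix the resulting identification $\kappa$ of the invariants of $B_1$ and $B_2$. (Note that, the $K$‑theory being trivial, the whole invariant is just $(\mathrm T^+,\Sigma)$.)

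Next I would produce the two families of approximately multiplicative c.p.c.\ maps needed for the intertwining. In the direction $B_1\to B_2$ one may simply take the honest $*$‑homomorphism $\mathrm{id}_A\otimes\iota$ for a unital embedding $\iota\colon U\hookrightarrow Q$; it is trace‑preserving and realizes $\kappa$. In the direction $B_2\to B_1$ one proceeds blockwise: given a finite set $\mathcal G\subset A\otimes Q$ and $\eps>0$ one has $\mathcal G\subset_\eps A\otimes\mathrm{M}_k$ for some $k$, and choosing $k'$ dividing the supernatural number of $U$ with $\lfloor k'/k\rfloor\,k/k'>1-\eps$, the block‑diagonal‑plus‑zero‑corner $*$‑homomorphism $\mathrm{M}_k\to\mathrm{M}_{k'}$ followed by a unital embedding $\mathrm{M}_{k'}\hookrightarrow U$ gives a $*$‑homomorphism $h\colon\mathrm{M}_k\to U$ with full range whose induced map on traces is $(1-\eps')\mathrm{tr}_k$ with $\eps'<\eps$. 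Precomposing $\mathrm{id}_A\otimes h$ with a trace‑preserving conditional expectation $A\otimes Q\to A\otimes\mathrm{M}_k$ yields an approximately multiplicative map $\psi\colon B_2\to B_1$ which agrees with $\kappa^{-1}$ on the prescribed trace data up to $\eps$; note the only role of KK‑triviality here is that the resulting compositions need to be matched with the identity only on the invariant.

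Finally I would run the intertwining. The compositions $\psi\circ(\mathrm{id}_A\otimes\iota)$ and $(\mathrm{id}_A\otimes\iota)\circ\psi$ are approximately multiplicative endomorphisms of $B_1$ and $B_2$ which, on any prescribed finite set and within any prescribed tolerance, induce the identity on the invariant, as does the identity map; since $\mathrm{KK}$ is trivial, Corollary \ref{stable-uniq-trace} then yields, after adjoining finitely many copies of the identity map, a unitary in a matrix amplification conjugating the composition to the identity. Corollary \ref{zero-head}, together with the stable rank one, strict comparison and stable projectionlessness of $B_1$ and $B_2$, is then used to push these extra copies back, so that --- after the usual unitary corrections --- $\psi\circ(\mathrm{id}_A\otimes\iota)$ and $(\mathrm{id}_A\otimes\iota)\circ\psi$ themselves become, within prescribed tolerances on prescribed finite sets, approximately unitarily equivalent to the identity; feeding this into Elliott's two‑sided approximate intertwining produces the isomorphism $A\otimes U\cong A\otimes Q$, and in particular (taking $A=\mathcal W$, $U=Q$) $\mathcal W\otimes\mathcal W\cong\mathcal W$. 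I expect the main obstacle to be precisely the removal of the stabilization produced by the uniqueness theorem: because $\mathrm{M}_{n+1}(A\otimes U)$ is not isomorphic to $A\otimes U$ for a general UHF algebra $U$, this cancellation cannot be performed by a clean self‑absorption isomorphism and must instead be carried out inside the amplification by a Cuntz‑semigroup comparison argument, which is exactly where $\mathcal Z$‑stability (hence strict comparison) of $A\otimes U$ is essential; by contrast the trace bookkeeping for the blockwise maps $\psi$ is routine.
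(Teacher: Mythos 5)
Your plan breaks down at the step you yourself flag as the main obstacle, and the fix you propose there does not work. For the two-sided intertwining between the constant systems $B_1=A\otimes U$ and $B_2=A\otimes Q$ you need, at each stage, that the compositions $\psi\circ(\mathrm{id}_A\otimes\iota)$ and $(\mathrm{id}_A\otimes\iota)\circ\psi$ be approximately unitarily equivalent to the identity \emph{on $B_1$ and $B_2$ themselves}. Under the hypotheses of Corollary \ref{UHF-stable} the only uniqueness statement available is the stable one, Corollary \ref{stable-uniq-trace}: it produces a unitary in $\mathrm{M}_{n+1}(B_i)+\Comp 1_{n+1}$ conjugating $\bigl(\psi\circ(\mathrm{id}_A\otimes\iota)\bigr)\oplus\mathrm{id}^{\oplus n}$ to $\mathrm{id}^{\oplus (n+1)}$. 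Corollary \ref{zero-head} cannot "push the extra copies back": it only says that $a\oplus a^{\oplus n}$ is approximately unitarily equivalent to $0\oplus a^{\oplus n}$, i.e.\ the amplified identity absorbs a zero summand; it provides no mechanism for cancelling the $n$ common identity summands from an approximate unitary equivalence, and such cancellation is false in general (if it were routine, every stable uniqueness theorem would be a uniqueness theorem). In this paper the passage from stable uniqueness to genuine uniqueness is precisely Theorem \ref{uniq-thm-0}, which needs finite nuclear dimension (through the tracial Razak decomposition of Theorem \ref{div-decp}) and is proved only for $A\otimes Q$, whose divisibility is essentially what Corollary \ref{UHF-stable} is being used to establish; neither ingredient is available here, since Corollary \ref{UHF-stable} assumes only nuclearity. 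So the "Cuntz-semigroup comparison inside the amplification" is a genuine missing argument, not bookkeeping.

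The paper avoids the problem altogether by never comparing endomorphisms of the limit algebras with the identity. After writing $U\cong U_1\otimes U_2$ and replacing $A$ by $A\otimes U_1$ (so that $A$ has strict comparison, which Corollary \ref{zero-head} requires), it realizes $A\otimes U$ and $A\otimes Q$ as inductive limits of the matrix amplifications $\mathrm{M}_{n_k}(A)$ and $\mathrm{M}_{m_k}(A)$ with connecting maps $a\mapsto a\otimes 1$, and intertwines the two systems by maps of the form $a\mapsto 0\oplus a\oplus\cdots\oplus a$ between the finite stages; the approximate commutativity of the resulting diagram is exactly Corollary \ref{zero-head} applied to matrix algebras over $A$. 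In other words, the amplification that stable uniqueness inevitably produces is absorbed into the connecting maps of the inductive systems rather than cancelled, so no non-stable uniqueness theorem is ever needed. Your existence step (the blockwise, approximately trace-preserving maps $B_2\to B_1$) is fine, but to rescue your direct approach you would have to prove a genuine uniqueness theorem for maps on $A\otimes U$ under the present hypotheses; the shorter route is to redo the argument at the level of the building blocks $\mathrm{M}_{n_k}(A)$, as the paper does.
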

\begin{proof}
The statement follows from Corollary \ref{zero-head} and an approximate intertwining argument. We only sketch the outline in the following, and leave the details to readers.

For any UHF algebra $U$, there are UHF algebras $U_1, U_2$ such that $U\cong U_1\otimes U_2$. Since $Q\cong U_1 \otimes Q$, in order to prove the statement, one may replace $A$ by $A\otimes U_1$, and thus one may assume that $A$ has strict comparison on positive elements (Corollary 4.7 of \cite{Ror-Z-stable}).

Write
$$
\xymatrix{
 \mathrm{M}_{n_1}(A) \ar[r] & \mathrm{M}_{n_2}(A) \ar[r] & \cdots \ar[r] & A\otimes U \\
 \mathrm{M}_{m_1}(A) \ar[r] & \mathrm{M}_{m_2}(A) \ar[r] & \cdots \ar[r] & A\otimes Q ,
}
$$
where the connecting maps have the form $a\to a\otimes 1_{n_k}$.


On applying Corollary \ref{zero-head}, after a telescoping operation in each inductive limit, there is an approximate commutative diagram
$$
\xymatrix{
 \mathrm{M}_{n_1}(A) \ar[r]^-{\mathrm{ad}u_1} \ar[d]_{\phi_1} & \mathrm{M}_{n_2}(A) \ar[d]_{\phi_2} \ar[r]^-{\mathrm{ad}u_2} & \cdots \ar[r] & A\otimes U \\
 \mathrm{M}_{m_1}(A) \ar[r]_-{\mathrm{ad}w_1} \ar[ur]_{\psi_1} & \mathrm{M}_{m_2}(A) \ar[r]_-{\mathrm{ad}w_2} \ar[ur]_{\psi_2} & \cdots \ar[r] & A\otimes Q ,
}
$$
where each of the maps $\phi_i$ and $\psi_j$ is $a \mapsto 0 \oplus \underbrace{a \oplus a\oplus \cdots \oplus a}_k$ for a sufficiently large $k$, and therefore $A\otimes U \cong A\otimes Q$.
\end{proof}

\begin{lem}\label{untwrist}
Let $A$ be a  separable simple nuclear C*-algebra which is KK-contractible, and assume that all traces of $A$ are bounded. Let $M_p$ and $M_q$ be two UHF algebras. Then, there exist an isomorphism $\phi: A\otimes \mathrm{M}_p \to A \otimes \mathrm{M}_p \otimes\mathrm{M}_q$ and a continuous path of unitaries $u_t\in \mathcal M(A\otimes\mathrm{M}_p\otimes \mathrm{M}_q)$, $1\leq t < \infty$, such that $u_1=1$ and
$$ \lim_{t\to\infty} u^*_t(a \otimes r\otimes 1_q)u_t = \phi(a\otimes r),\quad a\in A,\ r\in \mathrm{M}_p.$$
\end{lem}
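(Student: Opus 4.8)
\noindent\emph{Plan of proof.}
Write $B=A\otimes M_p$ and $C=A\otimes M_p\otimes M_q$, and let $j\colon B\to C$ be the first-factor embedding $j(x)=x\otimes 1_q$; thus $j(a\otimes r)=a\otimes r\otimes 1_q$, which is precisely the map the path is to conjugate onto $\phi$. First I record the standing facts: $B$ and $C$ are separable, simple, nuclear, non-unital, have only bounded traces, are KK-contractible (Lemma~\ref{KK-Kun}, with $M_p$, resp.\ $M_p\otimes M_q$, nuclear), and hence satisfy $\mathrm{K}_*(B)=\mathrm{K}_*(C)=0$; moreover, since a UHF algebra absorbs $\mathcal Z$, one has $B\otimes\mathcal Z\cong B$ and $C\otimes\mathcal Z\cong C$, so $B$ and $C$ have strict comparison of positive elements. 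In particular Corollary~\ref{zero-head} and Corollary~\ref{stable-uniq-trace} apply to $B$ and to $C$, and by Corollary~\ref{UHF-stable} one already has $B\cong A\otimes Q\cong C$. So the content of the lemma is not the existence of an isomorphism, but that one can be produced asymptotically unitarily equivalent to $j$ through a path of multiplier unitaries issuing from $1$.

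The plan is to run a one-sided Elliott approximate intertwining \emph{inside} $\mathcal M(C)$. Write $M_q=\overline{\bigcup_k M_{q_k}}$ with $q_k\mid q_{k+1}$ and $q_k\to q$ (the integers $q_k$ grow without bound, $q$ being infinite); then $M_{q_k}\subset M_q\subset\mathcal M(C)$, so the matrix units $e^{(k)}_{ij}$ of $M_{q_k}$ are honest partial isometries of $\mathcal M(C)$, and $C=\overline{\bigcup_k C_k}$ with $C_k:=A\otimes M_p\otimes M_{q_k}$. The image $j(B)=B\otimes 1_q$ sits inside every $C_k$ as the diagonal $x\mapsto\sum_{i\le q_k}x\otimes e^{(k)}_{ii}$, i.e.\ as the $q_k$-fold amplification --- realized through the matrix units of $M_{q_k}$ in $\mathcal M(C)$ --- of the corner map $x\mapsto x\otimes e^{(k)}_{11}$. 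Fixing increasing finite sets $\mathcal F_k\subset B$ and $\mathcal G_k\subset C_k$ with dense unions and a summable sequence $\eps_k>0$, I would build unitaries $u_k\in\mathcal M(C)$ with $u_1=1$ and
\begin{enumerate}
\item $\|\mathrm{Ad}(u_{k+1})\,j(x)-\mathrm{Ad}(u_k)\,j(x)\|<\eps_k$ for $x\in\mathcal F_k$, and
\item $\mathrm{dist}\big(\mathrm{Ad}(u_k^{*})(y),\,j(B)\big)<\eps_k$ for $y\in\mathcal G_k$.
\end{enumerate}
Given (1)--(2), the sequence $(\mathrm{Ad}(u_k)\circ j)_k$ is pointwise Cauchy, hence converges to a $*$-homomorphism $\phi\colon B\to C$; being a pointwise limit of injective $*$-homomorphisms with $B$ simple, $\phi$ is injective, and by (2) its range is dense, hence all of $C$. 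So $\phi$ is an isomorphism and $\mathrm{Ad}(u_k)\circ j\to\phi$ pointwise.

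For the inductive step one must, given $u_k$, conjugate $\mathrm{Ad}(u_k)\,j(B)$ by a unitary of $\mathcal M(C)$ that moves the finite set $\mathrm{Ad}(u_k)\,j(\mathcal F_k)$ only slightly, into a position that approximately contains $C_k$. Here Corollary~\ref{zero-head} enters: applied to $C$ with the number of copies taken equal to $q_{k+1}/q_k$, the unitary it supplies lies in a $(q_{k+1}/q_k)\times(q_{k+1}/q_k)$ matrix amplification of $C$, which --- through the identification $C=B\otimes M_q$ and the unital inclusion $M_{q_{k+1}/q_k}\hookrightarrow M_{q_{k+1}}\subset\mathcal M(C)$ --- is realized \emph{inside} $\mathcal M(C)$; it lets one trade surplus ``copies'' against zeros without ever leaving $\mathcal M(C)$, and, combined with the partial isometries $e^{(k+1)}_{i1}$, it rearranges $j(B)$ (diagonal in $M_{q_k}$) into a unitary conjugate containing $C_k$ up to $\eps_k$, the disturbance to $\mathcal F_k$ being kept small by a preliminary telescoping. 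It is essential that $q$ be an infinite supernatural number, so that the factor $q_{k+1}/q_k$ may be taken as large as the current $\eps_k$ demands.

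Finally, between $u_k$ and $u_{k+1}$ I would interpolate by a norm-continuous path of unitaries in $\mathcal M(C)$: $u_{k+1}u_k^{*}$ lies in the unitization of a finite matrix amplification of $C$ sitting in $\mathcal M(C)$, and since $\mathrm{K}_1(C)=0$ such a unitary is, stably, homotopic to $1$, so such a path exists; choose it so that conjugation by it moves $\mathcal F_k$ by at most $\eps_k$. The resulting path $t\mapsto u_t$, $1\le t<\infty$, then has $u_1=1$ and $\lim_{t\to\infty}u_t^{*}\,(a\otimes r\otimes 1_q)\,u_t=\phi(a\otimes r)$ for all $a\in A$, $r\in M_p$. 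The main obstacle is the middle step: keeping the intertwining simultaneously Cauchy (1) and surjective (2) while guaranteeing that every unitary used genuinely lies in $\mathcal M(C)$ rather than in a strictly larger matrix algebra over $C$ --- this is exactly what is secured by routing all amplifications through the $M_q$-tensor factor and exploiting that $q$ is infinite.
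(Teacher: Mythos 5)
Your overall framework (a one-sided Elliott intertwining inside $\mathcal M(C)$, $C=A\otimes \mathrm{M}_p\otimes \mathrm{M}_q$) is a legitimate strategy in principle, but the proposal has a genuine gap at its central inductive step, and this step is precisely where all the difficulty of the lemma lives. To pass from $u_k$ to $u_{k+1}$ you need a unitary $w=u_{k+1}u_k^*$ that \emph{simultaneously} (a) almost commutes with the finite set $\mathrm{Ad}(u_k)j(\mathcal F_k)$ and (b) rotates the diagonal copy $j(B)=B\otimes 1_q$ so that its image approximately contains $\mathcal G_{k+1}\subset C_k=B\otimes \mathrm{M}_{q_k}$. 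Corollary \ref{zero-head} does not supply such a $w$: it only says that the two maps $c\mapsto c\oplus c^{\oplus n}$ and $c\mapsto 0\oplus c^{\oplus n}$ are approximately unitarily equivalent, i.e.\ it lets you shuffle multiplicities of a \emph{diagonal} map; it never produces elements of the form $b\otimes e^{(k)}_{st}$ (off-diagonal corners of $B\otimes\mathrm{M}_{q_k}$) in the image of a conjugate of $j$. Multiplying by the partial isometries $1\otimes e^{(k)}_{i1}$, as you suggest, takes you \emph{out} of $w\,j(B)\,w^*$ rather than into it, since $(1\otimes e_{i1})(x\otimes 1)(1\otimes e_{1j})=x\otimes e_{ij}\notin j(B)$. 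What condition (b) really requires is an isomorphism $B\cong B\otimes\mathrm{M}_{q_k}$ that is approximately unitarily equivalent to the diagonal embedding $x\mapsto x\otimes 1_{q_k}$ — but that is a finite-matrix instance of the very lemma you are proving, so the argument as written is circular. Condition (a), the approximate centrality of $w$ relative to the already-arranged finite set, is not addressed at all; nor is the final interpolation step, where ``$\mathrm K_1(C)=0$'' does not by itself give a path from $u_k$ to $u_{k+1}$ along which conjugation moves $\mathcal F_k$ by at most $\eps_k$ (one needs the connecting path to stay in an approximate commutant, which again requires extra structure).

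For contrast, the paper's proof avoids the intertwining entirely: it first uses Corollary \ref{UHF-stable} to replace $A\otimes\mathrm{M}_p$ by $A\otimes Q$, and then observes that the unital embedding $Q\ni r\mapsto r\otimes 1_q\in Q\otimes\mathrm{M}_q$ is asymptotically unitarily equivalent to an isomorphism $\psi\colon Q\to Q\otimes\mathrm{M}_q$ by a classical path of unitaries $v_t\in Q\otimes\mathrm{M}_q$; setting $u_t=1_{\tilde A}\otimes v_t$ and $\phi=\mathrm{id}_A\otimes\psi$ finishes the proof, because $u_t$ commutes exactly with the $A$-factor. In other words, all the ``rotation'' is done inside the UHF tensor factor, where both the approximate containment and the continuity of the path are elementary; your approach tries to do this rotation against the full algebra $C$, which is where it breaks down. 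If you want to salvage your route, the missing ingredient is exactly the observation that the rotating unitaries can be taken of the form $1\otimes v$ with $v$ in the UHF factor — at which point you have reconstructed the paper's argument.
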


\begin{proof}
It follows from Lemma \ref{UHF-stable} that $A\otimes \mathrm{M}_p\cong A\otimes Q$. Thus, it is enough to show that 
there exist an isomorphism $\phi: A\otimes Q \to A \otimes Q \otimes\mathrm{M}_q$ and a continuous path of unitaries $u_t\in \mathcal M(A\otimes Q \otimes \mathrm{M}_q)$, such that $u_1=1$ and
$$ \lim_{t\to\infty} u^*_t(a \otimes r\otimes 1_q)u_t = \phi(a\otimes r),\quad a\in A,\ r\in Q.$$

Pick any isomorphism $\psi: Q \to Q \otimes\mathrm{M}_q$. It is clear that $\phi:=\mathrm{id}_A\otimes \psi: A\otimes Q \to A\otimes Q \otimes \mathrm{M}_q$ is an isomorphism. Then, the embedding map $Q\ni r\mapsto r\otimes 1_q\in Q\otimes\mathrm{M}_q$ and the isomorphism $\psi$ are asymptotically unitarily equivalent; that is, there is a continuous part of unitary $v_t\in Q\otimes\mathrm{M}_q$, $1\leq t <\infty$, such that 
$$\lim_{t\to\infty}v_t^*(r\otimes 1_p)v_t = \psi(r),\quad r\in Q.$$
One may assume that $v_1=1_{Q\otimes\mathrm{M}_q}$.

Consider the path
$$u_t = 1_{\tilde{A}}\otimes v_t \in \mathcal M(A\otimes Q \otimes \mathrm{M}_q).$$
Then, for any $a\otimes r\in A\otimes Q$, 
\begin{eqnarray*} 
\lim_{t\to\infty} u^*_t(a \otimes r\otimes 1_q)u_t & = &  \lim_{t\to\infty} (1_{\tilde{A}}\otimes v_t^*)(a \otimes r\otimes 1_q)(1_{\tilde{A}}\otimes v_t) \\
& = & \lim_{t\to\infty} a \otimes (v_t^*(r\otimes 1_q)v_t )\\
& = & a\otimes \psi(r) \\
& = & \phi(a\otimes r),
\end{eqnarray*}
as desired.
\end{proof}

\begin{thm}\label{Z-stable}
Let $A$ be a separable simple nuclear C*-algebra without a unit which is KK-contractible. Then $A\otimes \mathcal Z \cong A\otimes Q$.
\end{thm}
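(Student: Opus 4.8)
The plan is to reduce the statement to an absorption property for a fixed UHF target and then establish the isomorphism directly by an Elliott approximate intertwining fed by Lemma~\ref{untwrist}. Write $B:=A\otimes\mathcal Z$. By Lemma~\ref{KK-Kun}, $B$ is separable, simple, nuclear, non-unital and KK-contractible, and we assume, as we may in view of the preceding results, that all traces of $A$ (hence of $B$) are bounded (the traceless case being subsumed by the Kirchberg--Phillips theorem applied to $A\otimes\mathcal O_\infty\cong\mathcal O_2\otimes\mathcal K$). Fix a prime $q$. Since $M_{q^\infty}$ is strongly self-absorbing, it is $\mathcal Z$-stable, so $\mathcal Z\otimes M_{q^\infty}\cong M_{q^\infty}$, whence
$$B\otimes M_{q^\infty}=A\otimes(\mathcal Z\otimes M_{q^\infty})\cong A\otimes M_{q^\infty}\cong A\otimes Q,$$
the last isomorphism by Corollary~\ref{UHF-stable}. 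Thus the theorem is equivalent to the assertion that $A\otimes\mathcal Z$ is $M_{q^\infty}$-absorbing; I will prove the isomorphism $A\otimes\mathcal Z\cong A\otimes Q$ outright, which also yields this absorption.

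Realize $\mathcal Z$ as an inductive limit $\mathcal Z=\varinjlim(Z_{k_n,l_n},\psi_n)$ of prime dimension-drop algebras, where $(k_n),(l_n)$ are increasing sequences of pairwise coprime integers and each connecting map $\psi_n\colon Z_{k_n,l_n}\to Z_{k_{n+1},l_{n+1}}$ is of the Jiang--Su form, i.e.\ a direct sum of point evaluations followed by conjugation by a continuous path of unitaries (so that the limit is simple with unique trace); realize $A\otimes Q$ as $\varinjlim(A\otimes\mathrm M_{r_n},\ x\mapsto x\otimes 1)$. Tensoring by $A$, I must interleave the systems $(A\otimes Z_{k_n,l_n},\ \mathrm{id}_A\otimes\psi_n)$ and $(A\otimes\mathrm M_{r_n},\ \cdot\otimes 1)$. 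For each $n$ I would define $\alpha_n\colon A\otimes Z_{k_n,l_n}\to A\otimes\mathrm M_{r_n}$ by composing $\mathrm{id}_A$ with the evaluation of $Z_{k_n,l_n}\hookrightarrow\mathrm C([0,1],\mathrm M_{k_nl_n})$ at a sufficiently fine finite subset of $[0,1]$, and $\beta_n\colon A\otimes\mathrm M_{r_n}\to A\otimes Z_{k_{n+1},l_{n+1}}$ using the isomorphism and the path of unitaries $u_t$ furnished by Lemma~\ref{untwrist} for $A$ and the relevant UHF algebras: after reparametrising $t\in[1,\infty)$ to the open interval $(0,1)$, the normalisation $u_1=1$ together with $u_t^*(a\otimes r\otimes 1_q)u_t\to\phi(a\otimes r)$ is precisely what connects the two boundary fibres of $Z_{k_{n+1},l_{n+1}}$, allowing one to convert a map into a matrix algebra into a genuine $\ast$-homomorphism into $A\otimes Z_{k_{n+1},l_{n+1}}$. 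One then verifies, on prescribed finite sets and with a summable sequence of tolerances, that $\beta_n\circ\alpha_n$ agrees approximately with $\mathrm{id}_A\otimes\psi_n$ (using the Jiang--Su form of $\psi_n$: evaluating at points and re-spreading along the path reproduces $\psi_n$) and that $\alpha_{n+1}\circ\beta_n$ agrees approximately with the amplification $\cdot\otimes 1$ (evaluating a re-spread element at a point where the path is untwisted returns $\cdot\otimes 1$, up to multiplicity).

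Granting these approximate relations, Elliott's approximate-intertwining theorem produces an isomorphism $A\otimes\mathcal Z\cong A\otimes Q$, as required. The hard part will be the simultaneous compatibility in the previous paragraph: the presentation $\mathcal Z=\varinjlim Z_{k_n,l_n}$ and the conjugating paths from Lemma~\ref{untwrist} cannot be chosen independently, since the same path must at one end realize the connecting map $\psi_n$ (after composition with $\alpha_n$) and at the other end collapse to the trivial amplification; arranging this while keeping every estimate summable, and carrying out the construction with all the relevant unitaries living in the unitized multiplier algebras $\mathrm M_{n+1}(\cdot)+\mathbb C 1$ (as $A$ is non-unital), is the technical core. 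A variant would avoid the explicit intertwining and instead prove $M_{q^\infty}$-absorption of $A\otimes\mathcal Z$ by exhibiting a unital copy of $M_{q^\infty}$ in the central sequence algebra $F(A\otimes\mathcal Z)$ assembled from the data of Lemma~\ref{untwrist}; I expect the bookkeeping there to be of comparable difficulty.
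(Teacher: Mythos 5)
The central step of your plan does not go through as stated. Your intertwining uses the presentation of $\mathcal Z$ as a limit of \emph{prime} dimension drop algebras $Z_{k_n,l_n}$ with finite matrix fibres, and you propose to build the maps $\beta_n\colon A\otimes\mathrm M_{r_n}\to A\otimes Z_{k_{n+1},l_{n+1}}$ from Lemma \ref{untwrist}. But Lemma \ref{untwrist} (which rests on Corollary \ref{UHF-stable}, i.e.\ on KK-contractibility plus the stable uniqueness theorem) only untwists tensor factors that are \emph{infinite-dimensional} UHF algebras: it produces an isomorphism $A\otimes\mathrm M_p\cong A\otimes\mathrm M_p\otimes\mathrm M_q$ for supernatural numbers of infinite type, and no such isomorphism (let alone an asymptotic untwisting path) exists for finite matrix algebras, since $A\otimes\mathrm M_k\not\cong A\otimes\mathrm M_{kl}$ in general. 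So the path that is supposed to "connect the two boundary fibres of $Z_{k_{n+1},l_{n+1}}$" is not furnished by anything in the paper. Moreover, even granting some construction of $\alpha_n$ and $\beta_n$, the verification that $\beta_n\circ\alpha_n\approx\mathrm{id}_A\otimes\psi_n$ and $\alpha_{n+1}\circ\beta_n\approx(\,\cdot\otimes 1)$ on prescribed finite sets would require a uniqueness theorem for ($\ast$-homo)morphisms out of $A\otimes Z_{k_n,l_n}$ which is neither available in the paper nor supplied by you; you explicitly defer it as the "technical core", so at the decisive point the proposal is a plan rather than a proof. Finally, the reduction "we may assume all traces of $A$ are bounded" is not free: the preceding results \emph{require} bounded traces rather than granting them, and the paper has to earn this reduction at the end of its proof by passing to a hereditary subalgebra with bounded traces and invoking Brown's theorem together with Corollary 3.2 of \cite{TW-D}.

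The paper's route avoids both difficulties and is worth contrasting with yours. It writes $A\otimes\mathcal Z$ as an inductive limit of the algebras $A\otimes\mathcal Z_{p,q}$, where $p,q$ are relatively prime supernatural numbers of infinite type with $\mathrm M_p\otimes\mathrm M_q=Q$; then both endpoint fibres of $\mathcal Z_{p,q}$ are genuine UHF algebras, Lemma \ref{untwrist} applies at both ends, and conjugating by the resulting path of unitaries gives an \emph{exact} isomorphism $A\otimes\mathcal Z_{p,q}\cong\mathrm C([0,1],A\otimes\mathrm M_p\otimes\mathrm M_q)$, a trivial field which is manifestly $Q$-stable. Corollary 3.4 of \cite{TW-D} then transfers $Q$-stability from the building blocks to the limit $A\otimes\mathcal Z$, so no approximate intertwining (and no uniqueness theorem for maps out of dimension drop algebras) is needed. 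If you want to salvage your argument, the natural fix is precisely to replace the prime dimension drop algebras by the UHF-fibred $\mathcal Z_{p,q}$ — at which point the intertwining collapses into the paper's one-step untwisting.
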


\begin{proof}
To prove the statement, it is enough to assume that $A$ is $\mathcal Z$-stable and to show $A$ is $Q$-stable, i.e., $A\cong A\otimes Q$.

Let us first prove the statement for $A$ such that all traces of $A$ are bounded. Write $A\otimes \mathcal Z$ as an inductive limit of $A\otimes\mathcal Z_{p, q}$, where $p, q$ are two relatively prime supernatural numbers such that $\mathrm{M}_p\otimes\mathrm{M}_q = Q$. By corollary 3.4 of \cite{TW-D}, in order to show that $A$ is $Q$-stable, it is enough to show that $A\otimes\mathcal Z_{p, q}$ is $Q$-stable. Note that
$$A\otimes\mathcal Z_{p, q} =\{f\in\mathrm{C}([0, 1],  A\otimes\mathrm{M}_p\otimes\mathrm{M}_q): f(0)\in A\otimes \mathrm{M}_p\otimes 1_q,\ f(1)\in A\otimes 1_p\otimes \mathrm{M}_q \}.$$ Applying Lemma \ref{untwrist} to both endpoints, one obtains isomorphisms $$\phi_0: A\otimes \mathrm{M}_p \to A \otimes \mathrm{M}_p \otimes\mathrm{M}_q,\quad \phi_1: A\otimes \mathrm{M}_q \to A \otimes \mathrm{M}_p \otimes\mathrm{M}_q,$$
together with a continuous path of unitaries $u_t\in \mathcal M(A\otimes\mathrm{M}_p\otimes \mathrm{M}_q)$, $0 < t < 1$, such that $u_{\frac{1}{2}}=1$, 
$$ \lim_{t\to 0} u^*_t(a \otimes r\otimes 1_q)u_t = \phi_0(a\otimes r),\quad a\in A,\ r\in \mathrm{M}_p,$$
and
$$ \lim_{t\to 1} u^*_t(a \otimes 1_p \otimes r)u_t = \phi_1(a\otimes r),\quad a\in A,\ r\in \mathrm{M}_q.$$

Define the continuous field map
$$\Phi: A \otimes \mathcal Z_{p, q} \to \mathrm{C}([0, 1], A\otimes \mathrm{M}_p\otimes\mathrm{M}_q)$$ by
$$\Phi(f)(t) = u_t^*f(t)u_t,\quad t\in[0, 1],$$
where $\Phi(f)(0)$ and $\Phi(f)(1)$ are understood as $\phi_0(f(0))$ and $\phi_1(f(1))$, respectively. Then the map $\Phi$ is an isomorphism (the inverse  is $\Phi^{-1}(g)(t) = u_tg(t)u^*_t$, $t\in(0, 1)$, $\Phi^{-1}(g)(0)=\phi_0^{-1}(g(0))$, and $\Phi^{-1}(g)(1)=\phi_0^{-1}(g(1))$), and hence $A\otimes\mathcal Z_{p, q} \cong \mathrm{C}([0, 1], A\otimes \mathrm{M}_p\otimes\mathrm{M}_q).$ Since the trivial field $\mathrm{C}([0, 1], A\otimes \mathrm{M}_p\otimes\mathrm{M}_q)$ is $Q$-stable, one has that $A\otimes\mathcal Z_{p, q}$ is $Q$-stable, as desired.

For a general simple C*-algebra $A$, pick a hereditary sub-C*-algebra $B\subset A$ such that all traces of $B$ are bounded. By the Brown's theorem, the C*-algebra $A$ and the C*-algebra $B$ are stably isomorphic, and hence $B$ is also KK-contractible. Since $A$ is $\mathcal Z$-stable, it follows from Corollary 3.2 of \cite{TW-D} that $B$ is $\mathcal Z$-stable. Then, by the argument above, one has that $B$ is $Q$-stable, and hence $A\otimes\mathcal K \cong B\otimes\mathcal K$ is $Q$-stable. Using Corollary 3.2 of \cite{TW-D} again, one has that $A$ is $Q$-stable, as desired.
\end{proof}

\subsection{Uniqueness theorem}


\begin{lem}[Theorem 6 of \cite{Robert-Cu}]\label{uniq-R}
Consider $C\in \mathcal R_z$. For any finite set $\mathcal F\subset C$ and any $\eps>0$, there are finite set $\mathcal G\subset \mathrm{Cu}(C)$ such that for any C*-algebra $A$ with stable rank one and any homomorphisms $\phi, \psi: C\to A$, if
$$\phi(g') \leq \psi(g)\quad\textrm{and}\quad \psi(g') \leq \textrm{$\phi(g),\quad g', g\in\mathcal G$ with $g'\ll g$},$$
then there is a unitary $u\in \tilde{A}$ such that
$$\|u^*\phi(a) u -\psi(a)\|<\eps,\quad a\in \mathcal F.$$
\end{lem}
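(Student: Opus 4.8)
The plan is to derive the statement from the uniqueness theorem for the half-open interval, Lemma~\ref{pre-comp}, together with the one-dimensional (point--line) structure of the algebras in $\mathcal R_z$. First I would observe that it suffices to treat a single point--line algebra: a finite direct sum of Razak algebras $\bigoplus_j S(k_j,n_j)$ is again of the form $C=A(E,F,\phi_0,\phi_1)$, with $E=\bigoplus_j M_{k_j}(\Comp)$, $F=\bigoplus_j M_{k_j(n_j+1)}(\Comp)$ and $\phi_0,\phi_1$ the corresponding block-diagonal maps, so I may assume $C=A(E,F,\phi_0,\phi_1)$ with $E,F$ finite dimensional. Such a $C$ fits into the extension
$$0\longrightarrow J\longrightarrow C\stackrel{\pi}{\longrightarrow}E\longrightarrow 0,\qquad J=\ker\pi\cong C_0((0,1))\otimes F,$$
so that $J$ is a finite direct sum of matrix algebras over the open interval; up to a routine matrix amplification and the harmless translation between the $C_0((0,1))$- and $C_0((0,1])$-pictures, homomorphisms out of $J$ into a C*-algebra of stable rank one are governed by Lemma~\ref{pre-comp}.

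Next I would fix the test set. Enlarge $\mathcal F$ to a finite set $\mathcal F'$ of positive contractions that also contains: (i) enough positive contractions of $J$ supported away from $\{0,1\}$; (ii) a positive contraction $h\in C$ with $\pi(h)=1_E$ whose path part is the unit of $F$ outside small neighbourhoods of the endpoints; and (iii) the positive and negative parts of self-adjoint lifts of a system of matrix units of $E$. Let $\mathcal G\subset\mathrm{Cu}(C)$ be the finite set of Cuntz classes $[g_\eta(x)]$, with $x$ ranging over $\mathcal F'$ and $\eta$ over a finite set of small positive reals, chosen so that (a) the image of $\mathcal G$ in $\mathrm{Cu}(J)$ contains the set that Lemma~\ref{pre-comp}, applied to $J$ with the pair $(\mathcal F'\cap J,\eps')$ for a suitably small $\eps'$, asks for, and (b) $\mathcal G$ contains the classes $[(h-1+\eta)_+]$ and those of cut-downs of the matrix-unit lifts, which detect the behaviour of a homomorphism on the quotient $E$.

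Now the argument. Given $\phi,\psi\colon C\to A$ with $A$ of stable rank one and satisfying the Cuntz comparison on $\mathcal G$, apply Lemma~\ref{pre-comp} to $\phi|_J$ and $\psi|_J$ to produce a unitary $u_0\in\tilde A$ with $\|u_0^*\phi(j)u_0-\psi(j)\|<\eps'$ for every $j\in\mathcal F'\cap J$. After replacing $\phi$ by $\mathrm{Ad}(u_0)\circ\phi$, the maps $\phi$ and $\psi$ agree to within $\eps'$ on that finite subset of $J$; since $J$ is an ideal, this forces them to agree, to within a controlled multiple of $\eps'$, after multiplication by elements of $\phi(J)$, so that the discrepancies $\phi(h)-\psi(h)$ and $\phi(h_l)-\psi(h_l)$ are carried by the parts lying ``over'' the quotient $E$. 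The Cuntz classes of type (b) then pin down the relevant spectral projections of $\phi(h)$ and $\psi(h)$ (those near the eigenvalue $1$) up to equal Cuntz class with the $g'\ll g$ control, and the matrix-unit classes do the same for the off-diagonal lifts; invoking stable rank one of $A$ once more — in the form that matching the Cuntz data of positive elements in a hereditary sub-C*-algebra, with this control, yields approximate unitary equivalence leaving the already-matched part of $J$ essentially fixed — I obtain a second unitary $u_1\in\tilde A$, close to $1$ there, with $\|u_1^*\phi(a)u_1-\psi(a)\|<\eps$ for all $a\in\mathcal F'\supseteq\mathcal F$. The composite $u=u_0u_1$ is then as required.

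The main obstacle is precisely this last step: upgrading ``$\phi$ and $\psi$ agree on the ideal $J$, and a handful of further Cuntz classes match'' to ``$\phi$ and $\psi$ agree, approximately, on all of $C$''. This is a uniqueness statement for the finite-dimensional quotient $E$ relative to the ideal $J$, and the genuinely delicate point is the interaction at the gluing vertices $\{0,1\}$, where the path part of an element of $C$ is constrained to equal $\phi_0$ and $\phi_1$ of its image in $E$; controlling this interaction, and above all keeping the resulting estimates uniform in the codomain $A$, is where the machinery of \cite{Robert-Cu} is really needed. The remaining points — the matrix amplification of Lemma~\ref{pre-comp}, the $C_0((0,1))$-versus-$C_0((0,1])$ translation, and the reduction of a direct sum of Razak algebras to a single point--line algebra — are routine.
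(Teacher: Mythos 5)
The paper offers no proof of this lemma: it is quoted as Theorem~6 of \cite{Robert-Cu} (Robert's classification of homomorphisms out of one-dimensional NCCW complexes with trivial $\Kone$ into stable rank one algebras), so the only question is whether your sketch amounts to an independent proof. It does not. The decisive step --- upgrading ``$\phi$ and $\psi$ agree on the ideal $J$ and finitely many further Cuntz classes match'' to approximate agreement on all of $C$, uniformly in the codomain $A$ and with control at the gluing points $0$ and $1$ --- is exactly the content of the theorem, and you explicitly defer it to ``the machinery of \cite{Robert-Cu}.'' That makes the argument circular: the relative uniqueness over the finite-dimensional quotient $E$, with the second unitary chosen close to $1$ on the already-matched part of $J$, is the bulk of Robert's paper and is not a routine consequence of Lemma~\ref{pre-comp} plus stable rank one.

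There is also a concrete error earlier in the sketch. The ideal is $J\cong C_0((0,1))\otimes F$, and the passage from the cone $C_0((0,1])$ of Lemma~\ref{pre-comp} to the suspension $C_0((0,1))$ is not a ``harmless translation'': the cone is contractible, whereas $\Kone(C_0((0,1)))\cong\Int$, and two homomorphisms from $C_0((0,1))\otimes F$ into a stable rank one algebra can have identical Cuntz data yet fail to be approximately unitarily equivalent (consider the homomorphisms $C_0(\mathbb{T}\setminus\{1\})\to \mathrm{C}(\mathbb{T})$ determined by two unitaries with full spectrum but different winding numbers). In the present situation $\phi|_J$ and $\psi|_J$ extend to $C$, where $\Kone(C)=0$, so one expects this obstruction to be controllable via the index map of the extension $0\to J\to C\to E\to 0$, but that requires an argument you do not give and which does not follow from the Cuntz comparison hypotheses on $\mathcal G$ alone. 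Since the lemma is an external citation, the correct course is to cite it, as the paper does, rather than sketch a derivation whose crux is left to the work being cited.
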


\begin{cor}\label{uniq-R-cor}
Consider $C\in \mathcal R_z$. Let $\rho: C^+\setminus\{0\}\to (0, +\infty)$ be a map. For any finite set $\mathcal F\subset C$ and any $\eps>0$, there are finite set $\mathcal H_0, \mathcal H_1 \subset C^+$ and $\sigma>0$ such that for any C*-algebra $A$ with all traces bounded and $A$ has strict comparison of positive elements, and any homomorphisms $\phi, \psi: C\to A$, if
\begin{enumerate}
\item $\tau(\phi(a)), \tau(\psi(a)) > \rho(a)$, $a\in\mathcal H_0$, $\tau\in\mathrm{T}^+(A)$, $\|\tau\| =1$, and 
\item $|\tau(\phi(a) - \psi(a))| < \sigma$, $a\in\mathcal H_1$, $\tau\in\mathrm{T}^+(A)$, $\|\tau\| =1$, 
\end{enumerate}
then there is a unitary $u\in \tilde{A}$ such that
$$\|u^*\phi(a) u -\psi(a)\|<\eps,\quad a\in \mathcal F.$$
\end{cor}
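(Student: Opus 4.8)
The plan is to invoke Robert's Cuntz-semigroup uniqueness theorem (Lemma~\ref{uniq-R}) and to translate its hypotheses --- comparison of the classes $\phi(g')$ and $\psi(g)$ in $\mathrm{Cu}(A)$ for $g'\ll g$ in a finite set $\mathcal G$ --- into the stated trace conditions. The mechanism is that $A$ has strict comparison and that for a homomorphism $\phi\colon C\to A$ and a bounded trace $\tau$ of $A$ the composition $\tau\circ\phi$ is a bounded trace with $\mathrm d_\tau(\phi(x))=\mathrm d_{\tau\circ\phi}(x)$ for positive $x$.

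First I would apply Lemma~\ref{uniq-R} to $(\mathcal F,\eps)$, obtaining the finite set $\mathcal G\subset\mathrm{Cu}(C)$. After passing to a matrix amplification $\mathrm M_k(C)$ (still a direct sum of Razak algebras, hence in $\mathcal R_z$, with traces extended in the obvious way) I may assume each class in $\mathcal G$ equals $\langle c\rangle$ for a contraction $c\in C^+$. Enumerate the finitely many pairs $(g',g)\in\mathcal G\times\mathcal G$ with $g'\ll g$ and $g'\neq0$; for each, fix contractions $c'=c'_{g',g}$ and $c=c_{g',g}$ in $C^+$ representing $g'$ and $g$, and --- since $g'\ll g=\langle c\rangle=\sup_m\langle(c-\tfrac1m)_+\rangle$ --- a number $\eta=\eta_{g',g}\in(0,1)$ with $c'\preceq(c-\eta)_+$. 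Let $f_{a,b}\colon[0,1]\to[0,1]$ be $0$ on $[0,a]$, $1$ on $[b,1]$, linear on $[a,b]$, and set
$$
b_{g',g}:=f_{\eta/2,\eta}(c),\qquad h_{g',g}:=f_{\eta/4,\eta/2}(c)-f_{\eta/2,\eta}(c)\ge0 .
$$
The decisive point is that $h_{g',g}\neq0$: otherwise $\mathrm{sp}(c)\cap(\eta/4,\eta)=\emptyset$, whence $\chi_{(\eta/4,\infty)}(c)$ is a continuous function of $c$ and so a projection in $\mathrm M_\infty(C)$, which is nonzero since $c'\preceq(c-\eta)_+$ forces $(c-\eta)_+\neq0$; but a matrix algebra over a Razak algebra is projectionless (a rank count in the building blocks $S(k,n)$). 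I would then take $\mathcal H_1:=\{b_{g',g}\}$, $\mathcal H_0:=\{h_{g',g}\}$ (unions over the pairs), and $\sigma:=\min\{\rho(h):h\in\mathcal H_0\}>0$.

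To see this suffices, let $\phi,\psi\colon C\to A$ satisfy (1) and (2); I claim $\phi(g')\le\psi(g)$ and $\psi(g')\le\phi(g)$ in $\mathrm{Cu}(A)$ for each pair. The case $g'=0$ is trivial, so fix $g'\neq0$ with its data and a tracial state $\tau$ of $A$. Since $(c-\eta)_+^{1/n}\le f_{\eta/2,\eta}(c)=b$ for all $n$ (here $\|c\|\le1$ is used), and $f_{\eta/4,\eta/2}(c)=b+h\preceq c$ is a contraction,
$$
\mathrm d_\tau(\phi(c'))\le \mathrm d_{\tau\circ\phi}((c-\eta)_+)\le(\tau\circ\phi)(b),\qquad (\tau\circ\psi)(b)+(\tau\circ\psi)(h)=(\tau\circ\psi)\bigl(f_{\eta/4,\eta/2}(c)\bigr)\le \mathrm d_\tau(\psi(c)) .
$$
Combining these with (2) for $b\in\mathcal H_1$ and (1) for $h\in\mathcal H_0$ (so $(\tau\circ\psi)(h)=\tau(\psi(h))>\rho(h)\ge\sigma$) gives
$$
\mathrm d_\tau(\phi(c'))\le(\tau\circ\phi)(b)<(\tau\circ\psi)(b)+\sigma<(\tau\circ\psi)(b)+(\tau\circ\psi)(h)\le \mathrm d_\tau(\psi(c)),
$$
so $\mathrm d_\tau(\phi(c'))<\mathrm d_\tau(\psi(c))$ for every tracial state $\tau$ of $A$, and strict comparison yields $\phi(c')\preceq\psi(c)$, i.e.\ $\phi(g')\le\psi(g)$; the reverse inequality follows by symmetry. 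Lemma~\ref{uniq-R} then delivers the unitary $u\in\widetilde A$.

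The part I expect to be the main obstacle is producing the \emph{strict} inequality $\mathrm d_\tau(\phi(c'))<\mathrm d_\tau(\psi(c))$ in a form robust under the perturbation permitted by (2): this is precisely why one needs the two nested cut-offs $f_{\eta/2,\eta}$ and $f_{\eta/4,\eta/2}$, the genuine gap $\eta$ furnished by $g'\ll g$, the projectionlessness of $C$ (so that the correction term $h_{g',g}$ is nonzero), and the strict trace lower bound in hypothesis (1). The remaining work is routine: the matrix-amplification and trace-normalization bookkeeping, and the observation that Lemma~\ref{uniq-R} is stated for $A$ of stable rank one --- a hypothesis satisfied in the situations where this corollary is applied.
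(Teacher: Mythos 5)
Your proof is correct and follows essentially the same route as the paper: both reduce to Robert's Cuntz-semigroup uniqueness theorem (Lemma \ref{uniq-R}) and verify its comparison hypotheses via strict comparison, using a nonzero ``gap'' element (nonzero by stable projectionlessness of $C$) whose trace is bounded below by $\rho$ so that the strict inequality of rank functions survives the $\sigma$-perturbation allowed on $\mathcal H_1$. The only difference is cosmetic: the paper interpolates each pair $g'\ll g$ by elements $c'<c$ and uses a positive element below $c-c'$ as the slack, whereas you realize the slack by nested functional-calculus cut-offs of a single representative of $g$; both also share the same minor loose ends (elements a priori living in matrix amplifications rather than $C^+$, and the tacit use of stable rank one of $A$ in Lemma \ref{uniq-R}).
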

\begin{proof}
Let $\mathcal G\subset \mathrm{Cu}(C)$ be the finite subset obtained from Lemma \ref{uniq-R}. Write $$\mathcal G=\{g_1, g_2, ..., g_n\}$$ and list $\{(h'_i, h_i): i=1, 2, ..., m\}$ as the list of pairs with $h'_i \ll h_i$. Note that since $C$ is stably projectionless, none of $g_i$ are from projections. Then, pick elements $c_i', c_i \in \mathrm{M}_\infty(C)$, such that
$$c_i' < c_i\quad \mathrm{and}\quad h_i'<[c'_i] < [c_i] < h_i,$$
and $$\mathrm{d}_\tau(h_i') \leq \tau (c'_i)\leq \tau(c_i) \leq d_\tau(h_i),\quad \tau\in\mathrm{T}^+ C.$$

For each positive element $c_i-c_i'$, pike a positive element $d_i\in C$ such that $$0<d_i < c_i-c_i'.$$
Then set
$$\mathcal H_0=\{d_1, d_2, ..., d_m\}.$$
and set $$\sigma'=\frac{1}{2}\min\{\rho(d_i): i=1, 2, ..., m\}$$
Also choose a finite set $\mathcal H\subset C$ and $\sigma>0$ such that if $\tau_1, \tau_2$ are linear functional of $C$ with norm at most one satisfying $$|\tau_1(a) - \tau_2(a)| < \sigma,\quad a\in\mathcal H_1,$$ then
$$|\tau_1(b) - \tau_2(b)| < \sigma',\quad b=c'_i, c_i,\ i=1, 2, ..., m.$$

Then, the tuple $(\mathcal H_0, \mathcal H_1, \sigma)$ satisfies the conclusion of the lemma. Indeed, let $A$ and $\phi, \psi: C\to A$ be given satisfying the assumption of the lemma. Then, for any $\tau\in\mathrm{T}^+A$ with $\|\tau\| = 1$, one has
$$\tau(\phi(c_i)) - \tau(\phi(c'_i)) \geq \tau (\phi(d_i)) > \rho(d_i)>2\sigma'$$
$$\tau(\psi(c_i)) -\tau(\psi(c'_i)) \geq \tau (\psi(d_i)) > \rho(d_i)>2\sigma'$$
and
$$|\tau(\phi(c_i')) -\tau(\psi(c_i'))| < \sigma'\quad\mathrm{and}\quad |\tau(\phi(c_i)) -\tau(\psi(c_i))| < \sigma'.$$
Therefore
$$\mathrm{d}_\tau(\psi(h_i')) \leq \tau(\psi(c'_i)) < \tau(\phi(c'_i)) + \sigma' < \tau(\phi(c_i))-\sigma' < \tau(\phi(c_i))\leq \mathrm{d}_\tau(\phi(h_i))$$ and
$$\mathrm{d}_\tau(\phi(h_i'))\leq\tau(\phi(c'_i)) < \tau(\psi(c'_i)) + \sigma' < \tau(\psi(c_i))-\sigma' < \tau(\psi(c_i))\leq \mathrm{d}_\tau(\psi(h_i)).$$
Since $A$ has strict comparison, one has
$$\psi(h'_i) < \phi(h_i)\quad\mathrm{and}\quad \phi(h'_i) < \psi(h_i),\quad i=1, 2, ...,$$
and therefore there is a unitary $u\in\tilde{A}$ such that
$$\|u^*\phi(a) u -\psi(a)\|<\eps,\quad a\in \mathcal F$$
as desired.
\end{proof}

\begin{thm}\label{uniq-thm-0}
Let $A\in\mathcal R$ be a simple separable C*-algebra, and let $\rho: A^+\setminus\{0\} \to (0, +\infty)$ be a function satisfying
$$\tau(a) \geq \rho(a),\quad \tau\in\mathrm{T}^+A,\ \|\tau\|=1.$$ Also assume that $A$ has finite nuclear dimension and $A$ is KK-equivalent to $\{0\}$. 

Then, for any finite set $\mathcal F\subset A\otimes 1_Q$ and any $\eps>0$, there are finite sets $\mathcal G\subset A, \mathcal H_0, \mathcal H_1\subset (A\otimes Q)^+$ and $\delta, \sigma>0$ such that if $\phi: A\to A$ is a $\mathcal G$-$\delta$-multiplicative linear contraction such that
\begin{enumerate}
\item $\tau((\phi\otimes\mathrm{id})(a)) \geq \rho(a)$, $a\in\mathcal H_0$, $\tau\in\mathrm{T}^+(A\otimes Q)$, $\|\tau\|=1$,
\item $|\tau((\phi\otimes\mathrm{id})(a)) - \tau(a)| < \sigma$, $a\in\mathcal H_1$, $\tau\in\mathrm{T}^+(A\otimes Q)$, $\|\tau\|=1$,
\end{enumerate}
then there is a unitary $u\in \tilde{A}\otimes Q$ such that
$$\|u^*(\phi(a)\otimes 1_Q) u - a\otimes 1_Q \| < \eps,\quad a\in\mathcal F.$$
\end{thm}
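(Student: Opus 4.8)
The plan is to compare the two maps $\Phi:=(\cdot\otimes 1_Q)\circ\phi$ and $\iota:=(\cdot\otimes 1_Q)$, both from $A$ into $B:=A\otimes Q$, and to produce a unitary in $\tilde A\otimes Q$ conjugating one approximately into the other. Before choosing the output data I would record the standing facts: $B$ is simple, separable, $Q$-stable, has bounded traces (as $A\in\mathcal R$), and is KK-contractible (by Lemma \ref{KK-Kun}, $Q$ being nuclear and $A$ KK-contractible), hence stably projectionless, so $B$ has strict comparison and stable rank one; moreover $\Kzero(A)=\{0\}$ and $A$ satisfies the UCT, so Theorem \ref{div-decp} and Corollary \ref{stable-uniq-trace} apply. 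Reduce at once to the case $\mathcal F=\{a\otimes 1_Q:a\in\mathcal F_0\}$ with $\|a\|\le 1$ on $\mathcal F_0$.

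The construction of the output has three ingredients. (i) Apply Theorem \ref{div-decp} to $A$ with $\mathcal F_0$, a small tolerance $\eta>0$, and a large integer $n$ (fixed last): this gives $q,p_1,\dots,p_n\in\tilde A\otimes Q$ with $q+\sum p_i=1$ and $q\sim p_i$, partial isometries $v_i$ ($v_i^*v_i=p_1$, $v_iv_i^*=p_i$), a Razak algebra $S\subset p_1Bp_1$, and an $\mathcal F_0$-$\eta$-multiplicative contraction $\theta\colon A\to S$ (a perturbation of $a\mapsto v_1^*p_1(a\otimes 1_Q)p_1v_1$) with $\iota(a)\approx_{\eta'} q(a\otimes 1_Q)q+\sum_{i=1}^n v_i\theta(a)v_i^*$ on $\mathcal F_0$; a trace computation (using $q\sim p_i$ and $\|[p_i,a\otimes 1_Q]\|<\eta$) shows $\tau(\theta(a))$ is close to $\tfrac1{n+1}\tau(a)$ for every normalized trace of $B$. (ii) Apply Corollary \ref{uniq-R-cor} to $S$ with the finite set $\theta(\mathcal F_0)$, tolerance $\eps/10$, and $\rho_S:=\tfrac12\inf\{\tau(\cdot):\tau\in\mathrm T^+S,\ \|\tau\|=1\}$, obtaining $\mathcal H_0^S,\mathcal H_1^S\subset S^+$ and $\sigma_S>0$. (iii) Apply Corollary \ref{stable-uniq-trace} to $A$ with $(\mathcal F_0,\eps/10)$ and the function $\rho/2$, obtaining $\mathcal G',\mathcal H',\delta',m$; note that $\iota$ is $\rho/2$-full by the hypothesis on $\rho$, so it may serve as the ``$\sigma$'' there, requiring no a priori estimate on $\Phi$. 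Then take $n$ a large multiple of $m$, and set $\mathcal G\supseteq\mathcal G'$ together with generators controlling $\theta\circ\phi$ and $q(\phi(\cdot)\otimes 1_Q)q$; $\delta\le\min(\delta',\eta)$; $\mathcal H_0\supseteq\mathcal H'$ together with pull-backs into $(A\otimes Q)^+$ of $\mathcal H_0^S$ and enough elements that condition (1) forces $\Phi$ to be full; $\mathcal H_1\supseteq\mathcal H'$ together with pull-backs of $\mathcal H_1^S$; and $\sigma\le\min(\sigma_S,\dots)$.

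Given $\phi$ satisfying (1)--(2), put $\Phi=(\cdot\otimes 1_Q)\circ\phi$; since $A$ is KK-contractible, $[\Phi]=[\iota]=0$ in $\mathrm{KK}_{\mathrm{nuc}}(A,B)$, so Corollary \ref{stable-uniq-trace} with $\sigma:=\iota$ produces a unitary $u_0\in\mathrm M_{m+1}(B)+\Comp 1_{m+1}$ with $\|u_0(\Phi(a)\oplus\iota(a)^{\oplus m})u_0^*-\iota(a)^{\oplus(m+1)}\|<\eps/10$ on $\mathcal F_0$. The remaining task is to de-stabilize. Under the embedding $\mathrm M_{m+1}(B)=B\otimes\mathrm M_{m+1}\hookrightarrow B\otimes Q\cong B$ (which scales traces by $\tfrac1{m+1}$), the decomposition from Theorem \ref{div-decp} exhibits $\iota$ as $n+1$ mutually orthogonal, trace-equal, nearly-central corners, and this is exactly the room needed to absorb the $m$ extra copies of $\iota$; matching corners of equal trace (using strict comparison and stable rank one of $B$, the relevant projections being nearly central) shows $\iota^{\oplus(m+1)}$ folds back to a map approximately unitarily equivalent to $\iota$, while $\Phi\oplus\iota^{\oplus m}$ folds back to ``$\Phi$ cut by a trace-$\tfrac1{m+1}$ projection together with $\iota$ cut by its complement''. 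Replacing, using $Q$-stability of $B$, the $\iota$-corner there by the corresponding $\Phi$-corner and reassembling leaves only the non-Razak pieces $q(\phi(\cdot)\otimes 1_Q)q$ and $q(\cdot\otimes 1_Q)q$, which are small in trace, approximately multiplicative, and trace-close to one another by condition (2); absorbing each into one further copy of $\theta$ and invoking Corollary \ref{uniq-R-cor} on the enlarged Razak algebra --- with condition (1) supplying the lower trace bounds and condition (2) the trace closeness --- yields the desired unitary $u\in\tilde A\otimes Q$.

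I expect this de-stabilization, together with the bookkeeping it rests on, to be the main obstacle: turning the stably-unitary equivalence of Corollary \ref{stable-uniq-trace} into an honest unitary of $\tilde A\otimes Q$ forces one to use the \emph{precise} form of the tracial-approximation structure of Theorem \ref{div-decp} --- not merely ``a small complemented corner'' but the $n+1$ equal, nearly-central corners --- along with $Q$-stability of $B$ and both trace hypotheses on $\phi$, and to keep track of a long chain of tolerances (in particular the mutually dependent choices of $n$, of the fullness constant of $\theta$, and of the finite sets output by Corollary \ref{stable-uniq-trace}) so that every estimate closes.
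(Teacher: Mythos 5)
Your overall toolkit is the right one (Theorem \ref{div-decp}, Corollary \ref{uniq-R-cor}, Corollary \ref{stable-uniq-trace}), but the way you deploy the stable uniqueness theorem leaves a genuine gap at exactly the point you flag as the main obstacle. You first apply Corollary \ref{stable-uniq-trace} to $\Phi=(\cdot\otimes 1_Q)\circ\phi$ versus $\iota$ with $\sigma:=\iota$, obtaining a unitary in $\mathrm{M}_{m+1}(B)+\Comp 1_{m+1}$ intertwining $\Phi\oplus\iota^{\oplus m}$ and $\iota^{\oplus(m+1)}$, and then try to ``de-stabilize''. But unitary equivalence after amplification cannot be cancelled, and your proposed mechanism for closing the loop does not work: after matching the Razak corners $p_i(\cdot\otimes 1_Q)p_i$ of $\Phi$ and $\iota$ by Corollary \ref{uniq-R-cor}, what remains is the comparison of the two head maps $q(\phi(\cdot)\otimes 1_Q)q$ and $q(\cdot\otimes 1_Q)q$. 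These are $\mathcal G$-$\delta$-multiplicative maps defined on $A$ which do \emph{not} (even approximately) factor through any Razak algebra --- the $q$-corner is precisely the part of the decomposition over which Theorem \ref{div-decp} gives no structural control --- so ``absorbing each into one further copy of $\theta$ and invoking Corollary \ref{uniq-R-cor} on the enlarged Razak algebra'' is not available: Corollary \ref{uniq-R-cor} is a uniqueness theorem for homomorphisms \emph{from} an algebra in $\mathcal R_z$, and adjoining one Razak corner to a head map does not produce such a homomorphism. Nor can the heads be discarded: smallness of $q$ in trace gives no norm estimate. (There is also the secondary issue that your ``folding back'' unitaries coming from $Q$-stability live in multiplier algebras rather than in $\tilde A\otimes Q$, and that you never align the projections $\tilde\Phi(q),\tilde\Phi(p_i)$ with $q,p_i$ --- the paper's unitary $u_0$, obtained from $\Kzero(A)=\{0\}$ and stable rank one --- before speaking of ``corners of $\Phi$''.)

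The correct way to absorb the heads is a second use of Corollary \ref{stable-uniq-trace}, with the absorbing maps being the $n$ identity-corner maps $a\mapsto p_i(a\otimes 1_Q)p_i$ (whose normalized traces are bounded below by $\tfrac12\rho$), the comparison taking place inside $\mathrm{M}_{n+1}(q(A\otimes Q)q)\cong A\otimes Q$ so that the resulting unitary genuinely lies in $\tilde A\otimes Q$; and this forces the order of quantifiers used in the paper: one must obtain $n$ from Corollary \ref{stable-uniq-trace} applied to $(\mathcal F,\eps)$ and $\tfrac12\rho$ \emph{first}, and only then invoke Theorem \ref{div-decp} with exactly that $n$, so that the required amplification is realized internally by the $n$ trace-equal corners and no cancellation is ever needed. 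Once you arrange this, your opening step (the $m$-fold comparison of $\Phi$ with $\iota$ and the attempted fold-back) becomes superfluous --- the argument then coincides with the paper's proof, which proceeds: $u_0$ matching the images of the projections, $u_1$ from Corollary \ref{uniq-R-cor} on the Razak corners (this is where hypotheses (1) and (2) enter), and $u_2$ from Corollary \ref{stable-uniq-trace} comparing $\phi_0\oplus\bigoplus_i p_i(\cdot)p_i$ with $q(\cdot)q\oplus\bigoplus_i p_i(\cdot)p_i$.
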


\begin{proof}
Applying Corollary \ref{stable-uniq-trace} to $\mathcal F$ and $\eps/2$ with respect to $A$ and $\frac{1}{2}\rho$, one obtains $(\mathcal G_0, \delta_0)$, $\mathcal H$, and $n$.

Applying Theorem \ref{div-decp}, for arbitrary $\eps'>0$, there are projections $q, p_1, p_2, ..., p_n \in \tilde{A}\otimes Q$ and $S_1\subset p_1(A\otimes Q)p_1$ with $S_1\in\mathcal R_0$ such that
\begin{enumerate}
\item $q+p_1+p_2+\cdots+p_n =1_{\tilde{A}\otimes Q}$, $q$ is Murray-von Neumann equivalent to $p_i$, $i=1, 2, ..., n$,
\item $\|p_ia-ap_i\| < \eps'$, $a\in\mathcal F\cup \mathcal G \cup\mathcal H$,
\item $p_iap_i\in_{\eps'} S_i:=v_iSv^*_i$, $a\in\mathcal F\cup \mathcal G \cup\mathcal H$, where $v_i\in \tilde{A}\otimes Q$, $i=1, 2, ..., n$, are the partial isometries  satisfying $v^*_iv_i=p_1$ and $v_iv_i^*=p_i$.
\end{enumerate}
One then choose $\eps'$ sufficiently small so that
\begin{equation}\label{decom-10-01}
\|a-(qaq + p_1ap_1+p_2ap_2 + \cdots + p_nap_n)\| < \frac{\eps}{8},\quad a\in \mathcal F,
\end{equation}
the maps $a\mapsto qaq$ and $a\mapsto p_iap_i$, $i=1, 2, ..., n$, are $\mathcal G_0$-$\delta_0$-multiplicative,
and
\begin{equation}\label{tr-lbd-10}
\tau(p_iap_i) > \frac{1}{2}\rho(a),\quad a\in\mathcal H,\ \tau\in\mathrm{T}^+(p_i(A\otimes Q)p_i),\ \|\tau\|=1.
\end{equation}

Applying Corollary \ref{uniq-R-cor} to $p_i\mathcal Fp_i$ and $\frac{\eps}{8}>0$ (again, with $\eps'$ sufficiently small, let us assume that $p_i\mathcal Fp_i\subset S_i$) 
 with respect to $\frac{1}{2}\rho$ and $S_i$, one obtains $\mathcal H_{0, i}, \mathcal H_{1, i}\subset S_i\subset A\otimes Q$ and $\sigma_0>0$. Set $\mathcal H_0=\bigcup_{i=1}^n\mathcal H_{0, i}$ and $\mathcal H_1=\bigcup_{i=1}^n\mathcal H_{1, i}$.

Choose a finite set $\mathcal G\subset A$ sufficiently large and $\delta>0$ sufficiently small that if $\phi: A \to A$ is $\mathcal G$-$\delta$-multiplicative, then there is $\tilde{\Phi}: \tilde{A}\otimes Q \to \tilde{A}\otimes Q$ such that  the restriction of $\tilde{\Phi}$ to the $(n+1)\times(n+1)$ matrix algebra generated by $q, p_1, p_2, ..., p_n$ is a homomorphism, and
$$\|(\tilde{\phi}\otimes \mathrm{id})(a)-\tilde{\Phi}(a) \| < \eps''',\quad a\in \mathcal H_0\cup\mathcal H_1\cup \bigcup_{i=1}^n p_i\mathcal Fp_i,$$
where $$\eps'''=\min\{\frac{1}{24}\eps, \frac{1}{4}\sigma_0, \frac{1}{4}\rho(h): h\in\mathcal H_0\}.$$
Moreover, the map $a\mapsto \tilde{\Phi}(q(a\otimes 1)q)$ is $\mathcal G_0$-$\delta_0$-multiplicative.


Then $(\mathcal G, \delta)$, $\mathcal H_0$, $\mathcal H_1$, and $\sigma=\frac{1}{2}\sigma_0$ satisfy the theorem.

Indeed, if $\phi: A \to A$ is a $\mathcal G$-$\delta$-multiplicative contractive map satisfying the conditions of the theorem, then 
\begin{equation}\label{tr-cond-10-01}
\tau((\phi\otimes\mathrm{id})(a)) \geq \rho(a),\quad a\in\mathcal H_0,\ \tau\in\mathrm{T}^+(A\otimes Q),\ \|\tau\|=1,
\end{equation}
and
\begin{equation}\label{tr-cond-10-02}
\|\tau(\phi(a)) - \tau(a)\| < \sigma,\quad a\in\mathcal H_1,\ \tau\in\mathrm{T}^+(A\otimes Q),\ \|\tau\|=1. 
\end{equation}

Denote by $\tilde{\phi}: \tilde{A} \to \tilde{A}$ the unitization of $\phi$, and consider the map $\tilde{\phi}\otimes \mathrm{id}: \tilde{A}\otimes Q \to \tilde{A}\otimes Q$. Since $\phi$ is $\mathcal G$-$\delta$-multiplicative, 
there is $\tilde{\Phi}: \tilde{A}\otimes Q \to \tilde{A}\otimes Q$ such that  the restriction of $\tilde{\Phi}$ to the $(n+1)\times(n+1)$ matrix algebra generated by $q, p_1, p_2, ..., p_n$ is a homomorphism, and
\begin{equation}\label{pert-10-03}
\|(\tilde{\phi}\otimes \mathrm{id})(a)-\tilde{\Phi}(a) \| < \eps''',\quad a\in \mathcal H_0\cup\mathcal H_1\cup \bigcup_{i=1}^n p_i\mathcal F p_i.
\end{equation}
Moreover, the map $a\mapsto \tilde{\Phi}(q(a\otimes 1)q)$ is $\mathcal G_0$-$\delta_0$-multiplicative.
Note that $$[\tilde{\Phi}(q)]=[(\tilde{\phi}\otimes\mathrm{id})(q)]=[q]\quad\mathrm{and}\quad [\tilde{\Phi}(p_i)]=[(\tilde{\phi}\otimes\mathrm{id})(p_i)]=[p_i],\quad i=1, 2, ..., n.$$ 
Since $\tilde{A}\otimes Q$ has stable rank one, there is a unitary $u_0\in \tilde{A}\otimes Q$ such that 
$$u_0^*\tilde{\Phi}(q)u_0=q\quad\mathrm{and}\quad u_0^*\tilde{\Phi}(p_i)u_0=p_i,\quad i=1, 2, ..., n,$$
and hence the image of $p_i(A\otimes Q)p_i$ (or $q(A\otimes Q)q$) under the map $u_0^*\tilde{\Phi}u_0$ is inside $p_i(A\otimes Q)p_i$ (or $q(A\otimes Q)q$). 

Note that by \eqref{pert-10-03}, for any $a\in\mathcal H_{0, i}\cup\mathcal H_{1, i} \cup p_i\mathcal Fp_i$,
\begin{equation}\label{pert-10-04}
p_i(u_0^*(\phi\otimes\mathrm{id})(a)u_0)p_i \approx_{\eps'''} p_i(u_0^*\tilde{\Phi}(a)u_0)p_i = u_0^*\tilde{\Phi}(a)u_0 \approx_{\eps'''} u_0^*(\phi\otimes\mathrm{id})(a)u_0.
\end{equation}
Consider the map
$$S_i\ni a \mapsto p_iu_0^*(\phi\otimes\mathrm{id})(a)u_0p_i\in p_i(A\otimes Q)p_i.$$
By \eqref{tr-cond-10-01}, \eqref{tr-cond-10-02}, and \eqref{pert-10-04},
$$\tau(p_iu_0^*(\phi\otimes\mathrm{id})(a)u_0p_i) \geq \frac{1}{2}\rho(a),\quad a\in\mathcal H_{0, i},\ \tau\in\mathrm{T}^+(p_i(A\otimes Q)p_i),\ \|\tau\|=1,$$
and
$$\|\tau(p_iu_0^*(\phi\otimes\mathrm{id})(a)u_0p_i) - \tau(a)\| < \sigma_0,\quad a\in\mathcal H_{1, i},\ \tau\in\mathrm{T}^+(p_i(A\otimes Q)p_i),\ \|\tau\|=1. $$
It then follows from Corollary \ref{uniq-R-cor} (with $\psi$ being the embedding map $S_i\subset p_iAp_i$) that there is a unitary $u_{1, i}\in p_i(\tilde{A}\otimes Q)p_i$ such that
$$\|u_{1, i}^*p_iu_0^*((\phi\otimes \mathrm{id})(a))u_0p_iu_{1, i} - a\| < \frac{\eps}{8},\quad a\in p_i\mathcal Fp_i,$$
and hence by \eqref{pert-10-04}, one has
\begin{equation}\label{pert-10-05}
\|u_{1, i}^*u_0^*((\phi\otimes \mathrm{id})(a))u_0u_{1, i} - a\| < \frac{\eps}{8}+2\eps''',\quad a\in p_i\mathcal Fp_i.
\end{equation}

Put $$u_1=q\oplus u_{1, i}\oplus\cdots\oplus u_{1, n}.$$ One has that for any $a\in \mathcal F$,
\begin{eqnarray*}
&& u_1^*u_0^*((\phi\otimes\mathrm{id})(a))u_0u_1 \\
&\approx_{\frac{\eps}{8}}& u_1^*u_0^*((\phi\otimes\mathrm{id})(q(a\otimes 1_Q)q+\sum_{i=1}^np_i(a\otimes 1_Q)p_i))u_0u_1 \quad\quad(\textrm{by \eqref{decom-10-01}}) \\
&=& u_1^*u_0^*((\phi\otimes\mathrm{id})(q(a\otimes 1_Q)q))u_0u_1+ \sum_{i=1}^nu_1^*u_0^*(\phi\otimes\mathrm{id}(p_i(a\otimes 1_Q)p_i))u_0u_1 \\
&\approx_{\frac{\eps}{8}+2\eps'''}& u_0^*((\phi\otimes\mathrm{id})(q(a\otimes 1_Q)q))u_0 + \sum_{i=1}^n p_i(a\otimes 1_Q)p_i\quad\quad (\textrm{by \eqref{pert-10-05}})\\
&\approx_{\eps'''}& u_0^*(\tilde{\Phi}(q(a\otimes 1_Q)q))u_0 + \sum_{i=1}^n p_i(a\otimes 1_Q)p_i, \quad\quad (\textrm{by \eqref{pert-10-03}}).
\end{eqnarray*}
Thus,
\begin{equation}\label{pert-10-06}
\|u_1^*u_0^*((\phi\otimes\mathrm{id})(a))u_0u_1 - (\phi_0(a)+p_1ap_1+\cdots + p_nap_n)\| < \frac{\eps}{4} +3\eps''',\quad a\in\mathcal F,
\end{equation}
where
$$\phi_0: A \ni a \mapsto u^*_0\tilde{\Phi}(q(a\otimes 1_Q)q)u_0  \in q(A\otimes Q)q.$$
Note that (\eqref{tr-lbd-10})
$$\tau(p_iap_i) \geq\frac{1}{2}\rho(a),\quad a\in\mathcal H,\ \tau\in \mathrm{T}^+(p_iAp_i), \ \|\tau\|=1,$$
and the maps $a\mapsto p_iap_i$, $a\mapsto qaq$, and $\phi_0$ are $\mathcal G_0$-$\delta_0$-multiplicative;
then, compare the maps
$$A \ni a\mapsto \phi_0(a) + p_1ap_1 +\cdots + p_nap_n \in A\otimes Q\cong \mathrm{M}_{n+1}(q(A\otimes Q)q)$$
and 
$$A \ni a\mapsto qaq + p_1ap_1 +\cdots + p_nap_n \in A\otimes Q \cong  \mathrm{M}_{n+1}(q(A\otimes Q)q).$$
By Corollary \ref{stable-uniq-trace}, there is a unitary $u_2\in \tilde{A}\otimes Q$
such that 
\begin{equation}\label{final-pert}
\|u^*_2(\phi_0(a) + p_1ap_1 +\cdots + p_nap_n)u_2 - (qaq + p_1ap_1 +\cdots + p_nap_n)\| < \eps/2,\quad a\in \mathcal F.
\end{equation}
Hence, by \eqref{final-pert}, \eqref{pert-10-06}, \eqref{decom-10-01}, one has
$$\|u_2^*u^*_1u^*_0((\phi(a)\otimes1_Q)u_0u_1u_2 - a\otimes 1_Q \| < \frac{7}{8}\eps+3\eps'''\leq \eps,\quad a\in \mathcal F,$$
as desired.
\end{proof}

Note that any isomorphism $Q\to Q\otimes Q$ is approximately unitarily equivalent to the embedding $Q\ni a\mapsto a\otimes 1\in Q\otimes Q$. Also note that for any simple C*-algebra in the reduction class, one always can define
$$\rho: A^+\ni a \mapsto \min\{\tau(a): \tau\in\mathrm{T}^+A,\ \|\tau\|=1\} \in(0, +\infty),$$
and it satisfies $$\tau(a) \geq \rho(a),\quad \tau\in\mathrm{T}^+A,\ \|\tau\|=1.$$ So, one may remove the density function $\rho$ from the statement of Theorem \ref{uniq-thm-0}:
\begin{thm}\label{uniq-thm}
Let $A\in\mathcal R$ be a simple separable C*-algebra. 
Assume that $A$ has finite nuclear dimension, $A$ is KK-equivalent to $\{0\}$, and $A\cong A\otimes Q$. 

Then, for any finite set $\mathcal F\subset A\otimes Q$ and any $\eps>0$, there are finite sets $\mathcal G\subset A\otimes Q, \mathcal H\subset (A\otimes Q)^+$ and $\delta, \sigma>0$ such that if $\phi: A\otimes Q \to A\otimes Q$ is a $\mathcal G$-$\delta$-multiplicative linear contraction such that
$$|\tau(\phi(a)) - \tau(a)| < \sigma,\ a\in\mathcal H,\ \tau\in\mathrm{T}^+(A\otimes Q),\ \|\tau\|=1,$$
then there is a unitary $u\in \tilde{A}\otimes Q$ such that
$$\|u^*\phi(a) u - a \| < \eps,\quad a\in\mathcal F.$$
\end{thm}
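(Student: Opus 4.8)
The plan is to obtain this from Theorem~\ref{uniq-thm-0} applied to the $Q$-stable algebra $B:=A\otimes Q$. Since $A\cong A\otimes Q$, the algebra $B$ lies in $\mathcal R$, is simple and separable, has finite nuclear dimension, and is KK-contractible (by Lemma~\ref{KK-Kun}, or simply because $B\cong A$); being simple with a compact base for $\mathrm T^+B$, it carries the canonical density function $\rho(b)=\min\{\tau(b):\tau\in\mathrm T^+B,\ \|\tau\|=1\}$, so all hypotheses of Theorem~\ref{uniq-thm-0} are in force. What remains is to pass from the conclusion of Theorem~\ref{uniq-thm-0}, phrased in terms of $\phi\otimes\mathrm{id}_Q$ on $B\otimes Q$ and a finite set inside $B\otimes 1_Q$, to the conclusion sought here, phrased in terms of $\phi$ on $B$ and an arbitrary finite set of $B$. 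This transition is governed by the second fact recalled in the remark preceding the statement: any isomorphism $Q\to Q\otimes Q$, and hence (tensoring with $\mathrm{id}_A$) any isomorphism $B\to B\otimes Q$, is approximately unitarily equivalent to the first-factor embedding $\iota_0\colon x\mapsto x\otimes 1_Q$.

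To set this up I would fix an isomorphism $\kappa\colon B\to B\otimes Q$ together with unitaries $w_k$ in the multiplier algebra of $B\otimes Q$ — of the form $1_{\tilde A}\otimes v_k$ for unitaries $v_k\in Q\otimes Q$ — with $w_k^*\iota_0(x)w_k\to\kappa(x)$ for all $x\in B$. The argument will lean on three elementary facts: $(\phi\otimes\mathrm{id}_Q)\circ\iota_0=\iota_0\circ\phi$ for every linear $\phi\colon B\to B$; every norm-one trace on $B\otimes Q$ has the form $\bar\tau\otimes\mathrm{tr}_Q$ for a unique norm-one trace $\bar\tau$ on $B$, so $\tau\circ\iota_0=\bar\tau$; and, extended to the multiplier algebra, such a trace is invariant under conjugation by each $w_k$.

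Given $\mathcal F\subset B$ and $\eps>0$, I would apply Theorem~\ref{uniq-thm-0} to $B$, $\rho$, and the pair $(\mathcal F\otimes 1_Q,\ \eps/2)$, obtaining $\mathcal G_0\subset B$, finite $\mathcal H_0,\mathcal H_1\subset(B\otimes Q)^+$, and $\delta_0,\sigma_0>0$. For the data of the present theorem I would take $\mathcal H\subset B^+$ a finite set containing positive approximants of $\kappa^{-1}(\mathcal H_0\cup\mathcal H_1)$ (together with finitely many further positive elements witnessing that the canonical $\rho$ is a lower bound of the required strength), and $\mathcal G\supseteq\mathcal G_0$ large and $\delta,\sigma>0$ small enough that $\mathcal G$-$\delta$-multiplicativity of a contraction $\phi$ forces $\mathcal G_0$-$\delta_0$-multiplicativity, forces $\phi\otimes\mathrm{id}_Q$ to nearly commute with $\mathrm{Ad}(w_k)$ (for a single fixed large $k$) on the finitely many elements $\kappa^{-1}(\mathcal H_0\cup\mathcal H_1)\otimes 1_Q$, and — via the norm estimates $w_k^*\iota_0(b_0)w_k\approx\kappa(b_0)$ and the continuity of $\phi\otimes\mathrm{id}_Q$ — makes a $\sigma$-control of $\phi$ on $\mathcal H$ yield the tracial hypotheses (1)--(2) of Theorem~\ref{uniq-thm-0} with parameter $\sigma_0$. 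A $\phi$ meeting the hypotheses of the present theorem then meets those of Theorem~\ref{uniq-thm-0} for $B$ (for $b\in\mathcal H_0\cup\mathcal H_1$ and $b_0:=\kappa^{-1}(b)$ one has $b\approx\iota_0(b_0)$ after conjugation by $w_k$, whence $\tau(b)\approx\bar\tau(b_0)$ and $\tau((\phi\otimes\mathrm{id}_Q)(b))\approx\bar\tau(\phi(b_0))$), so Theorem~\ref{uniq-thm-0} gives a unitary $u\in\tilde B\otimes Q$ with $\|u^*(\phi(a)\otimes 1_Q)u-a\otimes 1_Q\|<\eps/2$ for $a\in\mathcal F$. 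Conjugating this estimate by $w_k$ for a large $k$ turns it into $\|(w_k^* u w_k)^*\kappa(\phi(a))(w_k^* u w_k)-\kappa(a)\|<\eps$, and applying the multiplier extension of $\kappa^{-1}$ produces a unitary $v\in\tilde A\otimes Q$ with $\|v^*\phi(a)v-a\|<\eps$ for $a\in\mathcal F$, as desired.

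I expect the main obstacle to be precisely the calibration of $\mathcal G,\delta,\sigma$ above, so that the single tracial tolerance over $\mathcal H$ really does deliver both tracial hypotheses of Theorem~\ref{uniq-thm-0} over $\mathcal H_0\cup\mathcal H_1$ — the density hypothesis included — once those elements and the map $\phi\otimes\mathrm{id}_Q$ have been pushed through $\kappa$ and compared with $\iota_0$. If one prefers to avoid this transfer of hypotheses, an equally good route is to re-run the proof of Theorem~\ref{uniq-thm-0} verbatim with $B=A\otimes Q$ in place of $A$: Theorem~\ref{div-decp} applied to $B\otimes Q\cong B$ and transported into $B$ along an isomorphism $B\otimes Q\to B$ (again approximately unitarily equivalent to $\iota_0$) supplies projections $q,p_1,\dots,p_n\in\tilde A\otimes Q$ and a Razak subalgebra $S\subset p_1Bp_1$ with $\|[p_i,a]\|$ small and $p_iap_i$ near $v_iSv_i^*$; then Corollary~\ref{uniq-R-cor} corrects the pieces $p_iap_i$ and Corollary~\ref{stable-uniq-trace} absorbs the $q$-piece, exactly as before, with $\phi$ in place of $\phi\otimes\mathrm{id}$ and $\mathcal F$ an arbitrary finite subset of $B$.
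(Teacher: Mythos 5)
Your overall route---apply Theorem \ref{uniq-thm-0} to $B:=A\otimes Q$ with the canonical density function, then move between the first-factor embedding $\iota_0\colon x\mapsto x\otimes 1_Q$ and an isomorphism $\kappa\colon B\to B\otimes Q$ via unitaries $w_k=1_{\tilde A}\otimes v_k$---is exactly the derivation the paper's preceding remark intends, and your handling of the \emph{conclusion} is fine: there $k$ may be chosen after $\phi$ and $u$ are in hand, so the pointwise convergence $w_k^*\iota_0(x)w_k\to\kappa(x)$ is only invoked at the finitely many known elements $a,\ \phi(a)$, $a\in\mathcal F$. The gap is in the transfer of the \emph{hypotheses}. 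The claim that $\mathcal G$-$\delta$-multiplicativity of $\phi$ can be calibrated so that $\phi\otimes\mathrm{id}_Q$ nearly commutes with $\mathrm{Ad}(w_k)$ on the elements $\kappa^{-1}(b)\otimes 1_Q$ is false: take $\phi=\mathrm{Ad}(1_{\tilde A}\otimes u_0)$ with $u_0$ a unitary in a copy of $\mathrm{M}_2$ inside the internal $Q$-factor moving a rank-one projection $p$ to an orthogonal one, and $w_k=1_{\tilde A}\otimes v$ with $v$ the flip unitary of $\mathrm{M}_2\otimes\mathrm{M}_2\subset Q\otimes Q$; then $\phi$ is an automorphism (hence $\mathcal G$-$\delta$-multiplicative for every $(\mathcal G,\delta)$), yet $(\phi\otimes\mathrm{id})\bigl(w_k^*(a\otimes p\otimes 1_Q)w_k\bigr)$ and $w_k^*\bigl((\phi\otimes\mathrm{id})(a\otimes p\otimes 1_Q)\bigr)w_k$ differ in norm by $\|a\|$. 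Since $w_k$ mixes the internal and external $Q$-factors while $\phi$ need not respect the internal factorization, no calibration of $(\mathcal G,\delta)$ repairs this. Consequently your choice of $\mathcal H$ (positive approximants of $\kappa^{-1}(\mathcal H_0\cup\mathcal H_1)$) does not verify hypotheses (1)--(2) of Theorem \ref{uniq-thm-0}: those concern $\tau\bigl((\phi\otimes\mathrm{id})(b)\bigr)$ at the fixed elements $b\in\mathcal H_0\cup\mathcal H_1$ themselves, and tracial control of $\phi$ at $\kappa^{-1}(b)$ gives no control there.

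The step is repairable, and without $\kappa$ or $w_k$: every norm-one trace on $B\otimes Q$ has the form $\bar\tau\otimes\mathrm{tr}_Q$, so after approximating each $b\in\mathcal H_0\cup\mathcal H_1$ by a finite sum $\sum_i c_i\otimes q_i$ of elementary tensors one gets $\tau\bigl((\phi\otimes\mathrm{id})(b)\bigr)\approx\sum_i\mathrm{tr}(q_i)\,\bar\tau(\phi(c_i))$ and $\tau(b)\approx\sum_i\mathrm{tr}(q_i)\,\bar\tau(c_i)$; putting the positive and negative real and imaginary parts of the $c_i$ into $\mathcal H$, taking $\sigma$ small, and running Theorem \ref{uniq-thm-0} with, say, one half of the canonical density function (so that an approximate tracial inequality suffices for the density hypothesis) yields both conditions. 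With that replacement your argument is complete. Your fallback---re-running the proof of Theorem \ref{uniq-thm-0} for $B$ with an arbitrary finite set, using Theorem \ref{div-decp} transported along an isomorphism $B\otimes Q\to B$---is also sound, precisely because there the approximate unitary equivalence is applied only to elements fixed in advance, never to $\phi(a)$.
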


\subsection{Classification theorem}

\begin{thm}\label{clas-thm-R}
Let $A, B\in\mathcal R$ be simple separable KK-contractible C*-algebra with finite nuclear dimension.  
Suppose that 
$$\xi: (\mathrm{T}^+B, \Delta_B) \to (\mathrm{T}^+A, \Delta_A),$$
where $\Delta_B$ and $\Delta_A$ are simplexes of tracial states of $B$ and $A$, respectively. Then, one has that $A\cong B$, and the C*-algebra isomorphism can be chosen to carry the invariant isomorphism.
\end{thm}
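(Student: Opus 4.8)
The plan is to run the Elliott approximate intertwining argument, drawing the existence half from the trace factorisation Theorem \ref{TFA} and the uniqueness half from Theorem \ref{uniq-thm}. First, the reduction to the $Q$-stable case: a simple separable C*-algebra with finite nuclear dimension is $\mathcal Z$-stable, so Theorem \ref{Z-stable} gives $A\cong A\otimes\mathcal Z\cong A\otimes Q$ and $B\cong B\otimes Q$; hence $A$ and $B$ have stable rank one and strict comparison of positive elements, and, being $\KK$-contractible, they satisfy the UCT and have vanishing $\Kzero$ and $\Kone$. Since the embeddings $a\mapsto a\otimes 1_Q$ induce isometric isomorphisms $\mathrm T^+A\cong\mathrm T^+(A\otimes Q)$ and $\mathrm T^+B\cong\mathrm T^+(B\otimes Q)$, we may and do assume $A=A\otimes Q$ and $B=B\otimes Q$; it then suffices to produce an isomorphism $\Phi\colon A\to B$ with $\Phi^*=\xi$ on trace cones.

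\emph{Existence.} Fix a finite set $\mathcal F\subset A$ and $\eps>0$. By Theorem \ref{TFA} there are a Razak algebra $S$, an $\mathcal F$-$\eps$-multiplicative map $\theta\colon A\to S$, and an embedding $\iota_A\colon S\to A$ with $|\tau(a)-\tau(\iota_A\theta(a))|<\eps$ for $a\in\mathcal F$ and $\tau\in\mathrm T^+A$, $\|\tau\|=1$. As $S$ and $B$ are stably projectionless, their Cuntz semigroups are recovered from their densely defined lower semicontinuous traces by Theorem 4.4 and Corollary 6.8 of \cite{ESR-Cuntz}, so the continuous cone map $(\iota_A)^*\circ\xi\colon\mathrm T^+B\to\mathrm T^+S$ dualises to a $\mathrm{Cu}$-morphism $\mathrm{Cu}(S)\to\mathrm{Cu}(B)$; by Theorem 1 of \cite{Robert-Cu} this is induced by a homomorphism $\iota_B\colon S\to B$, and since the associated ideal of $\mathrm{Cu}(S)$ is trivial (the morphism factors as the injective $(\iota_A)_*$ followed by an isomorphism $\mathrm{Cu}(A)\cong\mathrm{Cu}(B)$), $\iota_B$ is an embedding. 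Set $\alpha:=\iota_B\circ\theta\colon A\to B$; it is $\mathcal F$-$\eps$-multiplicative and
$$\tau(\alpha(a))=\bigl((\iota_A)^*\xi(\tau)\bigr)(\theta(a))=\xi(\tau)(\iota_A\theta(a))\approx_\eps\xi(\tau)(a),\qquad a\in\mathcal F,\ \tau\in\mathrm T^+B,\ \|\tau\|=1.$$
Applying the same construction to $B$ and $\xi^{-1}$ gives, for any finite $\mathcal F'\subset B$ and $\eps'>0$, an $\mathcal F'$-$\eps'$-multiplicative $\beta\colon B\to A$ with $\tau(\beta(b))\approx_{\eps'}\xi^{-1}(\tau)(b)$ for $b\in\mathcal F'$, $\tau\in\mathrm T^+A$, $\|\tau\|=1$.

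\emph{Intertwining.} With parameters chosen finely, $\beta\circ\alpha\colon A\to A$ is approximately multiplicative and, since $\tau(\beta\alpha(a))\approx\xi^{-1}(\tau)(\alpha(a))\approx\xi(\xi^{-1}(\tau))(a)=\tau(a)$, approximately trace-preserving on the relevant finite set; hence Theorem \ref{uniq-thm} supplies a unitary in $\widetilde A\otimes Q$ conjugating $\beta\alpha$ to within any prescribed tolerance of $\mathrm{id}_A$ on any prescribed finite set, and likewise for $\alpha\beta$ and $\mathrm{id}_B$. Feeding increasingly fine data into the existence step and these estimates yields sequences $\alpha_n\colon A\to B$ and $\beta_n\colon B\to A$ of approximately multiplicative contractions whose composites are approximately inner; the Elliott two-sided approximate intertwining (in the form for completely positive contractions between non-unital algebras) then produces an isomorphism $\Phi\colon A\to B$, and since each $\alpha_n$ approximately implements $\xi$ on traces we get $\Phi^*=\xi$. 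As $\Kzero$ and $\Kone$ vanish this is the whole Elliott invariant, and the reduction step completes the proof.

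\emph{Main obstacle.} The crux is the existence step: one must transport the Cuntz semigroup of the Razak algebra $S$ through $\xi$ into $\mathrm{Cu}(B)$ — which is exactly where the stably projectionless Cuntz--trace correspondence of \cite{ESR-Cuntz} and Robert's realisation theorem \cite{Robert-Cu} do the work — and then verify that the resulting $\mathrm{Cu}$-morphism is injective so that $\iota_B$ is genuinely an embedding. The remaining bookkeeping (composing approximately multiplicative maps while tracking the tolerances required by Theorem \ref{uniq-thm}, and running the intertwining with completely positive contractions rather than honest homomorphisms) is routine but must be handled with some care.
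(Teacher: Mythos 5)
Your proposal is correct and follows essentially the same route as the paper: reduce to the $Q$-stable case via Theorem \ref{Z-stable}, factor approximately through a Razak algebra with trace control, use the stably projectionless Cuntz--trace correspondence and Robert's theorem to produce a homomorphism into $B$ realizing $\xi$ on traces, and then run the Elliott approximate intertwining with Theorem \ref{uniq-thm} as the uniqueness ingredient. The only cosmetic difference is that you invoke Theorem \ref{TFA} directly (and verify, unnecessarily but harmlessly, that the map $S\to B$ is an embedding), whereas the paper quotes Corollary \ref{decp}, which rests on the same factorization theorem.
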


\begin{proof}
Since $A$, $B$ have finite nuclear dimension, one has that $A\cong A\otimes\mathcal Z$ and $B\cong B\otimes\mathcal Z$. By Theorem \ref{Z-stable}, $A\cong A\otimes Q$ and $B\cong B\otimes Q$, where $Q$ is the universal UHF algebra.

Pick increasing sequences of finite sets $\mathcal F_{A, 1}\subset \mathcal F_{A, 2} \subset \cdots \subset A$ and $\mathcal F_{B, 1}\subset \mathcal F_{B, 2} \subset \cdots \subset B$ with dense unions; pick $\eps_1>\eps_2>\cdots>0$ with $\sum\eps_n =1$. 

Apply Theorem \ref{uniq-thm} to $A$ with respect to $\mathcal F_{A, 1}$ and $\eps_1$, one obtains $\mathcal G_{A_1}, \mathcal H_{A, 1}$, $\delta_{A, 1}, \sigma_{A, 1}>0$.

By Corollary \ref{decp}, $A\in\mathrm{TA}\mathcal R_z$, there is a Razak algebra $S\subset A$ and $\theta_1: A \to S$ which is $\mathcal G_{A, 1}$-$\delta_{A, 1}$-multiplicative map such that
$$| \tau(\theta(a)) - \tau(a) | < \frac{\sigma_{A, 1}}{2},\quad a\in \mathcal H_{A, 1},\ \tau\in\mathrm{T}^+A,\ \|\tau\|=1.$$

Consider the Cuntz group map $[\xi]\circ[\iota]: \mathrm{Cu}(S)\to\mathrm{Cu}(B)$, where $\iota: S\to A$ is the embedding map. Since $B$ has stable rank one, there is a homomorphism $\phi'_1: S\to B$ such that 
$$[\phi'_1] = [\xi]\circ[\iota].$$ 
Therefore,
$$\tau(\phi'_1(a))=\xi(\tau)(a),\quad a\in S,\ \tau\in\mathrm{T}^+B,$$
and hence
\begin{equation*}
\tau(\phi'_1(\theta(a))) = \xi(\tau)(\theta(a))\approx_{\frac{\sigma_{A, 1}}{2}} \xi(\tau)(a),\quad a\in\mathcal H_{A, 1},\ \tau\in\mathrm{T}^+B,\ \|\tau\|=1.
\end{equation*}
Denote by $\phi_1=\phi'_1\circ \theta: A\to B$, and one has
\begin{equation}\label{tr-approx-001}
| \tau(\phi_1(a)) - \xi(\tau)(a) | < \frac{\sigma_{A, 1}}{2}, \quad a\in\mathcal H_{A, 1},\ \tau\in\mathrm{T}^+B,\ \|\tau\|=1.
\end{equation}

Consider the C*-algebra $B$. Applying Theorem \ref{uniq-thm} to $B$ with respect to $\mathcal F_{B, 1}$ and $\eps_1$, one obtains $\mathcal G_{B, 1}$, $\mathcal H_{B, 1}$, $\delta_{B, 1}>0$ and $\sigma_{B, 1}>0$.

Since $B\in\mathrm{TA}\mathcal R_z$, with \eqref{tr-approx-001}, the same argument as above produces a completely positive contraction $\psi_1: B \to A$ such that $\psi_1$ is $\mathcal G_{B, 1}$-$\delta_{B, 1}$-multiplicative, $\psi_1\circ\phi_1$ is $\mathcal G_{A, 1}$-$\delta_{A, 1}$-multiplicative,
\begin{equation}\label{match-tr-1}
| \tau(\psi_1\circ\phi_1(a)) - \tau(a) | < \sigma_{A, 1},\quad a\in\mathcal H_{A, 1}, \ \tau\in\mathrm{T}^+A,\ \|\tau\|=1,
\end{equation} 
and
\begin{equation}\label{tr-approx-002}
| \tau(\psi_1(a)) - \xi^{-1}(\tau)(a) | < \frac{\sigma_{B, 1}}{2},\quad a\in\mathcal H_{B, 1}, \ \tau\in\mathrm{T}^+A,\ \|\tau\|=1.
\end{equation}


By \eqref{match-tr-1}, it following from Theorem \ref{uniq-thm} that there is a unitary $u\in \tilde{A}\otimes Q$ such that
$$\|u^*\psi_1(\phi_1(a))u - a \| <\eps_1,\quad a\in\mathcal F_{A, 1}.$$
Replace $\psi_1$ by $\mathrm{ad}(u)\circ\psi_1: B\to A$, one has that
$$\|\psi_1(\phi_1(a)) - a \| <\eps_1,\quad a\in\mathcal F_{A, 1};$$
that is, the following diagram commute up to $\eps_1$ on $\mathcal F_{A, 1}$:
\begin{displaymath}
\xymatrix{
A \ar@{=}[r] \ar[d]^{\phi_1} & A \\
B \ar[ur]_{\psi_1}& 
}.
\end{displaymath}

Applying Theorem \ref{uniq-thm} again to $A$ on $\mathcal F_{A, 2}$ and $\eps_2$, one obtains $\mathcal G_{A, 2}, \mathcal H_{A, 2}$, $\delta_{A, 2}, \sigma_{A, 2}>0$.

Using the fact that $A\in\mathrm{TA}\mathcal R_z$ again and with \eqref{tr-approx-002}, one obtains a completely positive contraction $\phi_2: A \to B$ such that $\psi_2$ is $\mathcal G_{A, 2}$-$\delta_{A, 2}$-multiplicative, $\phi_2\circ\psi_1$ is $\mathcal G_{B, 1}$-$\delta_{B, 1}$-multiplicative,
\begin{equation}\label{match-tr-2}
| \tau(\phi_2\circ\psi_1(a)) - \tau(a) | < \sigma_{B, 1},\quad a\in\mathcal H_{B, 1}, \ \tau\in\mathrm{T}^+B,\ \|\tau\|=1,
\end{equation} 
and
$$| \tau(\psi_2(a)) - \xi(\tau)(a) | < \frac{\sigma_{A, 2}}{2},\quad a\in\mathcal H_{A, 2}, \ \tau\in\mathrm{T}^+B,\ \|\tau\|=1.$$

%
By \eqref{match-tr-2}, it following from Theorem \ref{uniq-thm} that there is a unitary $w\in \tilde{B}\otimes Q$ such that
$$\|w^*\phi_2(\psi_1(a))w - a\|<\eps_1,\quad a\in\mathcal F_{B, 1}.$$
Replace $\phi_2$ by $\mathrm{ad}(w)\circ\phi_2: A\to B$, one has that
$$\|\phi_2(\psi_1(a)) - a \| <\eps_1,\quad a\in\mathcal F_{B, 1}.$$
Thus, we have the following approximate commutative diagram:
\begin{displaymath}
\xymatrix{
A \ar@{=}[r] \ar[d]^{\phi_1} & A \ar[d]^{\phi_2} \\
B \ar[ur]_{\psi_1} \ar@{=}[r]& B 
}.
\end{displaymath}

Repeat this procedure, one has the diagram
\begin{displaymath}
\xymatrix{
A \ar@{=}[r] \ar[d]^{\phi_1}& A \ar@{=}[r] \ar[d]^{\phi_2} & A \ar@{=}[r] \ar[d]^{\phi_3} & \cdots \ar@{=}[r]& A \\
B \ar@{=}[r] \ar[ur]_{\psi_1}& B \ar@{=}[r] \ar[ur]_{\psi_2} & B \ar@{=}[r] \ar[ur]_{\psi_3} &\cdots\ar@{=}[r] & B 
}
\end{displaymath}
such that
$$\|\psi_n(\phi_n(a)) - a \| < \eps_n,\quad a\in\mathcal F_{A, n},$$
and
$$\|\phi_{n+1}(\psi_n(a)) - a \| < \eps_n,\quad a\in\mathcal F_{C, n},$$
and hence $A\cong B$, as desired.
\end{proof}

\begin{thm}\label{clas-thm}
The class of simple separable KK-contractible C*-algebras with finite nuclear dimension is classifiable. If $\mathrm T^+A\neq \{0\}$, the invariant is $(\mathrm T^+A, \Sigma_A)$, and in this case,  any a such C*-algebra $A$ a simple inductive limits of Razak algebras. 

If $\mathrm T^+A= \{0\}$, then the invariant is the Murray-von Neumann semigroup together with the class of the projections of the algebra.
\end{thm}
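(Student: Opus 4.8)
The plan is to split the argument according to whether $\mathrm{T}^+A$ is trivial: the traceless case will be essentially Kirchberg--Phillips, while the case of nonzero traces will be reduced to the work already done in Sections~5--8 together with Robert's Cuntz-semigroup classification.

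\textbf{The case $\mathrm{T}^+A=\{0\}$.} First I would observe that $A$ is then a Kirchberg algebra. Finite nuclear dimension forces $\mathcal{Z}$-stability (Winter's theorem, in its non-unital form), and a $\mathcal{Z}$-stable simple C*-algebra is stably finite or purely infinite; since a $\mathcal{Z}$-stable stably finite simple exact C*-algebra carries a nonzero densely defined lower semicontinuous trace, $A$ cannot be stably finite, so it is separable, simple, nuclear and purely infinite. Being $\mathrm{KK}$-contractible it satisfies the UCT and has $\underline{\mathrm{K}}(A)=0$, so by Kirchberg--Phillips it is determined by whether it is unital: $A\cong\mathcal{O}_2$ if unital, and otherwise $A$ is $\sigma$-unital, simple, purely infinite and nonunital, hence stable by Zhang's theorem, so $A\cong\mathcal{O}_2\otimes\mathcal{K}$. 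These two possibilities are distinguished exactly by the Murray--von Neumann semigroup of projections together with the class of the projections of $A$, which is therefore a complete invariant; functoriality and the lifting of invariant isomorphisms to $*$-isomorphisms are immediate.

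\textbf{The case $\mathrm{T}^+A\neq\{0\}$.} I would begin by recording that $A$ is automatically very regular: $A$ is $\mathcal{Z}$-stable as above; a nonzero trace rules out pure infiniteness, so $A$ is stably finite, and $\Kzero(A)=0$ then forces $A$ to be nonunital and stably projectionless; by Theorem~\ref{Z-stable}, $A\otimes\mathcal{Z}\cong A\otimes Q$, so $A\cong A\otimes Q$, and by R{\o}rdam's results (see \cite{Ror-Z-stable}) $A$ has stable rank one and strict comparison of positive elements; $\mathrm{KK}$-contractibility gives $\Kzero(A)=\Kone(A)=0$ and, being a $\mathrm{KK}$-equivalence invariant, the UCT. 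In particular, by Theorem~4.4 and Corollary~6.8 of \cite{ESR-Cuntz}, $\mathrm{Cu}(A)\cong\mathrm{Cu}(A\otimes\mathcal{K})$ is the canonical object built functorially from the cone $\mathrm{T}^+A$, and the class in it of a strictly positive element $h\in A$ is the $1$-homogeneous lower semicontinuous function $\tau\mapsto\|\tau\|=d_\tau([h])$, whose sublevel set at $1$ is precisely $\Sigma A$; so $(\mathrm{T}^+A,\Sigma A)$ and $(\mathrm{Cu}(A),[h])$ carry the same information. Next I would reduce to $\mathcal{R}$: by the reduction Lemma of Section~2, $A$ is stably isomorphic to a full hereditary sub-C*-algebra $A'\in\mathcal{R}$, which inherits finite nuclear dimension and $\mathrm{KK}$-contractibility; choosing by Theorem~\ref{model-trace} a model algebra $S=\varinjlim(S_n,\iota_n)$ whose tracial-state simplex is affinely homeomorphic to that of $A'$ --- so that $S\in\mathcal{R}$, $S$ is $\mathrm{KK}$-contractible (the $S_n$ being Razak algebras, with vanishing $\mathrm{K}$-theory and the UCT) and has finite nuclear dimension --- Theorem~\ref{clas-thm-R}, applied to $A'$ and $S$ and using that for algebras in $\mathcal{R}$ the scale $\Sigma$ and the tracial-state simplex determine one another, yields $A'\cong S$. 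Hence $A\otimes\mathcal{K}\cong A'\otimes\mathcal{K}\cong S\otimes\mathcal{K}$ is a diagonal inductive limit of Razak algebras, and by Robert's classification (\cite{Robert-Cu}) of such inductive limits and their full hereditary sub-C*-algebras by the Cuntz semigroup together with the class of a strictly positive element, realising --- by a rescaled version of the construction of Theorem~\ref{model-trace} --- an inductive limit $T$ of Razak algebras with $(\mathrm{Cu}(T),[\,\cdot\,])\cong(\mathrm{Cu}(A),[h])$ gives $A\cong T$; thus $A$ is a simple inductive limit of Razak algebras. Finally, an isomorphism $(\mathrm{T}^+A_1,\Sigma A_1)\cong(\mathrm{T}^+A_2,\Sigma A_2)$ translates, by the equivalence above, into an isomorphism $\mathrm{Cu}(A_1)\cong\mathrm{Cu}(A_2)$ carrying strict-positive classes to strict-positive classes, which by Robert's uniqueness theorem lifts to $A_1\cong A_2$ carrying the invariant; functoriality of $A\mapsto(\mathrm{T}^+A,\Sigma A)$ is clear.

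\textbf{Expected main obstacle.} The genuinely hard inputs --- the divisibility/tracial-approximation decomposition (Theorem~\ref{div-decp}), the trace-factorization theorem (Theorem~\ref{TFA}) and the stable uniqueness theorem (Corollary~\ref{stable-uniq-trace}), which power the intertwining in Theorem~\ref{clas-thm-R}) --- are already in hand, so the main work here is bookkeeping. The delicate point is the passage from an algebra not necessarily in $\mathcal{R}$, whose invariant $(\mathrm{T}^+A,\Sigma A)$ need not present a compact base, to the Cuntz-semigroup picture: I would need to check carefully that $\Sigma A$ really is the datum of the rank function of a strictly positive element, that every such datum is realised by an inductive system of Razak algebras, and that this realisation is compatible with the hereditary-subalgebra reduction used to invoke Theorem~\ref{clas-thm-R}.
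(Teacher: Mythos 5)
Your overall architecture coincides with the paper's: split according to whether $\mathrm{T}^+A=\{0\}$ (and dispose of that case by Kirchberg--Phillips, exactly as the paper does, with more justification supplied), and for nonzero traces use $\mathcal Z$-stability, Theorem~\ref{Z-stable}, the reduction to the class $\mathcal R$ by a hereditary subalgebra, Theorem~\ref{clas-thm-R}, and then a Cuntz-semigroup argument to pass from stable isomorphism to isomorphism. Two of your substitutions are harmless and worth recording. First, you compare each algebra with a model algebra $S$ from Theorem~\ref{model-trace} instead of comparing the two hereditary reductions with each other; this buys you, in addition, that each algebra is stably isomorphic to an inductive limit of Razak algebras. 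Second, to descend from $A_1\otimes\mathcal K\cong A_2\otimes\mathcal K$ to $A_1\cong A_2$ you invoke Robert's classification \cite{Robert-Cu}, which is legitimate given the first point and the fact that the scale isomorphism matches the classes of strictly positive elements; the paper instead notes that the two strictly positive elements have equal rank functions $\mathrm{d}_\tau$, hence equal Cuntz classes, and quotes Theorem 3 of \cite{CEI-CuntzSG} (hereditary subalgebras of a stable-rank-one algebra generated by Cuntz-equivalent positive elements are isomorphic), a somewhat lighter tool.

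The one genuine gap is the exhaustion clause, i.e.\ that every such $A$ with $\mathrm T^+A\neq\{0\}$ is itself a simple inductive limit of Razak algebras. You propose to realize $(\mathrm{Cu}(A),[h])$, equivalently $(\mathrm T^+A,\Sigma A)$, by a limit $T$ of Razak algebras via a ``rescaled version'' of the construction of Theorem~\ref{model-trace} and then apply Robert's theorem. But Theorem~\ref{model-trace} only produces algebras in $\mathcal R$: all traces bounded, compact base of tracial states, continuous norm function. For a general $A$ in the theorem the scale is merely a lower semicontinuous, possibly unbounded and non-continuous, datum on $\mathrm T^+A$ (for instance $\Sigma A=\{0\}$ when $A$ is stable), and rescaling the compact-base construction does not obviously produce such scales; realizing the full range of $(\mathrm T^+A,\Sigma A)$ by Razak limits is a theorem in its own right, and it is precisely what the paper imports from \cite{Tsang-W}. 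This does not affect the classification (isomorphism) part of your argument, which is complete modulo the ingredients you cite, and you do flag the point yourself; but as written it is an assertion rather than a proof, so you should either quote a range-of-invariant result such as \cite{Tsang-W} or actually carry out the construction for general scales.
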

\begin{proof}

If $\mathrm{T}^+A= \{0\}$, then $A$ is purely infinite. It follows from the classification of  Kirchberg and Phillips that $A$  is stably isomorphic to $\mathcal{O}_2$ and is classified by the Murray-von Neumann semigroup together with the class of the projections of the algebra. 

Assume $\mathrm{T}^+A\neq \{0\}$. 
Let $A, B$ be two such C*-algebras satisfying $$(\mathrm{T}^+A, \Sigma A) \cong (\mathrm{T}^+B, \Sigma B).$$ 

Consider $A\otimes\mathcal K$ and $B\otimes\mathcal K$, and note that the embeddings $A\to A\otimes\mathcal K$ and $B\to B\otimes \mathcal K$ induce homeomorphisms $\mathrm{T}^+(A\otimes\mathcal K)\cong \mathrm{T}^+A$ and $\mathrm{T}^+(B\otimes\mathcal K)\cong \mathrm{T}^+B$ respectively. Thus, one has that $\mathrm{T}^+(A\otimes\mathcal K)\cong \mathrm{T}^+(B\otimes\mathcal K)\cong C$ for a topological cone $C$ with a base $\Delta$ which is a compact metrizable Choquet simplex.

Consider the continuous affine function which is constant $1$ on $\Delta$, and denote $h_A\in A\otimes\mathcal K$ and $h_B\in B\otimes\mathcal K$ the positive elements which the Cuntz semigroup class is the given continuous affine function, respectively. Then one has that $\overline{h_A(A\otimes\mathcal K)h_A}\in \mathcal R$ and $\overline{h_B(B\otimes\mathcal K)h_B}\in\mathcal R$, and $\overline{h_A(A\otimes\mathcal K)h_A}\in \mathcal R$ and $\overline{h_B(B\otimes\mathcal K)h_B}\in\mathcal R$ has the same Elliott invariants; 
hence, by Theorem \ref{clas-thm-R}, $\overline{h_A(A\otimes\mathcal K)h_A}\cong \overline{h_B(B\otimes\mathcal K)h_B}$, and by Brown's Theorem, $A\otimes\mathcal K\cong B\otimes\mathcal K$.

Thus, one may identify $A=\overline{aDa}$ and $B=\overline{bDb}$ where $D$ a stable C*-algebra (with stable rank one). Since $\Sigma_A\cong\Sigma_B$, one has that $$\|\tau|_A\|=\|\tau|_B\|,\quad \tau\in\mathrm{T}^+D.$$
On the other hand, $$\|\tau|_A\| = \mathrm{d}_\tau(a)\quad\mathrm{and}\quad \|\tau|_B\| = \mathrm{d}_\tau(b).$$
Therefore
$$\mathrm{d}_\tau(a) = \mathrm{d}_\tau(b),\quad  \tau\in\mathrm{T}^+D,$$
and hence $[a]=[b]\in\mathrm{Cu}(D)$. Since $D$ has stable rank one, it follows from Theorem 3 of \cite{CEI-CuntzSG} that $A\cong B$.

It follows from \cite{Tsang-W} that the class of simple inductive limit of Razak algebras exhausts all possible values of the invariant. Thus, the class of simple, separable, finite nuclear dimension, KK-contractible, finite C*-algebras coincides with the class of simple inductive limits of Razak algebras.
\end{proof}

\begin{rem}
The invariants of Theorem \ref{clas-thm} in the finite case and infinite case to the semigroup can be unifies as
$$(\mathrm{V}(A)\setminus\{0\})\sqcup \mathrm{LAff}\mathrm{T^+A},$$
where $\mathrm{V}(A)$ is the Murray-von Neumann semigroup, $\mathrm{LAff}\mathrm{T^+A}$ is the semigroup of positive lower semicontinuous affine functions on $\mathrm T^+A$ arising from the Cuntz semigroup of $A$, together with the class of the algebra in it. If $\mathrm{T}^+A\neq \{0\}$, then this invariant is isomorphic to the Cuntz semigroup of $A$ together with the class of the algebra  (assuming $A$ is simple, separable, nuclear, and Jiang-Su stable); if $\mathrm{T}^+A = \{0\}$, then this invariant is the Murray-von Neumann semigroup together with the class of the projections of the algebra.
\end{rem}

\begin{cor}\label{clas-hom}
Let $A, B$ be simple separable KK-contractible C*-algebras with finite nuclear dimension. If there is a homomorphism $\xi: (\mathrm{T}^+B, \Sigma B) \to (\mathrm{T}^+A, \Sigma A)$, then there is a C*-algebra homomorphism $\phi: A\cong B$ such that $\phi_*=\xi$.
\end{cor}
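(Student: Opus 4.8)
The plan is to construct $\phi$ by a \emph{one-sided} approximate intertwining of $A$ with $B$, exploiting that $A$ is an inductive limit of Razak algebras together with Robert's existence and uniqueness theorems for one-dimensional NCCW complexes (Theorem 1 of \cite{Robert-Cu} and Lemma \ref{uniq-R}). First I would dispose of the degenerate cases: if $\mathrm T^+B=\{0\}$ then $\xi$ is the zero map and $\phi=0\colon A\to B$ has $\phi_*=\xi$; if $\mathrm T^+A=\{0\}$ then $\Sigma A=\{0\}$, so $\xi(\Sigma B)\subseteq\{0\}$ and, by homogeneity of $\xi$, $\xi\equiv 0$, and again $\phi=0$ works. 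So assume $\mathrm T^+A,\mathrm T^+B\neq\{0\}$. Then $A$ and $B$ carry a non-zero lower semicontinuous trace while $\Kzero(A)=\Kzero(B)=\{0\}$, which forces both to be non-unital; having finite nuclear dimension they are $\mathcal Z$-stable, so by Theorem \ref{Z-stable} we have $A\cong A\otimes Q$ and $B\cong B\otimes Q$. In particular $A$ and $B$ are simple, separable, $\mathcal Z$-stable and stably finite, hence stably projectionless, of stable rank one, and with strict comparison; and by Theorem \ref{clas-thm} I may fix a presentation $A=\varinjlim(P_n,\alpha_n)$ with each $P_n\in\mathcal R_z$, so each $P_n$ is a one-dimensional NCCW complex.

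Next I would lift $\xi$ to the Cuntz semigroup. Since $A$ and $B$ are stably projectionless and $\mathcal Z$-stable, Theorem 4.4 and Corollary 6.8 of \cite{ESR-Cuntz} identify $\mathrm{Cu}(A)$ and $\mathrm{Cu}(B)$ with the ordered monoids of lower semicontinuous affine functions on $\mathrm T^+A$ and $\mathrm T^+B$ arising from positive elements; under these identifications, composition with $\xi$ gives a $\mathrm{Cu}$-morphism $\xi^{*}\colon\mathrm{Cu}(A)\to\mathrm{Cu}(B)$, $f\mapsto f\circ\xi$. It is scale compatible: from $\xi(\Sigma B)\subseteq\Sigma A$ and homogeneity one gets $\|\xi(\tau)\|\leq\|\tau\|$ for all $\tau\in\mathrm T^+B$, so $\xi^{*}$ sends the class of a strictly positive element of $A$ below the class of a strictly positive element of $B$. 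Setting $\mu_n:=\xi^{*}\circ\mathrm{Cu}(\alpha_{n,\infty})\colon\mathrm{Cu}(P_n)\to\mathrm{Cu}(B)$, these are scale compatible and compatible with $\mathrm{Cu}(\alpha_n)$. Since each $P_n$ is a one-dimensional NCCW complex and $B$ has stable rank one, Theorem 1 of \cite{Robert-Cu} provides homomorphisms $\psi_n\colon P_n\to B$ with $\mathrm{Cu}(\psi_n)=\mu_n$.

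Then I would run the intertwining. For each $n$ the homomorphisms $\psi_{n+1}\circ\alpha_n$ and $\psi_n$ from $P_n$ to $B$ induce the same map $\mu_n$ on $\mathrm{Cu}$, so by Lemma \ref{uniq-R} (only stable rank one of $B$ is needed) they are approximately unitarily equivalent. Choosing increasing finite sets $\mathcal F_n\subset P_n$ with $\alpha_n(\mathcal F_n)\subseteq\mathcal F_{n+1}$ and $\bigcup_n\alpha_{n,\infty}(\mathcal F_n)$ dense in $A$, and a summable sequence $\eps_n>0$, I obtain unitaries $w_n\in\tilde B$ with $\|w_n^{*}(\psi_{n+1}\circ\alpha_n)(x)w_n-\psi_n(x)\|<\eps_n$ for $x\in\mathcal F_n$. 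Putting $V_1=1$ and $V_{n+1}=V_nw_n$, the genuine homomorphisms $\mathrm{ad}(V_n)\circ\psi_n\colon P_n\to B$ are approximately coherent with summable error, hence converge on the dense set $\bigcup_n\alpha_{n,\infty}(P_n)$ to a homomorphism $\phi\colon A\to B$. Finally $\mathrm{Cu}(\phi)=\varinjlim\mu_n=\xi^{*}$; equivalently, since traces are invariant under conjugation by unitaries of $\tilde B$ and $\tau\circ\psi_n=\xi(\tau)\circ\alpha_{n,\infty}$ for every $\tau\in\mathrm T^+B$, the limit satisfies $\tau\circ\phi=\xi(\tau)$ on a dense subset of $A$ and therefore everywhere; that is, $\phi_*=\xi$.

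The main obstacle I expect is the Cuntz-semigroup lift in the second step: one must know that for these stably projectionless, $\mathcal Z$-stable algebras $\mathrm{Cu}$ is exactly the scaled cone of lower semicontinuous affine functions, and that $\xi$ — viewed as a morphism of topological cones with distinguished compact slice $\Sigma$ — pulls back to a bona fide $\mathrm{Cu}$-morphism, i.e.\ one preserving order, the way-below relation, suprema of increasing sequences, and the scale. Once this structural point is settled, the existence step (Robert) and the uniqueness step (Lemma \ref{uniq-R}) both apply precisely because the $P_n$ are one-dimensional NCCW complexes, and the one-sided Elliott intertwining assembles $\phi$ in a routine way.
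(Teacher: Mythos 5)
Your argument is correct, but it takes a different route from the paper, whose entire proof is the one-line citation ``this follows from the classification of Razak algebras \cite{Razak-W}'' --- i.e.\ it invokes the existence half of Razak's original classification functor applied to the inductive-limit presentations supplied by Theorem \ref{clas-thm}. You instead re-derive that existence theorem inside the paper's own toolkit: Robert's Cuntz-semigroup existence theorem (Theorem 1 of \cite{Robert-Cu}, already used in Theorem \ref{TFA}) to produce the maps $\psi_n\colon P_n\to B$, the Cu-uniqueness statement (Lemma \ref{uniq-R}) to make them coherent, and a one-sided Elliott intertwining to assemble $\phi$. What your route buys is self-containedness and the correct non-unital formulation of the invariant: Razak's paper works with a different presentation of the invariant, so quoting it here strictly speaking requires a translation that the paper suppresses, whereas your argument never leaves the $(\mathrm{T}^+,\Sigma)$ / $\mathrm{Cu}$ picture. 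What it costs is the extra structural input you yourself flag: the identification of $\mathrm{Cu}$ with the scaled cone of lower semicontinuous affine functions for simple, stably projectionless, $\mathcal Z$-stable algebras, and the verification that composition with the cone map $\xi$ is a genuine $\mathrm{Cu}$-morphism (order, way-below, sequential suprema, scale). That identification is exactly Theorem 4.4 and Corollary 6.8 of \cite{ESR-Cuntz} as already invoked in the proof of Theorem \ref{TFA}, and preservation of $\ll$ and of increasing suprema under composition with a continuous linear cone map is routine there, so this is a point to be recorded rather than a gap. Two minor remarks: the degenerate cases are handled more quickly by noting that if either cone is $\{0\}$ then $\xi$ is forced to be zero because its codomain or the span of $\Sigma B$ collapses, and $\phi=0$ trivially works; and your identity $\tau\circ\psi_n=\xi(\tau)\circ\alpha_{n,\infty}$ deserves the one-line justification via $\tau(y)=\int_0^\infty d_\tau((y-t)_+)\,dt$, since a priori Robert's theorem only controls the induced map on $\mathrm{Cu}$, not on traces.
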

\begin{proof}
This follows from the classification of Razak algebras (\cite{Razak-W}).
\end{proof}

Consider the C*-algebra $\mathcal W$, the simple inductive limit of Razak algebras with $$(\mathrm T^+\mathcal W, \Sigma\mathcal W) \cong ([0, +\infty), [0, 1]).$$ 
\begin{cor}\label{cor-WW}
Let $A$ be a simple separable C*-algebra such that $A\otimes\mathcal Z$ has finite nuclear dimension. Then the C*-algebra $A\otimes \mathcal W$ is classifiable. In particular, $\mathcal W\otimes \mathcal W \cong \mathcal W$.
\end{cor}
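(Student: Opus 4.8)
The plan is to show that $A\otimes\mathcal W$ lies in the class classified by Theorem \ref{clas-thm}, apply that theorem, and then specialize to $A=\mathcal W$. Two preliminary observations do most of the bookkeeping. First, $A$ is nuclear: since $\mathcal Z$ is unital and $A\otimes\mathcal Z$ is nuclear (it has finite nuclear dimension), for every C*-algebra $D$ the map $x\mapsto x\otimes 1_{\mathcal Z}$ realizes both $A\otimes_{\min}D$ and $A\otimes_{\max}D$ compatibly inside $A\otimes\mathcal Z\otimes D$, on which there is only one C*-tensor norm; hence $A\otimes_{\min}D=A\otimes_{\max}D$ for all $D$. Second, recall that $\mathcal W$ is simple, separable, non-unital, nuclear, KK-contractible, has finite nuclear dimension, satisfies $\mathcal W\cong\mathcal W\otimes\mathcal Z$, and has invariant $(\mathrm T^+\mathcal W,\Sigma\mathcal W)\cong([0,+\infty),[0,1])$ (see \cite{Razak-W} and \cite{Jacelon-W}).

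Granting these, I would verify the hypotheses of Theorem \ref{clas-thm} for $A\otimes\mathcal W$ directly. It is separable, and it is simple since a minimal tensor product of simple C*-algebras one of which is nuclear is simple. It is KK-contractible by Lemma \ref{KK-Kun}, because $A$ is nuclear and $\mathcal W$ is KK-contractible; in particular $A\otimes\mathcal W$ is KK-equivalent to $\{0\}$ and hence satisfies the UCT, should that be needed. For finite nuclear dimension one uses $\mathcal W\cong\mathcal W\otimes\mathcal Z$ to write $A\otimes\mathcal W\cong(A\otimes\mathcal Z)\otimes\mathcal W$, and then the tensor-product estimate for nuclear dimension \cite{WZ-ndim} gives
$$\mathrm{dim}_{\mathrm{nuc}}(A\otimes\mathcal W)\leq(\mathrm{dim}_{\mathrm{nuc}}(A\otimes\mathcal Z)+1)(\mathrm{dim}_{\mathrm{nuc}}(\mathcal W)+1)-1<\infty.$$
Thus $A\otimes\mathcal W$ is a simple separable KK-contractible C*-algebra with finite nuclear dimension, so it is classifiable by Theorem \ref{clas-thm}.

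For the last assertion, take $A=\mathcal W$: since $\mathcal W\otimes\mathcal Z\cong\mathcal W$ has finite nuclear dimension, $\mathcal W$ satisfies the hypothesis, so $\mathcal W\otimes\mathcal W$ is classifiable by the previous paragraph. It remains to match invariants. The algebra $\mathcal W\otimes\mathcal W$ is stably finite (it is $\mathcal Z$-stable and carries the non-zero bounded trace $\tau_{\mathcal W}\otimes\tau_{\mathcal W}$) and has $\Kzero=\{0\}$ (being KK-contractible), hence is stably projectionless; and since $\mathcal W$ is nuclear with a unique densely defined lower semicontinuous trace up to scaling, so is $\mathcal W\otimes\mathcal W$, every such trace being bounded. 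Therefore $\mathcal W\otimes\mathcal W\in\mathcal R$ with $(\mathrm T^+(\mathcal W\otimes\mathcal W),\Sigma(\mathcal W\otimes\mathcal W))\cong([0,+\infty),[0,1])$, the same invariant as $\mathcal W$, and Theorem \ref{clas-thm} gives $\mathcal W\otimes\mathcal W\cong\mathcal W$.

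I do not anticipate a serious obstacle in this corollary: all the real content is carried by Theorem \ref{clas-thm} and Lemma \ref{KK-Kun}. The only genuine external inputs are the $\mathcal Z$-stability of $\mathcal W$ — essential, since $A$ itself need not have finite nuclear dimension, and it is the rewriting $A\otimes\mathcal W\cong(A\otimes\mathcal Z)\otimes\mathcal W$ that makes finite nuclear dimension visible — and the identification of the trace cone of $\mathcal W\otimes\mathcal W$ via the standard description of traces on minimal tensor products of nuclear C*-algebras with a unique tracial state; both are routine.
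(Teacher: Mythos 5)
Your proposal is correct and follows essentially the same route as the paper: KK-contractibility of $A\otimes\mathcal W$ via Lemma \ref{KK-Kun}, the rewriting $A\otimes\mathcal W\cong(A\otimes\mathcal Z)\otimes\mathcal W$ using $\mathcal W\cong\mathcal Z\otimes\mathcal W$ to get finite nuclear dimension, and then Theorem \ref{clas-thm}. You merely make explicit some points the paper leaves implicit (nuclearity of $A$, simplicity of the tensor product, and the matching of $(\mathrm T^+,\Sigma)$ for $\mathcal W\otimes\mathcal W$), all of which are routine and correctly handled.
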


\begin{proof}
By Lemma \ref{KK-Kun}, $A\otimes \mathcal W$ is KK-contractible. Note that $$A\otimes\mathcal W\cong A\otimes\mathcal (Z\otimes\mathcal W)\cong (A\otimes\mathcal Z)\otimes\mathcal W.$$ Then $A\otimes \mathcal W$ has finite nuclear dimension, and the corollary follows from Theorem \ref{clas-thm}.
\end{proof}

\bibliographystyle{amsalpha}
\bibliography{operator_algebras}

\end{document}